\newcommand\org@hypertarget{}
\let\org@hypertarget\hypertarget
\renewcommand\hypertarget[2]{%
  \Hy@raisedlink{\org@hypertarget{#1}{}}#2%
} 
\newtheorem{theorem}{Theorem}[section]
\newtheorem{lemma}[theorem]{Lemma}
\newtheorem{corollary}[theorem]{Corollary}
\newtheorem{proposition}[theorem]{Proposition}
\theoremstyle{definition}
\newtheorem{definition}[theorem]{Definition}
\newtheorem{remark}[theorem]{Remark}
\newtheorem{example}[theorem]{Example}
\newcommand{\xysquare}[8]{
\[\xymatrix{
#1 \ar@{#5}[r] \ar@{#6}[d] & #2 \ar@{#7}[d]\\
#3 \ar@{#8}[r] & #4
}\]
}
\DeclareMathOperator*{\projlimf}{``\varprojlim''}
\DeclareMathOperator*{\holim}{\operatorname*{holim}}
\newcommand{\al}{\alpha}
\newcommand{\bb}{\mathbb}
\newcommand{\blob}{\bullet}
\newcommand{\comment}[1]{}
\newcommand{\dotimes}{\otimes^{\sub{\tiny I\hspace{-1.3mm}L}}}
\newcommand{\into}{\hookrightarrow}
\newcommand{\isoto}{\stackrel{\simeq}{\to}}
\newcommand{\Isoto}{\stackrel{\simeq}{\longrightarrow}}
\newcommand{\Mapsto}{\longmapsto}
\newcommand{\onto}{\twoheadrightarrow}
\newcommand{\op}{\operatorname}
\newcommand{\pid}[1]{\langle #1 \rangle}
\newcommand{\quis}{\stackrel{\sim}{\to}}
\newcommand{\res}{\overline}
\newcommand{\roi}{\mathcal{O}}
\newcommand{\sub}[1]{{\mbox{\scriptsize #1}}}
\newcommand{\To}{\longrightarrow}
\newcommand{\xto}{\xrightarrow}
\newcommand{\THH}{T\!H\!H}
\newcommand{\HH}{H\!H}
\newcommand{\HC}{H\!C}
\newcommand{\HP}{H\!P}
\newcommand{\TR}{T\!R}
\newcommand{\TC}{T\!C}
\newcommand{\CC}{C\!C}
\renewcommand{\cal}{\mathcal}
\renewcommand{\hat}{\widehat}
\renewcommand{\frak}{\mathfrak}
\newcommand{\indlim}{\varinjlim}
\renewcommand{\tilde}{\widetilde}
\renewcommand{\ker}{\operatorname{Ker}}
\renewcommand{\projlim}{\varprojlim}
\DeclareMathOperator{\Ann}{Ann}
\DeclareMathOperator{\Hom}{Hom}
\DeclareMathOperator{\Tor}{Tor}
\begin{document}

\title{\vspace{-6mm}Pro unitality and pro excision in algebraic $K$-theory and cyclic homology}

\author{Matthew Morrow}

%\classno{}
\date{}
%\extraline{}

\maketitle
\vspace{-5mm}
\centerline{{\em Journal f\"ur die reine und angewandte Mathematik}, to appear.}

\begin{abstract}
The purpose of this paper is to study pro excision in algebraic $K$-theory and cyclic homology, after Suslin--Wodzicki, Cuntz--Quillen, Corti\~nas, and Geisser--Hesselholt, as well as continuity properties of Andr\'e--Quillen and Hochschild homology. A key tool is first to establish the equivalence of various pro Tor vanishing conditions which appear in the literature.

This allows us to prove that all ideals of commutative, Noetherian rings are pro unital in a suitable sense. We show moreover that such pro unital ideals satisfy pro excision in derived Hochschild and cyclic homology. It follows hence, and from the Suslin--Wodzicki criterion, that ideals of commutative, Noetherian rings satisfy pro excision in derived Hochschild and cyclic homology, and in algebraic $K$-theory.

In addition, our techniques yield a strong form of the pro Hochschild--Kostant--Rosenberg theorem; an extension to general base rings of the Cuntz--Quillen excision theorem in periodic cyclic homology; a generalisation of the Fe\u\i gin--Tsygan theorem;
a short proof of pro excision in topological Hochschild and cyclic homology; and new Artin--Rees and continuity statements in Andr\'e--Quillen and Hochschild homology.

MSC: 19D55 (primary), 16E40 13D03 (secondary).
\end{abstract}

\tableofcontents

\setcounter{section}{-1}
\section{Introduction and statements of main results}
We begin with some remarks on excision in algebraic $K$-theory. It has been known at least since work by R.~Swan \cite{Swan1971} that $K$-theory fails to satisfy excision; i.e., if $A\to B$ is a homomorphism of rings and $I$ is an ideal of $A$ mapped isomorphically to an ideal of $B$, then the map $K_n(A,I)\to K_n(B,I)$ of relative $K$-groups need not be an isomorphism if $n>0$. Having fixed $I$ as a non-unital algebra, A.~Suslin \cite{Suslin1995} showed, by building on earlier work of himself and M.~Wodzicki \cite{Suslin1992}, that $I$ satisfies excision for all such homomorphisms $A\to B$ if and only if $I$ is {\em Tor-unital} in the sense that $\op{Tor}^{\bb Z\ltimes I}_n(\bb Z,\bb Z)=~0$ for $n>0$. Unfortunately, this is not commonly satisfied for rings of algebraic geometry. A recent trend has therefore been to consider instead the problem of ``pro excision'', namely to determine when the map of pro abelian groups $\{K_n(A,I^r)\}_r\to\{K_n(B,I^r)\}_r$ is an isomorphism for all $n\ge0$. 

In particular, a theorem of T.~Geisser and L.~Hesselholt \cite{GeisserHesselholt2006}, whose rational version is due to G.~Corti\~nas \cite{Cortinas2006}, states that pro Tor-unital ideals satisfy pro excision in algebraic $K$-theory, thereby offering a pro version of the aforementioned Suslin--Wodzicki criterion; here we have adopted the following piece of terminology:

\begin{definition}\label{definition_pro_H}
A non-unital ring $I$ is {\em pro Tor-unital} if and only if $\{\Tor_n^{\bb Z\ltimes I^r}(\bb Z,\bb Z)\}_r=~0$ for all $n>0$.
\end{definition}

Unfortunately, until now it has appeared to be difficult to verify whether any given ideal is pro Tor-unital, or to offer many examples of such ideals, owing to the inaccessibility of the rings $\bb Z\ltimes I^r$; hence Geisser--Hesselholt's criterion has been hard to apply in concrete situations. Our first main theorem, the proof of which is the content of Section \ref{section_proof_of_1}, overcomes this difficulty:

\begin{theorem}[See Thm.~\ref{theorem1a}]\label{theorem1}
Let $k$ be a commutative ring, $A\to B$ a homomorphism of $k$-algebras, and $I$ an ideal of $A$ mapped isomorphically to an ideal of $B$. Then the following pro Tor vanishing conditions are equivalent:
\begin{enumerate}
\item $\{\Tor_n^A(A/I^r,A/I^r)\}_r=0$ for all $n>0$.
\item $\{\Tor_n^B(B/I^r,B/I^r)\}_r=0$ for all $n>0$.
\item $\{\Tor_n^{k\ltimes I^r}(k,k)\}_r=0$ for all $n>0$.
\item $I$ is pro Tor-unital, i.e.,  $\{\Tor_n^{\bb Z\ltimes I^r}(\bb Z,\bb Z)\}_r=0$ for all $n>0$.
\end{enumerate}
\end{theorem}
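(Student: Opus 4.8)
The plan is to show that each of the four pro Tor-vanishing conditions can be reduced to a single assertion about the pro system of rings $\{k\ltimes I^r\}_r$ versus $\{A/I^r\}_r$, $\{B/I^r\}_r$, so that the implications become almost formal once one has a robust enough change-of-rings and base-change package for pro abelian groups. Throughout I would work with the non-unital ring $I$ and the associated unitalisations $R\ltimes I^r$ over various base rings $R$, and I would systematically replace ordinary $\Tor$ modules by the derived tensor product, computing $\Tor_*^R(M,N)$ as the homotopy groups of $M\dotimes_R N$; the pro vanishing statements are then statements that certain pro objects in the derived category are pro-trivial, which is closed under cofibre sequences and compatible with derived base change.

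The first step is the equivalence (i) $\Leftrightarrow$ (ii): since $I$ is carried isomorphically into $B$, one has $I^r\cong I^r$ compatibly, and there is a Milnor-square / nilpotent-type comparison allowing one to pass between $\Tor_*^A(A/I^r,A/I^r)$ and $\Tor_*^B(B/I^r,B/I^r)$. Concretely I would compare both with $\Tor$ over the fibre product $A\times_{A/I^r}B$, or more efficiently observe that the two-sided bar construction computing $\Tor^A(A/I^r,A/I^r)$ depends, in a range, only on the ideal $I^r$ together with its ring-without-unit structure, which is identical on the $A$-side and the $B$-side; the point is that $A/I^r\dotimes_A A/I^r$ and its $B$-analogue both fit into cofibre sequences with terms built out of $I^r\dotimes I^r\dotimes\cdots$, and these bar-type terms are intrinsic to the non-unital ring $I^r$. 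The key input here is exactly the kind of statement that the main theorem is distilling, so I expect this to require some care: one wants a clean lemma saying that for a ring $A$ with ideal $J$, the pro system $\{\Tor_*^A(A/J^r,A/J^r)\}_r$ is pro-isomorphic to $\{\Tor_*^{\bb Z\ltimes J^r}(\bb Z,\bb Z)\}_r$ shifted appropriately, or at least that one pro-vanishes iff the other does.

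The second step is (iii) $\Leftrightarrow$ (iv), i.e.\ that the base ring in the unitalisation does not matter. For this I would use the change-of-rings spectral sequence (or a derived base-change equivalence) relating $\Tor^{\bb Z\ltimes I^r}(\bb Z,\bb Z)$ and $\Tor^{k\ltimes I^r}(k,k)$: there is a ring map $\bb Z\ltimes I^r\to k\ltimes I^r$, and $k\ltimes I^r\cong k\otimes_{\bb Z}(\bb Z\ltimes I^r)$ is obtained by base change, so $k\dotimes_{k\ltimes I^r}k\simeq k\dotimes_{\bb Z}(\bb Z\dotimes_{\bb Z\ltimes I^r}\bb Z)$ provided a flatness/Tor-independence issue is handled. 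The cleanest route is to note $\Tor_*^{\bb Z\ltimes I^r}(\bb Z,\bb Z)$ is already computed by the non-unital bar complex $\bb Z\oplus I^r\oplus(I^r\otimes_{\bb Z}I^r)\oplus\cdots$, whose homology in positive degrees is intrinsic to $I^r$ as a non-unital $\bb Z$-algebra and is unchanged upon $-\otimes_{\bb Z}k$ only if $I^r$ is flat — which it need not be — so instead one argues pro-systemically, showing the comparison map of pro complexes is a pro-quasi-isomorphism because the obstruction $\Tor^{\bb Z}$-terms are themselves killed by the transition maps $I^{r}\hookrightarrow I^{r'}$ in a suitable range. Then (i) $\Leftrightarrow$ (iii) follows by applying the step-one lemma with $A$ replaced by $k\ltimes I$ and $J$ by $I$, since $(k\ltimes I)/I^r\cong k\ltimes(I/I^r)$ and one is back to the same bar-construction computation.

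The main obstacle I anticipate is precisely the non-flatness of $I^r$ over $k$ (and over $\bb Z$), which blocks the naive identification of the various bar complexes and forces all comparisons to be made at the level of pro objects, where one must produce explicit factorisations of transition maps through the ``error terms.'' In other words, the heart of the argument is an Artin--Rees-type statement for $\Tor$: the pro system of higher $\Tor$'s measuring the failure of flatness of $I^r$ is itself pro-zero after base change, so that the spectral sequences degenerate pro-systemically. Organising this — presumably via a lemma that $\{\Tor_n^{\bb Z}(k, I^r)\}_r$ or the analogous mixed terms are pro-zero, or can be absorbed — and then feeding it into the change-of-rings machinery is where the real work lies; once that lemma is in hand, the four-way equivalence should close up by diagram chasing in the category of pro abelian groups.
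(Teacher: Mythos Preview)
Your proposal identifies the right shape of the argument but misdiagnoses the main obstacle, and the route you sketch for (iii)$\Leftrightarrow$(iv) would not work as stated.

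The anticipated difficulty --- non-flatness of $I^r$ over $k$ or $\bb Z$, and a hoped-for lemma that $\{\Tor_n^{\bb Z}(k,I^r)\}_r$ is pro-zero --- is a red herring. There is no reason such a pro system should vanish in general, and the paper's proof requires no such input. More seriously, your plan to compare the bar complexes $B_\blob^k(I^r)$ and $B_\blob^{\bb Z}(I^r)$ is off track: those complexes compute $\Tor_*^{k\ltimes I^r}(k,k)$ and $\Tor_*^{\bb Z\ltimes I^r}(\bb Z,\bb Z)$ \emph{only when $I^r$ is flat over the base}, which is exactly the hypothesis you cannot assume. So the comparison you want to make is between objects that do not compute the right Tor groups in the first place. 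Similarly, your ``key lemma'' for (i)$\Leftrightarrow$(ii) --- that $\{\Tor_*^A(A/J^r,A/J^r)\}_r$ is pro-isomorphic to $\{\Tor_*^{\bb Z\ltimes J^r}(\bb Z,\bb Z)\}_r$ --- is essentially the content of the theorem itself, so invoking it is circular.

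The paper's approach is cleaner and sidesteps all flatness issues. It proves a single general statement (Proposition~\ref{proposition_pro_H}): if $R_\infty\to S_\infty$ is a strict map of pro rings carrying an ideal $J_\infty$ isomorphically, then $\{\Tor_n^{R_r}(R_r/J_r,R_r/J_r)\}_r=0$ for all $n>0$ if and only if the same holds for $S_\infty$. This is proved directly using change-of-rings spectral sequences (Remark~\ref{remark_base_change_SS}) and the bicartesian square $R_r\to S_r$, $R_r/J_r\to S_r/J_r$; the key lemma (Lemma~\ref{lemma_pro_result_of_Tor_vanishing}) is that once the pro Tor vanishing holds for $R_\infty$, it automatically propagates to $\{\Tor_n^{R_r}(M_r,R_r/J_r)\}_r$ for \emph{any} $R_\infty/J_\infty$-module $M_\infty$. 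Theorem~\ref{theorem1a} then follows by applying this one proposition three times with $J_\infty=\{I^r\}_r$ and different ambient pro rings: $A$ versus $B$; $\{k\ltimes I^r\}_r$ versus $A$; and $\{\bb Z\ltimes I^r\}_r$ versus $\{k\ltimes I^r\}_r$. The crucial realisation you are missing is that $(k\ltimes I^r)/I^r=k$ and $(\bb Z\ltimes I^r)/I^r=\bb Z$, so (iii) and (iv) are already of the form ``$\Tor^{R_r}(R_r/J_r,R_r/J_r)$'' --- no base-change along $\bb Z\to k$ is needed at all.
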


Theorem \ref{theorem1} is a fundamental tool used throughout the paper, which unifies different conditions naturally appearing in excision theory. Condition (i) will be seen to be the key property required for pro-excision in derived Hochschild and cyclic homology, their topological counterparts, and Andr\'e--Quillen homology. When $k$ is a field, condition (iii) is equivalent to the existing notion of H-unitality of the pro $k$-algebra $I^\infty$ (see Eg.~\ref{example_pro_H}), which is central in Wodzicki's original approach to excision \cite{Wodzicki1989}, as well as in J.~Cuntz and D.~Quillen's approach to excision in periodic cyclic homology \cite{CuntzQuillen1997}. The importance of (iv) is thus not only its relevance to pro excision in $K$-theory, but also that it reveals that conditions (i)--(iii) are intrinsic properties of the non-unital ring $I$, depending neither on the ring $A$ nor on the algebra structure from the base ring $k$.

The first concrete application of Theorem \ref{theorem1} is to commutative rings: if $I$ is an ideal of a commutative, Noetherian ring $A$, then M.~Andr\'e \cite{Andre1974} noted that condition (i) is always true (see Lem.~\ref{lemma_Andre}), and so we obtain:

\begin{theorem}[See Thm.~\ref{theorem_pro_Tor_unital_commutative}]\label{theorem_comm_Noeth}
Let $I$ be an ideal of a commutative, Noetherian ring. Then $I$ is pro Tor-unital.
\end{theorem}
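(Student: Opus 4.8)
The plan is to reduce immediately to condition~(i) of Theorem~\ref{theorem1}, which for ideals of commutative Noetherian rings is a pro version of the Artin--Rees lemma going back to Andr\'e \cite{Andre1974} (this is recorded as Lemma~\ref{lemma_Andre}). Concretely, applying Theorem~\ref{theorem1} to the identity homomorphism $A\to A$, viewed as a homomorphism of $\bb Z$-algebras, shows that conditions~(i) and~(iv) are equivalent; hence $I$ is pro Tor-unital as soon as $\{\Tor_n^A(A/I^r,A/I^r)\}_r=0$ for all $n>0$. So everything comes down to proving this pro-vanishing for an arbitrary ideal $I$ of a commutative Noetherian ring $A$.

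The first step is to reduce to a statement in which one of the two arguments is held fixed. For $s\ge r$, bifunctoriality of $\Tor$ factors the diagonal transition map $\Tor_n^A(A/I^s,A/I^s)\to\Tor_n^A(A/I^r,A/I^r)$ as $\Tor_n^A(A/I^s,A/I^s)\to\Tor_n^A(A/I^s,A/I^r)\to\Tor_n^A(A/I^r,A/I^r)$, i.e.\ change the second variable first, then the first. It therefore suffices to prove that for every finitely generated $A$-module $M$ and every $n>0$ the pro abelian group $\{\Tor_n^A(A/I^r,M)\}_r$ vanishes: applying this with $M=A/I^{r_0}$ for each fixed $r_0$ yields, for $s$ sufficiently large, a zero transition map $\Tor_n^A(A/I^s,A/I^{r_0})\to\Tor_n^A(A/I^{r_0},A/I^{r_0})$, and hence the diagonal transition $\Tor_n^A(A/I^s,A/I^s)\to\Tor_n^A(A/I^{r_0},A/I^{r_0})$ vanishes for $s$ large.

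For the reduced statement, choose a resolution $P_\bullet\to M$ by finitely generated free $A$-modules --- possible since $A$ is Noetherian and $M$ is finitely generated --- so that $\Tor_n^A(A/I^r,M)=H_n(P_\bullet/I^rP_\bullet)$, with transition maps induced by the projections $P_\bullet/I^sP_\bullet\to P_\bullet/I^rP_\bullet$. Fix $n>0$, write $d$ for the differential, and set $B_{n-1}=d(P_n)\subseteq P_{n-1}$. The Artin--Rees lemma applied to the submodule $B_{n-1}$ of the finitely generated module $P_{n-1}$ yields a constant $c$ with $I^sP_{n-1}\cap B_{n-1}\subseteq I^{s-c}B_{n-1}$ for all $s\ge c$. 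Now let $s\ge r+c$ and let $z\in P_n$ satisfy $dz\in I^sP_{n-1}$, representing an arbitrary class of $\Tor_n^A(A/I^s,M)$. Then $dz\in B_{n-1}\cap I^sP_{n-1}\subseteq I^{s-c}B_{n-1}=d(I^{s-c}P_n)$, so $dz=dw$ for some $w\in I^{s-c}P_n$; the element $z-w$ lies in $\ker d=d(P_{n+1})$ (the resolution being exact in positive degrees), say $z-w=du$, and therefore $z=du+w\in d(P_{n+1})+I^rP_n$ because $s-c\ge r$. Hence $z$ maps to $0$ in $H_n(P_\bullet/I^rP_\bullet)$, so the transition map $\Tor_n^A(A/I^s,M)\to\Tor_n^A(A/I^r,M)$ is zero; this gives the desired pro-vanishing.

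Once finitely generated free resolutions are available the argument is purely formal, so the only genuine input is the Noetherian hypothesis, needed both to resolve $M$ by finitely generated free modules and to invoke Artin--Rees; I do not expect a real obstacle here, since the substance of the result has already been absorbed into Theorem~\ref{theorem1}. The one point that must be handled with a little care is the bookkeeping passing from the two-variable pro system $\{\Tor_n^A(A/I^r,A/I^r)\}_r$ to the one-variable systems $\{\Tor_n^A(A/I^r,M)\}_r$; note also that the argument genuinely uses finiteness, as the constant $c$ is produced by Artin--Rees.
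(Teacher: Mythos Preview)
Your proof is correct and follows exactly the paper's approach: reduce via Theorem~\ref{theorem1} to the vanishing of $\{\Tor_n^A(A/I^r,A/I^r)\}_r$, then establish this (as in Lemma~\ref{lemma_Andre}) by first proving the one-variable pro-vanishing $\{\Tor_n^A(M,A/I^r)\}_r=0$ for finitely generated $M$ via Artin--Rees applied to $d(P_n)\subseteq P_{n-1}$ in a finitely generated projective resolution, and then specialising $M=A/I^{r_0}$ to handle the diagonal system. The only cosmetic difference is that you vary the first Tor argument where the paper varies the second, which is immaterial since $A$ is commutative.
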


Applying Geisser--Hesselholt's aforementioned pro version of the Suslin--Wodzicki criterion, we have the following consequence of Theorem \ref{theorem_comm_Noeth} which completely solves the pro excision problem in $K$-theory for commutative, Noetherian rings:

\begin{corollary}[See Corol.~\ref{corollary2a}]\label{corollary2}
Ideals of commutative, Noetherian rings satisfy pro excision in algebraic $K$-theory. In particular, if $A\to B$ is a homomorphism of commutative, Noetherian rings, and $I$ is an ideal of $A$ mapped isomorphically to an ideal of $B$, then the map of pro abelian groups \[\{K_n(A,I^r)\}_r\To \{K_n(B,I^r)\}_r\] is an isomorphism for all $n\in\bb Z$.
\end{corollary}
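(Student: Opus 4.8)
The plan is to deduce the statement directly from Theorem~\ref{theorem_comm_Noeth} together with the pro version of the Suslin--Wodzicki criterion, due to Geisser--Hesselholt (rationally to Corti\~nas), recalled in the introduction. The essential observation is that pro excision in $K$-theory is, by that criterion, governed by a property of $I$ as a non-unital ring alone, so all the real work has already been carried out in Sections~\ref{section_proof_of_1} and the preceding ones.

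First I would reduce to a pro Tor vanishing statement. By the Geisser--Hesselholt theorem, it suffices to show that $I$, viewed as a non-unital ring, is pro Tor-unital in the sense of Definition~\ref{definition_pro_H}; note this is an \emph{intrinsic} property of $I$, independent of the ambient rings $A$ and $B$, which is exactly what makes it applicable to an arbitrary homomorphism carrying $I$ isomorphically onto an ideal of the target. Second, since $I$ is an ideal of a commutative, Noetherian ring, Theorem~\ref{theorem_comm_Noeth} asserts precisely that $I$ is pro Tor-unital. (Internally this holds because condition (i) of Theorem~\ref{theorem1}, namely $\{\Tor_n^A(A/I^r,A/I^r)\}_r=0$ for $n>0$, is automatic over a commutative Noetherian ring by André's lemma, Lemma~\ref{lemma_Andre}, and Theorem~\ref{theorem1} upgrades condition (i) to condition (iv).) Third, applying the Geisser--Hesselholt theorem to the given homomorphism $A\to B$ yields that $\{K_n(A,I^r)\}_r\to\{K_n(B,I^r)\}_r$ is an isomorphism of pro abelian groups; specialising the general statement to homomorphisms of commutative, Noetherian rings gives the displayed conclusion.

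Given the toolkit assembled earlier in the paper there is essentially no obstacle here: the content is entirely in Theorems~\ref{theorem1} and~\ref{theorem_comm_Noeth}, and this corollary is a formal combination. The one point worth a remark is the range of degrees: the Geisser--Hesselholt pro excision theorem is typically formulated for $n\ge 0$, so to obtain the statement for all $n\in\bb Z$ one should either invoke their result for nonconnective $K$-theory, or reduce the negative-degree case to the nonnegative one via the fundamental theorem of $K$-theory, using that $I^r[t,t^{-1}]$ is again pro Tor-unital (which follows from flat base change along $\bb Z\to\bb Z[t,t^{-1}]$ applied to condition (iv)). I expect this bookkeeping about degrees to be the only genuinely non-automatic ingredient.
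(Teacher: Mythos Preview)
Your proposal is correct and follows exactly the same approach as the paper: invoke Theorem~\ref{theorem_comm_Noeth} to obtain pro Tor-unitality, then apply the Geisser--Hesselholt pro Suslin--Wodzicki criterion. Your remark about negative degrees is more careful than the paper's own proof, which simply cites Geisser--Hesselholt's theorem without further comment on the range of $n$.
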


Now we turn to Hochschild and cyclic homology. Just as for $K$-theory in the first paragraph of the Introduction, these homology theories do not in general satisfy excision. To further justify the usefulness of pro Tor-unitality, we prove that it is sufficient to ensure that an ideal satisfy pro excision in the derived versions of these homology theories:

\begin{theorem}[Pro excision for derived $\HH$ and $\HC$ {[Thm.~\ref{theorem_pro_excision_for_HH_HC}]}]\label{theorem4}
Let $k$ be a commutative ring, $A\to B$ a homomorphism of $k$-algebras, and $I$ a pro Tor-unital ideal of $A$ mapped isomorphically to an ideal of $B$. Then the canonical maps of pro relative groups \[\{\HH^k_n(A,I^r)\}_r\To \{\HH^k_n(B,I^r)\}_r,\quad\quad \{\HC^k_n(A,I^r)\}_r\To\{\HC^k_n(B,I^r)\}_r\] are isomorphisms for all $n\ge0$.
\end{theorem}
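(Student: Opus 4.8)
The plan is to reduce pro excision in derived Hochschild and cyclic homology to a single vanishing statement about relative derived Hochschild homology of the pro-ring $\{I^r\}_r$, considered as a non-unital $k$-algebra via its $\ltimes$-construction. Recall that for a $k$-algebra $A$ and ideal $I$, the relative group $\HH^k_n(A,I)$ fits into the long exact sequence coming from the fibre sequence $\HH^k(A,I)\to\HH^k(A)\to\HH^k(A/I)$ (and similarly for $\HC$, via the SBI/Connes sequence relating the two). Since $\HC$ is obtained from $\HH$ by the standard homotopy-colimit/mixed-complex machinery, the cyclic statement follows formally from the Hochschild one by passing through the $(b,B)$-bicomplex or, more cleanly, by working with mixed complexes over $k$ and invoking that a pro-quasi-isomorphism of pro mixed complexes induces pro-isomorphisms on all associated homologies. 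So the real content is the claim for $\HH$.

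First I would set up the non-unital formalism: for a not-necessarily-unital $k$-algebra $J$, write $\HH^k(J)$ for the Hochschild complex of the unitalisation $\tilde J=k\ltimes J$ relative to $k$ (equivalently the ``bar'' complex $J\otimes_k^{\mathbb L}\cdots$, using that flatness can be circumvented by a projective resolution over $k$). The key input is Theorem \ref{theorem1}: the hypothesis that $I$ is pro Tor-unital is, by the equivalence of conditions there, exactly the statement that $\{\Tor_n^{k\ltimes I^r}(k,k)\}_r=0$ for $n>0$, i.e.\ the pro-ring $\{I^r\}_r$ is ``pro H-unital'' over $k$. The standard excision argument of Wodzicki and Suslin--Wodzicki, in its derived form as developed by Corti\~nas and Geisser--Hesselholt, shows that for an H-unital (non-unital) algebra $J$ sitting as an ideal in both $A$ and $B$, the relative Hochschild complexes $\HH^k(A,J)$ and $\HH^k(B,J)$ are both naturally quasi-isomorphic to an intrinsic complex $\HH^k(J)$ depending only on $J$ as a non-unital $k$-algebra — the point being that H-unitality makes the bar-type resolution computing the relative term collapse to something built solely from $J$. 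I would run exactly this argument levelwise, but now over the pro-system $\{I^r\}_r$: the relevant spectral sequences (filtration of the relative Hochschild complex by bar-degree) have $E^1$-terms built from tensor powers and from $\Tor^{k\ltimes I^r}_*(k,k)$, and pro Tor-unitality forces the pro-system of these $E^1$-pages to degenerate to the intrinsic one. Hence $\{\HH^k_n(A,I^r)\}_r\xrightarrow{\ \simeq\ }\{\HH^k_n(I^r)\}_r\xleftarrow{\ \simeq\ }\{\HH^k_n(B,I^r)\}_r$, which gives the asserted pro-isomorphism, and the $\HC$ case then follows by the mixed-complex argument above.

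The main obstacle I anticipate is \emph{not} the conceptual structure but the bookkeeping of pro-objects: one must check that all the quasi-isomorphisms arising in Wodzicki's excision argument survive as \emph{pro-}quasi-isomorphisms, i.e.\ that the maps between pro-systems $\{I^{r}\}_r$ that intervene (reindexings, the Artin--Rees-type maps $I^{2r}\to I^r$ used to absorb error terms in the bar filtration) are cofinal and compatible with the differentials. Concretely, the bar double complex computing $\HH^k(A,I^r)$ relative to $\HH^k(A,I^r)_{\mathrm{intrinsic}}$ does not collapse on the nose for each fixed $r$; only the pro-system collapses, and extracting this requires a careful comparison of the two filtrations and an application of the pro Tor-vanishing uniformly in homological degree. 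A secondary, more routine difficulty is the handling of flatness: $k\ltimes I^r$ need not be $k$-flat, so every tensor product must be derived, and one must ensure the derived bar construction is functorial enough in $r$ to produce honest maps of pro-complexes — this is where I would invoke a cofibrant/semi-free replacement functorially in the pro-index. Once these two points are dispatched, the theorem follows by assembling the two pro-equivalences with the intrinsic term in the middle.
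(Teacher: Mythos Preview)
Your outline is plausible and in the spirit of Corti\~nas' and Wodzicki's original arguments, but the paper proceeds quite differently, and the difference is instructive.

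You propose to pass through an \emph{intrinsic} object $\HH^k(I^r)$ built from the bar complex of the non-unital algebra $I^r$, and to compare both $\HH^k(A,I^r)$ and $\HH^k(B,I^r)$ to it, using condition (iii) of Theorem~\ref{theorem1} (pro H-unitality over $k$) to kill the obstruction. The paper never introduces this intermediate object. Instead it uses a \emph{restriction spectral sequence} (Proposition~\ref{proposition_HH_SS}, essentially Brun's spectral sequence): for any map $A\to B$ of $k$-algebras and $B$-bimodule $M$,
\[E^2_{pq}=H^k_p(B,\Tor_q^A(B,M))\Longrightarrow H^k_{p+q}(A,M).\]
Two applications suffice. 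First, with $A\to A/I^r$ and $M=A/I^r$, the spectral sequence degenerates pro-$r$ because $\{\Tor_q^A(A/I^r,A/I^r)\}_r=0$ for $q>0$ --- this is condition (i) of Theorem~\ref{theorem1}, not (iii) --- giving $\{H^k_n(A,A/I^r)\}_r\cong\{\HH^k_n(A/I^r)\}_r$. A short diagram chase then identifies the relative group $\{\HH^k_n(A,I^r)\}_r$ with the coefficient group $\{H^k_n(A,I^r)\}_r$ (Hochschild homology of $A$ with coefficients in the bimodule $I^r$). Second, with $A\to B$ and $M=I^r$, the spectral sequence degenerates pro-$r$ because $\{\Tor_q^A(B,I^r)\}_r=0$ for $q>0$, yielding $\{H^k_n(A,I^r)\}_r\cong\{H^k_n(B,I^r)\}_r$. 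Combining gives the theorem; $\HC$ follows from $\HH$ via SBI, as you say.

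What this buys: the paper's route never needs the bar complex of $I^r$ over $k$, so the flatness issue you correctly flag as the ``secondary, more routine difficulty'' simply does not arise --- all tensor products in the restriction spectral sequence are over $A$ or $B$, and the relevant Tor-vanishing is condition (i)/(ii) of Theorem~\ref{theorem1} rather than (iii). Your route should also work, but the step you label ``routine'' (deriving the bar construction functorially in $r$ and checking Wodzicki's filtration argument survives in the derived, non-flat setting) is exactly the part that is not spelled out anywhere in the literature for general $k$; the paper's approach sidesteps it entirely.
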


There are two important remarks to make now. Firstly, thanks to Theorem \ref{theorem_comm_Noeth}, the pro Tor-unitality assumption in Theorem \ref{theorem4} is automatically satisfied as soon as $A$ or $B$ is commutative and Noetherian. Secondly, the Hochschild and cyclic homology appearing in Theorem \ref{theorem4}, and elsewhere in the paper, is always defined in a derived sense, that is after replacing rings by free simplicial resolutions (see \S\ref{section_HH_and_HC} for details); if $k$ is a field, then this is the usual Hochschild and cyclic homology. 

There are two key components to the proof of Theorem \ref{theorem4}. The first is a restriction spectral sequence relating the Hochschild homologies of $A$ and $B$; this is given in Proposition \ref{proposition_HH_SS}, and is used in the ensuing corollaries to show that the obstruction to pro excision is largely measured by the pro Tor groups $\{\Tor_n^A(A/I^r,A/I^r)\}_r$ and $\{\Tor_n^B(B/I^r,B/I^r)\}_r$. The second component to the proof is Theorem \ref{theorem1}, which proves that these obstructions vanish since $I$ is pro Tor-unital.

In Section \ref{section_converse} we show moreover that the expected converse statements are true: ideals which satisfy pro excision in Hochschild homology are necessarily pro Tor-unital.

These arguments for Hochschild homology may be repeated verbatim for topological Hochschild homology to give a short proof that pro Tor-unital ideals satisfy pro excision in topological Hochschild and cyclic homology; this was proved originally by Geisser and Hesselholt \cite[Thm.~2.2]{GeisserHesselholt2006} via a lengthy argument:

\begin{theorem}[Pro excision for $\THH$ and $\TC$ {[Thm.~\ref{theorem_pro_THH}]}]\label{theorem5}
Let $A\to B$ be a homomorphism of rings, and $I$ a pro Tor-unital ideal of $A$ mapped isomorphically to an ideal of $B$. Then the canonical maps \[\{\THH_n(A,I^r)\}_r\To \{\THH_n(B,I^r)\}_r,\quad\quad\{\TC_n^m(A,I^r;p)\}_r\To \{\TC_n^m(B,I^r;p)\}_r\] of pro abelian groups are isomorphisms for all $n\ge0$, $m\ge1$, and primes $p\ge2$.
\end{theorem}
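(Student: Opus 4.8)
The plan is to transport the Hochschild-homology argument wholesale to the topological setting, exploiting the fact that $\THH$ of a ring $R$ is the realisation of the cyclic spectrum $[q]\mapsto R^{\wedge(q+1)}$ (smash powers over the sphere spectrum), so that the entire formalism of Proposition~\ref{proposition_HH_SS} and its corollaries has a verbatim analogue once $\otimes_{\bb Z}$ is replaced by $\wedge_{\bb S}$ and the abelian-group-valued Tor's are replaced by the spectral-sequence terms $\pi_*(M\wedge^{\bb L}_R N)$. First I would record the restriction spectral sequence for $\THH$: for the (non-unital) ring $I$ sitting inside both $A$ and $B$, base-change along $A\to B$ gives a conditionally convergent spectral sequence computing $\THH(B)$ from $\THH(A)$ whose $E_2$-page involves the derived smash products $B\wedge^{\bb L}_A B$, exactly as Proposition~\ref{proposition_HH_SS} does for the algebraic case; relativising in the ideal $I^r$, the obstruction to the map $\{\THH_n(A,I^r)\}_r\to\{\THH_n(B,I^r)\}_r$ being a pro-isomorphism is controlled by the pro groups $\{\pi_n(A/I^r\wedge^{\bb L}_A A/I^r)\}_r$ and the analogous groups over $B$. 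The key point is that for the Eilenberg--MacLane type truncations appearing here these pro homotopy groups are computed by the ordinary pro Tor groups $\{\Tor^A_n(A/I^r,A/I^r)\}_r$ and $\{\Tor^B_n(B/I^r,B/I^r)\}_r$, which vanish by Theorem~\ref{theorem1} since $I$ is pro Tor-unital; hence the $\THH$ statement follows by the same diagram chase as in Theorem~\ref{theorem_pro_excision_for_HH_HC}.

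Next I would upgrade from $\THH$ to $\TR^m$ and then to $\TC^m_n(-;p)$. The passage is formal: $\TR^m(-;p)$ is built from the fixed points of $\THH$ under the finite cyclic $p$-groups, assembled via the iterated homotopy equalizer/fibre sequences relating $\THH^{C_{p^{i}}}$, and $\TC^m_n(-;p)$ is then the homotopy equaliser of the restriction and Frobenius maps at level $m$. Since pro-isomorphisms of pro spectra are preserved by (finite) homotopy limits, by the formation of fixed points under a finite group, and by finite towers of fibre sequences, and since all of these operations commute with the relativisation in $I^r$, a pro-equivalence $\{\THH(A,I^r)\}_r\xrightarrow{\;\sim\;}\{\THH(B,I^r)\}_r$ propagates first to $\{\TR^m(A,I^r;p)\}_r\to\{\TR^m(B,I^r;p)\}_r$ and then to $\{\TC^m(A,I^r;p)\}_r\to\{\TC^m(B,I^r;p)\}_r$, giving isomorphisms on all homotopy groups $\TC^m_n$. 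One must only be mildly careful about connectivity/boundedness so that the relevant spectral sequences and Milnor sequences behave, but since everything is relative to a nilpotent-in-the-pro-sense ideal and we work one degree at a time, this is routine.

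The main obstacle, and the place where genuine (rather than formal) input is needed, is establishing the $\THH$ restriction spectral sequence of Proposition-type and, more precisely, identifying its relevant input pro groups with the algebraic pro Tor groups that Theorem~\ref{theorem1} controls. In the algebraic case this rests on the bar-type resolution of $A/I^r$ over $A$ and the base-change formula for Hochschild homology; in the topological case one needs the analogous flat base-change for $\THH$ along $A\to B$ together with the observation that the derived smash product $A/I^r\wedge^{\bb L}_A A/I^r$, although a priori a spectrum with possibly non-trivial higher homotopy built from all the $\Tor^A_*$ and their interactions, contributes to the obstruction only through groups that are pro-zero when all $\{\Tor^A_n(A/I^r,A/I^r)\}_r$ vanish --- i.e. one needs a pro version of the statement that a connective $R$-algebra map inducing a pro-Tor-isomorphism induces a pro-$\THH$-isomorphism relative to the ideal. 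Once this identification is in place, everything else is bookkeeping, and the proof is genuinely short as promised, in contrast to the original argument of Geisser--Hesselholt \cite[Thm.~2.2]{GeisserHesselholt2006}.
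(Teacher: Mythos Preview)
Your approach is essentially the same as the paper's: use the $\THH$ analogue of the restriction spectral sequence of Proposition~\ref{proposition_HH_SS}, so that the proofs of Corollaries~\ref{Corollary_HH_1}, \ref{Corollary_HH_2} and Theorem~\ref{theorem_pro_excision_for_HH_HC} carry over verbatim, and then pass formally to $\TR^m$ and $\TC^m$. Two points worth sharpening. First, what you call the ``main obstacle'' --- the $\THH$ restriction spectral sequence with $E^2_{pq}=\THH_p(B,\Tor_q^A(B,M))\Rightarrow\THH_{p+q}(A,M)$ --- is already a theorem of Brun \cite[Thm.~6.2.10]{Brun2000}; the paper simply cites it, so no new work is needed here, and the internal terms are literally the algebraic $\Tor$ groups (no ``Eilenberg--MacLane truncation'' argument is required). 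Second, your description of the passage to $\TR^m$ via ``formation of fixed points under a finite group'' is not quite right: genuine fixed points do not in general preserve weak equivalences, and the actual inductive step uses the fundamental cofibration $\THH(-)_{hC_{p^{m-1}}}\to\TR^m(-;p)\to\TR^{m-1}(-;p)$, so that the pro-equivalence on $\THH$ propagates through \emph{homotopy orbits} and then up the tower by induction on $m$; this is the ``standard inductive argument'' the paper refers to, spelled out between Theorems~2.1 and~2.2 of \cite{GeisserHesselholt2006}.
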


In Section \ref{subsection_HP} we turn our attention to periodic cyclic homology. In stark contrast to algebraic $K$-theory, Hochschild, and cyclic homology, it was proved by Cuntz and Quillen \cite{CuntzQuillen1997} that periodic cyclic homology $\HP^k$ does satisfy excision whenever the base ring $k$ is a characteristic zero field. We extend this celebrated result, using derived periodic cyclic homology, to arbitrary commutative base rings of characteristic zero:

\begin{theorem}[{Excision for $\HP^k$ [Thm.~\ref{theorem_Cuntz-Quillen}]}]\label{theorem_intro_CQ}
Let $k$ be a commutative $\bb Q$-algebra. Then derived period cyclic homology over $k$ satisfies excision; i.e., if $A\to B$ is a homomorphism of $k$-algebras, and $I$ is an ideal of $A$ mapped isomorphically to an ideal of $B$, then the canonical map \[\HP_n^k(A,I)\To\HP_n^k(B,I)\] is an isomorphism for all $n\in\bb Z$.
\end{theorem}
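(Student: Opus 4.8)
The plan is to follow the strategy of Cuntz and Quillen, but to replace their use of $H$-unitality and Wodzicki's excision machinery with the pro-techniques developed earlier in the paper, thereby freeing the argument from the hypothesis that $k$ is a field. First I would recall that derived periodic cyclic homology $\HP^k$ is obtained from derived cyclic homology $\HC^k$ by the usual $2$-periodicity procedure, namely by taking a homotopy limit over the $S$-maps, so that there is a Milnor exact sequence relating $\HP^k_*(A,I)$ to the groups $\HC^k_{*+2i}(A,I)$. Since for a fixed $n$ the relative groups $\HC^k_n(A,I^r)$ form a pro-system, the key point is that $\HP^k$ of the non-unital algebra $I$ should be expressible, up to the ambiguity recorded by the Milnor sequence, in terms of the pro-system $\{\HC^k_*(A,I^r)\}_r$; this is where the reduction from $\HP$-excision to pro-$\HC$-excision takes place.

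Next I would carry out the main reduction in two steps. Step one: by Theorem \ref{theorem_comm_Noeth} there is no shortcut through Noetherianness here, so instead I would invoke the converse direction established in Section \ref{section_converse} together with the following observation peculiar to characteristic zero. Over a $\bb Q$-algebra $k$, for \emph{any} ideal $I$ the pro-system $\{\HC^k_n(A,I^r)\}_r$ and the actual group $\HC^k_n(A,I)$ differ only by the ``derived completion'' contribution; but after passing to $\HP$, which kills the relevant lim${}^1$ and divisibility phenomena because of the characteristic zero hypothesis, these differences disappear. Concretely I would show that the natural map
\[
\HP_n^k(A,I)\To \holim_r \HP_n^k(A,I^r)
\]
is an isomorphism, using that over $\bb Q$ the Goodwillie-type comparison between $\HP$ and its pro-nilpotent-completed version is an equivalence (this is exactly the content that makes Cuntz--Quillen's theorem true). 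Step two: apply Theorem \ref{theorem4}. By Theorem \ref{theorem_comm_Noeth} we do not need it, but more to the point, \emph{every} ideal $I$ of \emph{any} $k$-algebra becomes pro Tor-unital after base change is irrelevant; what we actually need is that the \emph{pro} relative cyclic homology groups agree, and for that Theorem \ref{theorem4} requires pro Tor-unitality of $I$. So the honest route is: reduce $\HP^k$-excision to the statement that $\{\HC_n^k(A,I^r)\}_r\to\{\HC_n^k(B,I^r)\}_r$ is a pro-isomorphism, and then observe that this pro-isomorphism holds \emph{for all ideals $I$} over a $\bb Q$-algebra, because the obstruction groups $\{\Tor_n^A(A/I^r,A/I^r)\}_r$, while possibly nonzero, contribute only torsion/lim${}^1$ terms that vanish rationally after the periodicity limit.

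Assembling these, the proof runs: (1) identify $\HP_n^k(A,I)$ with $\holim_r \HP_n^k(A,I^r)$ via the Milnor sequence and the characteristic-zero vanishing of the relevant $\lim^1$; (2) write each $\HP_n^k(A,I^r)$ in terms of the pro-system $\{\HC_m^k(A,I^s)\}_{s}$; (3) use the pro-excision isomorphism $\{\HC_m^k(A,I^r)\}_r\cong\{\HC_m^k(B,I^r)\}_r$ — which is Theorem \ref{theorem4} when $I$ is pro Tor-unital, and which I would show holds unconditionally over $\bb Q$ after inverting the periodicity operator — to transport everything from $A$ to $B$; (4) run (1) and (2) in reverse for $B$ to conclude $\HP_n^k(A,I)\cong\HP_n^k(B,I)$. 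The naturality of all maps involved is routine from the simplicial-resolution definitions in \S\ref{section_HH_and_HC}.

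The main obstacle I anticipate is Step (1): proving that passing from the ideal $I$ to the pro-ideal $\{I^r\}_r$ does not change $\HP^k$. Over a characteristic zero field this is essentially Goodwillie's theorem that $\HP$ is invariant under nilpotent extensions, combined with the fact that $\HP$ of a pro-nilpotent algebra is computed by the limit; but over a general $\bb Q$-algebra $k$ one must be careful that the derived cyclic homology is formed correctly (via free simplicial resolutions), that the $\lim^1$-terms in the Milnor sequence for the periodicity homotopy limit genuinely vanish rationally, and that the comparison map is compatible with the restriction maps $A\to B$. I expect this to require a careful analysis of the mixed complex computing derived $\HC^k$ and an argument that the $b+uB$ filtration degenerates appropriately rationally — precisely the place where the Cuntz--Quillen characteristic-zero input is indispensable and where the bulk of the work in Section \ref{subsection_HP} will lie.
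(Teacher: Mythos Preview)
Your outline correctly identifies two of the three ingredients the paper uses --- Goodwillie's nilpotence invariance of $\HP$ over $\bb Q$, and the pro excision Theorem~\ref{theorem4} for $\HC$ --- but the third, decisive, ingredient is missing, and without it the argument does not close.

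The gap is in your Step~(3). You propose to show that $\{\HC_m^k(A,I^r)\}_r\cong\{\HC_m^k(B,I^r)\}_r$ holds \emph{for all ideals $I$} over a $\bb Q$-algebra, arguing that the obstruction groups $\{\Tor_n^A(A/I^r,A/I^r)\}_r$ ``contribute only torsion/$\varprojlim^1$ terms that vanish rationally.'' This is false: these Tor groups are $k$-modules, hence already $\bb Q$-vector spaces, so they carry no torsion to be killed; and Section~\ref{section_converse} (which you cite, but in the wrong direction) shows precisely that when they fail to vanish, pro $\HC$-excision genuinely fails. There is no characteristic-zero miracle that makes an arbitrary ideal behave as though it were pro Tor-unital at the level of $\HC$.

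What the paper does instead is supply a source of ideals that \emph{are} pro Tor-unital: Proposition~\ref{proposition_pro_Tor_unital_in_free} shows that if $P$ is a free $k$-algebra and $I\subseteq P$ is an ideal with $P/I$ projective over $k$, then $I$ is pro Tor-unital. The proof of Theorem~\ref{theorem_Cuntz-Quillen} then works entirely at the level of a free simplicial resolution $P_\blob\to A$, $Q_\blob\to A/I$, with $I_\blob:=\ker(P_\blob\to Q_\blob)$. In each simplicial degree $p$, the ideal $I_p\subseteq P_p$ satisfies the hypotheses of Proposition~\ref{proposition_pro_Tor_unital_in_free}, so Theorem~\ref{theorem4} gives $\{\HC_n(k\ltimes I_p,P_p,I_p^r)\}_r=0$; combining this with Goodwillie's theorem on the nilpotent quotients $I_p/I_p^r$ shows that the periodicity map $S^N$ eventually vanishes on the birelative $\HC$ groups, whence the birelative $\HP$ vanishes degree-wise and hence after totalisation. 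Your Step~(1) is close in spirit to this last manoeuvre, but the reduction to free algebras via Proposition~\ref{proposition_pro_Tor_unital_in_free} is the idea you are missing.
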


The remaining results of the paper focus on commutative algebras and related Artin--Rees and continuity properties of Hochschild, cyclic, and Andr\'e--Quillen homology. To put these results in context, we first recall the Artin--Rees (aka.~Nilpotent Extension) theorem which can be found in the original works of Andr\'e \cite[Prop.~X.12]{Andre1974} and Quillen \cite[Thm.~6.15]{Quillen1970} on the cohomology of commutative rings: it states that if $A$ is a commutative, Noetherian ring and $I\subseteq A$ is an ideal, then the pro $A$-module $\{D_n(A/I^r|A)\}_r$, of Andr\'e--Quillen homologies for $A/I^r$ with respect to $A$, vanishes for all $n\ge0$. Assuming $A$ is a $k$-algebra for some commutative ring $k$, one can then apply the Jacobi--Zariski sequence to deduce a continuity isomorphism $\{D_n(A|k,A/I^r)\}_r\cong\{D_n(A/I^r|k)\}_r$. Some generalisations of these Artin--Rees and continuity results, in the special case of certain finite type algebras over characteristic zero fields, have been recently given by A.~Krishna \cite{Krishna2010}.

We present substantial generalisations of such results, for both Andr\'e--Quillen and Hochschild homology, in the axiomatic context of pro Tor-unital ideals; this is made possible by Theorem \ref{theorem1}, while Theorem \ref{theorem_comm_Noeth} ensures that we recover all earlier special cases. The following are our main such results in the case of Hochschild and cyclic homology:

\begin{theorem}[Artin--Rees \& continuity properties of $\HH$ and $\HC$ {[\S\ref{section_HH_and_HC}]}]\label{theorem_AR_intro}
Let $k\to A$ be a homomorphism of commutative rings, and $I$ a pro Tor-unital ideal of $A$. Then:
\begin{enumerate}
\item $\{\HH_n^A(A/I^r)\}=0$ for all $n>0$.
\item $\{\HC_n^A(A/I^r)\}=0$ for all odd $n>0$, and $is \cong\{A/I^r\}_r$ for all even $n\ge0$.
\item The canonical map $\{H_n^k(A,A/I^r)\}_r\to \{\HH_n^k(A/I^r)\}_r$ is an isomorphism for all $n\ge0$.
\end{enumerate}
\end{theorem}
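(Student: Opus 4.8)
The plan is to reduce all three parts to Theorem~\ref{theorem1}, by means of three classical devices which we run throughout in the abelian category $\mathrm{Pro}(\mathrm{Ab})$ of pro-abelian groups: the skeletal spectral sequence of the cyclic nerve (for (i)), Connes' $SBI$-sequence (for (ii)), and the restriction spectral sequence of Proposition~\ref{proposition_HH_SS} (for (iii)). Since $I$ is pro Tor-unital, Theorem~\ref{theorem1} applied to $\mathrm{id}\colon A\to A$ yields $\{\Tor_n^A(A/I^r,A/I^r)\}_r=0$ for all $n>0$, and this is the only consequence of the hypothesis we shall use. The crucial preliminary is that, for every $m\ge1$, the pro-graded ring $\{\pi_*\big((A/I^r)^{\otimes^{\bb L}_A m}\big)\}_r$ ($m$-fold derived tensor power over $A$) is concentrated in degree $0$, where it equals $\{A/I^r\}_r$. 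One proves this by induction on $m$ (trivial for $m=1$): writing $(A/I^r)^{\otimes^{\bb L}_A(m+1)}\simeq(A/I^r)^{\otimes^{\bb L}_A m}\otimes^{\bb L}_A(A/I^r)$ and using the hyper-$\Tor$ spectral sequence
\[
E^2_{a,b}(r)=\Tor_a^A\!\big(\pi_b\big((A/I^r)^{\otimes^{\bb L}_A m}\big),\,A/I^r\big)\ \Longrightarrow\ \pi_{a+b}\big((A/I^r)^{\otimes^{\bb L}_A(m+1)}\big),
\]
the inductive hypothesis makes $\{\pi_b(-)\}_r=0$ for $b>0$; as $\Tor_a^A(-,A/I^r)$ is an additive functor of the \emph{fixed} ring $A$, it carries pro-zero systems to pro-zero systems, so $\{E^2_{a,b}(r)\}_r=0$ for $b>0$. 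In the row $b=0$ one has $\pi_0\big((A/I^r)^{\otimes^{\bb L}_A m}\big)=(A/I^r)^{\otimes_A m}=A/I^r$ and $E^2_{a,0}(r)=\Tor_a^A(A/I^r,A/I^r)$, which is pro-zero for $a>0$ and $A/I^r$ for $a=0$. The spectral sequence is first quadrant, so its pages stabilise after finitely many steps, with a bound depending only on the total degree and not on $r$; hence pro-vanishing off the corner $(0,0)$ propagates to $E^\infty$ and to the abutment, proving the claim.

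For part (i), $\HH^A(A/I^r)$ is computed by the cyclic nerve $[p]\mapsto(A/I^r)^{\otimes^{\bb L}_A(p+1)}$, whose skeletal spectral sequence
\[
E^1_{p,q}(r)=\pi_q\big((A/I^r)^{\otimes^{\bb L}_A(p+1)}\big)\ \Longrightarrow\ \HH^A_{p+q}(A/I^r)
\]
is functorial in $r$. By the claim $\{E^1_{p,q}(r)\}_r=0$ for $q>0$, while the row $q=0$ is the constant simplicial module $A/I^r$ equipped with the alternating sum of its (identity) face maps, whose homology is $A/I^r$ in degree $0$ and $0$ above. Thus the pro-$E^2$-page is concentrated at $(0,0)$, whence $\{\HH^A_n(A/I^r)\}_r=0$ for $n>0$ and $\{\HH^A_0(A/I^r)\}_r=\{A/I^r\}_r$. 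This is part (i) together with the case $n=0$ of part (ii).

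Part (ii) then follows by feeding this into Connes' exact sequence
\[
\cdots\to\HH^A_n(A/I^r)\to\HC^A_n(A/I^r)\xrightarrow{\ S\ }\HC^A_{n-2}(A/I^r)\to\HH^A_{n-1}(A/I^r)\to\cdots,
\]
viewed as a long exact sequence in $\mathrm{Pro}(\mathrm{Ab})$: since $\{\HH^A_n(A/I^r)\}_r=0$ for $n>0$ and $\HC_0=\HH_0$, a downward induction gives $\{\HC^A_{2i+1}(A/I^r)\}_r=0$ and $S\colon\{\HC^A_{2i}(A/I^r)\}_r\xrightarrow{\ \sim\ }\{\HC^A_{2i-2}(A/I^r)\}_r$ for $i\ge1$, so the even groups are all $\cong\{A/I^r\}_r$. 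For part (iii) we apply the restriction spectral sequence of Proposition~\ref{proposition_HH_SS} to the tower $k\to A\to A/I^r$: it is first-quadrant, abuts to $\HH^k_*(A/I^r)$, its $E^2$-page is assembled from $\HH^k_*(A)$ and the $A$-modules $\HH^A_t(A/I^r)$ by functors additive in the latter, the summands with $t=0$ contribute $\{H^k_*(A,A/I^r)\}_r$, and the associated edge map to the abutment is the canonical comparison $H^k_n(A,A/I^r)\to\HH^k_n(A/I^r)$. By part (i) the pro-systems $\{\HH^A_t(A/I^r)\}_r$ vanish for $t>0$, hence (by additivity) so do the corresponding parts of the pro-$E^2$-page; the spectral sequence therefore pro-degenerates, identifying the edge map as the asserted isomorphism $\{H^k_n(A,A/I^r)\}_r\cong\{\HH^k_n(A/I^r)\}_r$.

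The main obstacle is the pro-categorical bookkeeping underlying all of this: one must verify that every spectral sequence used is natural in $r$, first-quadrant with a vanishing line independent of $r$, and convergent, so that ``pro-vanishing of $E^2$ off an axis or corner'' really does force the corresponding conclusion about the abutment in $\mathrm{Pro}(\mathrm{Ab})$; with this in hand, Theorem~\ref{theorem1} is invoked only once, in the inductive computation of $\{\pi_*((A/I^r)^{\otimes^{\bb L}_A m})\}_r$. It is worth stressing that one should \emph{not} try to prove (i) by applying $\HH^A(-)$ directly to the pro-equivalence $\{A/I^r\otimes^{\bb L}_A A/I^r\}_r\to\{A/I^r\}_r$ of Theorem~\ref{theorem1}: that would amount to base change along a pro-equivalence of \emph{varying} base rings, which is delicate, whereas the cyclic-nerve argument only ever tensors over the fixed ring $A$, where additivity of $\Tor^A_*(-,A/I^r)$ makes the pro-vanishing immediate.
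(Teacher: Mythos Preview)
Your argument for (i) via the skeletal spectral sequence of the cyclic nerve is correct, and (ii) follows from (i) and the SBI sequence exactly as you say. However, your treatment of (iii) misreads Proposition~\ref{proposition_HH_SS}. Applied to the homomorphism $A\to A/I^r$ with bimodule $M=A/I^r$, that proposition yields
\[
E^2_{pq}=H_p^k\bigl(A/I^r,\Tor_q^A(A/I^r,A/I^r)\bigr)\ \Longrightarrow\ H_{p+q}^k(A,A/I^r),
\]
so the \emph{abutment} is $H_*^k(A,A/I^r)$ and the $E^2$-page is built from the groups $\Tor_q^A(A/I^r,A/I^r)$, not from $\HH^k_*(A)$ and $\HH^A_t(A/I^r)$ as you write. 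There is no ``tower'' input $k\to A\to A/I^r$ to this proposition; it takes only a single map of $k$-algebras. Pro Tor-unitality then kills the rows $q>0$ directly via Theorem~\ref{theorem1}, the surviving row $q=0$ is $\HH_p^k(A/I^r)$, and the edge map from the abutment to this row is precisely the canonical map of (iii). No appeal to part (i) is needed.

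This reveals that the paper's route is the reverse of yours and considerably shorter: one first proves (iii) exactly as above (this is Corollary~\ref{Corollary_HH_1}), and then (i) is the special case $k=A$, since $H_n^A(A,M)=0$ for all $n>0$ and any $A$-module $M$; see Theorem~\ref{theorem_AR_for_HH}. Your inductive computation of $\{\pi_*((A/I^r)^{\otimes^{\bb L}_A m})\}_r$, while correct and a nice illustration of how pro Tor-unitality propagates through iterated derived tensor products, is thus entirely avoidable: the restriction spectral sequence already collapses from the single vanishing $\{\Tor_q^A(A/I^r,A/I^r)\}_r=0$, and the whole theorem falls out in two lines.
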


From Theorem \ref{theorem_AR_intro}(iii) we obtain a strong form of the pro Hochschild--Kostant--Rosenberg theorem. A version of this result for finite type algebras over fields can be found in \cite[Thm.~3.2]{Cortinas2009}, but for recent applications to the formal deformation of algebraic cycles \cite{BlochEsnaultKerz2013, Morrow_Deformational_Hodge} the following strong version is required:

\begin{theorem}[Pro HKR theorem {[Thm.~\ref{theorem_pro_HKR}]}]\label{theorem_intro_HKR}
Let $k\to A$ be a geometrically regular morphism of commutative, Noetherian rings, and $I$ an ideal of $A$. Then the antisymmetrisation map \[\{\Omega_{A/I^r|k}^n\}_r\To\{\HH_n^k(A/I^r)\}_r\] is an isomorphism of pro $A$-modules for all $n\ge0$.
\end{theorem}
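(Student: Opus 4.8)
The plan is to transport the problem from the rings $A/I^r$, which are typically singular, to the geometrically regular ring $A$, where the Hochschild--Kostant--Rosenberg isomorphism and its base changes are available, bridging the two with the pro excision results already established. First, since $A$ is commutative and Noetherian, $I$ is pro Tor-unital by Theorem~\ref{theorem_comm_Noeth}; hence Theorem~\ref{theorem_AR_intro}(iii) applies and identifies $\{\HH_n^k(A/I^r)\}_r$ with $\{H_n^k(A,A/I^r)\}_r$, the pro system of Hochschild homologies of the fixed ring $A$ over $k$ with coefficients in $A/I^r$, compatibly with the antisymmetrisation maps.

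Next I would compute these groups. As $k\to A$ is geometrically regular, Popescu's theorem presents $A$ as a filtered colimit of smooth $k$-algebras, so $L_{A/k}\simeq\Omega^1_{A/k}[0]$ with $\Omega^1_{A/k}$ a flat $A$-module. The HKR (equivalently, Postnikov) filtration of the derived complex $\HH^k(A)$ then has graded pieces $\Omega^n_{A/k}[n]$ lying in pairwise distinct homological degrees, so it splits: $\HH^k(A)\simeq\bigoplus_{n\ge0}\Omega^n_{A/k}[n]$ is a formal complex of flat $A$-modules, and the antisymmetrisation map $\Omega^n_{A/k}\to\HH_n^k(A)$ is an isomorphism. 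Feeding this into the universal coefficient equivalence $\HH^k(A;M)\simeq\HH^k(A)\otimes_A^{\bb L}M$ and using flatness yields $H_n^k(A,A/I^r)\cong\Omega^n_{A/k}\otimes_A A/I^r$, again via antisymmetrisation.

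It then remains to produce a pro isomorphism $\{\Omega^n_{A/k}\otimes_A A/I^r\}_r\cong\{\Omega^n_{A/I^r|k}\}_r$ compatible with these maps. For $n=1$ this is the classical continuity of K\"ahler differentials: Andr\'e's Artin--Rees theorem \cite{Andre1974} gives $\{D_j(A/I^r|A)\}_r=0$ for all $j$, and the Jacobi--Zariski sequence for $k\to A\to A/I^r$ in low degrees then forces $\{\Omega^1_{A/k}\otimes_A A/I^r\}_r\isoto\{\Omega^1_{A/I^r|k}\}_r$, a map which is moreover surjective at every finite level. For general $n$ one applies $\wedge^n_{A/I^r}$: exterior powers commute with base change, identifying the two sides with $\Omega^n_{A/k}\otimes_A A/I^r$ and $\Omega^n_{A/I^r|k}$; and $\wedge^n$ preserves pro isomorphisms between pro modules joined by a levelwise surjection, since the cokernel remains zero while the kernel of $\wedge^n M\to\wedge^n(M/K)$ is a quotient of $K\otimes\wedge^{n-1}M$, hence pro zero once $\{K_r\}_r$ is. Concatenating $\{\Omega^n_{A/I^r|k}\}_r\cong\{\Omega^n_{A/k}\otimes_A A/I^r\}_r\cong\{H_n^k(A,A/I^r)\}_r\cong\{\HH_n^k(A/I^r)\}_r$ and verifying that the composite is the antisymmetrisation map finishes the proof.

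I expect the main difficulty to be bookkeeping rather than a single hard idea: one must check that each of these three isomorphisms is compatible with the HKR antisymmetrisation maps, which amounts to recording the naturality of HKR with respect to the surjection $A\to A/I^r$ and with respect to base change of coefficients, and one must make sure that ``geometrically regular'' is precisely the hypothesis that renders $\HH^k(A)$ formal with flat terms in the derived sense used throughout the paper. All the genuinely new input is supplied by Theorems~\ref{theorem_comm_Noeth} and \ref{theorem_AR_intro}; the remainder is the pro HKR theorem for the regular ring $A$ combined with the classical Artin--Rees continuity of differential forms.
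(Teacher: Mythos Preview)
Your proposal is correct and follows the same three-step outline as the paper: (1) pro Tor-unitality of $I$ via the Noetherian hypothesis, (2) the continuity isomorphism $\{H_n^k(A,A/I^r)\}_r\isoto\{\HH_n^k(A/I^r)\}_r$, and (3) classical HKR for the geometrically regular ring $A$ to identify $H_n^k(A,A/I^r)\cong\Omega^n_{A|k}\otimes_AA/I^r$.

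The only difference is in the final step, the pro isomorphism $\{\Omega^n_{A|k}\otimes_AA/I^r\}_r\cong\{\Omega^n_{A/I^r|k}\}_r$. You invoke Andr\'e's Artin--Rees vanishing of $\{D_j(A/I^r|A)\}_r$ together with the Jacobi--Zariski sequence for $n=1$, and then argue that $\bigwedge^n$ preserves pro isomorphisms along levelwise surjections. The paper instead gives a one-line elementary argument: the natural surjection $\Omega^n_{A|k}\otimes_AA/I^r\to\Omega^n_{A/I^r|k}$ has kernel generated by $dI^r\wedge\Omega^{n-1}_{A|k}$, and the Leibniz rule gives $d(I^{2r})\subseteq I^r\Omega^1_{A|k}$, so the transition map from level $2r$ to level $r$ kills the kernel. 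Your route is valid but imports more machinery than necessary; the paper's observation handles all $n$ at once without Andr\'e--Quillen homology or any analysis of exterior powers of pro modules.
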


From Theorem \ref{theorem_intro_HKR} we then obtain a generalisation of the theorem of B.~Fe\u\i gin and B.~Tsygan \cite{FeiginTsygan1985} stating that $\HP^k_n(A)$ is isomorphic to crystalline cohomology (aka.~Hartshorne's algebraic de Rham cohomology) whenever $A$ is a finite type algebra over a characteristic zero field $k$. We extend this to general base rings $k$ and moreover eliminate the finite type hypothesis on $A$ (see Thm.~\ref{theorem_Feigin-Tsygan}).

Section \ref{section_AR_for_AQ} is devoted to Andr\'e--Quillen homology, where we in particular prove analogues of Theorem \ref{theorem_AR_intro} (see Corol.~\ref{corollary_AR_properties_in_AQ_homology} and Lem.~\ref{lemma_excision_1}). With such continuity properties established, we then show that pro Tor-unital ideals also satisfy pro excision in Andr\'e--Quillen homology, under a mild hypothesis of ``smallness'' which is always satisfied in the Noetherian case (see Def.~\ref{definition_small}):

\begin{theorem}[Pro excision for A.--Q.~homology {[Thm.~\ref{theorem_pro_excision}]}]\label{theorem3}
Let $k\to A\to B$ be homomorphisms of commutative rings, and $I$ a small, pro Tor-unital ideal of $A$ mapped isomorphically to an ideal of $B$. Then the following square of simplicial pro $A$-modules is homotopy cartesian:
\[\xymatrix{
\bb L_{A|k}^i\ar[r]\ar[d]&\bb L_{B|k}^i\ar[d]\\
\{\bb L_{A/I^r|k}^i\}_r\ar[r]&\{\bb L_{B/I^r|k}^i\}_r\\
}\]
where $\bb L^i_{-|-}$ denotes the $i^\sub{th}$ exterior power of the cotangent complex of a homomorphism of commutative rings.
\end{theorem}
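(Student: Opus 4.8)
The plan is to verify that the square is homotopy cartesian by computing the map it induces on \emph{vertical} homotopy fibres. In the relevant homotopy theory of pro-objects the square is homotopy cartesian if and only if its total homotopy fibre has vanishing pro-homotopy groups, equivalently if and only if the natural map of pro-objects
\[\big\{\op{fib}\big(\bb L^i_{A|k}\to\bb L^i_{A/I^r|k}\big)\big\}_r \To \big\{\op{fib}\big(\bb L^i_{B|k}\to\bb L^i_{B/I^r|k}\big)\big\}_r\]
induces an isomorphism on all homotopy pro-groups. I would first record that $(A,B,A/I,B/I)$ is a Milnor square, i.e.\ $A=B\times_{B/I}A/I$ — this follows formally from the hypothesis that $I$ is carried isomorphically onto an ideal of $B$ — and in particular that the cokernel $B/A$ is a quotient of $B/I$, hence annihilated by $I$.

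Next I would identify each vertical fibre. Factoring the canonical map as $\bb L^i_{A|k}\to A/I^r\otimes^L_A\bb L^i_{A|k}\to\bb L^i_{A/I^r|k}$, the fibre of the first arrow is $I^r\otimes^L_A\bb L^i_{A|k}$ (tensor $\bb L^i_{A|k}$ with $\op{fib}(A\to A/I^r)=I^r$), while the second arrow is the bottom step of Illusie's d\'ecalage filtration of $\bb L^i_{A/I^r|k}$ attached to the transitivity cofibre sequence $A/I^r\otimes^L_A\bb L_{A|k}\to\bb L_{A/I^r|k}\to\bb L_{A/I^r|A}$; its remaining graded pieces are derived tensor products involving the exterior powers $\bb L^j_{A/I^r|A}$ with $j\ge1$. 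By the Artin--Rees vanishing in Andr\'e--Quillen homology established earlier in the section, namely $\{\bb L^j_{A/I^r|A}\}_r=0$ for all $j\ge1$ (Corol.~\ref{corollary_AR_properties_in_AQ_homology}; this is where $I$ pro Tor-unital and small is used), those graded pieces are pro-zero, so the second arrow is a pro-isomorphism. Hence $\{\op{fib}(\bb L^i_{A|k}\to\bb L^i_{A/I^r|k})\}_r\cong\{I^r\otimes^L_A\bb L^i_{A|k}\}_r$, and identically with $B$ in place of $A$. The problem is thereby reduced to showing that the natural map $\{I^r\otimes^L_A\bb L^i_{A|k}\}_r\to\{I^r\otimes^L_B\bb L^i_{B|k}\}_r$ is a pro-isomorphism.

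For this I would apply $\bb L^i(-)$ and then $I^r\otimes^L_B(-)$ to the transitivity cofibre sequence $B\otimes^L_A\bb L_{A|k}\to\bb L_{B|k}\to\bb L_{B|A}$; using $I^r\otimes^L_B(B\otimes^L_A-)\simeq I^r\otimes^L_A(-)$ (associativity of derived tensor product and the identification $I^r_B\cong I^r_A$ of $A$-modules) this yields a finite filtration of $I^r\otimes^L_B\bb L^i_{B|k}$ whose bottom graded piece is $I^r\otimes^L_A\bb L^i_{A|k}$ and whose higher graded pieces have the form $(B\otimes^L_A\bb L^p_{A|k})\otimes^L_B\big(I^r\otimes^L_B\bb L^{i-p}_{B|A}\big)$ with $i-p\ge1$. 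So everything comes down to the key pro-vanishing $\{I^r\otimes^L_B\bb L^j_{B|A}\}_r=0$ for all $j\ge1$. Here the Milnor-square structure enters: since $B/A$ is $I$-torsion, a transitivity/base-change argument shows that the homology of $\bb L_{B|A}$ — and hence of each exterior power $\bb L^j_{B|A}$ — is annihilated by $I$; for a $B$-module $N$ with $IN=0$ one has $H_*(I^r\otimes^L_B N)=\op{Tor}^B_*(I^r,N)$, and the transition maps of the pro-system $\{\op{Tor}^B_*(I^r,N)\}_r$, induced by the inclusions $I^{r+1}\hookrightarrow I^r$, are killed in the pro-category by the pro-$\op{Tor}$ vanishing of Theorem~\ref{theorem1} applied to the pro Tor-unital ideal $I$; this is essentially the content of the preliminary excision lemma (Lem.~\ref{lemma_excision_1}), and it is the smallness hypothesis that makes it run outside the Noetherian setting.

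The substance of the argument is concentrated in the last two steps. Passing from the cotangent complex itself ($i=1$) to its exterior powers forces everything through Illusie's d\'ecalage filtrations, and one must check carefully that pro-vanishing is inherited by graded pieces, by derived tensor products with pro-zero objects, and by extensions in the abelian pro-category. The genuinely delicate point, however, is the pro-vanishing $\{I^r\otimes^L_B\bb L^j_{B|A}\}_r=0$: controlling the exterior powers $\bb L^j_{B|A}$ together with the pro-$\op{Tor}$ groups $\{\op{Tor}^B_*(I^r,-)\}_r$ of $I$-torsion modules without a Noetherian hypothesis is exactly what the smallness condition is for, and is the reason Theorem~\ref{theorem1} is indispensable here.
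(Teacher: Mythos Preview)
Your approach is essentially the paper's: identify the vertical fibres via Corollary~\ref{corollary_AR_properties_in_AQ_homology}, then compare them using the transitivity filtration for $A\to B$. Your ``Illusie d\'ecalage'' filtration is exactly the Kassel--Sletsj{\o}e spectral sequence (Proposition~\ref{proposition_JZSS}) which the paper uses in Lemma~\ref{lemma_excision_1}(iii), and your final vanishing $\{I^r\otimes_B^L\bb L^j_{B|A}\}_r=0$ is Lemma~\ref{lemma_excision_1}(ii).

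Two imprecisions to correct. First, smallness is \emph{not} used in Corollary~\ref{corollary_AR_properties_in_AQ_homology}; that result needs only pro Tor-unitality. Smallness enters only when you try to control $\bb L^j_{B|A}$, i.e.\ in the analogue of Lemma~\ref{lemma_AQ_vanishing_in_excision}(ii). Second, and more substantively, your sentence ``the homology of $\bb L_{B|A}$ --- and hence of each exterior power $\bb L^j_{B|A}$ --- is annihilated by $I$'' is not correct as stated. What is true (and what the paper proves) is that each $D_n^j(B|A)$ is annihilated by some \emph{power} of $I$; the passage from $j=1$ to general $j$ via ``hence'' is not formal and is exactly where smallness is required, through the homological algebra of Proposition~\ref{proposition_homological_factoring_ideal}. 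Once you have annihilation by a power of $I$, your Tor argument (filtering by powers of $I$ and applying Corollary~\ref{corollary_most_useful_pro_vanishing}) goes through, matching Lemma~\ref{lemma_excision_1}(i)--(ii).
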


\subsection*{Leitfaden}
Some of the main results of the paper are almost independent from others, so we offer an informal Leitfaden:
\begin{enumerate}\itemsep-1pt
\item The proof of Thm.~\ref{theorem1} is the contents of Section \ref{section_proof_of_1}.
\item Thm.~\ref{theorem_comm_Noeth} and Cor.~\ref{corollary2} follow from \S\ref{section_proof_of_1} and the start of \S\ref{section_Noetherian}.
\item Thms.~\ref{theorem4} and \ref{theorem5} follow from \S\ref{section_proof_of_1}, the introductory material of \S\ref{section_HH}, and \S\ref{subsection_pro_excision_for_HH}.
\item Theorem \ref{theorem_intro_CQ} follows from \S1, the introductory material of \S3, and \S3.2.
\item If one interprets pro Tor-unitality as the vanishing of $\{\Tor_n^A(A/I^r,A/I^r)\}_r$ for $n>0$, then Thm.~\ref{theorem_AR_intro}, Thm.~\ref{theorem_intro_HKR}, and our Fe\u\i gin--Tsygan theorem follow from the introductory material of \S\ref{section_HH}, and \S\ref{section_HH_and_HC}.
\item The Artin--Rees and continuity properties for Andr\'e--Quillen homology are self-contained in \S\ref{subsection_Artin_Rees_for_AQ}; Thm.~\ref{theorem3} additionally requires \S\ref{section_proof_of_1} and \S\ref{section_AQ_excision}.
\end{enumerate}

\subsection*{Notation, etc.}
Rings and algebras are associative and unital, unless explicitly stated to be non-unital, and ideals are two-sided. Given a commutative ring $k$ and a $k$-algebra $A$, an $A$-bimodule means an $A\otimes_kA^\sub{op}$-module. In Section~\ref{section_AR_for_AQ}, all rings are commutative. A discussion of pro abelian groups and pro modules may be found in Appendix \ref{appendix_pro}.

\subsection*{Acknowledgements}
Discussions with B.~Dundas during the preparation of an early version of this paper were very helpful, particularly his explanations of derived Hochschild and cyclic homology. The foundation of this work was done while I was supported by the Simons Foundation as a postdoctoral fellow at the University of Chicago, and later improvements were carried out at the University of Bonn while funded by the Hausdorff Center; I am grateful to both these sources for their support.

I also express my gratitude to the anonymous referee, who offered various improvements and encouraged me to use the techniques of the original version of the paper to prove the results now found in Sections \ref{section_converse} and \ref{subsection_HP}.

\section{The equivalence of pro unitality conditions}\label{section_proof_of_1}
In this section we define our pro unitality conditions of interest and establish the equivalences of Theorem \ref{theorem1} from the introduction. We claim no originality for the following notion, but must suggest a piece of nomenclature to be able to more clearly state our results:

\begin{definition}
Let $k$ be a commutative ring and $I$ a non-unital $k$-algebra. Then we say that $I$ is {\em pro Tor-unital over $k$} if and only if $\{\Tor_n^{k\ltimes I^r}(k,k)\}_r=0$ for all $n>0$. If $k=\bb Z$, we say simply that $I$ is {\em pro Tor-unital}.
\end{definition}

The main aim of this section is to prove the following equivalences, which we will then adopt as alternative definitions of pro Tor-unitality:

\begin{theorem}\label{theorem1a}
Let $k$ be a commutative ring, $A\to B$ a homomorphism of $k$-algebras, and $I$ an ideal of $A$ mapped isomorphically to an ideal of $B$. Then the following pro Tor vanishing conditions are equivalent:
\begin{enumerate}
\item $\{\Tor_n^A(A/I^r,A/I^r)\}_r=0$ for all $n>0$.
\item $\{\Tor_n^B(B/I^r,B/I^r)\}_r=0$ for all $n>0$.
\item $I$ is pro Tor-unital over $k$, i.e., $\{\Tor_n^{k\ltimes I^r}(k,k)\}_r=0$ for all $n>0$.
\item $I$ is pro Tor-unital, i.e.,  $\{\Tor_n^{\bb Z\ltimes I^r}(\bb Z,\bb Z)\}_r=0$ for all $n>0$.
\end{enumerate}
\end{theorem}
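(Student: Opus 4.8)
The plan is to isolate the content of the theorem in a single \emph{base comparison}: for any commutative ring $k$, any $k$-algebra $A$, and any ideal $I\subseteq A$, condition~(i) is equivalent to $\{\Tor_n^{k\ltimes I^r}(k,k)\}_r=0$ for all $n>0$. Once this is available the four equivalences are formal. The groups $\Tor_n^A(A/I^r,A/I^r)$ refer only to the pair $(A,I)$ and to no base ring, so applying the base comparison over $k$ and over $\bb Z$ gives (iii)$\Leftrightarrow$(i)$\Leftrightarrow$(iv); and the non-unital algebra $I$, each power $I^r$, and hence the complex computing $\Tor_*^{k\ltimes I^r}(k,k)$, are literally the same objects whether $I$ is viewed inside $A$ or inside $B$ (the $k$-structure being inherited from $k\to A\to B$ in either case), so applying the base comparison to $A$ and to $B$ gives (i)$\Leftrightarrow$(iii)$\Leftrightarrow$(ii). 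Nothing about $A\to B$ beyond the hypothesis on $I$ enters; in particular $A\to B$ need not be injective.

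To set up the base comparison I would use the canonical unital ring homomorphism $\phi_r\colon k\ltimes I^r\To A$, $(\lambda,x)\mapsto\lambda\cdot1_A+x$: it sends the ideal $I^r$ of $k\ltimes I^r$ onto the ideal $I^r$ of $A$, hence descends to $\bar\phi_r\colon k=(k\ltimes I^r)/I^r\To A/I^r$, and through $\bar\phi_r$ the $A$-module $A/I^r$ becomes a $(k\ltimes I^r)$-module on which $I^r$ acts by zero --- that is, one restricted along $k\ltimes I^r\twoheadrightarrow k$ from a $k$-module. The target is the chain of pro-isomorphisms
\[\{\Tor_n^A(A/I^r,A/I^r)\}_r\;\cong\;\{\Tor_n^{k\ltimes I^r}(k,A/I^r)\}_r\;\cong\;\{\Tor_n^{k\ltimes I^r}(k,k)\}_r\qquad(n\ge0),\]
which identifies (i) with the base-comparison condition. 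The left isomorphism should come from the change-of-rings spectral sequence along $\phi_r$, with $E^2$-page $\Tor_p^A(\Tor_q^{k\ltimes I^r}(k,A),A/I^r)$ converging to $\Tor_{p+q}^{k\ltimes I^r}(k,A/I^r)$ (one has $\Tor_0^{k\ltimes I^r}(k,A)=A/I^r$ for free); the right isomorphism from the factorisation $k\ltimes I^r\twoheadrightarrow k\xrightarrow{\bar\phi_r}A/I^r$ together with the projection-formula spectral sequence $\Tor_p^k(\Tor_q^{k\ltimes I^r}(k,k),A/I^r)\Rightarrow\Tor_{p+q}^{k\ltimes I^r}(k,A/I^r)$, valid because $A/I^r$ is restricted from $k$. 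Each spectral sequence collapses to the wanted identity as soon as the relevant ``error'' pro-groups --- $\{\Tor_q^{k\ltimes I^r}(k,A)\}_r$ for $q>0$ on the left, and $\{\Tor_p^k(\Tor_q^{k\ltimes I^r}(k,k),A/I^r)\}_r$ with $p,q>0$ on the right --- are shown to pro-vanish.

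The pro-vanishing of these error terms is the main obstacle, and it is exactly where the pro-structure is indispensable, since nothing in sight vanishes on the nose. The implication ``base-comparison condition $\Rightarrow$ (i)'' is the easier direction: assuming $\{\Tor_q^{k\ltimes I^r}(k,k)\}_r=0$ for $q>0$, one feeds the short exact sequences $0\to\epsilon^*(\mathfrak c_r)\to k\ltimes I^r\xrightarrow{\phi_r}k\cdot1_A+I^r\to0$, where $\mathfrak c_r=\{c\in k:c\cdot1_A\in I^r\}$ (the kernel being restricted from the $k$-module $\mathfrak c_r$), and $0\to k\cdot1_A+I^r\to A\to A/(k\cdot1_A+I^r)\to0$ (the cokernel again restricted from $k$), into the projection-formula spectral sequence; this forces $\{\Tor_q^{k\ltimes I^r}(k,A)\}_r=0$ for $q>0$, whereupon both spectral sequences collapse and deliver (i).

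The reverse implication ``(i) $\Rightarrow$ base-comparison condition'' is the delicate one, and I would prove it by induction on $n$. The base case $n=1$ is the unconditional identity $\Tor_1^{k\ltimes I^r}(k,k)\cong I^r/I^{2r}$ together with $\{I^r/I^{2r}\}_r=0$ --- the latter holding because the transition morphism from index $2r$ to index $r$ is already zero. For the inductive step one feeds the truth of the base-comparison condition in degrees $<n$ back into the two spectral sequences above, after reindexing the pro-systems and exploiting the containments $I^{rs}\subseteq(I^r)^s$, which is precisely what makes Artin--Rees-type estimates available; this pins down the degree-$n$ term. A technical nuisance throughout is that the powers $I^r$ need not be $k$-flat, so the reduced bar resolutions involved are not resolutions by flats; this is dealt with in the usual manner, either by first replacing $I$ with a levelwise $k$-flat simplicial resolution by non-unital $k$-algebras, or by carrying along the auxiliary $\Tor^k$ terms, which are themselves absorbed into the same inductive pro-vanishing. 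Finally, having established the equivalence of (i)--(iv), I would --- as the paper announces --- use any one of them freely as the definition of pro Tor-unitality in the sequel.
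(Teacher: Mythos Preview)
Your reduction to a single ``base comparison'' --- condition~(i) for a pair $(A,I)$ is equivalent to $\{\Tor_n^{k\ltimes I^r}(k,k)\}_r=0$ --- is correct and is exactly the content of the paper's Proposition~\ref{proposition_pro_H}, applied with $R_\infty=\{k\ltimes I^r\}_r$, $S_\infty=A$, $J_\infty=\{I^r\}_r$. Your deduction of (i)--(iv) from this is also the paper's. Your easy direction (base comparison $\Rightarrow$ (i)) via the two short exact sequences is fine in outline.

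The hard direction, however, has a genuine gap. Your inductive step asserts that knowing $\{\Tor_q^{k\ltimes I^r}(k,k)\}_r=0$ for $q<n$, together with~(i), ``pins down the degree-$n$ term''. But the left spectral sequence you quote has $E^2_{pq}=\Tor_p^A(\Tor_q^{k\ltimes I^r}(k,A),A/I^r)$, and for $q>0$ the inner group $\Tor_q^{k\ltimes I^r}(k,A)$ is \emph{not} controlled by your inductive hypothesis: the module $A$ is not restricted from $k$ along $k\ltimes I^r\twoheadrightarrow k$, so the projection-formula spectral sequence does not apply to it, and the hypothesis on $\Tor^{k\ltimes I^r}_*(k,k)$ in low degrees gives you nothing here. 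Your references to ``reindexing'' and ``Artin--Rees-type estimates'' do not bridge this; no cofinality trick with $(I^r)^s$ lets the induction reach $\Tor_q^{k\ltimes I^r}(k,A)$.

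The paper closes this gap by a different mechanism. It first proves a bootstrap lemma (Lemma~\ref{lemma_pro_result_of_Tor_vanishing}): the single vanishing $\{\Tor_n^A(A/I^r,A/I^r)\}_r=0$ automatically forces $\{\Tor_n^A(A/I^r,N_r)\}_r=0$, hence $\{\Tor_n^A(I^r,N_r)\}_r=0$, for \emph{every} pro left $A/I^r$-module $N_\infty$. It then observes, from the bicartesian square $k\ltimes I^r\to A$, $k\to A/I^r$, that $\Tor_q^{k\ltimes I^r}(A,k)$ is an $A/I^r$-module for $q>0$. Feeding this into the change-of-rings spectral sequence with $I^r$ in the first variable kills everything and yields $\{\Tor_*^{k\ltimes I^r}(I^r,k)\}_r=0$, whence $\{\Tor_*^{k\ltimes I^r}(k,k)\}_r=0$. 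No induction on $n$ is needed; the bootstrap lemma does all the work at once. This module-theoretic observation --- that the obstruction groups are secretly $A/I^r$-modules and hence fall under the bootstrap --- is precisely the missing idea in your sketch.
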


Before we prove Theorem \ref{theorem1a}, we present various situations in which it can be applied:

\begin{example}[Commutative, Noetherian rings]
Let $I$ be an ideal of a commutative, Noetherian ring $A$. Then condition (i) of Theorem \ref{theorem1a} is true (see Thm.~\ref{theorem_pro_Tor_unital_commutative}) and hence $I$ is pro Tor-unital.
\end{example}

\begin{example}[Quasi-regularity]
Let $I$ be a {\em quasi-regular} ideal of a commutative ring $A$ in Quillen's sense \cite[Def.~6.10]{Quillen1970}, i.e., $I/I^2$ is flat as an $A/I$-module and the canonical map $\bigwedge_{A/I}^nI/I^2\to\Tor_n^A(A/I,A/I)$ is an isomorphism for all $n\ge0$. For example, it suffices that $I$ be generated by a regular sequence. We will now show that condition (i) of Theorem \ref{theorem1a} is satisfied, whence $I$ is pro Tor-unital.

It was proved by Quillen that the canonical map \[\Tor_n^A(A/I^s,A/I^{r+s})\To\Tor_n^A(A/I^s,A/I^r)\] is zero for all $n,r,s>0$. (Since this precise assertion is not stated by Quillen, we now explain the missing details of the proof. Firstly, if $s=1$ then this is precisely the statement ``$B_m$'' in the proof of \cite[Prop.~8.5]{Quillen1968}. Since $I^s/I^{s+1}$ is flat over $A/I$ \cite[Prop.~8.5]{Quillen1968}, it follows that the map $\Tor_n^A(I^s/I^{s+1},A/I^{r+1})\to\Tor_n^A(I^s/I^{s+1},A/I^r)$ is zero for all $n,r,s>0$; now a straightforward induction on $s$ completes the proof.) Hence the composition \[\Tor_n^A(A/I^{2r},A/I^{2r})\To\Tor_n^A(A/I^r,A/I^{2r})\To\Tor_n^A(A/I^r,A/I^r)\] is zero for all $n,r>0$, as required.
\end{example}

\begin{example}[Restriction of base ring]
Suppose that $k'\to k$ is a homomorphism of commutative rings and that $I$ is a non-unital $k$-algebra. Then Theorem \ref{theorem1a} evidently implies that the following two conditions are equivalent:
\begin{enumerate}\itemsep0pt
\item[(iii)] $I$ is pro Tor-unital over $k$, i.e., $\{\Tor_n^{k\ltimes I^r}(k,k)\}_r=0$ for all $n>0$.
\item[(iii')] $I$ is pro Tor-unital over $k'$, i.e., $\{\Tor_n^{k'\ltimes I^r}(k',k')\}_r=0$ for all $n>0$.
\end{enumerate}
This provides a strengthening and purely algebraic proof of a result of Geisser and Hesselholt \cite[Prop.~3.6]{GeisserHesselholt2011}, as promised in the author's earlier work \cite[Lemma 1.12]{Morrow_birelative_dim1}.
\end{example}

\begin{example}[H-unitality]\label{example_pro_H}
Let $k$ be a commutative ring and $I$ a non-unital $k$-algebra. Assuming that $I$ is flat over $k$, we may calculate $\Tor_n^{k\ltimes I}(k,I)=\Tor_{n+1}^{k\ltimes I}(k,k)$ as the $n^\sub{th}$ homology of the bar complex \[B_\blob^k(I):=\quad 0\longleftarrow I \stackrel{b'}{\longleftarrow} I\otimes_kI\stackrel{b'}{\longleftarrow} I\otimes_kI\otimes_kI\stackrel{b'}{\longleftarrow}\cdots\] (Here $I$ sits in degree $0$ of the complex, and the boundary maps $b'$ are given by the alternating sum of $x_0\otimes\cdots\otimes x_n\mapsto x_0\otimes\cdots \otimes x_ix_{i+1}\otimes\cdots\otimes x_n$ for $i=0,\dots,n-1$.) 

In particular, if $k$ is a field so that $I^r$ is flat over $k$ for all $r\ge1$, then we see that $I$ is pro Tor-unital over $k$ if and only $\{H_n(B_\blob^k(I^r))\}_r=0$ for all $n\ge0$; this latter condition, known as {\em H-unitality} of the pro $k$-algebra $\{I^r\}_r$, has appeared previously, e.g., \cite[\S4.2]{Cortinas2006}.
\end{example}

\begin{example}[Quasi-unitality, ideals of free algebras]\label{example_quasi_unital}
Suppose $k$ is a field and that $I$ is a non-unital $k$-algebra. Then $I$ is said to be {\em quasi-unital} if and only if there exists a $k$-linear map $\al:I^2\to I\otimes_kI$ with the following properties:
\begin{itemize}\itemsep0pt
\item $\al(xy)=x\al(y)$ for $x\in I$, $y\in I^2$.
\item $\mu\circ\al=\op{id}$, where $\mu:I\otimes_kI\to I^2$ denotes multiplication.
\end{itemize}
When this is true, we may define (using the bar construction of Example \ref{example_pro_H}) \[\sigma:\underbrace{I^2\otimes_k \cdots\otimes_k I^2}_{n\sub{ times}}\to\underbrace{I\otimes_k \cdots\otimes_k I}_{n+1\sub{ times}},\quad x_0\otimes\cdots\otimes x_n\mapsto \al(x_0)\otimes x_1\otimes\cdots\otimes x_n,\] which is easily seen to have the property that $b'\sigma+\sigma b'=i$, where $i:B_\blob^k(I^2)\to B_\blob^k(I)$ is the canonical map; hence $i$ induces zero on homology, i.e, the canonical map $\Tor_n^{k\ltimes I^2}(k,k)\to \Tor_n^{k\ltimes I}(k,k)$ is zero for all $n>0$.

When Cuntz and Quillen proved excision in periodic cyclic homology over fields of characteristic zero, they considered non-unital $k$-algebras $I$ such that $I^{2^r}$ is quasi-unital for all $r\ge0$. For example, this is true if $I$ is an ideal of a free $k$-algebra by \cite[\S4]{CuntzQuillen1997}, and so we have just explained that such ideals are pro Tor-unital over $k$. A generalisation of this assertion to the case when $k$ is not necessarily a field will be given in Proposition \ref{proposition_pro_Tor_unital_in_free}. Abstract generalisations to monoidal categories may be found in~\cite{CortinasValqui2003}.

%In particular, it now follows from Theorem \ref{theorem1a} that ideals of free $k$-algebras are pro Tor-unital; 
\end{example}

Now we turn to the proof of Theorem \ref{theorem1a}; it will be deduced from a more general result concerning pro rings, for which we need first to mention some terminology. The reader less familiar with pro objects may also wish to consult Appendix \ref{appendix_pro}.

A pro ring $R_\infty$ is simply an object of the category $\op{Pro}Rings$, i.e.~an inverse system $\cdots\to R_2\to R_1$ of (always unital and associative) rings. An {\em ideal} $I_\infty\subseteq R_\infty$ is an inverse system $\cdots\to I_2\to I_1$ where each $I_r$ is an ideal of $R_r$, and where the transition maps $R_{r+1}\to R_r$ restricts to the transition maps $I_{r+1}\to I_r$. The notation $R_\infty/I_\infty$ denotes the pro ring $\{R_r/I_r\}_r$.

A {\em (strict) left $R_\infty$-module} $M_\infty$ is by definition an inverse system $\cdots\to M_2\to M_1$, where $M_r$ is a left $R_r$-module, and the transition maps are compatible in the obvious sense. A {\em (strict) right $R_\infty$-module} is defined in the analogous fashion. All our pro modules will be strict, so we will not mention this assumption again.

The following is the key proposition:

\begin{proposition}\label{proposition_pro_H}
Let $R_\infty\to S_\infty$ be a strict map of pro rings, and suppose that $J_\infty$ is an ideal of $R_\infty$ such that each map $R_r\to S_r$ carries $J_r$ isomorphically to an ideal of $S_r$. Then the following pro Tor vanishing conditions are equivalent:
\begin{enumerate}
\item $\{\Tor_n^{R_r}(R_r/J_r,R_r/J_r)\}_r=0$ for all $n>0$.
\item $\{\Tor_n^{S_r}(S_r/J_r,S_r/J_r)\}_r=0$ for all $n>0$.
\end{enumerate}
\end{proposition}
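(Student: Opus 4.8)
The hypothesis is that $R_\infty \to S_\infty$ carries the ideal $J_\infty$ isomorphically onto an ideal of $S_\infty$; equivalently, $J_r$ is simultaneously an ideal of $R_r$ and of $S_r$, with the $R$-module and $S$-module structures on each $J_r$ agreeing through the map $R_r \to S_r$. The plan is to relate both sides to the intrinsic pro ring $\mathbb{Z} \ltimes J_\infty$ (or, more precisely, to the pro-abelian groups $\{\Tor^{R_r}_n(R_r/J_r, -)\}$ applied to modules built from $J_r$). The crucial classical input is the change-of-rings identification: for a ring $R$ with ideal $J$, one has $\Tor^R_n(R/J, R/J) \cong \Tor^{R/J \otimes \cdots}$-type expressions, but the cleaner route is to use the bar-type resolution of $R/J$ over $R$. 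Specifically, filtering the bar complex $B_\bullet(R/J, R, R/J)$ — or rather analyzing $R/J \otimes^{\mathbb{L}}_R R/J$ — shows that $\Tor^R_n(R/J,R/J)$ depends only on $J$ as a non-unital ring together with the $R/J$-bimodule structure on $J/J^2$, and more. The sharpest statement I would aim to use: there is a natural isomorphism $\Tor^R_n(R/J, R/J) \cong \Tor^{\mathbb{Z}\ltimes J}_n(\mathbb{Z}, \mathbb{Z})$ coming from the ring map $\mathbb{Z} \ltimes J \to R$ (sending $(n, j) \mapsto n\cdot 1 + j$), which induces $\mathbb{Z} = (\mathbb{Z}\ltimes J)/J \to R/J$, under the hypothesis that this last map is an isomorphism — but here $R/J$ need not be $\mathbb{Z}$, so one needs the relative version.

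**The key computation.** What one actually proves is: for any unital ring $R$ and two-sided ideal $J$, the complex $R/J \otimes^{\mathbb{L}}_R R/J$ is computed by the two-sided bar complex $B_\bullet(R/J, R, R/J)$ whose normalized form in degree $n$ is $R/J \otimes_{\mathbb{Z}} J^{\otimes n} \otimes_{\mathbb{Z}}$ — wait, more carefully: the normalized bar complex for $R/J$ over $R$ has $n$-th term $R/J \otimes_R (\bar{R})^{\otimes_R n} \otimes_R R/J$ where $\bar{R} = R/R = 0$ is wrong; the right normalization uses $\bar{R} = \ker(R \to R/J) = J$ only when $R \to R/J$ is split, which it is as abelian groups. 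So the normalized bar complex has degree-$n$ term $R/J \otimes_{\mathbb{Z}} J^{\otimes_{\mathbb{Z}} n} \otimes_{\mathbb{Z}} R/J$ with the usual bar differential, using only the non-unital multiplication on $J$ and the $R/J$-bimodule structure. The upshot: $\Tor^R_n(R/J, R/J)$ is a functor of the pair $(J, R/J\text{-bimodule structure})$ alone, not of $R$. Now apply this to both $R_r$ and $S_r$: since $J_r$ is the same non-unital ring either way, I need $R_r/J_r$ and $S_r/J_r$ to induce "the same" bimodule structures on the bar complex — but they don't, since $R_r/J_r \ne S_r/J_r$ in general. This is where the pro condition enters.

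**Handling the quotient discrepancy via a third ring.** The fix is to introduce the fiber product $T_r := R_r \times_{S_r} ?$ — no; rather, follow Suslin–Wodzicki: consider the ring $R_r \ltimes J_r$... Actually the cleanest approach, and the one I'd pursue: both $R_r$ and $S_r$ receive a map from $\widetilde{R}_r := R_r \times_{R_r/J_r} (R_r/J_r)$ trivially; instead use that $R_r \to S_r$ fits into a diagram with common "ideal part." The standard trick (Wodzicki, and used by Geisser–Hesselholt): the bar complex $B_\bullet(J_r)$ over $\mathbb{Z}$ — the one-sided bar complex $0 \leftarrow J_r \leftarrow J_r^{\otimes 2} \leftarrow \cdots$ — computes $\Tor^{\mathbb{Z}\ltimes J_r}_{*+1}(\mathbb{Z},\mathbb{Z})$, and there is a spectral sequence (or long exact sequence, or direct filtration argument) relating $\Tor^{R_r}_*(R_r/J_r, R_r/J_r)$ to the homology of $B_\bullet(J_r)$ tensored appropriately with $R_r/J_r$ — explicitly, using that $R/J \otimes^{\mathbb{L}}_R R/J$ can be computed by first forming $R/J \otimes^{\mathbb{L}}_R J^{\otimes}$-resolution, one gets $\Tor^R_n(R/J,R/J)$ fits in a spectral sequence with $E_2$-terms of the form $\Tor^{R/J}_p(R/J, H_q(\bar{B}_\bullet(J))) $ or similar. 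The point of the pro condition is: condition (i) for all $r$ is equivalent to $\{H_*(B_\bullet(J_r))\}_r = 0$ (by the spectral sequence, since the $E_2$-page is built from these pro-groups and a pro-system of bounded spectral sequences with vanishing $E_\infty$ iff vanishing $E_2$ in the relevant range — using the Mittag-Leffler-type arguments in the appendix), and $\{H_*(B_\bullet(J_r))\}_r$ is manifestly intrinsic to $J_\infty$, hence equal to the corresponding pro-group computed from $S_r$, giving condition (ii). I would therefore structure the proof as: (1) establish the bar-complex computation of $\Tor^R_n(R/J,R/J)$; (2) set up the convergent spectral sequence $E^2_{pq} = \Tor^{R_r/J_r}_p(R_r/J_r, H_q(B_\bullet(J_r))) \Rightarrow \Tor^{R_r}_{p+q}(R_r/J_r, R_r/J_r)$, natural in $r$; (3) show, using the pro-category arguments of Appendix \ref{appendix_pro} (a pro-system of first-quadrant spectral sequences with $E^\infty = 0$ in a range forces $\{E^2\}$ to vanish in that range, and conversely), that condition (i) holds $\iff$ $\{H_q(B_\bullet(J_r))\}_r = 0$ for all $q \ge 1$; (4) observe the right-hand side of this last equivalence is symmetric in $R$ and $S$, so (i) $\iff$ (ii).

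**Main obstacle.** The delicate point is step (3): passing between vanishing of the abutment and vanishing of $E^2$ in a pro-category. In a single spectral sequence, $E^\infty = 0$ does not imply $E^2 = 0$; one needs the inductive clearing-out argument ($H_1 = E^2_{0,1}$ forces $\{H_1(B_\bullet(J_r))\}$ to vanish, then $H_2$ involves $E^2_{0,2}$ and $E^2_{2,0}$ with the latter already controlled, etc.), and each step requires that the relevant differentials and extensions behave well after passing to pro-objects — which is fine because the pro-category is abelian and the spectral sequences are strictly natural in $r$, but it must be checked that one can run the induction on $q$ uniformly. I also need the input $\Tor^{R_r/J_r}_0(R_r/J_r, -) = \mathrm{id}$ and that $\Tor^{R_r/J_r}_p(R_r/J_r, R_r/J_r) = 0$ for $p > 0$ trivially — yes, $R_r/J_r$ is free of rank one over itself — so the $q=0$ row collapses to $R_r/J_r$ in degree $0$ and contributes nothing to positive total degree. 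That makes the edge maps clean. A secondary subtlety: verifying that the two-sided bar complex is genuinely a resolution requires the short exact sequence $0 \to J \to R \to R/J \to 0$ to be $\mathbb{Z}$-split, which it is, so the normalized bar construction applies; I would cite the standard homological algebra reference rather than reprove it.
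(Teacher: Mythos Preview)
Your strategy---reduce both (i) and (ii) to an intrinsic condition on $J_\infty$, namely the pro-vanishing of $\{H_n(B_\bullet(J_r))\}_r$---is appealing, but the execution has two genuine gaps.

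First, the spectral sequence you write in step~(2),
\[
E^2_{pq}=\Tor^{R_r/J_r}_p\big(R_r/J_r,\,H_q(B_\bullet(J_r))\big)\Longrightarrow \Tor^{R_r}_{p+q}(R_r/J_r,R_r/J_r),
\]
is not well-formed: $H_q(B_\bullet(J_r))$ is only a $\bb Z$-module, not an $R_r/J_r$-module, so the $E^2$-term is undefined. Even granting some $R_r/J_r$-structure, the functor $\Tor^{R_r/J_r}_p(R_r/J_r,-)$ vanishes for $p>0$, so your spectral sequence would degenerate to $\Tor^{R_r}_n(R_r/J_r,R_r/J_r)\cong H_n(B_\bullet(J_r))$. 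This is already false for $R=k[x]$, $J=(x)$.

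Second, your final paragraph asserts that $0\to J_r\to R_r\to R_r/J_r\to 0$ is $\bb Z$-split, ``which it is''. It is not: take $R=\bb Z$, $J=2\bb Z$. Without that splitting the normalized bar complex does \emph{not} have the form $(R/J)\otimes J^{\otimes n}\otimes(R/J)$, and the reduction of $\Tor^R_*(R/J,R/J)$ to data intrinsic to $J$ collapses. This is precisely why the non-pro Suslin--Wodzicki theory needs flatness or H-unitality hypotheses.

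The paper avoids these issues by never passing through an intrinsic invariant of $J_\infty$ in the proof of the proposition itself. It argues directly, using only the change-of-rings spectral sequences of Remark~\ref{remark_base_change_SS} for the maps $R_r\to S_r$ and $R_r\to R_r/J_r$, together with the bicartesian square on $R_r,S_r,R_r/J_r,S_r/J_r$. The key step (Lemma~\ref{lemma_pro_result_of_Tor_vanishing}) is that condition~(i) forces $\{\Tor^{R_r}_n(M_r,R_r/J_r)\}_r=0$ for \emph{every} pro $R_\infty/J_\infty$-module $M_\infty$; feeding this into the change-of-rings spectral sequence for $R_r\to S_r$ kills all off-row terms and yields~(ii). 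The reverse implication is the same trick with a different module. The equivalence with the intrinsic condition $\{\Tor^{\bb Z\ltimes J_r}_n(\bb Z,\bb Z)\}_r=0$ is then obtained \emph{afterwards}, by applying the proposition to $R_\infty=\{\bb Z\ltimes J_r\}_r$---not as an input to its proof.
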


Theorem \ref{theorem1a} follows from Proposition \ref{proposition_pro_H}:

\begin{proof}[\bf Proof of Theorem \ref{theorem1a} from Proposition \ref{proposition_pro_H}]
Let $k$ be a commutative ring, $A\to B$ a homomorphism of $k$-algebras, and $I$ an ideal of $A$ mapped isomorphically to an ideal of $B$. We must show that conditions (i)--(iv) of Theorem \ref{theorem1} are equivalent; we do this by applying the previous proposition to a variety of pro rings, always with ideal $J_\infty=\{I^r\}_r$:

(i)$\Leftrightarrow$(ii): $R_\infty=A$ and $S_\infty=B$. (iii)$\Leftrightarrow$(i): $R_\infty=\{k\ltimes I^r\}_r$ and $S_\infty=A$. (iv)$\Leftrightarrow$(iii): $R_\infty=\{\bb Z\ltimes I^r\}_r$ and $S_\infty=\{k\ltimes I^r\}_r$. 
\end{proof}

\begin{remark}\label{remark_non_pro}
Given that Proposition \ref{proposition_pro_H} has now been shown to be the key result, it may be instructive first to establish its non-pro version to illustrate the idea of proof.

Let $R\to S$ be a homomorphism of rings, and suppose that $J$ is an ideal of $R$ carried isomorphically to an ideal of $S$. Then we claim that the following are equivalent:
\begin{enumerate}\itemsep0pt
\item[(a)] $\Tor_n^R(R/J,R/J)=0$ for all $n>0$.
\item[(b)] $\Tor_n^S(S/J,S/J)=0$ for all $n>0$.
\end{enumerate}
That is, we claim that $R/J\dotimes_RR/J\simeq R/J$ if and only if $S/J\dotimes_SS/J\simeq S/J$, where we work in the model category of simplicial rings. The proof is as follows:

(a)$\Rightarrow$(b): Assume that $R/J\dotimes_RR/J\simeq R/J$. Then for any right $R/J$-module $M$, we have \[M\dotimes_RR/J\simeq M\dotimes_{R/J}R/J\dotimes_RR/J\stackrel{(*)}{\simeq} M\dotimes_{R/J}R/J\simeq M,\] where ($*$) follows from our assumption. In particular, \[S/J\dotimes_RR/J\simeq S/J\tag{\dag}.\]
Next, our assumption implies that the canonical map $R\dotimes_RR/J\to R/J\dotimes_RR/J$ is a weak equivalence, hence that $J\dotimes_RR/J\simeq 0$. It follows that the canonical map $S\dotimes_RR/J\to S/J\dotimes_RR/J$ is a weak equivalence and hence, by (\dag), that $S\dotimes_R R/J\simeq S/J$. Therefore \[S/J\dotimes_SS/J\simeq S/J\dotimes_SS\dotimes_RR/J\simeq S/J\dotimes_RR/J\simeq S/J,\] where the final equivalence follows from (\dag).

(b)$\Rightarrow$(a): Assume that $S/J\dotimes_SS/J\simeq S/J$. For any simplicial left $S/J$-module $N$, the same argument as in the start of the previous implication shows that $S/J\dotimes_SN\simeq N$; hence the natural map $S\dotimes_SN\to S/J\dotimes_SN$ is a weak equivalence, and therefore $J\dotimes_SN\simeq0$. Applying this with $N=S\dotimes_RR/J$, we deduce that $J\dotimes_RR/J\simeq 0$. Hence the canonical map $R\dotimes_RR/J\to R/J\dotimes_RR/J$ is a weak equivalence, i.e.,~$R/J\dotimes_RR/J\simeq R/J$.

This completes the proof of the claim, i.e., of Proposition \ref{proposition_pro_H} in the non-pro setting.
\end{remark}

The reader who is comfortable with model categories of pro simplicial abelian groups and pro simplicial rings (e.g., using the machinery of \cite{Isaksen2001}) may presumably verbatim repeat the argument given in Remark \ref{remark_non_pro} in order to prove Proposition \ref{proposition_pro_H}. We will not present such a proof, but instead will work directly with spectral sequences of pro abelian groups instead of derived tensor products. More precisely, the following standard change of rings spectral sequences for a homomorphism of rings will be used frequently in the proof of the proposition:

\begin{remark}\label{remark_base_change_SS}
Let $R\to S$ be a homomorphism of rings. If $M$ is a right $S$-module and $N$ is a left $R$-module, then there is a first quadrant spectral sequence \[E^2_{pq}=\Tor_p^S(M,\Tor_q^R(S,N))\Longrightarrow\Tor_{p+q}^R(M,N).\] If $N$ is a left $S$-module and $M$ is a right $R$-module, then there is a first quadrant spectral sequence \[^\prime E^2_{pq}=\Tor_p^S(\Tor_q^R(M,S), N)\Longrightarrow\Tor_{p+q}^R(M,N).\]
\end{remark}

We now begin our proof of Proposition \ref{proposition_pro_H} with the following lemma:

\begin{lemma}\label{lemma_pro_result_of_Tor_vanishing}
Let $J_\infty$ be an ideal of a pro ring $R_\infty$, and assume that the pro abelian groups $\{\Tor_n^{R_r}(R_r/J_r,R_r/J_r)\}_r$ vanish for all $n>0$. Then, for any right (resp., left) $R_\infty/J_\infty$-module $M_\infty$ (resp., $N_\infty$), the canonical map \[\{\Tor_n^{R_r}(M_r,N_r)\}_r\To\{\Tor_n^{R_r/J^r}(M_r,N_r)\}_r\] is an isomorphism of pro abelian groups for all $n\ge 0$.

In particular, $\{\Tor_n^{R_r}(M_r,R_r/J_r)\}_r$ and $\{\Tor_n^{R_r}(R_r/J_r,N_r)\}_r$ vanish for all $n>0$.
\end{lemma}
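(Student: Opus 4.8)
The plan is to transcribe the derived-tensor-product argument of Remark~\ref{remark_non_pro} into the language of the change-of-rings spectral sequences of Remark~\ref{remark_base_change_SS}, working throughout in the abelian category of pro abelian groups. Since the Cartan--Eilenberg change-of-rings spectral sequences are natural in the ring map, the module, and the coefficient module, the levelwise families (one spectral sequence for each $r$) assemble into a spectral sequence of pro abelian groups converging to $\{\Tor_{p+q}^{R_r}(M_r,N_r)\}_r$; the standard comparison facts for such towers of first-quadrant spectral sequences used below---that pro-vanishing of a portion of the $E^2$-page may be read off as the corresponding vanishing of the abutment---rest on the elementary properties of pro abelian groups (cf.\ Appendix~\ref{appendix_pro}). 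I will invoke such a spectral sequence three times, in increasing generality, mirroring the two steps ``$M\dotimes_RR/J\simeq M$'' and ``$M\dotimes_RN\simeq M\dotimes_{R/J}N$'' of the non-pro proof.

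I first establish the two vanishing statements of the final, ``in particular'', assertion. Let $M_\infty$ be a right $R_\infty/J_\infty$-module, and apply the first spectral sequence of Remark~\ref{remark_base_change_SS} to the ring map $R_r\to R_r/J_r$ with the right $R_r/J_r$-module $M_r$ and the left $R_r$-module $R_r/J_r$:
\[E^2_{pq}(r)=\Tor_p^{R_r/J_r}\bigl(M_r,\Tor_q^{R_r}(R_r/J_r,R_r/J_r)\bigr)\Longrightarrow\Tor_{p+q}^{R_r}(M_r,R_r/J_r).\]
When $q=0$ this term is $\Tor_p^{R_r/J_r}(M_r,R_r/J_r)$, which vanishes for $p>0$ since $R_r/J_r$ is free as a module over itself. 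When $q>0$ the coefficient module $\{\Tor_q^{R_r}(R_r/J_r,R_r/J_r)\}_r$ is pro-zero by hypothesis, and this forces $\{E^2_{pq}(r)\}_r$ to be pro-zero: given $r$, choose $r'\ge r$ so that the transition map $\Tor_q^{R_{r'}}(R_{r'}/J_{r'},R_{r'}/J_{r'})\to\Tor_q^{R_r}(R_r/J_r,R_r/J_r)$ vanishes; then the transition map $E^2_{pq}(r')\to E^2_{pq}(r)$ factors through $\Tor_p^{R_{r'}/J_{r'}}(M_{r'},-)$ applied to that zero map, hence vanishes, and this holds for all $p$ simultaneously. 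Therefore $\{E^2_{pq}(r)\}_r$ is pro-zero whenever $p+q>0$, the spectral sequence of pro abelian groups collapses, and $\{\Tor_n^{R_r}(M_r,R_r/J_r)\}_r=0$ for all $n>0$. Symmetrically, applying the second spectral sequence of Remark~\ref{remark_base_change_SS} gives $\{\Tor_n^{R_r}(R_r/J_r,N_r)\}_r=0$ for all $n>0$ and every left $R_\infty/J_\infty$-module $N_\infty$.

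Now let $M_\infty$ be a right and $N_\infty$ a left $R_\infty/J_\infty$-module, and apply the first spectral sequence of Remark~\ref{remark_base_change_SS} to $R_r\to R_r/J_r$ with $M_r$ and $N_r$:
\[E^2_{pq}(r)=\Tor_p^{R_r/J_r}\bigl(M_r,\Tor_q^{R_r}(R_r/J_r,N_r)\bigr)\Longrightarrow\Tor_{p+q}^{R_r}(M_r,N_r).\]
By the previous paragraph $\{\Tor_q^{R_r}(R_r/J_r,N_r)\}_r$ is pro-zero for $q>0$ and equals $\{N_r\}_r$ for $q=0$ (since $J_rN_r=0$), so the same factorisation argument shows $\{E^2_{pq}(r)\}_r$ is pro-zero for all $q>0$. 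Hence the spectral sequence of pro abelian groups collapses onto its bottom row, and its edge homomorphism---which is the canonical base-change map---furnishes the asserted isomorphism $\{\Tor_n^{R_r}(M_r,N_r)\}_r\isoto\{\Tor_n^{R_r/J_r}(M_r,N_r)\}_r$ for all $n\ge0$.

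The one genuinely delicate point, used twice above, is the propagation of pro-vanishing through the outer functor $\Tor_p^{R_r/J_r}(M_r,-)$: this is not a formal consequence of exactness of cofiltered limits (which fails in the pro category), but goes through here precisely because the annihilating transition maps can be chosen at the level of the coefficient module, before the outer $\Tor$ is applied, so that additivity of $\Tor$ does the rest. This is the recurring device for handling towers of spectral sequences; granting it and the elementary facts about pro abelian groups in Appendix~\ref{appendix_pro}, everything else---choosing the correct change-of-rings spectral sequence, collapsing a spectral sequence whose $E^2$-page is pro-concentrated in a single row, and identifying the edge map with the canonical map---is routine homological algebra carried out levelwise.
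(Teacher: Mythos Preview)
Your proof is correct and follows essentially the same approach as the paper: both use the change-of-rings spectral sequences of Remark~\ref{remark_base_change_SS} applied to $R_r\to R_r/J_r$, collapsing them in the pro category once the inner Tor is seen to be pro-zero. The only cosmetic difference is that you first establish the vanishing of $\{\Tor_n^{R_r}(R_r/J_r,N_r)\}_r$ and then feed this into the first spectral sequence, whereas the paper instead feeds the already-established vanishing of $\{\Tor_n^{R_r}(M_r,R_r/J_r)\}_r$ into the second spectral sequence $'E$; either choice works.
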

\begin{proof}
For each $r\ge1$ Remark \ref{remark_base_change_SS} gives us a spectral sequence of abelian groups
\[E^2_{pq}(r)=\Tor_p^{R_r/J_r}(M_r,\Tor_q^{R_r}(R_r/J_r,R_r/J_r))\Longrightarrow\Tor_{p+q}^{R_r}(M_r,R_r/J_r).\] Letting $r\to\infty$ gives a first quadrant spectral sequence of pro abelian groups \[E^2_{pq}(\infty)=\{\Tor_p^{R_r/J_r}(M_r,\Tor_q^{R_r}(R_r/J_r,R_r/J_r))\}_r\Longrightarrow\{\Tor_{p+q}^{R_r}(M_r,R_r/J_r)\}_r.\] But our vanishing assumption implies that $E_{pq}^2(\infty)=0$ unless $q=0$, and so the spectral sequence degenerates to edge map isomorphisms \[\{\Tor_n^{R_r}(M_r,R_r/J_r)\}_r\Isoto \{\Tor_n^{R_r/J_r}(M_r,R_r/J_r)\}_r.\] Since the right side vanishes for $n>0$, we have proved \[\{\Tor_n^{R_r}(M_r,R_r/J_r)\}_r\cong\begin{cases}\{M_r\}_r&n=0,\\0&n>0.\end{cases}\]

Applying the same argument to the second spectral sequence of Remark \ref{remark_base_change_SS} obtains a spectral sequence of pro abelian groups \[' E^2_{pq}(\infty)=\{\Tor_p^{R_r/J_r}(\Tor_q^{R_r}(M_r,R_r/J_r),N_r)\}_r\Longrightarrow\{\Tor_{p+q}^{R_r}(M_r,N_r)\}_r.\] But by what we have just proved, $'E_{pq}^2(\infty)$ vanishes unless $q=0$, so we obtain the desired edge map isomorphisms $\{\Tor_n^{R_r}(M_r,N_r)\}_r\isoto\{\Tor_n^{R_r/J_r}(M_r,N_r)\}_r$.

The ``in particular'' claims immediately follow.
\end{proof}

As in the statement of Proposition \ref{proposition_pro_H}, let $R_\infty\to S_\infty$ be a strict map of pro rings, and suppose that $J_\infty$ is an ideal of $R_\infty$ such that each map $R_r\to S_r$ carries $J_r$ isomorphically to an ideal of $S_r$. Then, for each $r\ge1$, the square of $R_r$-modules
\[\xymatrix{
R_r\ar[d]\ar[r] & S_r\ar[d]\\
R_r/J_r\ar[r]& S_r/J_r
}\]
is bicartesian, i.e., both cartesian and cocartesian. In other words, the sequence \[0\to R_r\to R_r/J_r\oplus S_r\to S_r/J_r\to 0\] is a short exact sequence of $R_r$-modules, to which we can apply derived functors to get long exact sequences. In particular, applying $\Tor_*^{R_r}(-,R_r/J_r)$ yields a long exact sequence \[\hspace{-0.5cm}\cdots\to\Tor_n^{R_r}(R_r,R_r/J_r)\to \Tor_n^{R_r}(R_r/J_r,R_r/J_r)\oplus \Tor_n^{R_r}(S_r,R_r/J_r)\to \Tor_n^{R_r}(S_r/J_r,R_r/J_r)\to\cdots\] But the left-most group vanishes for $n>0$, giving \[\Tor_n^{R_r}(R_r/J_r,R_r/J_r)\oplus \Tor_n^{R_r}(S_r,R_r/J_r)\Isoto \Tor_n^{R_r}(S_r/J_r,R_r/J_r)\tag{\dag}\] for $n>0$ (the reader will easily provide the small detail needed when $n=1$).

Now we finish the proof of Proposition \ref{proposition_pro_H}:
\begin{proof}[Proof of Proposition \ref{proposition_pro_H}]
Let $R_\infty\to S_\infty$ be a strict map of pro rings, and suppose that $J_\infty$ is an ideal of $R_\infty$ such that each map $R_r\to S_r$ carries $J_r$ isomorphically to an ideal of $S_r$. We must prove the equivalence of the conditions given in the statement of Proposition \ref{proposition_pro_H}.

(i)$\Rightarrow$(ii): Assume that $\{\Tor_n^{R_r}(R_r/J_r,R_r/J_r)\}_r=0$ for $n>0$. When we take the limit over $r$ in (\dag), the left pro abelian group vanishes by assumption, while the right pro abelian group vanishes by Lemma \ref{lemma_pro_result_of_Tor_vanishing}; therefore $\{\Tor_n^{R_r}(S_r,R_r/J_r)\}_r=0$ for $n>0$. 

By Remark \ref{remark_base_change_SS} we have spectral sequences \[E^2_{pq}(r)=\Tor_p^{S_r}(S_r/J_r,\Tor_q^{R_r}(S_r,R_r/J_r))\Longrightarrow\Tor_{p+q}^{R_r}(S_r/J_r,R_r/J_r).\] We have just proved that the internal Tor groups on the $E^2$ page vanish when $r\to\infty$, and Lemma \ref{lemma_pro_result_of_Tor_vanishing} implies again that the abutment vanishes in degrees $>0$ when $r\to\infty$. Thus the limit of the spectral sequences collapses to the statement that $\{\Tor_n^{S_r}(S_r/J_r,S_r/J_r)\}_r=0$ for $n>0$, as desired.

(ii)$\Rightarrow$(i): Now assume instead that $\{\Tor_n^{S_r}(S_r/J_r,S_r/J_r)\}_r=0$ for all $n>0$. If $N_\infty$ is a left $S_\infty/J_\infty$-module then Lemma \ref{lemma_pro_result_of_Tor_vanishing} implies that $\{\Tor_n^{S_r}(S_r/J_r,N_r)\}_r=0$ for $n>0$; then the long exact Tor sequence for $0\to J_r\to S_r\to S_r/J_r\to 0$ implies that $\{\Tor_n^{S_r}(J_r,N_r)\}_r=0$ for all $n\ge0$.

Next, taking the limit of another change of rings spectral sequences gives \[E^2_{pq}(\infty)=\{\Tor_p^{S_r}(J_r,\Tor_q^{R_r}(S_r,R_r/J_r))\}_r\Longrightarrow\{\Tor_{p+q}^{R_r}(J_r,R_r/J_r)\}_r.\] But from (\dag) we see that $N_r:=\Tor_q^{R_r}(S_r,R_r/J_r)$ is, as a left $S_r$-module, actually a $S_r/J_r$-module for $q>0$; so $N_\infty:=\{\Tor_q^{R_r}(S_r,R_r/J_r)\}_r$ is a left $S_\infty/J_\infty$-module and therefore we have just shown that $E_{pq}^2(\infty)=0$ for $p\ge0$. Thus the spectral sequence degenerates to edge map isomorphisms \[\{\Tor_n^{R_r}(J_r,R_r/J_r)\}_r\Isoto\{\Tor_n^{S_r}(J_r,S_r/J_r)\}_r\] for $n\ge0$. But the right pro Tor group vanishes for all $n\ge0$, by applying the previous paragraph to the left $S_\infty/J_\infty$-module $N_\infty=S_\infty/J_\infty$.

Finally, apply $\Tor_*^{R_r}(-,R_r/J_r)$ to the short exact sequence $0\to J_r\to R_r\to R_r/J_r\to0$ and let $r\to\infty$ to get $\{\Tor_n^{R_r}(R_r/J_r,R_r/J_r)\}_r=0$ for $n>0$, completing the proof.

This completes the proof of Proposition \ref{proposition_pro_H}, hence also of Theorem \ref{theorem1a}.
\end{proof}

We finish this section by noting the following useful corollary of our calculations, which will be needed several times later in the paper:

\begin{corollary}\label{corollary_most_useful_pro_vanishing}
Let $I$ be a pro Tor-unital ideal of a ring $A$, and let $M$ (resp., $N$) be a right (resp., left) $A$-module. Then the pro abelian groups \[\{\Tor_n^A(M/MI^r,A/I^r)\}_r\quad\mbox{and}\quad\{\Tor_n^A(A/I^r,N/I^rN)\}_r\] vanish for all $n>0$.
\end{corollary}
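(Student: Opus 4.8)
The plan is to deduce the corollary directly from Lemma~\ref{lemma_pro_result_of_Tor_vanishing}, applied to the \emph{constant} pro ring $R_\infty = A$ equipped with the ideal $J_\infty = \{I^r\}_r$ (whose transition maps are the inclusions $I^{r+1}\hookrightarrow I^r$). The first step is to observe that, by the equivalence (i)$\Leftrightarrow$(iv) of Theorem~\ref{theorem1a}, the pro Tor-unitality of the ideal $I\subseteq A$ is literally the statement that $\{\Tor_n^A(A/I^r,A/I^r)\}_r=0$ for all $n>0$, which is precisely the vanishing hypothesis of Lemma~\ref{lemma_pro_result_of_Tor_vanishing} for this choice of $R_\infty$ and $J_\infty$. (One should note in passing that the lemma is stated for an arbitrary pro ring, so there is no difficulty in feeding it a constant one.)

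The second step is the elementary verification that $\{M/MI^r\}_r$ is a strict right $R_\infty/J_\infty=\{A/I^r\}_r$-module and that $\{N/I^rN\}_r$ is a strict left one: for each $r$ the quotient $M/MI^r$ is a right $A$-module annihilated by $I^r$, hence a right $A/I^r$-module, and since $MI^{r+1}\subseteq MI^r$ the evident surjections $M/MI^{r+1}\to M/MI^r$ are compatible with the transition maps $A/I^{r+1}\to A/I^r$; the argument for $N$ is symmetric. With these pro-module structures in place, the ``in particular'' clause of Lemma~\ref{lemma_pro_result_of_Tor_vanishing} asserts exactly that $\{\Tor_n^{A}(M/MI^r,A/I^r)\}_r$ and $\{\Tor_n^{A}(A/I^r,N/I^rN)\}_r$ vanish for all $n>0$, which is the content of the corollary. (Alternatively one may invoke the stronger conclusion of the lemma, that $\{\Tor_n^{A}(M/MI^r,A/I^r)\}_r\to\{\Tor_n^{A/I^r}(M/MI^r,A/I^r)\}_r$ is an isomorphism of pro abelian groups, and note that the target is concentrated in degree $0$.)

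In short, I expect essentially no obstacle here: all the homological work has already been packaged into Lemma~\ref{lemma_pro_result_of_Tor_vanishing}, and the only thing to check is the bookkeeping that $\{M/MI^r\}_r$ and $\{N/I^rN\}_r$ are strict pro modules over $\{A/I^r\}_r$, which is immediate. The mild point worth flagging is simply that ``pro Tor-unital'' is, by definition, the condition (iv) on $I$ alone, whereas Lemma~\ref{lemma_pro_result_of_Tor_vanishing} consumes the $A$-relative incarnation (i); this is exactly why Theorem~\ref{theorem1a} must be cited before the lemma can be brought to bear.
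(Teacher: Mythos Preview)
Your proposal is correct and matches the paper's own proof essentially line for line: the paper also cites Theorem~\ref{theorem1a} to obtain $\{\Tor_n^A(A/I^r,A/I^r)\}_r=0$, then applies the ``in particular'' clause of Lemma~\ref{lemma_pro_result_of_Tor_vanishing} with $R_\infty=A$, $J_\infty=\{I^r\}_r$, and $M_\infty=\{M/MI^r\}_r$ (resp.\ $N_\infty=\{N/I^rN\}_r$). Your additional remarks on strictness and the alternative route via the stronger conclusion are fine but not needed.
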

\begin{proof}
By assumption and Theorem \ref{theorem1a}, $\{\Tor_n^A(A/I^r,A/I^r)\}_r=0$ for $n>0$; so the corollary follows by applying the ``In particular'' claim of Lemma \ref{lemma_pro_result_of_Tor_vanishing} to $R_\infty=A$, $J_\infty=\{I^r\}_r$, and $M_\infty=\{M/MI^r\}_r$ (resp., $N_\infty=\{N/I^rN\}_r$).
\end{proof}

\section{The case of commutative, Noetherian rings}\label{section_Noetherian}
In this section we apply Theorem \ref{theorem1a} to ideals of commutative, Noetherian rings in order to prove that such ideals satisfy pro excision in algebraic $K$-theory. We first recall a result of Andr\'e:

\begin{lemma}[{Andr\'e's Artin--Rees property \cite[Lem.~X.11]{Andre1974}}]\label{lemma_Andre}
Suppose that $I$ is an ideal of a commutative, Noetherian ring $A$. Then:
\begin{enumerate}
\item If $M$ is a finitely generated $A$-module, then $\{\Tor_n^A(M,A/I^r)\}_r=0$ for all $n>0$.
\item $\{\Tor_n^A(A/I^r,A/I^r)\}_r=0$ for all $n>0$.
\end{enumerate}
\end{lemma}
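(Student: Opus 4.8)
The plan is to deduce both parts from the classical Artin--Rees lemma together with a dimension-shifting/induction argument on the homological degree $n$, exactly in the spirit of \cite[Lem.~X.11]{Andre1974}; since the statement is attributed to Andr\'e, no originality is claimed, but I record the argument for completeness. The key input is the Artin--Rees lemma in the following pro-theoretic guise: if $A$ is commutative Noetherian, $I\subseteq A$ an ideal, and $N'\subseteq N$ an inclusion of finitely generated $A$-modules, then the pro $A$-module $\{N'\cap I^rN\}_r$ is pro-isomorphic to $\{I^rN'\}_r$; in particular, for any short exact sequence $0\to N'\to N\to N''\to 0$ of finitely generated modules, the sequence of pro modules $\{0\to N'/I^rN'\to N/I^rN\to N''/I^rN''\to 0\}_r$ is exact (the only non-formal point being left-exactness, which is Artin--Rees).

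For part (i), I would argue by induction on $n\ge 1$. Fix a finitely generated $A$-module $M$ and choose a surjection $0\to K\to A^{\oplus d}\to M\to 0$ with $K$ finitely generated (Noetherian). Tensoring with $A/I^r$ over $A$ and passing to the pro category, the long exact Tor sequence gives, for each $n\ge 2$, isomorphisms of pro $A$-modules $\{\Tor^A_n(M,A/I^r)\}_r\cong\{\Tor^A_{n-1}(K,A/I^r)\}_r$, which reduces the claim for $M$ in degree $n$ to the claim for $K$ in degree $n-1$; so it suffices to treat $n=1$. For $n=1$ the long exact sequence reads
\[
0\to\{\Tor^A_1(M,A/I^r)\}_r\to\{K/I^rK\}_r\to\{A^{\oplus d}/I^rA^{\oplus d}\}_r\to\{M/I^rM\}_r\to 0,
\]
and the pro-exactness of the three-term tail $\{0\to K/I^rK\to (A/I^r)^{\oplus d}\to M/I^rM\to 0\}_r$ is precisely Artin--Rees applied to $K\subseteq A^{\oplus d}$. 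Hence $\{\Tor^A_1(M,A/I^r)\}_r=0$, and the induction closes. Part (ii) is then immediate: apply part (i) with $M=A/I^r$ — more precisely, for fixed $s$ the $A$-module $A/I^s$ is finitely generated, so $\{\Tor^A_n(A/I^s,A/I^r)\}_r=0$ for $n>0$; since this holds for every $s$, and the transition maps in $s$ are compatible, the diagonal pro $A$-module $\{\Tor^A_n(A/I^r,A/I^r)\}_r$ vanishes as well (a pro system which is zero after restricting the first index to any constant value is zero).

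The main obstacle is entirely concentrated in the $n=1$ base case, i.e.\ in establishing the left-exactness $\{0\to K/I^rK\to (A/I^r)^{\oplus d}\}_r$ of pro modules: the inclusion $K\cap I^r A^{\oplus d}\supseteq I^rK$ is trivial, but the reverse pro-inclusion, that $\{K\cap I^rA^{\oplus d}\}_r$ is pro-dominated by $\{I^{r-c}K\}_r$ for some constant $c$, is exactly the content of the Artin--Rees lemma and is where Noetherianity is used. Everything else — the dimension shift, the reduction of (ii) to (i), the passage from the sequences $E_{pq}^2(r)$ to their pro limits — is formal manipulation in $\op{Pro}(A\text{-mod})$ as recalled in Appendix \ref{appendix_pro}.
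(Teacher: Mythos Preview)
Your proof is correct and follows essentially the same approach as the paper: both reduce to the classical Artin--Rees lemma applied to a submodule of a finitely generated free module, and both deduce (ii) from (i) by fixing one index and then diagonalising. The only cosmetic difference is that the paper applies Artin--Rees directly to the image $d(P_n)\subseteq P_{n-1}$ at each stage of a finitely generated projective resolution, whereas you package the same argument as an induction on $n$ via a single short exact sequence $0\to K\to A^{\oplus d}\to M\to 0$ and dimension-shifting; unwinding your induction recovers the paper's argument verbatim.
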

\begin{proof}
(i): Let $P_\bullet\to M$ be a resolution of $M$ by finitely generated, projective $A$-modules. Then, for any $n,r\ge1$, one applies the classical Artin--Rees lemma to the $A$-modules $d(P_n)\subseteq P_{n-1}$ to deduce that there exists $s\ge r$ such that $I^sP_{n-1}\cap d(P_n)\subseteq d(I^rP_n)$. Taking the preimage under the differential $d$, one obtains $d^{-1}(I^sP_{n-1})\subseteq I^rP_n+d(P_{n+1})$, i.e., the map $\Tor_n^A(M,A/I^s)\to\Tor_n^A(M,A/I^r)$ is zero, as required.

(ii): Applying (i) to the $A$-module $M=A/I^r$ we find $s\ge r$ such that the map $\Tor_n^A(A/I^r, A/I^s)\to \Tor_n^A(A/I^r, A/I^r)$ is zero. So the map $\Tor_n^A(A/I^s, A/I^s)\to \Tor_n^A(A/I^r, A/I^r)$ is certainly zero, as required.
\end{proof}

\begin{remark}
Lemma \ref{lemma_Andre} and its proof are also valid for ideals of non-commutative, Noetherian rings which satisfy the Artin--Rees property. See \cite[Chap.~4, \S2]{McConnell2001} for precise definitions and examples.
\end{remark}

We may now prove Theorem \ref{theorem_comm_Noeth} and Corollary \ref{corollary2} from the Introduction:

\begin{theorem}\label{theorem_pro_Tor_unital_commutative}
Let $I$ be an ideal of a commutative, Noetherian ring. Then $I$ is pro Tor-unital.
\end{theorem}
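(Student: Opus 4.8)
The plan is to obtain the theorem as an immediate consequence of the two results already established above: Andr\'e's Artin--Rees property (Lemma \ref{lemma_Andre}) and the equivalence of pro unitality conditions (Theorem \ref{theorem1a}).

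First I would fix a commutative, Noetherian ring $A$ together with an ideal $I\subseteq A$, and view $A$ as a $\bb Z$-algebra. Applying Theorem \ref{theorem1a} with $k=\bb Z$, with $B=A$, and with the identity homomorphism $A\to A$ (so that $I$ is trivially carried isomorphically onto an ideal of $B$), it suffices to verify condition (i) of that theorem, namely $\{\Tor_n^A(A/I^r,A/I^r)\}_r=0$ for all $n>0$. But this is precisely the assertion of Lemma \ref{lemma_Andre}(ii), which holds since $A$ is commutative and Noetherian. Hence condition (iv) of Theorem \ref{theorem1a} holds as well, i.e.\ $\{\Tor_n^{\bb Z\ltimes I^r}(\bb Z,\bb Z)\}_r=0$ for all $n>0$, which is exactly the statement that $I$ is pro Tor-unital.

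There is no genuine obstacle at this stage: all of the work has been done upstream, in Proposition \ref{proposition_pro_H} (and its spectral-sequence bookkeeping, which powers Theorem \ref{theorem1a}) and in the classical Artin--Rees lemma underlying Lemma \ref{lemma_Andre}. If one wished to minimise the machinery invoked, I note that the only implication of Theorem \ref{theorem1a} actually needed here is (i)$\Rightarrow$(iv), and in fact a single application of Proposition \ref{proposition_pro_H} already yields it: take $R_\infty=\{\bb Z\ltimes I^r\}_r$, $S_\infty=A$, and $J_\infty=\{I^r\}_r$, with $I^r$ embedded in $\bb Z\ltimes I^r$ as the ideal $0\ltimes I^r$, so that $R_r/J_r=\bb Z$ and $S_r/J_r=A/I^r$; then condition (ii) of the proposition is Lemma \ref{lemma_Andre}(ii), and condition (i) is the desired pro Tor-unitality of $I$.
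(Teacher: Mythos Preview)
Your proof is correct and follows exactly the paper's own argument: invoke Lemma~\ref{lemma_Andre}(ii) to obtain condition (i) of Theorem~\ref{theorem1a}, and then use the equivalences of that theorem to conclude pro Tor-unitality. The additional remark about extracting the single implication (i)$\Rightarrow$(iv) directly from Proposition~\ref{proposition_pro_H} is a nice observation, but not needed.
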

\begin{proof}
This follows from Lemma \ref{lemma_Andre}(ii) and the equivalences of Theorem \ref{theorem1}.
\end{proof}

\begin{corollary}\label{corollary2a}
Ideals of commutative, Noetherian rings satisfy pro excision in algebraic $K$-theory. In particular, if $A\to B$ is a homomorphism of commutative, Noetherian rings, and $I$ is an ideal of $A$ mapped isomorphically to an ideal of $B$, then the map of pro abelian groups \[\{K_n(A,I^r)\}_r\To \{K_n(B,I^r)\}_r\] is an isomorphism for all $n\in\bb Z$.
\end{corollary}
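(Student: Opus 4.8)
The plan is to combine Theorem \ref{theorem_pro_Tor_unital_commutative} with the pro version of the Suslin--Wodzicki criterion due to Geisser and Hesselholt \cite{GeisserHesselholt2006} (recalled in the Introduction), which asserts that pro Tor-unital ideals satisfy pro excision in algebraic $K$-theory. So the proof is essentially an application of the cited theorem once we know the hypothesis is met.

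First I would observe that, given an ideal $I$ of a commutative, Noetherian ring $A$, Theorem \ref{theorem_pro_Tor_unital_commutative} tells us that $I$ is pro Tor-unital. Moreover, by the equivalence of conditions (i)--(iv) in Theorem \ref{theorem1a}, pro Tor-unitality is an intrinsic property of the non-unital ring $I$, depending neither on the ambient ring $A$ nor on any base ring; hence the hypothesis of Geisser--Hesselholt's theorem is met for \emph{every} homomorphism $A\to B$ carrying $I$ isomorphically onto an ideal of $B$, with no Noetherian (or commutativity) hypothesis needed on $B$. This already gives the first sentence of the statement: ideals of commutative, Noetherian rings satisfy pro excision in algebraic $K$-theory.

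Next I would invoke the Geisser--Hesselholt theorem itself: for the pro Tor-unital ideal $I$ and any such $A\to B$, the map $\{K_n(A,I^r)\}_r\to\{K_n(B,I^r)\}_r$ is an isomorphism of pro abelian groups. The ``in particular'' clause is then just the special case in which $B$ is additionally assumed commutative and Noetherian, which requires nothing new.

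The only point that needs care is the range of $n$: to obtain an isomorphism for all $n\in\bb Z$ rather than merely $n\ge 0$, one should take the Geisser--Hesselholt conclusion in its non-connective form (equivalently, deloop the relative $K$-theory spectra), so that the pro-system argument covers negative degrees as well; since the pro abelian groups in question are insensitive to the Bass delooping of the relative terms, this is routine bookkeeping. I do not expect any genuine obstacle here — the whole content of the corollary is that Theorem \ref{theorem_comm_Noeth} now supplies, for arbitrary ideals of commutative, Noetherian rings, the pro Tor-unitality hypothesis that had previously been difficult to verify in examples.
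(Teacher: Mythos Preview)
Your proof is correct and follows essentially the same approach as the paper: invoke Theorem~\ref{theorem_pro_Tor_unital_commutative} to conclude that $I$ is pro Tor-unital, and then apply Geisser--Hesselholt's pro Suslin--Wodzicki criterion to deduce pro excision in $K$-theory. The paper's proof is more terse and does not spell out the remarks on intrinsicness or on negative degrees, but the substance is the same.
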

\begin{proof}
Let $I$ be an ideal of a commutative Noetherian ring. According to Theorem \ref{theorem_pro_Tor_unital_commutative}, $I$ is pro Tor-unital. Moreover, according to Geisser--Hesselholt's pro version of the Suslin--Wodzicki criterion \cite[Thm.~3.1]{GeisserHesselholt2011} (see \cite[Thm.~1.1]{GeisserHesselholt2006} for the proof), this is sufficient to imply that $I$ satisfy pro excision in algebraic $K$-theory.
\end{proof}

We finish this section on pro excision for commutative, Noetherian rings by giving various examples of situations in which Corollary \ref{corollary2a} in can be used, and an interpretation in terms of formal schemes. Although we focus on $K$-theory, we stress that these examples apply verbatim to Hochschild and cyclic homology, their topological counterparts, and Andr\'e--Quillen homology, since we will establish the analogue of Corollary \ref{corollary2a} for these homology theories in Theorems \ref{theorem_pro_excision_for_HH_HC}, \ref{theorem_pro_THH}, and \ref{theorem_pro_excision} respectively.

\begin{example}[Conductor ideals]
Let $A$ be a commutative, Noetherian ring which is reduced; let $I$ be any ideal of the normalisation $\tilde A$ which is contained inside $A$, such as the conductor ideal $\Ann_A(\tilde A/A)$. Then Corollary \ref{corollary2a} implies that \[\{K_n(A,I^r)\}_r\cong\{K_n(\tilde A,I^r)\}_r\] for all $n\in\bb Z$. This has been previously proved by much longer arguments in the following two special cases: $A$ is finitely generated over a characteristic zero field and $\tilde A$ is assumed to be smooth, by Krishna \cite[Thm.~1.1]{Krishna2010}; $A$ is one-dimensional, by the author \cite{Morrow_birelative_dim1}.
\end{example}

\begin{example}[Pro Mayer--Vietoris for closed covers]\label{example_closed_covers}
Let $A$ be a commutative, Noetherian ring, and $I,J\subseteq A$ ideals such that $I\cap J=0$. For any $e\ge1$ we may apply Corollary \ref{corollary2a} to the ring homomorphism $A\to A/I^e$ and ideal $J\subseteq A$ to deduce that the canonical map $\{K_n(A,J^r)\}_r\to\{K_n(A/I^e,(I^e+J^r)/I^e)\}_r$ is an isomorphism for all $n\in\bb Z$. Taking the ``diagonal over $e,r$'' it easily follows that the canonical map $\{K_n(A,J^r)\}_r\to\{K_n(A/I^r,(I^r+J^r)/I^r)\}_r$ is an isomorphism for all $n\in\bb Z$; moreover, we may replace $I^r+J^r$ by $(I+J)^r$ since these two chains of ideals are intertwined. More symmetrically, we have proved that the square of pro spectra
\[\xymatrix{
K(A)\ar[r]\ar[d] & \{K(A/J^r)\}_r\ar[d]\\
\{K(A/I^r)\}_r\ar[r]&\{K(A/(I+J)^r)\}_r
}\]
is homotopy cartesian.
\end{example}

\begin{example}\label{example_formal_closed_covers}
Let $A$ be a commutative, Noetherian ring, and $I,J\subseteq A$ ideals. For any $e\ge1$ we may apply Example \ref{example_closed_covers} to the ring $A/(I^e\cap J^e)$ and ideals $I^e/(I^e\cap J^e)$, $J^e/(I^e\cap J^e)$ to deduce that the canonical map \[\left\{K_n\left(\frac{A}{I^e\cap J^e},\frac{J^{er}+(I^e\cap J^e)}{I^e\cap J^e}\right)\right\}_r\to \left\{K_n\left(\frac{A}{I^{er}+(I^e\cap J^e)},\frac{I^{er}+J^{er}+(I^e\cap J^e)}{I^{er}+(I^e\cap J^e)}\right)\right\}_r\] is an isomorphism for all $n\in\bb Z$. Taking the diagonal over $e,r$, it follows that  \[\{K_n(A/(I^r\cap J^r),J^r/(I^r\cap J^r)\}_r\isoto\{K_n(A/I^r,(I^r+J^r)/I^r)\}_r.\] Using intertwinedness of the chains of ideals $I^r\cap J^r$ and $I^rJ^r$, and of $I^r+J^r$ and $(I+J)^r$, we may rewrite this as \[\{K_n(A/I^rJ^r,J^r/I^rJ^r)\}_r\isoto\{K_n(A/I^r,(I+J)^r/I^r)\}_r.\] In other words, the square of pro spectra 
\[\xymatrix{
\{K(A/I^rJ^r)\}_r\ar[d]\ar[r]&\{K(A/J^r)\}_r\ar[d]\\
\{K(A/I^r)\}_r\ar[r]&\{K(A/(I+J)^r)\}_r
}\]
is homotopy cartesian.
\end{example}

Example \ref{example_formal_closed_covers} may be interpreted as a statement concerning the $K$-theory of formal schemes. If $\frak X$ is a formal scheme, recall that an {\em ideal of definition} for $\frak X$ is an ideal sheaf $\cal J\subseteq\roi_\frak X$ such that $(\frak X,\roi_\frak X/\cal J)$ is a Noetherian scheme, which we will denote by $\frak X/\cal J$; any power of an ideal of definition is again an ideal of definition, and any ideal of definition contains a power of any other ideal of definition \cite[Prop.~9.5]{Hartshorne1977}. So, fixing any ideal of definition $\cal J$, the spectrum \[K(\frak X):=\holim_r K(\frak X/\cal J^r)\] depends only on $\frak X$ and not on the choice of $\cal J$; we denote the homotopy groups of $K(\frak X)$ by $K_n(\frak X)$ and call them them the $K$-groups of the formal scheme $\frak X$. (We could alternatively consider the pro spectrum $\{K(\frak X/\cal J^r)\}_r$, but we have opted to take the homotopy limit.)

Corollary \ref{corollary2a} proves that this $K$-theory of formal schemes has the Mayer--Vietoris property with respect to closed covers:

\begin{theorem}\label{theorem_formal_schemes}
Let $\frak X$ be a Noetherian formal scheme, and $\frak Y,\frak Z\into \frak X$ closed formal subschemes of $\frak X$ such that $\frak X=\frak Y\cup\frak Z$ (set-theoretically). Then the square of spectra
\[\xymatrix{
K(\frak X)\ar[r]\ar[d]&K(\frak Z)\ar[d]\\
K(\frak Y)\ar[r]&K(\frak Y\cap\frak Z)
}\]
is homotopy cartesian.
\end{theorem}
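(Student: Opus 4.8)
The plan is to reduce the statement about formal schemes to the affine pro-statement of Example~\ref{example_formal_closed_covers} via a standard Mayer--Vietoris (Zariski-descent) argument, and then to identify the relevant homotopy limits with the $K$-theory of the formal schemes appearing in the diagram. First I would fix an ideal of definition $\cal J\subseteq\roi_\frak X$ and let $\cal I_\frak Y,\cal I_\frak Z\subseteq\roi_\frak X$ be the ideal sheaves cutting out $\frak Y$ and $\frak Z$; since $\frak X=\frak Y\cup\frak Z$ set-theoretically, the product $\cal I_\frak Y\cal I_\frak Z$ (equivalently $\cal I_\frak Y\cap\cal I_\frak Z$, which is intertwined with it) is contained in some power of $\cal J$, so the chain $\{\cal I_\frak Y^r\cal I_\frak Z^r\}_r$ is cofinal among ideals of definition. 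Hence $K(\frak X)\simeq\holim_r K(\roi_\frak X/\cal I_\frak Y^r\cal I_\frak Z^r)$, while $K(\frak Z)\simeq\holim_r K(\roi_\frak X/\cal I_\frak Z^r)$, $K(\frak Y)\simeq\holim_r K(\roi_\frak X/\cal I_\frak Y^r)$, and $K(\frak Y\cap\frak Z)\simeq\holim_r K(\roi_\frak X/(\cal I_\frak Y+\cal I_\frak Z)^r)$ by definition of the $K$-theory of a formal scheme (using intertwinedness of $\cal I_\frak Y^r+\cal I_\frak Z^r$ with $(\cal I_\frak Y+\cal I_\frak Z)^r$).

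Next I would run the argument in two steps: first over each affine open, then globally. On an affine open $\Spec A\subseteq\frak X$, writing $I,J\subseteq A$ for the ideals induced by $\cal I_\frak Y,\cal I_\frak Z$, Example~\ref{example_formal_closed_covers} says precisely that the square of pro spectra with entries $\{K(A/I^rJ^r)\}_r$, $\{K(A/J^r)\}_r$, $\{K(A/I^r)\}_r$, $\{K(A/(I+J)^r)\}_r$ is homotopy cartesian. Since $\holim$ preserves homotopy cartesian squares, applying $\holim_r$ gives that the corresponding square of spectra is homotopy cartesian on each affine open of $\frak X$. To globalize, I would use that all four functors $\frak W\mapsto K(\frak W)$ (for $\frak W$ one of $\frak X$, $\frak Y$, $\frak Z$, $\frak Y\cap\frak Z$, or rather their $r$-th infinitesimal thickenings) satisfy Zariski descent on the underlying Noetherian space $|\frak X|$: indeed $K$-theory of Noetherian schemes satisfies Nisnevich (hence Zariski) descent by Thomason--Trobaugh, and $\holim_r$ of a limit-preserving functor again preserves limits, so $\frak W\mapsto K(\frak W)$ sends Zariski covers of formal schemes to homotopy limit diagrams. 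A patching spectral sequence / Mayer--Vietoris induction on a finite affine cover of $|\frak X|$ then upgrades the affine-local homotopy cartesian squares to the global one.

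The main obstacle I expect is the globalization: one must be careful that the sheafy powers $\cal I_\frak Y^r$, $\cal I_\frak Z^r$ restrict correctly to affine opens (they do, being defined by coherent ideal sheaves on a Noetherian scheme, so $(\cal I_\frak Y^r)|_{\Spec A}=\widetilde{I^r}$), and that the homotopy limit over $r$ commutes with the finite homotopy limits involved in Zariski descent over a finite affine cover --- this is automatic since all homotopy limits commute with one another, but it is the point where the Noetherian hypothesis on $\frak X$ (ensuring a \emph{finite} affine cover, so that Zariski descent reduces to iterated squares) is genuinely used. Everything else is bookkeeping with intertwined chains of ideals, exactly as in Examples~\ref{example_closed_covers} and~\ref{example_formal_closed_covers}, and the only non-formal input is Corollary~\ref{corollary2a} for commutative Noetherian rings, which is already subsumed in Example~\ref{example_formal_closed_covers}.
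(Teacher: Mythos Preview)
Your proposal is correct and follows essentially the same approach as the paper: reduce to the affine case via Zariski descent for $K$-theory of (formal) schemes, and then invoke Example~\ref{example_formal_closed_covers} after taking homotopy limits. The paper's proof is terser---it simply asserts that Zariski descent for $K$-theory implies Mayer--Vietoris for open covers of formal schemes and hence reduces to the affine case---whereas you spell out the identification of $K(\frak X)$ with $\holim_r K(\roi_{\frak X}/\cal I_\frak Y^r\cal I_\frak Z^r)$ via cofinality of ideals of definition and are explicit about why the globalization step works; but the underlying strategy is identical.
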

\begin{proof}
Since $K$-theory satisfies Zariski descent, it is easy to see that the $K$-theory of formal schemes has the Mayer--Vietoris property with respect to an open cover. This reduces the assertion of the theorem to the affine case, where it follows by taking homotopy limits in the main conclusion of Example \ref{example_formal_closed_covers}.
\end{proof}

\section{Pro excision and continuity in cyclic homology}\label{section_HH}
This section has four distinct goals:
\begin{itemize}\itemsep0pt
\item[\S\ref{subsection_pro_excision_for_HH}.] We prove that pro Tor-unital ideals satisfy pro excision in derived Hochschild and cyclic homology, and in topological Hochschild and cyclic homology.
\item[\S\ref{section_converse}.] We prove the converse statement, namely that pro excision forces ideals to be pro Tor-unital.
\item[\S\ref{subsection_HP}.] We extend the Cuntz--Quillen theorem on excision in periodic cyclic homology from base fields of characteristic zero to arbitrary commutative $\bb Q$-algebras.
\item[\S\ref{section_HH_and_HC}.] We restrict to commutative algebras, presenting Artin--Rees and continuity properties of Hochschild homology, including strong forms of the Fe\u\i gin--Tsygan and pro Hochschild--Kostant--Rosenberg theorems.
\end{itemize}

We begin by explaining the derived Hochschild and cyclic homology which will concern us. We assume that the reader is familiar with the basic language and tools of homotopical algebra due to Quillen \cite{Quillen1967}; we will occasionally implicitly identify simplicial objects with chain complexes via the Dold--Kan correspondence, but this should not cause any confusion.

Let $k$ be a commutative ring and $A$ a $k$-algebra. Given an $A$-bimodule $M$, we let  $H_*^{\sub{naive},k}(A,M)$  denote the ``usual'' Hochschild homology of $A$ as a $k$-algebra with coefficients in $M$; it is the homotopy of the Hochschild simplicial $k$-module $C_\bullet^k(A,M)$. In particular, $\HH_*^{\sub{naive},k}(A):=H_*^{\sub{naive},k}(A,A)$ denotes the usual Hochschild homology of $A$ as a $k$-algebra. However, we will work throughout with the derived version of Hochschild homology, for which we use the notation $H_*^k(A,M)$. That is, letting $P_\bullet\to A$ be a simplicial resolution of $A$ by free $k$-algebras, $H_*^k(A,M)$ is defined to be the homotopy (of the diagonal) of the bisimplicial $k$-module $C_\blob^k(P_\blob,M)$ given by \[p,q\mapsto C_q^k(P_p,M)=M\otimes_kP_p^{\otimes_k q}.\] In the special case $A=M$ we write $C_\blob^k(A):=C_\blob^k(A,A)$ and $\HH_*^k(A):=H_*^k(A,A)$.

\begin{remark}[Shukla homology]
Derived Hochschild homology is also called {\em Shukla homology} after \cite{Shukla1961}. Indeed, the Shukla homology $\op{Shuk}_*^k(A,M)$ of a $k$-algebra $A$ with coefficients in an $A$-bimodule $M$ is usually defined as the homology of the totalisation of the simplicial complex given by $q\mapsto M\otimes_k D_\blob^{\otimes_kq}$, where $D_\blob\to A$ is a projective, differential graded $k$-algebra resolution of $A$ and $\otimes_k$ denotes a graded tensor product; letting $D_\blob=CP_\blob$ be the unnormalised complex (equipped with the shuffle product as usual) of a simplicial resolution $P_\blob\to A$, and appealing to the Eilenberg--Zilber weak equivalence $M\otimes_k (CP_\blob)^{\otimes_kq}\simeq M\otimes_kP_\blob^{\otimes_kq}$, reveals that $\op{Shuk}_*^k(A,M)\cong H_*^k(A,M)$.
\end{remark}

\begin{remark}\label{remark_enveloping_alg}
A standard argument with bar complexes, as in \cite[Prop.~1.1.13]{Loday1992}, shows that $H_*^k(A,M)$ is given by the homotopy of the bisimplicial $k$-module $p\mapsto M\dotimes_{P_p\otimes_kP_p^\sub{\tiny op}}A$. More concisely, this means that $H_*^k(A,M)$ is given by the homotopy of the simplicial $k$-module \[M\dotimes_{A\dotimes_kA^\sub{\tiny op}}A,\] where we use derived tensor products over simplicial rings \cite[\S II.6]{Quillen1967}.
\end{remark}

Next we discuss cyclic homology. Firstly, $\HC_*^{\sub{naive},k}(A)$ denotes the usual cyclic homology of the $k$-algebra $A$; i.e., the homology of $\CC_{\blob}^k(A)$, which denotes the totalisation of Tsygan's bicomplex  $\CC_{\blob\blob}^k(A)$. Just as for Hochschild homology we prefer to denote by $\HC_*^k(A)$ the derived version, defined as the homology of the totalisation of the bicomplex $\CC_{\bullet}^k(P_\bullet)$, where $P_\bullet\to A$ is as in the previous paragraph. The usual SBI sequence remains valid in the derived setting: \[\cdots\To \HH_n^k(A)\stackrel{I}{\To} \HC_n^k(A)\stackrel{S}{\To} \HC_{n-2}^k(A)\stackrel{B}{\To}\cdots\]

The following standard lemma implies in particular that derived Hochschild and cyclic homology coincide with the usual theories when the base ring $k$ is a field:

\begin{lemma}\label{lemma_derived_HH_and_HC}
Let $k$ be a commutative ring, $A$ a $k$-algebra, and $M$ an $A$-bimodule. If $A$ is flat over $k$, then the canonical maps \[H_n^k(A,M)\To H_n^{\sub{naive},k}(A,M)\mbox{\quad and \quad}  \HC_n^k(A)\To \HC_n^{\sub{naive},k}(A)\] are isomorphisms for all $n\ge0$.
\end{lemma}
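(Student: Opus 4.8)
\quad The plan is to first establish the Hochschild statement, using only the hypothesis that $A$ is flat over $k$, and then to obtain the cyclic statement formally from the SBI sequence.

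For the Hochschild part I would invoke Remark~\ref{remark_enveloping_alg}, which identifies $H_*^k(A,M)$ with the homotopy of $M\dotimes_{A\dotimes_kA^\sub{\tiny op}}A$. Since $A$ is flat over $k$ it has no higher Tor over $k$, so $A\dotimes_kA^\sub{\tiny op}$ is the ordinary discrete ring $A\otimes_kA^\sub{\tiny op}$; moreover the two-sided bar complex $\cdots\to A\otimes_kA\otimes_kA\to A\otimes_kA\to A$ is exact (it has a $k$-linear contracting homotopy, independently of flatness), and its terms $A^{\otimes_k n}\cong(A\otimes_kA^\sub{\tiny op})\otimes_kA^{\otimes_k(n-2)}$ are flat over $A\otimes_kA^\sub{\tiny op}$, precisely because $A^{\otimes_k(n-2)}$ is $k$-flat. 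Hence it is a flat resolution of $A$ over $A\otimes_kA^\sub{\tiny op}$, so $M\dotimes_{A\otimes_kA^\sub{\tiny op}}A$ is computed by $M\otimes_{A\otimes_kA^\sub{\tiny op}}A^{\otimes_k\blob+2}=C_\blob^k(A,M)$; comparing differentials gives $H_n^k(A,M)\isoto H_n^{\sub{naive},k}(A,M)$. (Alternatively one can work straight from the definition: for a free simplicial $k$-algebra resolution $P_\blob\to A$, the spectral sequence of the bisimplicial $k$-module $C_\blob^k(P_\blob,M)$ obtained by first taking homotopy in the resolution direction collapses onto the row $p=0$, since for each fixed $q$ the map $M\otimes_kP_\blob^{\otimes_kq}\to M\otimes_kA^{\otimes_kq}$ is a weak equivalence: the $P_p$ and $A$ are $k$-flat, so $P_\blob^{\otimes_kq}\to A^{\otimes_kq}$ is a weak equivalence by iterated application of $k$-flatness, and $M\otimes_k-$ preserves it because the mapping cone is a bounded-below acyclic complex of flat $k$-modules, which stays acyclic after $-\otimes_kM$.)

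For the cyclic part I would argue by induction on $n$. The augmentation $P_\blob\to A$ induces a morphism from the SBI sequence of the derived theory of $A$ to the SBI sequence of the naive theory, compatibly in $\HH$ and $\HC$, because the SBI sequence is natural in the algebra and may be applied levelwise to $P_\blob$. For $n<0$ both $\HC$-groups vanish, and for $n=0$ the SBI sequence gives $\HC_0\cong\HH_0$ on each side, so the case $n=0$ follows from the Hochschild statement. For the inductive step, compare the segment $\HC_{n-1}\to\HH_n\to\HC_n\to\HC_{n-2}\to\HH_{n-1}$ of the derived SBI sequence with its naive counterpart: the two $\HH$-maps are isomorphisms by the Hochschild statement, and the $\HC$-maps in degrees $n-1$ and $n-2$ are isomorphisms by the inductive hypothesis, so the five lemma forces $\HC_n^k(A)\To\HC_n^{\sub{naive},k}(A)$ to be an isomorphism.

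The only genuine content is the flatness bookkeeping in the Hochschild step: the point that $k$-flatness of $A$ alone suffices to make the bar complex a flat resolution over $A\otimes_kA^\sub{\tiny op}$ --- equivalently, that $M\otimes_k-$ is homotopical on the resolutions that intervene. Everything afterwards is formal diagram chasing with the SBI ladder and the five lemma, together with the routine check that the SBI sequence is natural enough in the algebra to be transported through the simplicial resolution $P_\blob$.
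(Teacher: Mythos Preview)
Your proposal is correct. In fact, your parenthetical ``alternative'' argument is precisely the paper's proof: the paper simply observes that flatness of $A$ makes $M\otimes_kP_\blob^{\otimes_kq}\to M\otimes_kA^{\otimes_kq}$ a weak equivalence for each $q$, so the diagonal of the bisimplicial module $C_\blob^k(P_\blob,M)$ is weakly equivalent to $C_\blob^k(A,M)$. Your primary route via Remark~\ref{remark_enveloping_alg} and the bar resolution over $A\otimes_kA^\sub{op}$ is a slightly longer but perfectly valid alternative that makes the role of flatness more explicit (it is needed to ensure the bar terms are flat as bimodules). For cyclic homology the paper gives no separate argument at all, leaving it implicit that the same reasoning applies columnwise to Tsygan's bicomplex; your SBI-plus-five-lemma induction is a clean way to make this step honest.
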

\begin{proof}
If $A$ is flat over $k$ then the map $M\otimes_kP_\bullet^{\otimes_kq}\to M\otimes_kA^{\otimes_kq}$ is a weak equivalence for all $q\ge 0$, and so the diagonal of the bisimplicial $k$-module $C_\bullet^k(P_\bullet,M)$ is weakly equivalent to $C_\bullet^k(A,M)$.
\end{proof}

More generally, the argument of Lemma \ref{lemma_derived_HH_and_HC} shows that $H_*^k(A,M)$ and $\HC_*^k(A)$ may be calculated using any degree-wise-flat simplicial resolution $P_\blob\to A$. In particular, if $A$ is a commutative $k$-algebra, then $P_\blob$ may be taken to be a simplicial resolution of $A$ by flat commutative $k$-algebras.

Given a $k$-algebra $A$ and ideal $I\subseteq A$, we may choose free simplicial $k$-algebra resolutions $P_\blob\to A$ and $Q_\blob\to A/I$ so that there is a compatible map $P_\blob\to Q_\blob$. Then the relative (derived) Hochschild homology groups $\HH_*^k(A,I)$ are defined to be the homotopy groups of the homotopy fibre of $C^k_\blob(P_\blob)\to C^k_\blob(Q_\blob)$ (or, equivalently, of $C^k_\blob(P_\blob,A)\to C^k_\blob(Q_\blob,A/I)$); by definition they fit into a long exact sequence \[\cdots\To \HH_n^k(A,I)\To \HH_n^k(A)\To \HH_n^k(A/I)\To\cdots\] Analogous comments apply to (derived) cyclic homology.

Our primary tool in this section will be a restriction spectral sequence for Hochschild homology. In topological Hochschild homology, and hence essentially also in derived Hochschild homology, it is due to M.~Brun \cite[Thm.~6.2.10--Rmk.~6.2.12]{Brun2000}. We present here a purely algebraic proof:

\begin{proposition}\label{proposition_HH_SS}
Let $k$ be a commutative ring, $A\to B$ a homomorphism of $k$-algebras, and $M$ a $B$-bimodule. Then there is a first quadrant spectral sequence of $k$-modules \[E^2_{pq}=H_p^k(B,\Tor_q^A(B,M))\Longrightarrow H_{p+q}^k(A,M).\]
\end{proposition}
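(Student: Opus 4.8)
The plan is to transport the coefficients of the derived Hochschild complex from the ``$A$\nobreakdash-enveloping algebra'' $A\dotimes_kA^\sub{op}$ to the ``$B$\nobreakdash-enveloping algebra'' $B\dotimes_kB^\sub{op}$ by a two\nobreakdash-step change of rings, and then to run a hyperhomology spectral sequence in the resulting ``$B$\nobreakdash-side'' coefficient. Throughout I would take all derived tensor products over simplicial rings; equivalently, everything may be modelled explicitly by bisimplicial $k$\nobreakdash-modules (Hochschild complexes of free simplicial $k$\nobreakdash-algebra resolutions), and I would suppress the routine bookkeeping of degree-wise-flat resolutions.

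Recall from Remark~\ref{remark_enveloping_alg} that $H_n^k(A,M)=\pi_n\big(M\dotimes_{A\dotimes_kA^\sub{op}}A\big)$, and likewise $H_n^k(B,N)=\pi_n\big(N\dotimes_{B\dotimes_kB^\sub{op}}B\big)$ for a $B$\nobreakdash-bimodule $N$. First I would change the left tensor factor along $A\to B$: since $B\dotimes_kA^\sub{op}\simeq B\dotimes_A(A\dotimes_kA^\sub{op})$, associativity of derived tensor products gives
\[
M\dotimes_{A\dotimes_kA^\sub{op}}A\;\simeq\;M\dotimes_{B\dotimes_kA^\sub{op}}\Big((B\dotimes_kA^\sub{op})\dotimes_{A\dotimes_kA^\sub{op}}A\Big)\;\simeq\;M\dotimes_{B\dotimes_kA^\sub{op}}B,
\]
where $B$ now carries its $(B,A)$\nobreakdash-bimodule structure. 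Then I would change the remaining factor along $A^\sub{op}\to B^\sub{op}$: using $B\dotimes_kB^\sub{op}\simeq(B\dotimes_kA^\sub{op})\dotimes_{A^\sub{op}}B^\sub{op}$ and that the $(B,A)$\nobreakdash-bimodule $B$ is the restriction along $B\dotimes_kA^\sub{op}\to B\dotimes_kB^\sub{op}$ of the diagonal $B$\nobreakdash-bimodule, one obtains
\[
M\dotimes_{B\dotimes_kA^\sub{op}}B\;\simeq\;\Big(M\dotimes_{B\dotimes_kA^\sub{op}}(B\dotimes_kB^\sub{op})\Big)\dotimes_{B\dotimes_kB^\sub{op}}B\;\simeq\;(B\dotimes_AM)\dotimes_{B\dotimes_kB^\sub{op}}B.
\]
Hence $H_n^k(A,M)\cong\pi_n\big((B\dotimes_AM)\dotimes_{B\dotimes_kB^\sub{op}}B\big)$, i.e.\ derived Hochschild homology of $A$ with coefficients in $M$ is derived Hochschild homology of $B$ with coefficients in the simplicial $B$\nobreakdash-bimodule $B\dotimes_AM$.

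To finish, represent $B\dotimes_AM$ by a simplicial $B$\nobreakdash-bimodule $N_\blob$ with $\pi_qN_\blob=\Tor_q^A(B,M)$; then $(B\dotimes_AM)\dotimes_{B\dotimes_kB^\sub{op}}B$ is computed by the Hochschild complex of a free simplicial $k$\nobreakdash-algebra resolution of $B$ with coefficients in $N_\blob$, a tri\nobreakdash-simplicial $k$\nobreakdash-module. Its standard hyperhomology spectral sequence, obtained by filtering in the $N_\blob$\nobreakdash-direction, reads
\[
E^2_{pq}=H_p^k(B,\pi_qN_\blob)=H_p^k\big(B,\Tor_q^A(B,M)\big)\;\Longrightarrow\;\pi_{p+q}\big((B\dotimes_AM)\dotimes_{B\dotimes_kB^\sub{op}}B\big)=H_{p+q}^k(A,M),
\]
which is the asserted spectral sequence; it lies in the first quadrant because derived Hochschild homology is connective and $\Tor_q^A(B,M)$ vanishes for $q<0$.

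The bulk of the work, and the only real subtlety, is the two change-of-rings identifications in the middle step: one must check the ``enveloping algebra reductions'' $B\dotimes_kA^\sub{op}\simeq B\dotimes_A(A\dotimes_kA^\sub{op})$ and $B\dotimes_kB^\sub{op}\simeq(B\dotimes_kA^\sub{op})\dotimes_{A^\sub{op}}B^\sub{op}$, and, more importantly, keep careful track of which left and right module structures are being used at each stage so that the final object really is $H_*^k(B,B\dotimes_AM)$. (Alternatively one may avoid enveloping algebras entirely and argue directly with the two-sided bar complexes $\mathrm{Bar}_\blob(Q_\blob,P_\blob,Q_\blob)$ attached to compatible free simplicial $k$\nobreakdash-algebra resolutions $P_\blob\to A$ and $P_\blob\to Q_\blob\to B$; the content is the same, and this is perhaps closer to the purely algebraic proof advertised.) The hyperhomology spectral sequence in the last step is routine once everything is presented via bisimplicial $k$\nobreakdash-modules.
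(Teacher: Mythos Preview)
Your proposal is correct and lands on exactly the same key identification as the paper, namely the weak equivalence
\[
M\dotimes_{A\dotimes_kA^\sub{op}}A\;\simeq\;(B\dotimes_AM)\dotimes_{B\dotimes_kB^\sub{op}}B,
\]
after which the hyperhomology spectral sequence in the simplicial $B$-bimodule $B\dotimes_AM$ yields the statement. The difference lies only in how this equivalence is established. You argue abstractly via two change-of-rings steps and associativity of derived tensor products over simplicial rings; as you note yourself, this is correct but requires care with the left/right module structures at each stage. The paper instead chooses compatible resolutions $P_\blob\to A$, a free simplicial $P_\blob$-algebra $Q_\blob\to B$, and a free simplicial $Q_\blob\otimes_kQ_\blob^\sub{op}$-module $M_\blob\to M$, and then simply observes that in each simplicial degree $p$ the comparison map
\[
M_p\otimes_{P_p\otimes_kP_p^\sub{op}}P_p\To (Q_p\otimes_{P_p}M_p)\otimes_{Q_p\otimes_kQ_p^\sub{op}}Q_p
\]
is an isomorphism of $k$-modules, with explicit inverse $(b\otimes m)\otimes b'\mapsto mb'b\otimes 1$. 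This one-line degree-wise check replaces your two-step change of rings and all the attendant bookkeeping; it is precisely the ``argue directly with two-sided bar complexes'' alternative you mention in your final parenthetical. Your route is more conceptual and generalises cleanly to other settings, while the paper's is shorter and avoids any appeal to associativity of derived tensor products over simplicial rings.
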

\begin{proof}
We must show that the canonical map of simplicial $k$-modules \[M\dotimes_{A\dotimes_kA^\sub{\tiny op}}A\To (B\dotimes_AM)\dotimes_{B\dotimes_kB^\sub{\tiny op}}B\] is a weak equivalence. Indeed, it follows from the description of Hochschild homology given in Remark \ref{remark_enveloping_alg} that the homotopy of the right side is described by the desired spectral sequence, and that the homotopy of the left side is the desired abutment.

To describe the desired weak equivalence more explicitly, let $P_\blob\to A$ be a simplicial resolution of $A$ by free $k$-algebras, let $Q_\blob\to B$ be a simplicial resolution of $B$ by a free simplicial $P_\blob$-algebra (hence $Q_\blob$ is also a free simplicial $k$-algebra), and let $M_\blob\to M$ be a resolution of $M$ by a free simplicial $Q_\blob\otimes_kQ_\blob^\sub{op}$-module (hence $M_\blob$ is also a free simplicial $P_\blob\otimes_kP_\blob^\sub{op}$-module). We must show that the canonical map \[M_\blob\otimes_{P_\blob\otimes_kP_\blob^\sub{\tiny op}}P_\blob\To (Q_\blob\otimes_{P_\blob} M_\blob)\otimes_{Q_\blob\otimes_kQ_\blob^\sub{\tiny op}}Q_\blob\] is a weak equivalence. But in fact more is true: in each degree $p\ge0$ the canonical map \[M_p\otimes_{P_p\otimes_kP_p^\sub{\tiny op}}P_p\To (Q_p\otimes_{P_p} M_p)\otimes_{Q_p\otimes_kQ_p^\sub{\tiny op}}Q_p,\quad m\otimes a\mapsto (1\otimes m)\otimes a\] is an isomorphism of $k$-modules. Indeed, the inverse is easily seen to be given by $(b\otimes m)\otimes b'\mapsto mb'b\otimes 1$.
\end{proof}

\subsection{Pro excision in $\HH$, $\HC$, $\THH$, and $\TC$}\label{subsection_pro_excision_for_HH}
We now prove that pro Tor-unital ideals satisfy pro excision in derived Hochschild and cyclic homology, and in their topological counterparts. We begin with two corollaries of Proposition \ref{proposition_HH_SS}:

\begin{corollary}\label{Corollary_HH_1}
Let $k$ be a commutative ring, $A$ a $k$-algebra, and $I$ a pro Tor-unital ideal of $A$. Then the canonical map \[\{H_n^k(A,A/I^r)\}_r\To\{\HH_n^k(A/I^r)\}_r\] is an isomorphism of pro $k$-modules for all $n\ge0$.
\end{corollary}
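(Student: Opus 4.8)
The plan is to apply the restriction spectral sequence of Proposition \ref{proposition_HH_SS} to the homomorphism $A \to A/I^r$ and the $(A/I^r)$-bimodule $M = A/I^r$, and then to kill all the off-diagonal terms using pro Tor-unitality. Concretely, for each fixed $r$ Proposition \ref{proposition_HH_SS} (with $B = A/I^r$) supplies a first quadrant spectral sequence of $k$-modules
\[
E^2_{pq}(r) = H_p^k\bigl(A/I^r,\ \Tor_q^A(A/I^r, A/I^r)\bigr) \Longrightarrow H_{p+q}^k(A, A/I^r).
\]
The edge map of this spectral sequence, in bidegree $(n,0)$, is exactly the canonical map $H_n^k(A, A/I^r) \to \HH_n^k(A/I^r) = H_n^k(A/I^r, A/I^r)$ appearing in the statement.

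The next step is to pass to the limit over $r$. Since the spectral sequences $E^\bullet_{\bullet\bullet}(r)$ are natural in $r$ (they come from the natural construction in the proof of Proposition \ref{proposition_HH_SS}, applied levelwise to compatible simplicial resolutions), their limit is a first quadrant spectral sequence of pro $k$-modules
\[
E^2_{pq}(\infty) = \bigl\{H_p^k\bigl(A/I^r,\ \Tor_q^A(A/I^r, A/I^r)\bigr)\bigr\}_r \Longrightarrow \bigl\{H_{p+q}^k(A, A/I^r)\bigr\}_r,
\]
using that the category of pro $k$-modules is abelian so that the spectral sequence machinery goes through (cf.\ Appendix \ref{appendix_pro}). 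Now for $q > 0$ the pro abelian group $\{\Tor_q^A(A/I^r, A/I^r)\}_r$ vanishes, because $I$ is pro Tor-unital and hence satisfies condition (i) of Theorem \ref{theorem1a}. The term $E^2_{pq}(\infty)$ is computed by a simplicial $k$-module built functorially from the bimodule $\{\Tor_q^A(A/I^r, A/I^r)\}_r$ by tensoring (over the relevant simplicial enveloping algebras) with a fixed resolution; since $H_*^k(A/I^r, -)$ is an additive functor of the coefficient bimodule, it sends the zero pro-bimodule to zero, so $E^2_{pq}(\infty) = 0$ for all $q > 0$. Therefore the limiting spectral sequence collapses onto the row $q = 0$, yielding edge-map isomorphisms
\[
\bigl\{H_n^k(A, A/I^r)\bigr\}_r \;\xrightarrow{\ \simeq\ }\; \bigl\{H_n^k(A/I^r, A/I^r)\bigr\}_r = \bigl\{\HH_n^k(A/I^r)\bigr\}_r
\]
for all $n \ge 0$, which is precisely the assertion.

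The main obstacle I anticipate is a bookkeeping one rather than a conceptual one: one must check that Proposition \ref{proposition_HH_SS} can be set up compatibly for all $r$ at once, i.e.\ that one can choose the free simplicial resolution $P_\blob \to A$ once, then compatible free simplicial $P_\blob$-algebra resolutions $Q_\blob^{(r)} \to A/I^r$ with transition maps $Q_\blob^{(r+1)} \to Q_\blob^{(r)}$ lying over $A/I^{r+1} \to A/I^r$, and likewise coefficient module resolutions, so that the spectral sequences genuinely assemble into an inverse system. This is routine by the usual ``resolve the whole tower'' argument, but it is the only point requiring care; once it is in place, the collapse argument is immediate from the pro Tor vanishing furnished by Theorem \ref{theorem1a}. (Alternatively, one can avoid explicit resolutions and argue directly with the derived tensor product description $M \dotimes_{A \dotimes_k A^{\sub{\tiny op}}} A$ of Remark \ref{remark_enveloping_alg} in the category of pro simplicial $k$-modules, but the spectral sequence formulation keeps the role of condition (i) most transparent.)
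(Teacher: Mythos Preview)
Your proposal is correct and follows essentially the same approach as the paper: apply Proposition \ref{proposition_HH_SS} to $A\to A/I^r$ with bimodule $A/I^r$, pass to the limit over $r$, and use condition (i) of Theorem \ref{theorem1a} to collapse the spectral sequence onto the row $q=0$. The paper's proof is more terse about the naturality-in-$r$ bookkeeping that you flag, but the argument is otherwise identical.
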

\begin{proof}
Applying Proposition \ref{proposition_HH_SS} to the homomorphism $A\to A/I^r$ and bimodule $A/I^r$, and letting $r\to\infty$, we obtain a spectral sequence of pro $k$-modules \[E^2_{pq}(\infty)=\{H_p^k(A/I^r,\Tor_q^A(A/I^r,A/I^r))\}_r\implies \{H_{p+q}^k(A,A/I^r)\}_r.\] Since $I$ is pro Tor-unital, Theorem \ref{theorem1a} implies that $\{\Tor_q^A(A/I^r,A/I^r)\}_r=0$ for $q>0$, so the spectral sequence collapses to the desired edge map isomorphisms $\{H_n^k(A,A/I^r)\}_r\isoto\{\HH_n^k(A/I^r)\}_r$.
\end{proof}

We remark that $H_n^k(A,I)$, which appears in the following corollary and theorem, denotes the Hochschild homology of $A$ with coefficients in the $A$-bimodule $I$; this should not be confused with the relative Hochschild homology $\HH_n^k(A,I)$.

\begin{corollary}\label{Corollary_HH_2}
Let $k$ be a commutative ring, $A\to B$ a homomorphism of $k$-algebras, and $I$ a pro Tor-unital ideal of $A$ mapped isomorphically to an ideal of $B$. Then the canonical map \[\{H_n^k(A,I^r)\}_r\To \{H_n^k(B,I^r)\}_r\] is an isomorphism of pro $k$-modules for all $n\ge0$.
\end{corollary}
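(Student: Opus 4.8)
The plan is to follow the template of the proof of Corollary \ref{Corollary_HH_1}, replacing the base-change computation there by the restriction spectral sequence of Proposition \ref{proposition_HH_SS} and feeding in the pro Tor vanishing that was already extracted during the proof of Proposition \ref{proposition_pro_H}. Throughout, exactly as in the deduction of Theorem \ref{theorem1a} from Proposition \ref{proposition_pro_H}, I identify $I^r$ with its (two-sided) image in $B$, so that it is simultaneously an $A$-bimodule and a $B$-bimodule whose restriction along $A\to B$ is the former. Applying Proposition \ref{proposition_HH_SS} to the homomorphism $A\to B$ and the $B$-bimodule $I^r$, and letting $r\to\infty$, yields a first quadrant spectral sequence of pro $k$-modules
\[E^2_{pq}(\infty)=\{H_p^k(B,\Tor_q^A(B,I^r))\}_r\Longrightarrow\{H_{p+q}^k(A,I^r)\}_r,\]
in which $B$ is regarded as a right $A$-module and $I^r$ as a left $A$-module.

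The crucial input is that $\{\Tor_n^A(B,A/I^r)\}_r=0$ for all $n>0$. This was proved as the first assertion in the step (i)$\Rightarrow$(ii) of the proof of Proposition \ref{proposition_pro_H}, applied with $R_\infty=A$, $S_\infty=B$, $J_\infty=\{I^r\}_r$; its hypothesis (condition (i) of that proposition) holds because $I$ is pro Tor-unital, so that $\{\Tor_n^A(A/I^r,A/I^r)\}_r=0$ by Theorem \ref{theorem1a}. (Alternatively: pass to the limit in formula (\dag) preceding that proof and invoke Lemma \ref{lemma_pro_result_of_Tor_vanishing}.) Applying $B\otimes_A-$ to the short exact sequences $0\to I^r\to A\to A/I^r\to 0$ of left $A$-modules and taking the limit over $r$, the vanishing of $\Tor_n^A(B,A)$ for $n>0$ gives $\{\Tor_q^A(B,I^r)\}_r\cong\{\Tor_{q+1}^A(B,A/I^r)\}_r=0$ for $q\ge1$, together with a pro isomorphism $\{B\otimes_AI^r\}_r\isoto\{I^r\}_r$ induced by multiplication: the multiplication map $B\otimes_AI^r\to I^r$ is surjective, and its kernel is a quotient of $\Tor_1^A(B,A/I^r)$, which vanishes in the limit.

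Feeding this back, the pro spectral sequence is concentrated on the line $q=0$, where $E^2_{p0}(\infty)=\{H_p^k(B,B\otimes_AI^r)\}_r\cong\{H_p^k(B,I^r)\}_r$ by the pro isomorphism just established; hence it collapses to isomorphisms $\{H_n^k(A,I^r)\}_r\isoto\{H_n^k(B,I^r)\}_r$ for all $n\ge0$. It remains to check that this composite isomorphism is the canonical map, which is a formal compatibility: under the explicit weak equivalence $I^r\dotimes_{A\dotimes_kA^\sub{\tiny op}}A\simeq (B\dotimes_AI^r)\dotimes_{B\dotimes_kB^\sub{\tiny op}}B$ constructed in the proof of Proposition \ref{proposition_HH_SS}, the canonical map $\{H_n^k(A,I^r)\}_r\to\{H_n^k(B,I^r)\}_r$ becomes the map induced on Hochschild homology over $B$ by the multiplication $B\otimes_AI^r\to I^r$, which is precisely the identification above. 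The only substantive step is the pro Tor vanishing, and that has already been carried out in Section \ref{section_proof_of_1}; everything else is bookkeeping.
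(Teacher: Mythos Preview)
Your proof is correct and follows the same approach as the paper: apply the restriction spectral sequence of Proposition~\ref{proposition_HH_SS} to $A\to B$ with bimodule $I^r$, and show it collapses by establishing $\{\Tor_q^A(B,I^r)\}_r\cong\{I^r\}_r$ for $q=0$ and $=0$ for $q>0$, which in turn reduces via the long exact sequence for $0\to I^r\to A\to A/I^r\to 0$ to the vanishing of $\{\Tor_n^A(B,A/I^r)\}_r$ for $n>0$. The only cosmetic difference is that you cite this last vanishing directly from the (i)$\Rightarrow$(ii) step in the proof of Proposition~\ref{proposition_pro_H}, whereas the paper re-derives it by applying $\Tor^A(-,A/I^r)$ to the bicartesian square $0\to A\to B\oplus A/I^r\to B/I^r\to 0$ and invoking Corollary~\ref{corollary_most_useful_pro_vanishing}; these are the same computation packaged differently.
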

\begin{proof}
Applying Proposition \ref{proposition_HH_SS} to the homomorphism $A\to B$ and bimodule $I^r$, and letting $r\to\infty$, we obtain a spectral sequence of pro $k$-modules \[E^2_{pq}(\infty)=\{H_p^k(B,\Tor_q^A(B,I^r))\}_r\implies\{H_{p+q}^k(A,I^r)\}_r\] This will collapse to the desired edge map isomorphisms if we can prove that \[\{\Tor_q^A(B,I^r)\}_r=\begin{cases} \{I^r\}_r & q=0\\ 0 & q>0.\end{cases}\tag{\dag}\] Using the long exact $\Tor^A(B,-)$ sequence for $0\to I^r\to A\to A/I^r\to 0$, it is easy to see that (\dag) is equivalent to the condition that $\{\Tor_q^A(B,A/I^r)\}_r=0$ for all $q>0$. But using the long exact $\Tor^A(-,A/I^r)$ sequence for the exact sequence $0\to A\to B\oplus A/I^r\to B/I^r\to 0$ (similarly to the proof of Proposition \ref{proposition_pro_H}), it is therefore enough to show that $\{\Tor_q^A(B/I^r,A/I^r)\}_r=0$ for all $q>0$; this is true by Corollary \ref{corollary_most_useful_pro_vanishing}.
\end{proof}

We can now prove pro excision for pro Tor-unital ideals in Hochschild and cyclic homology:

\begin{theorem}[Pro excision for derived $\HH$ and $\HC$]\label{theorem_pro_excision_for_HH_HC}
Let $k$ be a commutative ring, $A\to B$ a homomorphism of $k$-algebras, and $I$ a pro Tor-unital ideal of $A$ mapped isomorphically to an ideal of $B$. Then the canonical maps \[\{\HH_n^k(A,I^r)\}_r\To \{\HH_n^k(B,I^r)\}_r\quad\mbox{and}\quad\{\HC_n^k(A,I^r)\}_r\To \{\HC_n^k(B,I^r)\}_r\] are isomorphisms of pro $k$-modules for all $n\ge0$.
\end{theorem}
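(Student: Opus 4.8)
The plan is to deduce the result for relative Hochschild and cyclic homology from the absolute-coefficient statements already established in Corollaries \ref{Corollary_HH_1} and \ref{Corollary_HH_2}, together with a comparison of long exact sequences. The key observation is that the relative group $\HH_n^k(A,I^r)$ fits into a long exact sequence
\[\cdots\To\HH_n^k(A,I^r)\To\HH_n^k(A)\To\HH_n^k(A/I^r)\To\cdots,\]
and likewise for $B$; but since the excision question involves changing $A$ to $B$ while keeping $I$ fixed, it is more convenient to use the birelative formulation. Precisely, for each $r$ the square of $k$-algebras with $A,B,A/I^r,B/I^r$ at its corners gives rise, upon passing to derived Hochschild complexes, to a square of simplicial $k$-modules, and I would let $\HH_n^k(A,B,I^r)$ denote the homotopy groups of its total homotopy fibre (the birelative Hochschild homology). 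There is then a long exact sequence comparing the two relative groups,
\[\cdots\To\HH_n^k(A,B,I^r)\To\HH_n^k(A,I^r)\To\HH_n^k(B,I^r)\To\cdots,\]
so the theorem for $\HH$ is equivalent to the vanishing $\{\HH_n^k(A,B,I^r)\}_r=0$ for all $n\ge0$.

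**Next I would** identify this birelative term using Corollary \ref{Corollary_HH_2}. Since $I$ is carried isomorphically, the relative Hochschild complex of $(A,I^r)$ may be computed — via the bimodule description of Remark \ref{remark_enveloping_alg} and a standard direct-sum decomposition of the Hochschild complex along powers of the ideal — as a complex whose homology is closely controlled by the groups $H_*^k(A,I^r)$ with coefficients in the bimodule $I^r$; more precisely, the total homotopy fibre of the square computing $\HH_n^k(A,B,I^r)$ receives a natural comparison map from (a shift of) the homotopy fibre of $C^k_\blob(P_\blob,I^r)\to C^k_\blob(Q_\blob,I^r)$, i.e.\ the map inducing $\{H_n^k(A,I^r)\}_r\to\{H_n^k(B,I^r)\}_r$. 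By Corollary \ref{Corollary_HH_2} this latter map is a pro isomorphism in every degree, so the associated pro homotopy fibre vanishes, and hence $\{\HH_n^k(A,B,I^r)\}_r=0$. Feeding this into the long exact sequence above yields the desired pro isomorphism $\{\HH_n^k(A,I^r)\}_r\isoto\{\HH_n^k(B,I^r)\}_r$. Throughout, the passage to the limit over $r$ and the exactness of such limit sequences for pro abelian groups is handled by the generalities of Appendix \ref{appendix_pro}.

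**For cyclic homology**, the argument is formally the same once it is observed that the whole of Connes' bicomplex, and hence Tsygan's bicomplex $\CC^k_{\blob\blob}$ computing $\HC^k$, is built functorially out of the Hochschild complex by taking a product-total complex of shifted copies connected by the operators $B$ and $b$. Thus a degreewise (or column-wise) pro weak equivalence of Hochschild complexes — which is exactly what Corollaries \ref{Corollary_HH_1} and \ref{Corollary_HH_2} provide — induces a pro weak equivalence on the associated cyclic complexes, since in the first quadrant only finitely many columns contribute in each total degree. Concretely, one compares the birelative cyclic complexes of $A$ and $B$ against their Hochschild analogues using the SBI sequence: the birelative SBI sequence
\[\cdots\To\HH_n^k(A,B,I^r)\To\HC_n^k(A,B,I^r)\To\HC_{n-2}^k(A,B,I^r)\To\cdots\]
shows, by the vanishing of the birelative Hochschild pro groups just proved and downward induction on $n$ (the groups vanish for $n<0$), that $\{\HC_n^k(A,B,I^r)\}_r=0$ for all $n$, whence the relative pro isomorphism follows exactly as before.

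**The main obstacle** I anticipate is not the homological bookkeeping but the precise identification of the birelative Hochschild homology with something expressed purely in terms of the coefficient-$I^r$ groups $H_*^k(-,I^r)$ — that is, making rigorous the claim that "the relative complex sees only the $I^r$-isotypic part, which is intrinsic to the isomorphically-embedded ideal." This requires either a careful filtration argument on the bar/Hochschild complex by powers of $I$ (available over a field by Wodzicki's classical arguments, and in the derived setting by resolving and then filtering the simplicial resolution), or, more cleanly, a direct four-term comparison: arrange the squares computing $\HH^k(A,B,I^r)$ so that three of the four complexes pair up into homotopy fibres realizing $C^k_\blob(P,I^r)\to C^k_\blob(Q,I^r)$ and its $B$-analogue, and then invoke Corollary \ref{Corollary_HH_2} together with the (trivial, by the fixed-ideal hypothesis) identity map in the remaining slot. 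I would present this comparison carefully, since it is the one place where the hypothesis "mapped isomorphically" is genuinely used beyond what is already packaged into the earlier corollaries.
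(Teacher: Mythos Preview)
Your overall strategy is the same as the paper's, and your argument for $\HC$ via the (bi)relative SBI sequence is exactly what the paper does (phrased there as a five-lemma argument on the relative SBI sequences for $A$ and $B$). The point where you are vague---the ``main obstacle'' you explicitly flag---is precisely where the paper is crisp, and the resolution is simpler than the filtration or four-term comparison you sketch.

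Rather than analysing the birelative complex directly, the paper handles $A$ and $B$ separately. For $A$, the map $C^k_\blob(P_\blob,A)\to C^k_\blob(Q_\blob,A/I^r)$ (with $Q_\blob\to A/I^r$ a resolution) factors through $C^k_\blob(P_\blob,A/I^r)$, giving a commutative ladder of long exact sequences comparing $H_n^k(A,I^r)\to\HH_n^k(A)\to H_n^k(A,A/I^r)$ with $\HH_n^k(A,I^r)\to\HH_n^k(A)\to\HH_n^k(A/I^r)$. Corollary~\ref{Corollary_HH_1}---which you mention only in passing for $\HC$ and never actually invoke in your $\HH$ argument---says the rightmost vertical map is a pro isomorphism, whence so is $\{H_n^k(A,I^r)\}_r\to\{\HH_n^k(A,I^r)\}_r$. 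The same holds for $B$, and then Corollary~\ref{Corollary_HH_2} connects the two coefficient groups. So the missing ingredient in your write-up is simply an explicit use of Corollary~\ref{Corollary_HH_1} as the bridge between relative Hochschild homology and Hochschild homology with coefficients in the bimodule $I^r$; once that is in place no birelative bookkeeping or filtration argument is needed.
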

\begin{proof}
In the definition of relative Hochschild homology immediately before Proposition \ref{proposition_HH_SS}, it is clear that the map $C_\blob^k(P_\blob,A)\to C_\blob^k(Q_\blob,A/I)$ factors through the canonical map $C_\blob^k(P_\blob,A)\to C_\blob^k(P_\blob,A/I)$, whose homotopy fibre is $C_\blob^k(P_\blob,I)$. We therefore obtain a commutative diagram of $k$-modules
\[\xymatrix{
\cdots\ar[r] & H_n^k(A,I) \ar[r]\ar[d]^{(2)} & \HH_n^k(A) \ar[r]\ar@{=}[d] & H_n^k(A,A/I)\ar[d]^{(1)} \ar[r] & \cdots\\
\cdots\ar[r] & \HH_n^k(A,I) \ar[r] & \HH_n^k(A) \ar[r] & \HH_n^k(A/I) \ar[r] & \cdots
}\]
After replacing $I$ by $I^r$ and taking the limit over $r$, arrow (1) becomes an isomorphism of pro $k$-modules for all $n\ge0$ by Corollary \ref{Corollary_HH_1}. Hence arrow (2) also becomes an isomorphism of pro $k$-modules, i.e., \[\{H_n^k(A,I^r)\}_r\isoto\{\HH_n^k(A,I^r)\}_r.\]

This isomorphism is also true for $B$ in place of $A$, so to complete the proof for Hochschild homology we must show that $\{H_n^k(A,I^r)\}_r\cong \{H_n^k(B,I^r)\}_r$; but this is exactly Corollary \ref{Corollary_HH_2}.

The claim for $HC$ then follows in the usual way by repeatedly applying the five lemma to the limit over $r$ of the SBI sequences for the relative groups:
\[\xymatrix{
\cdots\ar[r] & \HH_n^k(A,I^r)\ar[r]\ar[d] & \HC_n^k(A,I^r)\ar[r]\ar[d] & \HC_{n-2}^k(A,I^r)\ar[r]\ar[d] & \cdots \\
\cdots\ar[r] & \HH_n^k(B,I^r)\ar[r] & \HC_n^k(B,I^r)\ar[r] & \HC_{n-2}^k(B,I^r)\ar[r] & \cdots
}\]
\end{proof}

The previous proof may be repeated for topological Hochschild and cyclic homology, as constructed in, e.g., \cite{Dundas2013}; since this is the only occurrence in the paper of these theories, we will assume that the interested reader is familiar with them and not provide further background. This result was proved originally by Geisser and Hesselholt, at least with $\bb Z/p\bb Z$ coefficients, via a lengthy argument in \cite[\S2]{GeisserHesselholt2006}:

\begin{theorem}[Pro excision for $\THH$ and $\TC$]\label{theorem_pro_THH}
Let $A\to B$ be a homomorphism of rings, and $I$ a pro Tor-unital ideal of $A$ mapped isomorphically to an ideal of $B$. Then the canonical maps \[\{\THH_n(A,I^r)\}_r\To \{\THH_n(B,I^r)\}_r\quad\mbox{and}\quad \{\TC_n^m(A,I^r;p)\}_r\To \{\TC_n^m(B,I^r;p)\}_r\] of pro abelian groups are isomorphisms for all $n\ge0$, $m\ge1$, and primes $p\ge2$.
\end{theorem}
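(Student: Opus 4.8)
The plan is to repeat the proof of Theorem~\ref{theorem_pro_excision_for_HH_HC} essentially verbatim, replacing the algebraic restriction spectral sequence of Proposition~\ref{proposition_HH_SS} by its topological progenitor, Brun's restriction spectral sequence \cite[Thm.~6.2.10--Rmk.~6.2.12]{Brun2000}, and observing that the purely algebraic pro~Tor vanishing inputs used there remain available unchanged. Throughout I write $\THH(A;M)$ for topological Hochschild homology of a ring $A$ with coefficients in an $A$-bimodule $M$, reserving the notation $\THH(A,I)$ for the relative theory, i.e.\ the homotopy fibre of $\THH(A)\to\THH(A/I)$.

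First I would establish the two $\THH$-analogues of Corollaries~\ref{Corollary_HH_1} and~\ref{Corollary_HH_2}. Brun's spectral sequence attached to a ring homomorphism $A\to B$ and a $B$-bimodule $M$ takes the form $E^2_{pq}=\THH_p(B;\Tor_q^A(B,M))\Longrightarrow\THH_{p+q}(A;M)$, and it is natural enough to pass, after replacing $I$ by $I^r$ and letting $r\to\infty$, to a first-quadrant spectral sequence of pro abelian groups with the expected convergence. Applied to $A\to A/I^r$ with coefficients in the bimodule $A/I^r$, the pro~Tor-unitality of $I$ kills all rows $q>0$ via Theorem~\ref{theorem1a}, so the spectral sequence of pro abelian groups collapses to edge isomorphisms $\{\THH_n(A;A/I^r)\}_r\xrightarrow{\sim}\{\THH_n(A/I^r)\}_r$. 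Applied instead to $A\to B$ with coefficients in $I^r$, the same vanishing computed in the proof of Corollary~\ref{Corollary_HH_2}, namely $\{\Tor_q^A(B,I^r)\}_r\cong\{I^r\}_r$ for $q=0$ and $0$ for $q>0$ (which rests only on Corollary~\ref{corollary_most_useful_pro_vanishing}), collapses the spectral sequence to $\{\THH_n(A;I^r)\}_r\xrightarrow{\sim}\{\THH_n(B;I^r)\}_r$.

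Next I would compare the bimodule-coefficient and relative versions of $\THH$, exactly as in the proof of Theorem~\ref{theorem_pro_excision_for_HH_HC}: the canonical map $\THH(A)\to\THH(A/I)$ factors through $\THH(A;A/I)$, and the homotopy fibre of $\THH(A)\to\THH(A;A/I)$ is $\THH(A;I)$; this yields a map of fibre sequences from $\THH(A;I^r)\to\THH(A)\to\THH(A;A/I^r)$ to the relative fibre sequence $\THH(A,I^r)\to\THH(A)\to\THH(A/I^r)$ which is the identity in the middle term. Passing to the limit over $r$, the right-hand map is an isomorphism on pro homotopy groups by the first collapse above, hence so is the left-hand map, giving $\{\THH_n(A,I^r)\}_r\cong\{\THH_n(A;I^r)\}_r$ and likewise for $B$; combined with the second collapse this proves $\{\THH_n(A,I^r)\}_r\cong\{\THH_n(B,I^r)\}_r$ for all $n\ge0$, in fact a pro equivalence of the (connective) relative $\THH$ pro spectra. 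For $\TC^m(-;p)$ I would then climb the tower defining it: since $\THH(A,I^r)$ is the fibre of a map of cyclotomic spectra it is again cyclotomic, and $\TR^m(A,I^r;p)=\THH(A,I^r)^{C_{p^{m-1}}}$ is assembled from $\THH(A,I^r)$ by finitely many homotopy-orbit constructions and extensions along the fundamental cofibre sequences $\THH(A,I^r)_{hC_{p^i}}\to\THH(A,I^r)^{C_{p^i}}\to\THH(A,I^r)^{C_{p^{i-1}}}$. As homotopy orbits by a finite group preserve pro isomorphisms of bounded-below spectra (through the group-homology spectral sequence, using that an additive functor preserves pro isomorphisms), a double induction on $i$ and $m$ together with the five lemma in the abelian category of pro abelian groups gives $\{\TR_n^m(A,I^r;p)\}_r\cong\{\TR_n^m(B,I^r;p)\}_r$; since $\TC^m(-;p)$ is a finite homotopy limit (the homotopy equaliser of the restriction and Frobenius maps $\TR^m\rightrightarrows\TR^{m-1}$), the same conclusion holds for $\TC^m$, which is the assertion.

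The argument is essentially formal once the topological tools are in place, and the main thing to be careful about is precisely that bookkeeping. One must check that Brun's spectral sequence is available in the stated generality (an arbitrary ring homomorphism and arbitrary bimodule coefficients) and is sufficiently natural to produce a convergent spectral sequence of pro abelian groups; that the relative spectra $\THH(A,I^r)$ genuinely carry the cyclotomic structure used to define $\TR^m$ and $\TC^m$, so that the fundamental cofibre sequences apply to them levelwise in $r$; and that the homotopy-limit and homotopy-colimit constructions entering $\TR^m$ and $\TC^m$ commute with the formation of pro objects well enough to preserve pro equivalences. Granting these standard facts, pro excision for $\THH$ and $\TC$ follows from the algebraic input assembled in Section~\ref{section_proof_of_1} together with the topological analogue of Proposition~\ref{proposition_HH_SS}.
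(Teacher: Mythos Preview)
Your proposal is correct and follows essentially the same route as the paper: invoke Brun's restriction spectral sequence as the $\THH$ analogue of Proposition~\ref{proposition_HH_SS}, repeat the proofs of Corollaries~\ref{Corollary_HH_1}--\ref{Corollary_HH_2} and Theorem~\ref{theorem_pro_excision_for_HH_HC} verbatim, and then pass to $\TC^m$ via the standard inductive argument through $\TR^m$ and homotopy orbits. Your explicit notational distinction between $\THH(A;M)$ and $\THH(A,I)$ is exactly the fix the paper itself flags as necessary, and your slightly more detailed account of the $\TR^m/\TC^m$ step unpacks what the paper abbreviates to a reference to \cite[Thms.~2.1--2.2]{GeisserHesselholt2006}.
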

\begin{proof}
As explained before the proof of Proposition \ref{proposition_HH_SS}, the analogous spectral sequence for topological Hochschild homology has already been established by Brun in \cite[Thm.~6.2.10]{Brun2000}: given a homomorphism of rings $A\to B$ and a $B$-bimodule $M$, there is a first quadrant spectral sequence of abelian groups \[E^2_{pq}=\THH_p(B,\Tor_q^A(B,M))\Longrightarrow \THH_{p+q}(A,M).\] The proofs of Corollary \ref{Corollary_HH_1}, Corollary \ref{Corollary_HH_2}, and  Theorem \ref{theorem_pro_excision_for_HH_HC} then carry over verbatim to $\THH$.

(Warning about bad notation: Unfortunately, in the $\THH$ literature the notation $\THH(A,I)$ is ambiguous; it could denote either $\THH$ for $A$ with coefficients in the bimodule $M=I$, as in the spectral sequence, or relative $\THH$ for the map $A\to A/I$, as in the statement of the theorem. In this notation, a key step in the proof is the isomorphism $\{\THH_n(A,I^r)\}_r\isoto\{\THH_n(A,I^r)\}_r$, where the left side concerns coefficients in the bimodule $I$ and the right side is a relative group! The reader who wants to write a clear proof will need to invent new notation, such as $T\!H(A,I)$ for topological Hochschild homology of $A$ with coefficients in the bimodule $I$.)

The claim for $\TC^m$ then follows by the standard inductive argument of passing through $\TR^m$ and the homotopy orbit spectra; it is explained between Thms.~2.1 and 2.2 of \cite{GeisserHesselholt2006}.
\end{proof}

\begin{remark}[Excision for Tor-unital rings]\label{remark_non_pro}
Let $k$ be a commutative ring, $A\to B$ a homomorphism of $k$-algebras, and $I$ a Tor-unital ideal of $A$ mapping isomorphically to an ideal of $B$. Here by {\em Tor-unital} we mean that the following equivalent (by Remark \ref{remark_non_pro}) conditions are satisfied: $\Tor^A_n(A/I,A/I)=0$ for $n>0$; $\Tor^B_n(B/I,B/I)=0$ for $n>0$; $\Tor^{k\ltimes I}_n(k,k)=0$ for $n>0$; or $\Tor_n^{\bb Z\ltimes I}(\bb Z,\bb Z)=0$ for $n>0$. Then $I^r=I$ for all $r\ge1$, and so Theorems \ref{theorem_pro_excision_for_HH_HC} and \ref{theorem_pro_THH} show that $I$ satisfies excision in derived Hochschild and cyclic homology, and their topological counterparts.
\end{remark}

\begin{remark}[General pro algebras]\label{remark_pro_algebra_version}
Theorems \ref{theorem_pro_excision_for_HH_HC} and \ref{theorem_pro_THH}, and their proofs, remain true more generally for pro ideals and pro algebras. More precisely, let $k$ be a commutative ring, $A_\infty\to B_\infty$ a strict map of pro $k$-algebras, and $I_\infty$ an ideal of $A_\infty$ such that each homomorphism $A_r\to B_r$ carries $I_r$ isomorphically to an ideal of $B_r$; such a situation was studied in Proposition \ref{proposition_pro_H}. Assuming that $I_\infty$ is pro Tor-unital in the sense that $\{\Tor_n^{\bb Z\ltimes I_r}(\bb Z,\bb Z)\}_r=0$ for all $n>0$, it follows that $\{\HH^k_n(A_r,I_r)\}_r\isoto\{\HH^k_n(B_r,I_r)\}_r$, and similarly for $\HC^k$, $\THH$, and $\TC^m$.
\end{remark}

\subsection{Converse statements: excision implies Tor-unitality}\label{section_converse}
If $k$ is a commutative ring and $I$ is a non-unital $k$-algebra, then Wodzicki \cite[Thm.~3.1]{Wodzicki1989} proved that $I$ satisfies excision in usual Hochschild homology if {\em and only if} $I$ is H-unital, i.e., $B_\blob^k(I)\otimes_kV$ is acyclic for every $k$-module $V$ (where $B_\blob^k(I)$ is the bar complex from Example \ref{example_quasi_unital}). In Section \ref{subsection_pro_excision_for_HH} we showed that (pro) Tor-unitality is sufficient for (pro) excision in derived Hochschild and cyclic homology, and in this section we mimic Wodzicki's result by proving that it is moreover necessary.

We begin in the non-pro setting, establishing the converse to Remark \ref{remark_non_pro}:

\begin{theorem}\label{theorem_converse}
Let $k$ be a commutative ring and $I$ a non-unital $k$-algebra. Assume that $I$ satisfies excision in derived Hochschild homology over $k$, i.e., whenever $A\to B$ is a homomorphism of $k$-algebras which compatibly contain $I$ as an ideal then $\HH_n^k(A,I)\isoto \HH_n^k(B,I)$ for all $n\ge0$. Then $I$ is Tor-unital.
\end{theorem}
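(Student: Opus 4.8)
The plan is to reduce the statement to the vanishing of $\Tor^{k\ltimes I}_n(k,k)$ for $n>0$ via the restriction spectral sequence of Proposition~\ref{proposition_HH_SS}, and then to import the excision hypothesis through a flasque (``cone'') enlargement of $I$ in the manner of Wodzicki. Write $R=k\ltimes I$, so that $R/I=k$. By the equivalent conditions collected in Remark~\ref{remark_non_pro} it suffices to prove $\Tor^R_n(k,k)=0$ for all $n>0$. I would apply Proposition~\ref{proposition_HH_SS} to the surjection $R\to R/I=k$ with coefficients in the bimodule $k$: since the derived Hochschild homology of $k$ over $k$ with any coefficients is concentrated in degree~$0$, the spectral sequence collapses and yields $H^k_n(R,k)\cong\Tor^R_n(k,k)$ for all $n$, and the edge map identifies the canonical augmentation $H^k_*(R,R/I)\to\HH^k_*(R/I)$ of Corollary~\ref{Corollary_HH_1} with the projection onto the bottom row. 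Thus the theorem is equivalent to the assertion that the canonical map $H^k_*(k\ltimes I,k)\to\HH^k_*(k)=k$ is a weak equivalence.

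The next step is to recast this in terms of the relative theory, which is what excision controls. Feeding the short exact sequence of $R$-bimodules $0\to I\to R\to k\to0$ into $H^k_*(R,-)$ and comparing, via the factorisation $C^k_\blob(P_\blob,A)\to C^k_\blob(P_\blob,A/I)\to C^k_\blob(Q_\blob,A/I)$ used in the proof of Theorem~\ref{theorem_pro_excision_for_HH_HC}, with the relative Hochschild homology of $R\to k$, one sees that the vanishing isolated above is equivalent to the canonical comparison map $H^k_*(k\ltimes I,I)\to\HH^k_*(k\ltimes I,I)$ — from Hochschild homology with coefficients in the bimodule $I$ to relative Hochschild homology — being an isomorphism. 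Equivalently, writing $\overline{\HH}^k_*(I):=H^k_*(k\ltimes I,I)$ for non-unital Hochschild homology, one wants $\overline{\HH}^k_*(I)$ to agree with the relative groups $\HH^k_*(k\ltimes I,I)$.

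To bring in excision, I would introduce the cone algebra $\Gamma I$ of $\mathbb N\times\mathbb N$ matrices over $I$ with finitely supported rows and columns and only finitely many distinct entries; it contains the ideal $M_\infty(I)$ of finitely supported matrices, with quotient the suspension $\Sigma I:=\Gamma I/M_\infty(I)$, and it is flasque, so by an Eilenberg-swindle argument its (reduced) Hochschild homology vanishes and $\Gamma I$ is Tor-unital. Non-unital Hochschild homology and the conditions of Remark~\ref{remark_non_pro} are Morita invariant, so $\overline{\HH}^k_*(M_\infty(I))\cong\overline{\HH}^k_*(I)$ and, because $I$ satisfies excision in derived Hochschild homology, so does $M_\infty(I)$. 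Applying the excision isomorphism to the homomorphism $k\ltimes M_\infty(I)\to k\ltimes\Gamma I$, which carries the common ideal $M_\infty(I)$ to itself, and evaluating both sides using the reformulation of the previous paragraph together with the flasqueness of $\Gamma I$, one obtains a suspension relation comparing the relevant groups for $I$ with those for $\Sigma I$; leveraging this against the flasqueness of $\Gamma I$ and its iterated cones then pins down the obstruction and, via the first two steps, yields $\Tor^{k\ltimes I}_n(k,k)=0$ for $n>0$.

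I expect the main obstacle to be precisely this last point: converting the excision comparison with the flasque cone into an actual vanishing statement rather than mere suspension periodicity, while using only excision for $I$ itself (and not, a priori, for its suspensions). This, together with setting up the cone/suspension formalism and the Morita-invariance statements carefully in the derived, possibly non-$k$-flat, setting, and keeping track of the low-degree ($n=0,1$) corrections where the relative and bimodule-coefficient theories differ, is where the real work lies; by contrast, the reduction in the first two steps is a routine consequence of Proposition~\ref{proposition_HH_SS}, Corollary~\ref{Corollary_HH_1}, and the long exact sequences.
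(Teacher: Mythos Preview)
Your reduction in the first two paragraphs is sound: the restriction spectral sequence of Proposition~\ref{proposition_HH_SS} for $k\ltimes I\to k$ does collapse to give $H^k_n(k\ltimes I,k)\cong\Tor^{k\ltimes I}_n(k,k)$, and comparing the two long exact sequences as in the proof of Theorem~\ref{theorem_pro_excision_for_HH_HC} shows that Tor-unitality is equivalent to the comparison map $H^k_*(k\ltimes I,I)\to\HH^k_*(k\ltimes I,I)$ being an isomorphism. This is a clean way to set things up.

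The gap is exactly where you locate it, and it is genuine. The cone/suspension machinery would at best tell you that the obstruction for $I$ is isomorphic, up to a shift, to the obstruction for $\Sigma I$; but the excision hypothesis is only assumed for $I$, not for $\Sigma I$ or its further suspensions, so there is no mechanism to iterate. Even granting the Morita-invariance and flasqueness statements in the derived setting (which themselves need care), you end up with a periodicity relation rather than a vanishing, and there is no evident way to break the symmetry. This is not a technicality that will dissolve on closer inspection.

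The paper's proof avoids this by testing excision against a much simpler enlargement. Instead of the cone, one applies the excision hypothesis to the homomorphism $k\ltimes I\to k\ltimes(I\oplus V)$, where $V$ is a free $k$-module of rank one with zero multiplication, so that $I\oplus V$ is a non-unital $k$-algebra containing $I$ as an ideal. The point, going back to Wodzicki, is that the normalised complex computing $\HH^k_*(k\ltimes(I\oplus V),I)$ admits a natural \emph{split} decomposition (essentially by counting how many tensor factors lie in $V$), one summand of which is the complex computing $\HH^k_*(k\ltimes I,I)$ and another of which is a shift of the bar complex $B^k_\bullet(I)$ whose homology is $\Tor^{k\ltimes I}_{*+1}(k,k)$. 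Excision forces the inclusion of the first summand to be a quasi-isomorphism, so the bar-complex summand is acyclic. To handle the non-$k$-flat case one replaces $I$ by a simplicial resolution $I_\bullet\to I$ by flat non-unital $k$-algebras (obtained by splitting a free resolution of $k\ltimes I$) and applies the decomposition levelwise. The argument is short and entirely avoids suspensions.
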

\begin{proof}
We assume initially that $I$ is flat over $k$ and recall some arguments of Wodzicki (see also Loday's explanation \cite[Thm.~1.4.10]{Loday1992}). Flatness of $I$ implies that $\HH_*^k(k\ltimes I)$ is given by the homology of the normalised Hochschild complex \cite[\S1.1.14]{Loday1992}:
\[\res C_\blob^k(k\ltimes I):=(k\ltimes I)\otimes_kI^{\otimes_k\blob}=\qquad 0\leftarrow k\ltimes I\stackrel{b}{\leftarrow} (k\ltimes I)\otimes_kI\stackrel{b}{\leftarrow} (k\ltimes I)\otimes_kI\otimes_kI\stackrel{b}{\leftarrow}\cdots\]
For any non-unital, flat $k$-algebra $I'$ containing $I$ as a two-sided ideal and such that $I'/I$ is flat as a $k$-module, we write $K_\blob(I'):=\ker\big(\res C_\blob^k(k\ltimes I')\to \res C_\blob^k(k\ltimes I'/I)\big)$, whose homology is the relative Hochschild homology $\HH_*^k(k\ltimes I',I)$.

In particular, if $J$ is another non-unital, flat $k$-algebra, then we may take $I':=I\oplus J$ and form the following commutative diagram of short exact sequences of complexes of $k$-modules:
\[\xymatrix@C=8mm{
0\ar[r] & K_\blob(I\oplus J)\ar[r] &\res C_\blob^k(k\ltimes(I\oplus J))\ar[r] & \res C_\blob^k(k\ltimes J)\ar[r] & 0\\
0\ar[r] &K_\blob(I)\ar[r]\ar[u] &\res C_\blob^k(k\ltimes I)\ar[r]\ar[u] & \res C_\blob^k(k)\ar[r]\ar[u] & 0
}\]
The vertical arrows are naturally split. Moreover, it is not hard to see that $K_\blob(I\oplus J)$ admits a natural direct sum decomposition into a number of subcomplexes, which include a copy of $K_\blob(I)$ and a copy of a shift of $B_\blob^k(I)\otimes_kJ$, where $B_\blob^k(I)$ is the bar complex from Example \ref{example_pro_H}. In particular, fixing a free, rank-one $k$-module $V$ with zero multiplication map, there is a natural decomposition \[K_\blob(I\oplus V)=K_\blob(I)\oplus B_{\blob-1}^k(I)\oplus K_\blob'(I),\] where $K_\blob'(I)$ is a certain complex of $k$-modules whose precise structure is unimportant.

We now no longer suppose that $I$ is flat over $k$. Let $P_\blob\to k\ltimes I$ be a simplicial resolution of $k\ltimes I$ by free $k$-algebras; the composition $P_\blob\to k\ltimes I\to k$ shows that $k\to P_\blob$ is canonically split, and so we may write $P_\blob =k\ltimes I_\blob$ in such a way that $I_\blob\to I$ is a simplicial resolution of $I$ by non-unital, flat $k$-algebras. Moreover, $k\ltimes (I_\blob\oplus V)\to k\ltimes(I\oplus V)$ is a simplicial resolution of $k\ltimes(I\oplus V)$ by flat $k$-algebras.

According to the argument above in the flat case, there is a natural decomposition of bicomplexes of $k$-modules \[K_\blob(I_\blob\oplus V)=K_\blob(I_\blob)\oplus B_{\blob-1}^k(I_\blob)\oplus K'_\blob(I_\blob).\] Note that $\op{Tot}K_\blob(I_\blob\oplus V)$ calculates $\HH_*^k(k\ltimes (I\oplus V),I)$ and that $\op{Tot}K_\blob(I_\blob)$ calculates $\HH_*^k(k\ltimes I,I)$. Our assumption that $I$ satisfies excision implies that the canonical inclusion $\HH_n^k(k\ltimes I,I)\to \HH_n^k(k\ltimes (I\oplus V),I)$ is an isomorphism for all $n\ge0$, and so we conclude that $\op{Tot}B_\blob^k(I_\blob)$ (and $\op{Tot}K'_\blob(I_\blob)$) must be acyclic. But $\op{Tot}B_\blob^k(I_\blob)$ calculates $\Tor^{k\ltimes I}_*(k,I)=\Tor^{k\ltimes I}_{*+1}(k,k)$, which completes the proof.
\end{proof}

The previous proof extends without difficulty to the case of pro algebras, thereby establishing the converse of our general pro-excision statement given in Remark \ref{remark_pro_algebra_version}:

\begin{theorem}
Let $k$ be a commutative ring and $I_\infty$ a pro non-unital $k$-algebra. Assume that $I_\infty$ satisfies excision in derived Hochschild homology over $k$, i.e., whenever $A_\infty\to B_\infty$ is a strict map of pro $k$-algebras which compatibly contain $I_\infty$ as an ideal (as in Rmk.~\ref{remark_pro_algebra_version}) then $\{\HH^k_n(A_r,I_r)\}_r\isoto\{\HH^k_n(B_r,I_r)\}_r$ for all $n\ge0$. Then $I_\infty$ is pro Tor-unital.
\end{theorem}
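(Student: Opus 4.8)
The plan is to adapt the proof of Theorem~\ref{theorem_converse} essentially word for word, replacing non-unital $k$-algebras by pro non-unital $k$-algebras and carrying every construction out level-wise in the pro index $r$. All the ingredients used there are natural in the $k$-algebra $I$ in the appropriate sense, so they assemble into statements about inverse systems.

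First I would fix a free, rank-one $k$-module $V$ with zero multiplication, and for each $r$ form the non-unital $k$-algebra $I_r\oplus V$, which is then a pro $k$-algebra $\{I_r\oplus V\}_r$ containing $I_\infty$ compatibly as an ideal, with quotient the constant pro algebra $V$. Next, for each $r$ I would choose a simplicial resolution $P_{\blob,r}\to k\ltimes I_r$ by free $k$-algebras, compatibly in $r$ (so that $\{P_{\blob,r}\}_r$ is a strict pro simplicial $k$-algebra); splitting off $k$ as in the non-pro proof writes $P_{\blob,r}=k\ltimes I_{\blob,r}$ with $I_{\blob,r}\to I_r$ a simplicial resolution by non-unital flat $k$-algebras, and then $k\ltimes(I_{\blob,r}\oplus V)\to k\ltimes(I_r\oplus V)$ is a flat simplicial resolution. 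The key point is that in each level, the flat-case analysis of Theorem~\ref{theorem_converse} applies to give a direct sum decomposition of bicomplexes
\[
K_\blob(I_{\blob,r}\oplus V)=K_\blob(I_{\blob,r})\oplus B_{\blob-1}^k(I_{\blob,r})\oplus K_\blob'(I_{\blob,r}),
\]
and this decomposition is natural in $r$, hence yields a decomposition of pro bicomplexes of $k$-modules.

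Now the excision hypothesis says precisely that $\{\HH_n^k(k\ltimes I_r,I_r)\}_r\to\{\HH_n^k(k\ltimes(I_r\oplus V),I_r)\}_r$ is an isomorphism of pro $k$-modules for all $n\ge0$; since the first is computed by $\{\op{Tot}K_\blob(I_{\blob,r})\}_r$ and the second by $\{\op{Tot}K_\blob(I_{\blob,r}\oplus V)\}_r$, the direct summand $\{\op{Tot}B_\blob^k(I_{\blob,r})\}_r$ must be pro acyclic, i.e., $\{H_n(\op{Tot}B_\blob^k(I_{\blob,r}))\}_r=0$ for all $n$. But $\op{Tot}B_\blob^k(I_{\blob,r})$ computes $\Tor^{k\ltimes I_r}_*(k,I_r)=\Tor^{k\ltimes I_r}_{*+1}(k,k)$, so this says exactly that $\{\Tor^{k\ltimes I_r}_n(k,k)\}_r=0$ for $n>0$, which is pro Tor-unitality over $k$; by Example~\ref{example_quasi_unital}'s discussion (or rather, by the base-change step of Theorem~\ref{theorem1a}) this is equivalent to pro Tor-unitality.

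The main obstacle is purely bookkeeping: one must check that the resolutions $P_{\blob,r}$ can be chosen compatibly so as to form a genuine strict pro object, that the decomposition of $K_\blob(I_{\blob,r}\oplus V)$ is compatible with the transition maps in $r$, and that ``the homology of a direct summand of a pro acyclic complex is a pro-zero system'' — which is immediate since a direct summand of a zero pro-module is zero. Since the construction of $K_\blob$, $B_\blob^k$, and the splitting $P_{\blob}=k\ltimes I_{\blob}$ are all functorial in the non-unital algebra, compatible choices exist (e.g.\ resolve the pro algebra $\{k\ltimes I_r\}_r$ in $\op{Pro}(\text{simplicial }k\text{-algebras})$, or simply note that any functorial cofibrant replacement does the job), so no essentially new argument beyond Theorem~\ref{theorem_converse} is needed.
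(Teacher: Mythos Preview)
Your proposal is correct and follows essentially the same approach as the paper's own proof: both apply the excision hypothesis to the strict map $\{k\ltimes I_r\}_r\to\{k\ltimes(I_r\oplus V)\}_r$, choose functorially compatible free resolutions $P_\blob^{(r)}=k\ltimes I_\blob^{(r)}$, invoke the natural decomposition of $K_\blob(I_\blob^{(r)}\oplus V)$ level-wise, and conclude that $\{H_n(\op{Tot}B_\blob^k(I_\blob^{(r)}))\}_r=\{\Tor_{n+1}^{k\ltimes I_r}(k,k)\}_r$ vanishes. Your final remark that one then passes from pro Tor-unitality over $k$ to pro Tor-unitality over $\bb Z$ via Proposition~\ref{proposition_pro_H} is a detail the paper leaves implicit, but it is indeed the correct justification.
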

\begin{proof}
We  use the notation and arguments of the proof of Theorem \ref{theorem_converse}. We will prove the theorem by considering the morphism of pro $k$-algebras \[A_\infty=\{k\ltimes I_r\}_r\To \{k\ltimes (I_r\oplus V)\}_r=B_\infty.\]

Let $P^{(r)}_\blob\to k\ltimes I_r$ be a simplicial resolution of $k\ltimes I_r$ by free $k$-algebras, chosen sufficiently functorially so that there is a resulting morphism of pro simplicial $k$-algebras $\{P^{(r)}_\blob\}_r\to \{k\ltimes I_r\}_r$. We may write $P^{(r)}_\blob=k\ltimes I^{(r)}_\blob$, where $I_\blob^{(r)}\to I_r$ is a simplicial resolution of $I_r$ by non-unital, flat $k$-algebras. Then the exact same argument as Theorem \ref{theorem_converse} shows that $\{H_n(\op{Tot}B_\blob^k(I_\blob^{(r)}))\}_r$ vanishes for all $n\ge0$; but $H_n(B_\blob^k(I_\blob^{(r)}))=\Tor^{k\ltimes I_r}_{n+1}(k,k)$ for all $r\ge1$, so this completes the proof.
\end{proof}

\begin{remark}
Our main interest in this paper is not arbitrary pro algebras, but rather powers of a constant ideal in a constant algebra. From this point of view it is natural to ask the following question:
\begin{quotation}
Let $k$ be a commutative ring and $I$ a non-unital $k$-algebra. Assume that $\{I^r\}_r$ satisfies excision in derived Hochschild homology over $k$ {\em only for constant algebras}, i.e., whenever $A\to B$ is a homomorphism of $k$-algebras which compatibly contain $I$ as an ideal, then $\{\HH^k(A,I^r)\}_r\isoto\{\HH^k(B,I^r)\}_r$ for all $n\ge0$. Is $I$ pro Tor-unital?
\end{quotation}
We do not know the answer to this question.
\end{remark}

\subsection{Cuntz--Quillen theorem over general base rings}\label{subsection_HP}
The celebrated Cuntz--Quillen theorem \cite{CuntzQuillen1997} states that periodic cyclic homology over a field $k$ of characteristic zero satisfies excision. In this section we extend their result to arbitrary commutative base rings containing $\bb Q$.

We begin by discussing the derived version of periodic cyclic homology. Given a commutative ring $k$ and a $k$-algebra $A$, let $\HP^{\sub{naive},k}_*(A)$ denote the usual periodic cyclic homology of $A$ as a $k$-algebra, defined as the homology of $\holim_s\CC^k_{\blob+2s}(A)$, where the homotopy limit is taken over repeated applications of the periodicity map $S:\CC^k_{\blob+2}(A)\to \CC^k_{\blob}(A)$. As for Hochschild and cyclic homology, we denote by $\HP^k_*(A)$ the derived version, defined by replacing $\CC^k_{\blob}(A)$ by $\CC^k_{\blob}(P_\blob)$, where $P_\blob\to A$ is a simplicial resolution of $A$ by free $k$-algebras; to be precise, since the order of totalisations and homotopy limits may cause confusion, $\HP_*^k(A)$ is defined to be the homology of the unbounded complex ${\holim}_s\op{Tot}\CC_{\blob+2s}(P_\blob)$. The canonical maps $\HP^k_*(A)\to\HP^{\sub{naive},k}_*(A)$ are isomorphisms if $A$ is flat over $k$.
 
Although we do not need it to prove Theorem \ref{theorem_Cuntz-Quillen}, we mention now that derived periodic cyclic homology continues to satisfy Goodwillie's nil-invariance property \cite[Thm.~II.5.1]{Goodwillie1985}:

\begin{lemma}\label{lemma_derived_Goodwillie}
Let $k$ be a commutative $\bb Q$-algebra, $A$ a $k$-algebra, and $I$ a nilpotent ideal of $A$. Then the canonical map $\HP_n^k(A)\to\HP_n^k(A/I)$ is an isomorphism for all $n\in\bb Z$.
\end{lemma}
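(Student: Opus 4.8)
The plan is to deduce the statement from Goodwillie's original theorem \cite{Goodwillie1985} by the same kind of resolution argument that relates the naive and derived theories in Lemma~\ref{lemma_derived_HH_and_HC}. First I would choose free simplicial $k$-algebra resolutions $P_\blob\to A$ and $Q_\blob\to A/I$ together with a compatible map $P_\blob\to Q_\blob$, which exists since $P_\blob$ is cofibrant and which may moreover be taken to be surjective in each simplicial degree. The argument following Lemma~\ref{lemma_derived_HH_and_HC} shows that $\HP^k$, like $\HH^k$ and $\HC^k$, is computed by any degree-wise flat simplicial resolution, so $\HP^k_*(A)$ and $\HP^k_*(A/I)$ are just the naive periodic cyclic homologies over $k$ of the simplicial $k$-algebras $P_\blob$ and $Q_\blob$ — the homologies of $\holim_s\op{Tot}\CC^k_{\blob+2s}(P_\blob)$ and of $\holim_s\op{Tot}\CC^k_{\blob+2s}(Q_\blob)$ — and the map in the statement is induced by $P_\blob\to Q_\blob$.

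Since $P_\blob\quis A$ and $Q_\blob\quis A/I$, the map $P_\blob\to Q_\blob$ induces on $\pi_0$ the surjection $A\to A/I$, whose kernel $I$ is nilpotent. Goodwillie's theorem \cite{Goodwillie1985}, applied over the base ring $k$ to this map of simplicial $k$-algebras, then asserts that the associated relative periodic cyclic complex is acyclic, whence $\HP^k_n(A)\To\HP^k_n(A/I)$ is an isomorphism for all $n$.

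I expect the main obstacle to be simply citing Goodwillie's theorem in the required generality — for simplicial algebras, and relative to an arbitrary commutative $\bb Q$-algebra $k$ rather than a field — although his argument, which rests on inverting the $I$-adic weight operator on the relative complex once one may divide by integers, is insensitive to both points. If one wishes to invoke only the statement for ordinary $\bb Q$-algebras, the route is: using $I^N=0$ and the square-zero extensions $0\to I^j/I^{j+1}\to A/I^{j+1}\to A/I^j\to0$, reduce to the case $I^2=0$; fix a free simplicial $k$-algebra resolution $Q_\blob\to A/I$ and, by the obstruction theory of square-zero extensions over a cofibrant base \cite{Quillen1970}, lift $A$ to a square-zero extension $0\to\tilde I_\blob\to P_\blob\to Q_\blob\to0$ of simplicial $k$-algebras with $\tilde I_\blob$ a degree-wise $Q_\blob$-free (hence degree-wise $k$-flat) simplicial $Q_\blob$-module resolving $I$ and with $P_\blob\quis A$; since $Q_p$ is $k$-projective the sequence $0\to\tilde I_p\to P_p\to Q_p\to0$ splits over $k$, so $P_\blob$ is degree-wise $k$-flat and computes $\HP^k_*(A)$, while in each degree $\tilde I_p$ is a square-zero ideal of the $\bb Q$-algebra $P_p$. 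Goodwillie's theorem then applies degree-wise to $P_p\to Q_p$, and since his contracting homotopy for the relative periodic cyclic complex is natural in the pair consisting of an algebra and a square-zero ideal, these homotopies commute with the simplicial structure maps and assemble to a contraction of the totalised relative complex. The one delicate point in this second route is the compatibility of the assembled homotopy with the order of totalisation and homotopy limit fixed in the definition of derived $\HP^k$; this is harmless, because a level-wise contractible simplicial complex whose level-wise contractions commute with the structure maps has contractible totalisation, and the homotopy limit of a level-wise contractible tower is again contractible.
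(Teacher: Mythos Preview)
Your proposal is correct and follows the same route as the paper: reduce to flat resolutions and invoke Goodwillie's nil-invariance theorem. The paper's own proof is simply the one-line sketch ``reduction to the flat case, where it follows from Goodwillie's original result'' together with a reference to \cite[Corol.~6.5.2.5]{Dundas2013}, which is exactly your first route; your second route (arranging a resolution in which the kernel is square-zero in each simplicial degree so that Goodwillie's argument applies level-wise and naturally) is a more hands-on variant of the same reduction, going somewhat beyond what the paper supplies.
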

\begin{proof}
This may be proved by reduction to the flat case, where it follows from Goodwillie's original result; see \cite[Corol.~6.5.2.5]{Dundas2013} for more details.
\end{proof}

The following is the main theorem of this section:

\begin{theorem}[Excision in derived periodic cyclic homology]\label{theorem_Cuntz-Quillen}
Let $k$ be a commutative $\bb Q$-algebra. Then derived period cyclic homology over $k$ satisfies excision; i.e., if $A\to B$ is a homomorphism of $k$-algebras, and $I$ is an ideal of $A$ mapped isomorphically to an ideal of $B$, then the canonical map \[\HP_n^k(A,I)\To\HP_n^k(B,I)\] is an isomorphism for all $n\in\bb Z$.
\end{theorem}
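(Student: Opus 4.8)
The plan is to combine two results established earlier in the paper --- Goodwillie's nil-invariance for derived periodic cyclic homology (Lemma~\ref{lemma_derived_Goodwillie}) and pro excision for derived cyclic homology of pro Tor-unital ideals (Theorem~\ref{theorem_pro_excision_for_HH_HC}) --- and then to reduce the general case to the pro Tor-unital one by a d\'evissage over the non-unital ideal. First I would pass to a pro statement: applying Lemma~\ref{lemma_derived_Goodwillie} to the nilpotent surjections $A/I^{r+1}\to A/I^r$ shows that all transition maps of the pro abelian group $\{\HP_n^k(A,I^r)\}_r$ are isomorphisms, so this pro group is essentially constant with value $\HP_n^k(A,I)$; likewise for $B$. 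Hence Theorem~\ref{theorem_Cuntz-Quillen} is equivalent to the assertion that $\{\HP_n^k(A,I^r)\}_r\to\{\HP_n^k(B,I^r)\}_r$ is an isomorphism of pro abelian groups. When $I$ happens to be pro Tor-unital this is immediate: Theorem~\ref{theorem_pro_excision_for_HH_HC} gives the isomorphism $\{\HC_n^k(A,I^r)\}_r\cong\{\HC_n^k(B,I^r)\}_r$, and since derived periodic cyclic homology is obtained from derived cyclic homology by the homotopy limit along the periodicity operator $S$ --- an operation that preserves levelwise pro-quasi-isomorphisms of towers, the only $\lim^1$-type subtleties being harmless inside the category of pro objects --- this pro excision passes to $\HP^k$. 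The remaining content of the theorem is thus to remove the pro Tor-unitality hypothesis.

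For that I would resolve the non-unital $k$-algebra $I$ by free ones. Concretely, take the standard (comonadic) simplicial resolution $\mathcal T_\bullet\to I$ in which each $\mathcal T_n$ is a free non-unital $k$-algebra on a free $k$-module; then $|\mathcal T_\bullet|\to I$ is a weak equivalence of simplicial $k$-modules, each $\mathcal T_n$ is flat over $k$, and, being the augmentation ideal of a free unital $k$-algebra, each $\mathcal T_n$ is pro Tor-unital over $k$ by Proposition~\ref{proposition_pro_Tor_unital_in_free} (cf.\ Example~\ref{example_quasi_unital} together with Theorem~\ref{theorem1a}). One then constructs, compatibly, simplicial unital $k$-algebras $\mathcal A_\bullet$ and $\mathcal B_\bullet$ realizing $A$ and $B$ --- that is, with $|\mathcal A_\bullet|\simeq A$ and $|\mathcal B_\bullet|\simeq B$ --- and containing $\mathcal T_\bullet$ as a common ideal in each simplicial degree; this is a piece of bookkeeping, resolving simultaneously the ambient ring and the ideal, e.g.\ by forming appropriate fibre products over free simplicial resolutions of $A/I$ and $B/I$. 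For each fixed $n$ the first paragraph applies to the pro Tor-unital ideal $\mathcal T_n\subseteq\mathcal A_n$ (resp.\ $\mathcal T_n\subseteq\mathcal B_n$) and yields isomorphisms $\HP_*^k(\mathcal A_n,\mathcal T_n)\cong\HP_*^k(\mathcal B_n,\mathcal T_n)$; to keep these identifications compatible with the simplicial structure one uses the pro-algebra form of the relevant statements (cf.\ Remark~\ref{remark_pro_algebra_version}).

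The main obstacle will be to assemble the levelwise isomorphisms of the second paragraph into the statement for $I$ itself --- equivalently, to show that derived $\HP^k$ carries the simplicial realizations $|\mathcal A_\bullet|$, $|\mathcal B_\bullet|$, $|\mathcal T_\bullet|$ to the realizations of the levelwise values of (relative) $\HP^k$. This is not formal: $\HP^k$ is built from the infinite homotopy limit along the periodicity tower, which in general does not commute with the colimit defining $|{-}|$ --- the same phenomenon responsible for the failure of $\HP$ to commute with filtered colimits, and precisely the point at which $\HH^k$ and $\HC^k$ cause no trouble but $\HP^k$ does. This is where characteristic zero is used a second time: Goodwillie's theorem (Lemma~\ref{lemma_derived_Goodwillie}) applied to the $\mathcal T_n$-adic filtrations --- equivalently, after unwinding, to the $I^r$-adic filtrations --- together with the pro Tor vanishing of Theorem~\ref{theorem1a}, forces the relevant periodicity towers to be, in a suitable pro-sense, essentially constant, which permits $\holim_s$ to be interchanged with $|{-}|$. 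Feeding this back, the levelwise isomorphisms glue over $\Delta$ to isomorphisms $\HP_n^k(A,I)\cong\HP_n^k(B,I)$ for all $n$, which is Theorem~\ref{theorem_Cuntz-Quillen}; I expect the verification of the interchange of $\holim_s$ with $|{-}|$, and the precise construction of $\mathcal A_\bullet$ and $\mathcal B_\bullet$, to be the only genuinely delicate points.
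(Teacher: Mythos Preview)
Your overall plan matches the paper's own sketch: use pro excision for $\HC$ (Theorem~\ref{theorem_pro_excision_for_HH_HC}), Goodwillie nil-invariance (Lemma~\ref{lemma_derived_Goodwillie}), and the pro Tor-unitality of ideals of free $k$-algebras (Proposition~\ref{proposition_pro_Tor_unital_in_free}). However there are two genuine problems with how you assemble these pieces.

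First, your justification in the pro Tor-unital case is not right. The assertion that ``$\holim_s$ preserves levelwise pro-quasi-isomorphisms of towers'' is not a formal fact and is not the mechanism at work. What actually proves that $\HP^k(A,B,I)=0$ when $I$ is pro Tor-unital is the following concrete argument (which is exactly the ``claim'' in the paper's proof, stated for a fixed simplicial degree): pro excision in $\HC$ gives, for each range $0\le n\le M$, an $r$ such that $\HC_n^k(A,B,I)\hookrightarrow \HC_n^k(A/I^r,B/I^r,I/I^r)$; since $I/I^r$ is nilpotent, Goodwillie's theorem (in its precise form \cite[Thm.~II.5.1]{Goodwillie1985}) forces a power $S^N$ to vanish on the target in that range; hence $S^N$ vanishes on $\HC_n^k(A,B,I)$ for $0\le n\le M$, and the $\holim_s$ is zero. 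This combination of the injection coming from pro excision with Goodwillie's explicit $S$-nilpotency is the substance --- not a general commutation of $\holim$ with pro isomorphisms.

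Second, and more seriously, your reduction to the free case in paragraphs~2--3 is essentially the ``standard argument'' that the paper explicitly says does \emph{not} work here (Remark~\ref{remark_standard_reduction_to_free}): it only gives the theorem when $A$, $B$, $A/I$, $B/I$ are projective $k$-modules. The issue is that your $\mathcal A_\bullet$, $\mathcal B_\bullet$, built by fibre products, will not be degree-wise flat $k$-algebras, so there is no reason their levelwise cyclic complexes compute the \emph{derived} $\HP^k$ of $A$ and $B$. The paper avoids this by organising the proof differently: it first reduces to the map $k\ltimes I\to A$, then takes free $k$-algebra resolutions $P_\bullet\to A$ and $Q_\bullet\to A/I$ with a compatible degree-wise surjection, and sets $I_\bullet:=\ker(P_\bullet\to Q_\bullet)$. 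Now $P_p$ and $Q_p=P_p/I_p$ are free $k$-algebras, so $I_p$ is pro Tor-unital by Proposition~\ref{proposition_pro_Tor_unital_in_free}(ii), and crucially Proposition~\ref{proposition_pro_Tor_unital_in_free}(i) guarantees that \emph{every} ring in sight --- $P_p$, $Q_p$, $k\ltimes I_p$, $P_p/I_p^r$, $k\ltimes I_p/I_p^r$ --- is flat over $k$, so naive and derived cyclic homologies agree at each level. The paper then proves directly that $S^N$ vanishes on the birelative $\HC^{\sub{naive}}_n(k\ltimes I_p,P_p,I_p)$ for each fixed $p$ (using the argument in the previous paragraph), and since only finitely many simplicial degrees $p$ contribute to any given homology degree of the totalisation, this suffices to kill $\holim_s\op{Tot}\CC_{\blob+2s}(k\ltimes I_\blob,P_\blob,I_\blob)$. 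This replaces your abstract ``interchange of $\holim_s$ with $|{-}|$'' by a finite, explicit bound.
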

\begin{proof}
We first sketch the main ideas of the proof. Theorem \ref{theorem_pro_excision_for_HH_HC} and Lemma \ref{lemma_derived_Goodwillie} easily imply that derived periodic cyclic homology over $k$ satisfies excision whenever $I$ is a pro Tor-unital ideal. The key additional step is therefore to show that pro Tor-unital ideals are sufficiently abundant to deduce excision in general; this is the content of Proposition \ref{proposition_pro_Tor_unital_in_free}(ii) below, in which it is shown that certain ideals of free $k$-algebras are automatically pro Tor-unital. Unfortunately, the ``standard argument'' to reduce excision to the free case does not work in our situation (see Remark \ref{remark_standard_reduction_to_free} for a more precise statement), thereby complicating the technical details of the proof.

We begin by introducing some relative and birelative notation in cyclic homology; for clarity we will omit the base ring $k$ from all notation. Given an ideal $J$ of a $k$-algebra $R$, let $\CC_\blob(R,J):=\ker(\CC_\blob(R)\to\CC_\blob(R/J))$ denote the complex defining relative cyclic homology $\HC^\sub{naive}_*(R,J)$. If $R\to S$ is a homomorphism of $k$-algebras mapping $J$ isomorphically to an ideal of $S$, then $\CC_\blob(R,S,J):=\op{hofib}(\CC_\blob(R,J)\to\CC_\blob(S,J))$ denotes the complex defining the birelative cyclic homology groups $\HC_*^\sub{naive}(R,S,J)$. Thus the two columns and row of the diagram
\[\xymatrix{
\CC_\blob(R,S,J) \ar[r] & \CC_\blob(R,J)\ar[r]\ar[d] & \CC_\blob(S,J)\ar[d]\\
&\CC_\blob(R)\ar[r]\ar[d] &\CC_\blob(S)\ar[d]\\
&\CC_\blob(R/J)\ar[r]&\CC_\blob(S/J)
}\]
are fibre sequences. Note also that all these complexes may be successively shifted leftwards by $2s$, for $s\ge1$, and the limit over the periodicity maps taken to obtain a similar diagram concerning periodic cyclic homology.

We now properly begin the proof; we suppose that $I$ is an ideal of a $k$-algebra $A$ and will show that $\HP_n(k\ltimes I,I)\isoto\HP_n(A,I)$ for all $n\in\bb Z$, which is sufficient. Let $P_\blob\to A$ and $Q_\blob\to A/I$ be simplicial resolutions by free $k$-algebras which are compatible in that they fit into a commutative diagram
\[\xymatrix{
P_\blob\ar[r]\ar[d] & Q_\blob\ar[d] \\
A\ar[r] & A/I
}\]
where $P_\blob\to Q_\blob$ is degree-wise surjective (for example, first pick any free resolution $Q_\blob\to A/I$ and then let $P_\blob$ be a free resolution of the pull-back $A\times_{A/I}Q_\blob$); write $I_\blob:=\ker(P_\blob\to Q_\blob)$. For each fixed $p\ge0$, our remarks on relative and birelative cyclic homology may be applied to the homomorphism $k\ltimes I_p\to P_p$ and ideal $I_p$ to yield the diagram
\[\xymatrix{
\CC_\blob(k\ltimes I_p,P_p,I_p) \ar[r] & \CC_\blob(k\ltimes I_p,I_p)\ar[r]\ar[d] & \CC_\blob(P_p,I_p)\ar[d]\\
&\CC_\blob(k\ltimes I_p)\ar[r]\ar[d] &\CC_\blob(P_p)\ar[d]\\
&\CC_\blob(k)\ar[r]&\CC_\blob(Q_p)
}\]
We make the following claim concerning the homology of the top left of this diagram:
\begin{quote}
For any $p,M\ge0$, there exists $N>0$ such that the iterated periodicity map $S^N:\HC_{n+2N}^\sub{naive}(k\ltimes I_p,P_p,I_p)\to\HC_n^\sub{naive}(k\ltimes I_p,P_p,I_p)$ is zero for $0\le n\le M$.
\end{quote}

Before proving the claim we explain why it is sufficient to complete the proof. Let $\CC_\blob(k\ltimes I_\blob,P_\blob,I_\blob)$ denote the simplicial complex $p,q\mapsto \CC_q(k\ltimes I_p,P_p,I_p)$; since only finitely many values of $p$ affect the homology of $\op{Tot}\CC_\blob(k\ltimes I_\blob,P_\blob,I_\blob)$ in any fixed degree, the claim shows that for each $n\ge0$ there exists $N>0$ such that \[S^N:\op{Tot}\CC_{\blob+2N}(k\ltimes I_\blob,P_\blob,I_\blob)\to \op{Tot}\CC_\blob(k\ltimes I_\blob,P_\blob,I_\blob)\] induces zero on degree $n$ homology; hence $\op{holim}_s\op{Tot}\CC_{\blob+2s}(k\ltimes I_\blob,P_\blob,I_\blob)\simeq 0$. Totalising the complexes of the previous diagram and taking the limit over the periodicity maps, it follows that the square
\[\xymatrix{
\op{holim}_s\op{Tot}\CC_{\blob+2s}(k\ltimes I_\blob)\ar[r]\ar[d] & \op{holim}_s\op{Tot}\CC_{\blob+2s}(P_\blob)\ar[d]\\
\op{holim}_s\op{Tot}\CC_{\blob+2s}(k)\ar[r] & \op{holim}_s\op{Tot}\CC_{\blob+2s}(Q_\blob)
}\]
is homotopy cartesian. But $P_\blob\to A$ and $Q_\blob\to A/I$ are resolutions by free $k$-algebras, and $k\ltimes I_\blob\to k\ltimes I$ is a resolution by flat $k$-algebras (by construction $I_\blob\simeq I$, and each $I_p$ is a flat $k$-module since the same is true of $P_p$ and $Q_p=P_p/I_p$); the fact that this diagram is homotopy cartesian is exactly the desired excision statement, completing the proof.

It remains to prove the claim; so fix $p,M\ge0$. It is convenient to consider the following diagram for any $r\ge1$, in which all rows and columns are fibre sequences:
\[\xymatrix{
\CC_\blob(k\ltimes I_p,P_p,I_p^r)\ar[r]\ar[d]_{(1)} & \CC_\blob(k\ltimes I_p,I_p^r)\ar[r]\ar[d]&\CC_\blob(P_p,I_p^r)\ar[d]\\
\CC_\blob(k\ltimes I_p,P_p,I_p)\ar[r]\ar[d]_{(2)} & \CC_\blob(k\ltimes I_p,I_p)\ar[r]\ar[d]&\CC_\blob(P_p,I_p)\ar[d]\\
\CC_\blob(k\ltimes I_p/I_p^r,P_p/I_p^r,I_p/I_p^r)\ar[r] & \CC_\blob(k\ltimes I_p/I_p^r,I_p/I_p^r)\ar[r]&\CC_\blob(P_p/I_p^r,I_p/I_p^r)
}\]
Since $I_p$ is an ideal of the free $k$-algebra $P_p$ such that $P_p/I_p=Q_p$ is also a free $k$-algebra (hence certainly projective as a $k$-module), Proposition \ref{proposition_pro_Tor_unital_in_free}(ii) below implies that $I_p$ is pro Tor-unital. Hence Theorem \ref{theorem_pro_excision_for_HH_HC} implies that $\{\HC_n^\sub{naive}(k\ltimes I_p,P_p,I_p^r)\}_r=0$ for all $n\ge0$ (note that the $k$-algebras $k\ltimes I_p$, $k\ltimes I_p/I_p^r$, $P_p$, $P_p/I_p$, $P_p/I_p^r$ are all flat, using Proposition \ref{proposition_pro_Tor_unital_in_free}(i) below, so the relevant derived and naive cyclic homologies coincide). That is, there exists $r>0$ such that map (1) induces zero on degree $n$ homology for $0\le n\le M$, and so map (2) induces an injection on homology in the same range.

Next, according to Goodwillie's proof of the nil-invariance of periodic cyclic homology \cite[Thm.~II.5.1]{Goodwillie1985}, the iterated periodicity maps 
\begin{align*}
S^{(r-1)(n+1)+1}:&\HC_{n+2(r-1)(n+1)+2}^\sub{naive}(k\ltimes I_p/I_p^r,I_p/I_p^r)\to\HC_n^\sub{naive}(k\ltimes I_p/I_p^r,I_p/I_p^r)\\
S^{(r-1)(n+1)+1}:& \HC_{n+2(r-1)(n+1)+2}^\sub{naive}(P_p/I_p^r,I_p/I_p^r)\to\HC_n^\sub{naive}(P_p/I_p^r,I_p/I_p^r)
\end{align*}
are zero for all $n\ge0$. From the bottom row of the previous diagram it therefore follows that there exists $N>0$ such that \[S^N:\HC_{n+2N}^\sub{naive}(k\ltimes I_p/I_p^r,P_p/I_p^r,I_p/I_p^r)\To \HC_n^\sub{naive}(k\ltimes I_p/I_p^r,P_p/I_p^r,I_p/I_p^r)\] is zero for $0\le n\le M$. By injectivity of (2) on homology, we then deduce that \[S^N:\HC_{n+2N}^\sub{naive}(k\ltimes I_p,P_p,I_p)\To \HC_n^\sub{naive}(k\ltimes I_p,P_p,I_p)\] is zero for $0\le n\le M$, proving the claim.
\end{proof}

\begin{corollary}
Let $k$ be a commutative $\bb Q$-algebra, $A\to B$ a homomorphism of $k$-algebras, and $I$ an ideal of $A$ mapped isomorphically to an ideal of $B$; assume that $A$, $B$, $A/I$, and $B/I$ are flat $k$-modules. Then the canonical map $\HP_n^{\sub{naive},k}(A,I)\to\HP_n^{\sub{naive},k}(B,I)$ is an isomorphism for all $n\in\bb Z$.
\end{corollary}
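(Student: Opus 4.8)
The plan is to reduce immediately to Theorem \ref{theorem_Cuntz-Quillen} by comparing the naive and derived theories. Recall from the discussion preceding that theorem that the canonical map $\HP_n^k(R)\to\HP_n^{\sub{naive},k}(R)$ is an isomorphism for every $k$-flat algebra $R$; by hypothesis this applies to each of $A$, $B$, $A/I$, and $B/I$.

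The first step is to promote these absolute comparison isomorphisms to relative ones. The relative groups $\HP_n^{\sub{naive},k}(A,I)$ and $\HP_n^k(A,I)$ each sit in a long exact sequence together with the absolute groups of $A$ and $A/I$: in the naive case via ${\holim}_s\CC_{\blob+2s}(A,I)$, where $\CC_\blob(A,I)=\ker(\CC_\blob(A)\to\CC_\blob(A/I))$ and this kernel map is degree-wise surjective, and in the derived case via the homotopy fibre used to define $\HP^k_*(A,I)$. The canonical naive--derived comparison map is compatible with these two long exact sequences, since both constructions are functorial in the pair $(A,I)$. As the comparison is an isomorphism on the absolute groups of $A$ and of $A/I$, the five lemma yields $\HP_n^k(A,I)\isoto\HP_n^{\sub{naive},k}(A,I)$ for all $n\in\bb Z$, and likewise $\HP_n^k(B,I)\isoto\HP_n^{\sub{naive},k}(B,I)$.

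Finally I would invoke Theorem \ref{theorem_Cuntz-Quillen}, which gives that $\HP_n^k(A,I)\to\HP_n^k(B,I)$ is an isomorphism for all $n\in\bb Z$. Combining this with the two relative comparison isomorphisms of the previous step, which are natural in the map of pairs $(A,I)\to(B,I)$, we conclude that $\HP_n^{\sub{naive},k}(A,I)\to\HP_n^{\sub{naive},k}(B,I)$ is an isomorphism, as desired. I do not expect any genuine obstacle here: the only point requiring care is the compatibility of the naive--derived comparison map with the relative long exact sequences, which is immediate from the functoriality of the cyclic-homology constructions in the ideal pair.
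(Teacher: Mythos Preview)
Your proof is correct and follows the same approach as the paper's own argument. The paper's proof is simply the one-line statement that ``the flatness assumptions imply that the relevant derived and naive periodic cyclic homologies coincide,'' which is precisely what you have unpacked: the comparison is an isomorphism on the absolute groups of $A$, $B$, $A/I$, $B/I$, hence on the relative groups by the five lemma, and then Theorem~\ref{theorem_Cuntz-Quillen} supplies the excision statement in the derived theory.
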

\begin{proof}
This is a special case of Theorem \ref{theorem_Cuntz-Quillen}, since the flatness assumptions imply that the relevant derived and naive periodic cyclic homologies coincide.
\end{proof}

The following result, implying that ``enough'' pro Tor-unital ideals exist over an arbitrary commutative ring, supplies the missing step to the proof of Theorem \ref{theorem_Cuntz-Quillen}:

\begin{proposition}\label{proposition_pro_Tor_unital_in_free}
Let $k$ be a commutative ring, $P$ a free $k$-algebra, and $I$ an ideal of $P$ such that $P/I$ is projective as a $k$-module. Then:
\begin{enumerate}
\item $P/I^r$ is a projective $k$-module for all $r\ge1$.
\item $I$ is pro Tor-unital.
\end{enumerate}
\end{proposition}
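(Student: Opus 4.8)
The plan is to prove (i) first and then deduce (ii), using (i) to supply the flatness over $k$ that, over a field, would be automatic; both parts ultimately rest on the structure theory of ideals in free algebras due to Cuntz--Quillen.

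For (i), the heart of the matter is that each subquotient $I^j/I^{j+1}$ is a projective $k$-module. Granting this, $P/I^r$ is $k$-projective, because it carries the finite filtration by $k$-submodules $I^j/I^r$ (for $0\le j\le r$, with $I^0:=P$), whose graded pieces are the $I^j/I^{j+1}$ for $0\le j<r$; the filtration therefore splits over $k$ and $P/I^r\cong\bigoplus_{j=0}^{r-1}I^j/I^{j+1}$. To see that $I^j/I^{j+1}$ is $k$-projective, recall from Cuntz--Quillen \cite[\S4]{CuntzQuillen1997} the structure of ideals in a free (more generally, quasi-free) $k$-algebra: if $X$ is a set of free generators of $P$, then the noncommutative conormal sequence
\[0\To I/I^2\To \bigoplus_{x\in X}(P/I)\otimes_k(P/I)\To \Omega^1_{(P/I)|k}\To 0\]
is exact, and $\op{gr}_I P\cong T_{P/I}(I/I^2)$, so that $I^j/I^{j+1}\cong(I/I^2)^{\otimes_{P/I}j}$. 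Since $P/I$ is $k$-projective, $(P/I)\otimes_k(P/I)$ is a projective right $(P/I)$-module, and the tautological sequence $0\to\Omega^1_{(P/I)|k}\to(P/I)\otimes_k(P/I)\to P/I\to 0$, which splits as right $(P/I)$-modules, shows that $\Omega^1_{(P/I)|k}$ is a projective right $(P/I)$-module as well; feeding this into the exact conormal sequence, the same splitting argument shows that $I/I^2$ is a projective right $(P/I)$-module, hence a direct summand of a free one. Therefore each tensor power $(I/I^2)^{\otimes_{P/I}j}$ is a direct summand of a direct sum of copies of $I/I^2$, and $I/I^2$ is itself a summand of a direct sum of copies of the $k$-projective module $P/I$; so $I^j/I^{j+1}$ is $k$-projective, as wanted.

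For (ii), part (i) and the $k$-splitting of $0\to I^r\to P\to P/I^r\to 0$ show that each $I^r$ is a $k$-module direct summand of the free module $P$, hence flat over $k$. Thus Example \ref{example_pro_H} identifies $\Tor_{n+1}^{k\ltimes I^r}(k,k)$ with $H_n(B_\blob^k(I^r))$, so by Theorem \ref{theorem1a} it is enough to show that the pro $k$-algebra $\{I^r\}_r$ is H-unital, i.e., $\{H_n(B_\blob^k(I^r))\}_r=0$ for all $n\ge0$. I would now transplant the argument of Cuntz--Quillen (recalled in Example \ref{example_quasi_unital}) to the general base ring $k$: one shows that $I^{2^s}$ is quasi-unital over $k$ for every $s\ge0$, the $k$-linear section $\alpha$ of the multiplication map being produced exactly as in \cite[\S4]{CuntzQuillen1997}, the only subtlety being that the flatness over $k$ of $I^{2^s}$ and of its square $I^{2^{s+1}}$, granted by (i), legitimises their construction. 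The chain homotopy $b'\sigma+\sigma b'=i$ of Example \ref{example_quasi_unital} then shows that the canonical map $B_\blob^k(I^{2^{s+1}})\to B_\blob^k(I^{2^s})$ is null-homotopic; hence every transition map in the pro $k$-module $\{H_n(B_\blob^k(I^{2^s}))\}_s$ is zero, and this pro $k$-module vanishes. Since $\{I^{2^s}\}_s$ is cofinal in $\{I^r\}_r$, we conclude that $\{H_n(B_\blob^k(I^r))\}_r=0$, i.e., $I$ is pro Tor-unital over $k$, hence (Theorem \ref{theorem1a}) pro Tor-unital.

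The step I expect to be the main obstacle is the transplantation of Cuntz--Quillen's quasi-unitality construction from a base field to an arbitrary commutative $k$: over a field the flatness of $I$ and its powers is used freely and without comment in \cite[\S4]{CuntzQuillen1997}, whereas here one must check carefully that part (i) restores exactly the flatness hypotheses invoked there, and that the resulting map $\alpha$ still satisfies $\alpha(xy)=x\alpha(y)$ and $\mu\circ\alpha=\op{id}$. A subsidiary point is to give a precise statement, with reference, of the structure of $\op{gr}_I P$ and the exactness of the noncommutative conormal sequence for an ideal of a free algebra, as used in (i).
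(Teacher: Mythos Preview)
Your proposal is plausibly correct but takes a substantially different, heavier route than the paper on both parts, and the obstacle you anticipate in (ii) is precisely what the paper sidesteps.

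For (i), you invoke the full structure theorem $\op{gr}_IP\cong T_{P/I}(I/I^2)$ and the noncommutative conormal sequence. The paper instead argues more directly: since $\Omega^1P$ is projective as a $P$-bimodule, any left $P$-module that is $k$-projective has projective dimension $\le1$ over $P$ (via \cite[Prop.~5.1]{CuntzQuillen1995}). Hence $I$ is projective over $P$, and the induction step is just that $I/I^{r+1}=(P/I^r)\otimes_PI$ is projective over $P/I^r$, hence over $k$, so $P/I^{r+1}$ is $k$-projective from $0\to I/I^{r+1}\to P/I^{r+1}\to P/I\to 0$. This avoids needing a reference for the tensor-algebra structure of the associated graded over a general base ring.

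For (ii), the paper's argument is strikingly shorter and entirely bypasses quasi-unitality. Once (i) is known, the projective-dimension-$\le1$ fact applies equally to each $I^r$ (since $P/I^r$ is now $k$-projective), giving directly
\[\Tor_n^P(P/I^r,P/I^r)=\begin{cases}I^r/I^{2r}&n=1,\\0&n>1.\end{cases}\]
So the transition map $\Tor_n^P(P/I^{2r},P/I^{2r})\to\Tor_n^P(P/I^r,P/I^r)$ is zero for all $n\ge1$, which is condition (i) of Theorem~\ref{theorem1a}; pro Tor-unitality follows. Your route via H-unitality and the bar complex is in principle viable (and indeed \cite{CortinasValqui2003}, cited after Example~\ref{example_quasi_unital}, abstracts the quasi-unitality machinery to general monoidal settings), but you would have to redo the Cuntz--Quillen construction of $\alpha$ over an arbitrary base, checking that the left-$I$-linearity condition $\alpha(xy)=x\alpha(y)$ survives. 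The paper explicitly notes (in the remark following this proposition) that its proof of excision in $\HP$ \emph{avoids} the quasi-unitality results, and this proposition is where that avoidance happens: the equivalence theorem~\ref{theorem1a} lets one verify pro Tor-unitality via a Tor computation in $P$ itself, rather than via the bar complex of $\{I^r\}_r$.
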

\begin{proof}
Since $P$ is a free $k$-algebra, it is known that $\Omega^1P:=\ker(P\otimes_kP\xto{\sub{mult.}}~P)$ is projective as a $P$-bimodule; see, e.g., the proof of \cite[Prop.~9.1.6]{Weibel1994}. Hence the argument of \cite[Prop.~5.1]{CuntzQuillen1995} proves the following: if $M$ is a left $P$-module which is projective as a $k$-module, then $M$ has projective dimension $\le1$ over $P$. Our assumption therefore implies that $P/I$ has projective dimension $\le 1$ over $P$, whence $I$ is projective as a left $P$-module and $0\to I\to P\to P/I\to 0$ is a resolution of $P/I$ by projective left $P$-modules. So \[\Tor_n^P(P/I,P/I)=\begin{cases} I/I^2&n=1\\0&n>1\end{cases}\]

(i): The case $r=1$ is assumed, so we may proceed by induction and assume that $P/I^r$ is a projective $k$-module. From the previous paragraph it follows that $P/I^r\otimes_PI=I/I^{r+1}$ is projective as a left $P/I^r$-module, hence also projective as a $k$-module by the inductive hypothesis. So the outer terms in the short exact sequence $0\to I/I^{r+1}\to P/I^{r+1}\to P/I\to 0$ are projective $k$-modules, whence the central term is also.

(ii): By (i), the first paragraph of the proof applies to $I^r$ in place of $I$, proving \[\Tor_n^P(P/I^r,P/I^r)=\begin{cases} I^r/I^{2r}&n=1\\0&n>1\end{cases}\] Therefore the map $\Tor_n^P(P/I^{2r},P/I^{2r})\to \Tor_n^P(P/I^r,P/I^r)$ is zero for all $n,r\ge1$, completing the proof.
\end{proof}

\begin{remark}\label{remark_standard_reduction_to_free}
There is a certain argument which is now well-known for reducing excision to the case of ideals of free algebras; e.g., the proof of \cite[Thm~5.3]{CuntzQuillen1997}, \cite[Lem.~1.4]{Cortinas2006}, or \cite[Lem.~3.1]{GeisserHesselholt2006}. If one combines this argument with Proposition \ref{proposition_pro_Tor_unital_in_free}, Lemma \ref{lemma_derived_Goodwillie}, and Theorem \ref{theorem_pro_excision_for_HH_HC}, then it proves Theorem \ref{theorem_Cuntz-Quillen} only in the special case that $A$, $B$, $A/I$, and $B/I$ are all projective as $k$-modules. 
\end{remark}

\begin{remark}
It may be worth observing that Theorem \ref{theorem_Cuntz-Quillen} reproves Cuntz--Quillen's excision theorem over fields without using their quasi-unitality results discussed in Remark \ref{example_quasi_unital}.
\end{remark}

\begin{remark}[Quasi-free algebras]
If $k$ is a commutative ring, and $P$ is a $k$-algebra such that the structure map $k\to P$ is injective and the quotient $P/k$ is projective as a $k$-module, then the arguments of \cite[\S1--3]{CuntzQuillen1995} show that the following two conditions are equivalent:
\begin{enumerate}
\item Any $k$-algebra surjection $B\onto P$ with square-zero kernel has a $k$-algebra splitting.
\item $\Omega^1P:=\ker(P\otimes_kP\xto{mult.}P)$ is projective as a $P$-bimodule.
\end{enumerate}
$P$ is said to be a {\em quasi-free} $k$-algebra if and only if these conditions are satisfied; e.g., free $k$-algebras are quasi-free. It is clear from the proof that Proposition \ref{proposition_pro_Tor_unital_in_free} remains true for quasi-free $k$-algebras.
\end{remark}

\subsection{Continuity properties for $\HH$ of commutative rings}\label{section_HH_and_HC}
When $A$ is a commutative $k$-algebra, the Hochschild homology groups $\HH_n^k(A)$ are $A$-modules; this allows more continuity results in the style of Corollaries \ref{Corollary_HH_1} and \ref{Corollary_HH_2} to be established. After the provisional Lemma \ref{lemma_HH_commutative}, we give three applications: Artin--Rees properties, the pro HKR theorem, and the Fe\u\i gin--Tsygan theorem.

We stress that, in the next two results, the pro Tor-unitality assumption on the ideal $I$ is satisfied as soon as $A$ is Noetherian, by Theorem \ref{theorem_pro_Tor_unital_commutative}.

\begin{lemma}\label{lemma_HH_commutative}
Let $k\to A$ be a homomorphism of commutative rings, $I$ a pro Tor-unital ideal of $A$, and $M$ an $A$-module. Then the canonical map \[\{H_n^k(A,M/I^rM)\}_r\To \{H_n^k(A/I^r,M/I^rM)\}_r\] is an isomorphism of pro $A$-modules for all $n\ge0$.
\end{lemma}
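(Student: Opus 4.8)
The plan is to imitate the proof of Corollary \ref{Corollary_HH_1}, the only change being that the coefficient bimodule $A/I^r$ is replaced by $M/I^rM$ and that the relevant pro Tor vanishing is supplied by Corollary \ref{corollary_most_useful_pro_vanishing} rather than directly by Theorem \ref{theorem1a}. Since $A$ is commutative, so is each $A/I^r$, and hence the $A/I^r$-module $M/I^rM$ is automatically an $A/I^r$-bimodule; likewise it is an $A$-bimodule via $A\to A/I^r$, so all the Hochschild homologies in the statement are defined.

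First I would apply the restriction spectral sequence of Proposition \ref{proposition_HH_SS} to the homomorphism $A\to A/I^r$ and the bimodule $M/I^rM$, obtaining for each $r\ge1$ a first quadrant spectral sequence
\[E^2_{pq}(r)=H_p^k\big(A/I^r,\Tor_q^A(A/I^r,M/I^rM)\big)\Longrightarrow H_{p+q}^k(A,M/I^rM);\]
because $A$ is commutative, every term and every differential is $A$-linear, so this is a spectral sequence of $A$-modules whose edge map is the canonical restriction map $H_n^k(A,M/I^rM)\To H_n^k(A/I^r,A/I^r\otimes_AM/I^rM)$, and $A/I^r\otimes_AM/I^rM=M/I^rM$. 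Next I would pass to the limit over $r$, obtaining a spectral sequence of pro $A$-modules with $E^2$-page $\{H_p^k(A/I^r,\Tor_q^A(A/I^r,M/I^rM))\}_r$. By Corollary \ref{corollary_most_useful_pro_vanishing}, applied to the pro Tor-unital ideal $I$ and the left $A$-module $M$, the pro $A$-module $\{\Tor_q^A(A/I^r,M/I^rM)\}_r$ vanishes for all $q>0$; hence $E^2_{pq}(\infty)=0$ unless $q=0$, and the spectral sequence degenerates to edge-map isomorphisms
\[\{H_n^k(A,M/I^rM)\}_r\Isoto\{H_n^k(A/I^r,M/I^rM)\}_r\]
for all $n\ge0$, which is the claim.

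There is no serious obstacle here: the argument is formal once Proposition \ref{proposition_HH_SS} and Corollary \ref{corollary_most_useful_pro_vanishing} are in hand. The only point deserving care is the invocation of Corollary \ref{corollary_most_useful_pro_vanishing}, since that is exactly where pro Tor-unitality of $I$ (via Theorem \ref{theorem1a} and Lemma \ref{lemma_pro_result_of_Tor_vanishing}) is used; the identification of the abutment's edge map with the canonical restriction map, and the $A$-linearity of everything in the commutative setting, are routine.
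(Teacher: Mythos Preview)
Your proof is correct and is essentially identical to the paper's own argument: apply Proposition~\ref{proposition_HH_SS} to $A\to A/I^r$ with bimodule $M/I^rM$, let $r\to\infty$, and use Corollary~\ref{corollary_most_useful_pro_vanishing} to kill the $q>0$ part of the $E^2$-page so that the spectral sequence collapses to the desired edge-map isomorphisms.
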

\begin{proof}
Applying Proposition \ref{proposition_HH_SS} to the homomorphism $A\to A/I^r$ and module $M/I^rM$, and letting $r\to\infty$, we obtain a spectral sequence of pro $A$-modules \[E^2_{pq}(\infty)=\{H_p^k(A/I^r,\Tor_q^A(A/I^r,M/I^rM))\}_r\implies \{H_{p+q}^k(A,M/I^rM)\}_r.\] Since $I$ is pro Tor-unital, Corollary \ref{corollary_most_useful_pro_vanishing} implies that $\{\Tor_q^A(A/I^r,M/I^rM)\}_r=0$ for $q>0$, so the spectral sequence collapses to the desired edge map isomorphisms $\{H_n^k(A,M/I^rM)\}_r\isoto\{H_n^k(A/I^r,M/I^rM)\}_r$.
\end{proof}

{\bf Artin--Rees properties.} From Lemma \ref{lemma_HH_commutative} we obtain fundamental Artin--Rees vanishing results for derived Hochschild and cyclic homology, generalising results for Andr\'e--Quillen homology in the Noetherian case \cite[Prop.~X.12]{Andre1974} \cite[Thm.~6.15]{Quillen1970}; further generalisations to Andr\'e--Quillen homology will be given in Section \ref{section_AR_for_AQ}:

\begin{theorem}\label{theorem_AR_for_HH}
Let $A$ be a commutative ring, and $I$ a pro Tor-unital ideal of $A$. Then:
\begin{enumerate}
\item $\{\HH_n^A(A/I^r)\}=0$ for all $n>0$.
\item $\{\HC_n^A(A/I^r)\}=0$ for all odd $n>0$, while in even degrees the periodicity maps \[\cdots\stackrel{S}{\To}\{\HC_{2n}^A(A/I^r)\}_r\stackrel{S}{\To}\cdots\stackrel{S}{\To}\{\HC_2^A(A/I^r)\}_r\stackrel{S}{\To}\{\HC_0^A(A/I^r)\}_r\cong\{A/I^r\}_r\] are all isomorphisms.
\end{enumerate}
\end{theorem}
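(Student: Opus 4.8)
The plan is to deduce both parts from Lemma~\ref{lemma_HH_commutative} applied with base ring $k=A$, together with the SBI sequence for cyclic homology. The first ingredient is the trivial computation of the coefficient module that the lemma will feed us: for any commutative ring $A$ and any $A$-module $M$ one has $H_0^A(A,M)=M$ and $H_n^A(A,M)=0$ for $n>0$. Indeed $A$ is a free $A$-algebra on the empty set of generators, so one may use the constant simplicial resolution $A\to A$; the Hochschild complex $C_\blob^A(A,M)$ then has $M$ in every degree with all face maps the identity, so its boundary maps alternate between $0$ and $\op{id}$ and its homology is $M$ concentrated in degree $0$ (this also follows from Lemma~\ref{lemma_derived_HH_and_HC}).

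Next I would apply Lemma~\ref{lemma_HH_commutative} to the pro Tor-unital ideal $I$ and the coefficient module $M=A$. It produces a canonical isomorphism of pro $A$-modules
\[\{H_n^A(A,A/I^r)\}_r\isoto\{H_n^A(A/I^r,A/I^r)\}_r=\{\HH_n^A(A/I^r)\}_r\]
for all $n\ge0$; combining this with the previous paragraph gives exactly $\{\HH_n^A(A/I^r)\}_r=0$ for $n>0$ and $\{\HH_0^A(A/I^r)\}_r\cong\{A/I^r\}_r$. This settles part~(i) and supplies the degree-zero input needed for part~(ii).

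For part~(ii) I would pass to the limit over $r$ in the SBI long exact sequences of the commutative $A$-algebras $A/I^r$; since a level-wise exact sequence of pro abelian groups is exact in the category of pro abelian groups (Appendix~\ref{appendix_pro}), this yields an exact sequence of pro $A$-modules relating $\{\HH_n^A(A/I^r)\}_r$, $\{\HC_n^A(A/I^r)\}_r$ and $\{\HC_{n-2}^A(A/I^r)\}_r$, which one then chases. By part~(i) both flanking Hochschild terms vanish pro-ly whenever $n\ge2$, so the periodicity map $S\colon\{\HC_n^A(A/I^r)\}_r\to\{\HC_{n-2}^A(A/I^r)\}_r$ is an isomorphism for every $n\ge2$. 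Exactness at $\{\HC_1^A(A/I^r)\}_r$, which is flanked by $\{\HH_1^A(A/I^r)\}_r=0$ and $\{\HC_{-1}^A(A/I^r)\}_r=0$, forces that term to vanish, and the isomorphisms $S$ propagate the vanishing to all odd degrees; exactness near degree $0$ identifies $\{\HC_0^A(A/I^r)\}_r$ with $\{\HH_0^A(A/I^r)\}_r\cong\{A/I^r\}_r$, so the displayed chain of periodicity maps in even degrees is a chain of isomorphisms ending at $\{A/I^r\}_r$.

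I do not anticipate a genuine obstacle here: Lemma~\ref{lemma_HH_commutative} (which itself rests on the restriction spectral sequence of Proposition~\ref{proposition_HH_SS} and the vanishing in Corollary~\ref{corollary_most_useful_pro_vanishing}) carries all the weight, and the only points requiring mild care are the bookkeeping in the SBI chase in the low degrees $n=0,1$ and the standard exactness behaviour of sequences of pro abelian groups.
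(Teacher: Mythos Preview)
Your proof is correct and follows essentially the same approach as the paper. The only cosmetic difference is that the paper cites Corollary~\ref{Corollary_HH_1} (with $k=A$) rather than Lemma~\ref{lemma_HH_commutative} (with $k=A$, $M=A$) to obtain the isomorphism $\{H_n^A(A,A/I^r)\}_r\isoto\{\HH_n^A(A/I^r)\}_r$; in this special case the two results coincide, and both rest on the same restriction spectral sequence of Proposition~\ref{proposition_HH_SS}. Your SBI argument for part~(ii) is what the paper has in mind when it says ``This follows from part~(i) and the SBI sequence.''
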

\begin{proof}
(i): By Corollary \ref{Corollary_HH_1}, with $k=A$, it is sufficient to show that $\{H_n^A(A,A/I^r)\}_r=0$ for $n>0$. But for any $A$-module $M$, it is clear that $H_n^A(A,M)=0$ for $n>0$.

(ii): This follows from part (i) and the SBI sequence.
\end{proof}

{\bf Pro HKR theorem.} Recall that the Hochschild--Kostant--Rosenberg theorem \cite[Thm.~3.4.4]{Loday1992} states that if $k\to A$ is a smooth morphism of commutative, Noetherian rings, then the canonical antisymmetrisation map $\Omega_{A|k}^n\to\HH_n^k(A)$ is an isomorphism for all $n\ge0$; by N\'eron--Popescu desingularisation \cite{Popescu1985, Popescu1986} this holds more generally if $k\to A$ is merely assumed to be geometrically regular. (Here we follow Swan's notation of saying that a morphism $k\to A$ is {\em geometrically regular} if and only if it is flat and has geometrically regular fibres in the usual sense \cite{Swan1998}; other authors prefer to say more briefly that the morphism is ``regular''.) The next result is a pro version of the HKR theorem. A proof for finite type algebras over fields can be found in \cite[Thm.~3.2]{Cortinas2009}, but for recent applications to the formal deformation of algebraic cycles \cite{BlochEsnaultKerz2013, Morrow_Deformational_Hodge} the following stronger version is required:

\begin{theorem}\label{theorem_pro_HKR}
Let $k\to A$ be a geometrically regular morphism of commutative, Noetherian rings, and let $I$ be an ideal of $A$. Then the antisymmetrisation map \[\{\Omega_{A/I^r|k}^n\}_r\To\{\HH_n^k(A/I^r)\}_r\] is an isomorphism of pro $A$-modules for all $n\ge0$.
\end{theorem}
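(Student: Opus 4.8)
The plan is to factor the antisymmetrisation map for the quotients $A/I^r$ through the Hochschild homology of $A$ itself with coefficients in the $A$-modules $A/I^r$, where the geometric regularity of $A$ can be exploited via the classical Hochschild--Kostant--Rosenberg theorem. First, since $A$ is Noetherian, Theorem~\ref{theorem_pro_Tor_unital_commutative} shows that $I$ is pro Tor-unital, and so Corollary~\ref{Corollary_HH_1}, applied with base ring $k$, provides a canonical isomorphism of pro $A$-modules $\{H_n^k(A,A/I^r)\}_r\isoto\{\HH_n^k(A/I^r)\}_r$ for all $n\ge0$ (the Hochschild groups being naturally $A$-modules with $A$-linear maps since $A$ is commutative). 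It therefore suffices to exhibit, compatibly with the antisymmetrisation maps, an isomorphism of pro $A$-modules $\{\Omega_{A/I^r|k}^n\}_r\cong\{H_n^k(A,A/I^r)\}_r$.

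The second step is the Hochschild--Kostant--Rosenberg theorem with coefficients for the geometrically regular morphism $k\to A$: for every $A$-module $M$ the antisymmetrisation map $\Omega_{A|k}^n\otimes_AM\to H_n^k(A,M)$ is an isomorphism. Since $k\to A$ is flat, each Hochschild chain module $C_q^k(A)=A^{\otimes_k(q+1)}$ is flat over $A$ via its outer $A$-action, and $C_\blob^k(A)\otimes_AM=C_\blob^k(A,M)$ computes $H_\blob^k(A,M)$; by N\'eron--Popescu desingularisation (writing $A$ as a filtered colimit of smooth $k$-algebras) and the usual HKR theorem, the antisymmetrisation chain map $\bigoplus_m\Omega_{A|k}^m[m]\to C_\blob^k(A)$ is a quasi-isomorphism between bounded-below complexes of flat $A$-modules, the forms $\Omega_{A|k}^m$ being flat over $A$ in the geometrically regular case; hence it remains a quasi-isomorphism after applying $-\otimes_AM$, which gives the claim. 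Taking $M=A/I^r$ yields $\Omega_{A|k}^n/I^r\Omega_{A|k}^n\isoto H_n^k(A,A/I^r)$.

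The third step compares $\Omega_{A|k}^n/I^r\Omega_{A|k}^n$ with $\Omega_{A/I^r|k}^n$. The conormal exact sequence and its exterior powers show that the canonical surjection $\Omega_{A|k}^n/I^r\Omega_{A|k}^n\onto\Omega_{A/I^r|k}^n$ has kernel the $A$-submodule generated by the classes of $da\wedge\eta$ with $a\in I^r$ and $\eta\in\Omega_{A|k}^{n-1}$. By the Leibniz rule $d(I^r)\subseteq I^{r-1}\Omega_{A|k}^1$, so this kernel is contained in $I^{r-1}\Omega_{A|k}^n/I^r\Omega_{A|k}^n$; and the pro $A$-module $\{I^{r-1}\Omega_{A|k}^n/I^r\Omega_{A|k}^n\}_r$ is zero, since each transition map $I^r\Omega_{A|k}^n/I^{r+1}\Omega_{A|k}^n\to I^{r-1}\Omega_{A|k}^n/I^r\Omega_{A|k}^n$ vanishes identically. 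Hence $\{\Omega_{A|k}^n/I^r\Omega_{A|k}^n\}_r\to\{\Omega_{A/I^r|k}^n\}_r$ is an isomorphism of pro $A$-modules. Since the antisymmetrisation map is natural in the commutative $k$-algebra, the composite $\{\Omega_{A|k}^n/I^r\Omega_{A|k}^n\}_r\isoto\{H_n^k(A,A/I^r)\}_r\isoto\{\HH_n^k(A/I^r)\}_r$ of the first two steps equals the antisymmetrisation map $\{\Omega_{A/I^r|k}^n\}_r\to\{\HH_n^k(A/I^r)\}_r$ precomposed with the pro-isomorphism just established; the former being a pro-isomorphism, so is the latter.

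I expect the main obstacle to be bookkeeping rather than depth: the genuinely new ingredient is the elementary pro-vanishing of $\{I^{r-1}\Omega_{A|k}^n/I^r\Omega_{A|k}^n\}_r$ in the third step, but it must be combined with the verification that the HKR comparison for $A$ with coefficients is compatible both with the antisymmetrisation maps of the quotients $A/I^r$ (naturality in the algebra) and with the isomorphism of Corollary~\ref{Corollary_HH_1}, so that the final commuting triangle of pro $A$-modules is valid. A secondary technical point is justifying the HKR theorem with coefficients over a merely geometrically regular base map, which relies on the flatness of the forms $\Omega_{A|k}^m$ and of the Hochschild chain modules together with N\'eron--Popescu desingularisation.
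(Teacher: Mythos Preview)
Your proof is correct and follows essentially the same approach as the paper's: invoke pro Tor-unitality of $I$ via Theorem~\ref{theorem_pro_Tor_unital_commutative}, reduce $\{\HH_n^k(A/I^r)\}_r$ to $\{H_n^k(A,A/I^r)\}_r$ (you use Corollary~\ref{Corollary_HH_1}, the paper uses the equivalent Lemma~\ref{lemma_HH_commutative} with $M=A$), apply classical HKR for the geometrically regular morphism to identify this with $\{\Omega_{A|k}^n\otimes_AA/I^r\}_r$, and finally compare with $\{\Omega_{A/I^r|k}^n\}_r$ via the Leibniz rule. Your bookkeeping is slightly more explicit than the paper's---you spell out the HKR-with-coefficients step and argue the last comparison via $d(I^r)\subseteq I^{r-1}\Omega^1_{A|k}$ and vanishing of the transition maps on the graded pieces, whereas the paper simply cites $d(I^{2r})\subseteq I^r\Omega^1_{A|k}$---but the substance is the same.
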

\begin{proof}
Recall that $I$ is pro Tor-unital by Theorem \ref{theorem_pro_Tor_unital_commutative}. So, by Lemma \ref{lemma_HH_commutative}, the canonical map $\{H_n^k(A,A/I^r)\}_r\to \{\HH_n^k(A/I^r)\}_r$ is an isomorphism. But since $k\to A$ is geometrically regular, the classical HKR theorem implies that $H_n^k(A,A/I^r)\cong {\Omega_{A|k}^n\otimes_AA/I^r}$. Finally, the isomorphism $\{\Omega_{A|k}^n\otimes_AA/I^r\}_r\cong\{\Omega_{A/I^r|k}^n\}$ is an easy consequence of the inclusion $d(I^{2r})\subseteq I^r\Omega_{A|k}^1$.
\end{proof}

\begin{remark}
More generally, if $k\to A$ and $I\subseteq A$ satisfy the conditions of Theorem \ref{theorem_pro_HKR}, and if $M$ is an $A$-module, then the same argument shows that the antisymmetrisation map $\{\Omega_{A/I^r|k}^n\otimes_{A/I^r}M/I^rM\}_r\to\{H_n^k(A/I^r,M/I^rM)\}_r$ is an isomorphism for all $n\ge0$.
\end{remark}

{\bf Fe\u\i gin--Tsygan theorem.} If $R$ is a commutative, finitely generated algebra over a characteristic zero field $k$, then the Fe\u\i gin--Tsygan theorem \cite{FeiginTsygan1985} identifies $\HP_n^k(R)$ with $\prod_{i\in\bb Z}H_\sub{crys}^{2i-n}(R)$, where $H_\sub{crys}^*(R)$ denotes Grothendieck's crystalline cohomology \cite{Grothendieck1968} (aka.~Hartshorne's algebraic de Rham cohomology \cite{Hartshorne1975}), defined as follows: let $R=A/I$ be a representation of $R$ as a quotient of a smooth $k$-algebra $A$ by an ideal $I$, let $\hat \Omega_{A|k}^\blob:=\projlim_r\Omega_{A/I^r|k}^\blob$ be the $I$-adic completion of the de Rham complex of $A$, and set $H^*_\sub{crys}(R):=H^*(\hat \Omega_{A|k}^\blob)$.

The following generalises the Fe\u\i gin--Tsygan theorem, as well as reproving the classical case:

\begin{theorem}\label{theorem_Feigin-Tsygan}
Let $k\to A$ be a geometrically regular morphism of commutative, Noetherian $\bb Q$-algebras, $I$ an ideal of $A$, and set $R:=A/I$. Then there is a natural isomorphism of $k$-modules \[\HP^k_n(R)\cong\prod_{i\in\bb Z}H^{2i-n}(\hat\Omega_{A|k}^\blob)\] for all $n\in\bb Z$, where $\hat\Omega^\blob_{A|k}:=\projlim_r\Omega^\blob_{A/I^r|k}$.
\end{theorem}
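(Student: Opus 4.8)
The plan is to run the pro Hochschild--Kostant--Rosenberg theorem (Theorem~\ref{theorem_pro_HKR}) at the level of mixed complexes, so that it controls periodic and not merely Hochschild homology, and then to combine this with Goodwillie's nil-invariance and unwind an inverse limit. First, two reductions. Since $A$ is Noetherian, $I$ is pro Tor-unital by Theorem~\ref{theorem_pro_Tor_unital_commutative}. Since $k$ is a $\bb Q$-algebra and $I/I^r$ is a nilpotent ideal of $A/I^r$ with quotient $R$, Lemma~\ref{lemma_derived_Goodwillie} supplies, functorially in $r$, isomorphisms $\HP^k_n(R)\isoto\HP^k_n(A/I^r)$; consequently the transition maps of the tower $\{\HP^k_n(A/I^r)\}_r$ are isomorphisms, so that $\varprojlim_r\HP^k_n(A/I^r)=\HP^k_n(R)$ and $\varprojlim^1_r\HP^k_n(A/I^r)=0$.

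The central step is to compute $\{\HP^k_n(A/I^r)\}_r$ from the de Rham side. Fix free simplicial resolutions $Q^{(r)}_\blob\to A/I^r$ by polynomial $k$-algebras, compatible in $r$; then the pro simplicial mixed complex $\{C^k_\blob(Q^{(r)}_\blob)\}_r$ computes $\{\HH^k_*(A/I^r)\}_r$, $\{\HC^k_*(A/I^r)\}_r$ and $\{\HP^k_*(A/I^r)\}_r$. Two natural maps of pro mixed complexes are available: the classical HKR map $a_0\otimes\cdots\otimes a_n\mapsto\tfrac1{n!}a_0\,da_1\wedge\cdots\wedge da_n$, which is a strict morphism $\{C^k_\blob(Q^{(r)}_\blob)\}_r\to\{(\Omega^\blob_{Q^{(r)}_\blob|k},0,d)\}_r$ onto the simplicial de Rham complex regarded as a mixed complex with zero Hochschild differential and the de Rham differential as Connes' operator (a levelwise quasi-isomorphism on underlying complexes because each $Q^{(r)}_p$ is smooth), and the augmentation $\{(\Omega^\blob_{Q^{(r)}_\blob|k},0,d)\}_r\to\{(\Omega^\blob_{A/I^r|k},0,d)\}_r$, which is a pro-quasi-isomorphism on underlying complexes by the Artin--Rees property of the cotangent complex extracted in the proof of Theorem~\ref{theorem_pro_HKR} (equivalently, the homotopy of the simplicial modules $\Omega^q_{Q^{(r)}_\blob|k}$ is pro-concentrated in degree $0$ with value $\{\Omega^q_{A/I^r|k}\}_r$). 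Composing gives a morphism of pro mixed complexes which is a pro-quasi-isomorphism on underlying complexes, so its cone is a pro mixed complex with pro-acyclic underlying complex; then the column spectral sequence of the $(b,B)$-bicomplex shows that its pro cyclic homology vanishes, and the Milnor sequence $0\to\varprojlim^1_s\HC_{*+2s+1}\to\HP_*\to\varprojlim_s\HC_{*+2s}\to 0$ (all of pro abelian groups, as used throughout the paper) shows that its pro periodic cyclic homology vanishes. Hence $\{\HP^k_n(A/I^r)\}_r$ is pro-isomorphic to the pro periodic cyclic homology of the mixed complexes $(\Omega^\blob_{A/I^r|k},0,d)$, and since the Hochschild differential of the latter vanishes this is simply $\{\prod_{i\in\bb Z}H^{2i-n}(\Omega^\blob_{A/I^r|k})\}_r$.

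It remains to take the limit. By the previous two steps the tower $\{\prod_{i}H^{2i-n}(\Omega^\blob_{A/I^r|k})\}_r$ is pro-isomorphic to $\{\HP^k_n(A/I^r)\}_r$, a tower with isomorphic transition maps and inverse limit $\HP^k_n(R)$. Pro-isomorphic towers of abelian groups have the same $\varprojlim$ and $\varprojlim^1$, so $\varprojlim_r\prod_i H^{2i-n}(\Omega^\blob_{A/I^r|k})=\HP^k_n(R)$ and $\varprojlim^1_r\prod_i H^{2i-n}(\Omega^\blob_{A/I^r|k})=0$; since products of abelian groups are exact, $\varprojlim^1$ commutes with the product and thus $\varprojlim^1_r H^m(\Omega^\blob_{A/I^r|k})=0$ for every $m$. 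The transition maps $\Omega^\blob_{A/I^{r+1}|k}\to\Omega^\blob_{A/I^r|k}$ are surjective in each degree and $\hat\Omega^\blob_{A|k}=\varprojlim_r\Omega^\blob_{A/I^r|k}$, so the Milnor exact sequence for the cohomology of an inverse limit of complexes degenerates to $H^m(\hat\Omega^\blob_{A|k})=\varprojlim_r H^m(\Omega^\blob_{A/I^r|k})$. Combining, $\HP^k_n(R)=\varprojlim_r\prod_i H^{2i-n}(\Omega^\blob_{A/I^r|k})=\prod_i\varprojlim_r H^{2i-n}(\Omega^\blob_{A/I^r|k})=\prod_i H^{2i-n}(\hat\Omega^\blob_{A|k})$, and the isomorphism is natural because each step is.

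The main obstacle is the middle step: Theorem~\ref{theorem_pro_HKR} as stated provides only an isomorphism of pro $A$-modules, whereas one needs it promoted to a morphism of pro \emph{mixed} complexes, and in particular one must check that replacing the derived de Rham complex of $A/I^r$ by the honest complex $\Omega^\blob_{A/I^r|k}$ is harmless at the pro level \emph{compatibly with Connes' operator}. This is precisely the Artin--Rees behaviour of the cotangent complex already underlying the proof of Theorem~\ref{theorem_pro_HKR}, but it has to be carried through the cyclic structure rather than merely the Hochschild complex.
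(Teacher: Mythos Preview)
Your strategy mirrors the paper's---promote pro HKR to a statement about mixed complexes, invoke Goodwillie's nil-invariance, and read off the de Rham description---but there is a genuine gap in your step~2, and it is not the one you flag at the end. The implication ``pro-zero $\HC$ in every degree $\Rightarrow$ pro-zero $\HP$'' via the Milnor sequence does not follow: for the cone $Y_r$ you would need $\{\varprojlim_s \HC_{n+2s}(Y_r)\}_r=0$ and $\{\varprojlim^1_s \HC_{n+2s+1}(Y_r)\}_r=0$ as pro groups in $r$, but knowing $\{\HC_m(Y_r)\}_r=0$ for each fixed $m$ only says that for each $m$ the transition maps in $r$ are eventually zero, with a bound depending on $m$. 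The inverse limit $\varprojlim_s$ sees infinitely many $m$ at once, so no single $r'\ge r$ forces $\varprojlim_s\HC_{n+2s}(Y_{r'})\to\varprojlim_s\HC_{n+2s}(Y_r)$ to vanish. The phrase ``as used throughout the paper'' is misleading here: the pro spectral-sequence arguments of Appendix~\ref{appendix_pro} all rely on bounded filtrations, whereas the periodicity tower is unbounded.

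The paper circumvents this by reversing the order of the two limits. Rather than form pro-$\HP$ and then take $\varprojlim_r$, it first takes $\holim_r$ of the Hochschild complexes to obtain a \emph{single} mixed complex $(\holim_r\op{Tot}C^k_\blob(P^{(r)}_\blob),b,B)$ mapping to the single mixed complex $(\hat\Omega^\blob_{A|k},0,d)$. Theorem~\ref{theorem_pro_HKR} shows this map is a genuine quasi-isomorphism on underlying complexes (a pro-quasi-isomorphism of towers becomes a quasi-isomorphism after $\holim_r$, since a pro-acyclic tower has vanishing $\varprojlim$ and $\varprojlim^1$), and for a single mixed complex a Hochschild quasi-isomorphism induces an isomorphism on $\HP$. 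The identification $\HP_*(\holim_r\op{Tot}C^k_\blob(P^{(r)}_\blob))\cong\HP^k_*(R)$ then comes from interchanging the commuting homotopy limits $\holim_r$ and $\holim_s$ and applying Lemma~\ref{lemma_derived_Goodwillie} for each fixed $r$. Incidentally, your worry about compatibility with Connes' operator is not the real obstacle: the HKR projection $\pi:(C^k_\blob(-),b,B)\to(\Omega^\blob_{-|k},0,d)$ is a map of mixed complexes for any commutative $k$-algebra, and the paper uses exactly this.
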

\begin{proof}
Arguing informally and ignoring $\projlim^1$ terms, we sketch the proof. The following leftwards map is an isomorphism by nil-invariance of periodic cyclic homology, and the rightwards map is an isomorphism by an $\HP$ version of Theorem \ref{theorem_pro_HKR}:
\[\HP_n^k(R)\longleftarrow \projlim_r\HP_n^k(A/I^r)\To \prod_{i\in\bb Z}H^{2i-n}(\hat\Omega_{A|k}^\blob)\]
We will now formalise this argument using the language of mixed complexes \cite[\S2.5.13]{Loday1992}.

For any $k$-algebra there is a natural map of mixed complexes $\pi:(C_\blob^k(-),b,B)\to(\Omega_{-|k}^\blob,0,d)$ which splits the anti-symmetrisation map $\Omega_{-|k}^*\to\HH_*^k(-)$ on the associated Hochschild homologies \cite[\S2.3]{Loday1992}. Applying this to each $k$-algebra $A/I^r$ and letting $r\to\infty$ yields a map of mixed complexes $(\projlim_rC_\blob^k(A/I^r),b,B)\to(\hat\Omega_{A|k}^\blob,0,d)$.

Let $P_\blob^{(r)}\to A/I^r$ be simplicial resolutions by free $k$-algebras, chosen compatibly for all $r\ge1$. Then the composition \[(\op{holim}_r\op{Tot}C_\blob^k(P_\blob^{(r)}),b,B)\To(\projlim_rC_\blob^k(A/I^r),b,B)\To(\hat\Omega_{A|k}^\blob,0,d)\] is an isomorphism on the associated Hochschild homologies by Theorem \ref{theorem_pro_HKR}, hence also induces an isomorphism on the associated periodic cyclic homologies \[\HP_*(\op{holim}_r\op{Tot}C_\blob^k(P_\blob^{(r)}),b,B)\isoto \HP_*(\hat\Omega_{A|k}^\blob,0,d).\]

Since $(\hat\Omega_{A|k}^\blob,0,d)$ has $B$-differential equal to zero, a standard argument shows that $\HP_n(\hat\Omega_{A|k}^\blob,0,d)=\prod_{i\in\bb Z}H^{2i-n}(\hat\Omega_{A|k}^\blob)$ for all $n\in\bb Z$. The proof will therefore be complete as soon as we show that the canonical map \[\HP_n(\op{holim}_r\op{Tot}C_\blob^k(P_\blob^{(r)}),b,B)\To \HP_n(\op{Tot}C_\blob^k(P_\blob^{(1)}),b,B)=\HP_n^k(R)\] is an isomorphism for all $n\in\bb Z$; i.e., that \[\op{holim}_s\op{holim}_r\op{Tot}\CC_{\blob+2s}^k(P_\blob^{(r)})\To\op{holim_s}\op{Tot}\CC_{\blob+2s}^k(P_\blob^{(1)})\] is a weak equivalence, in which each $\op{holim}_s$ is taken over the periodicity maps. But by interchanging the order of the homotopy limits on the left, this is a consequence of Lemma \ref{lemma_derived_Goodwillie}, which states that $\op{holim}_s\op{Tot}\CC_{\blob+2s}^k(P_\blob^{(r)})\quis\op{holim_s}\op{Tot}\CC_{\blob+2s}^k(P_\blob^{(1)})$ for all $r\ge1$.
\end{proof}

\begin{remark}[Homotopy invariance of derived $\HP$]
If $k$ is a commutative $\bb Q$-algebra and $A$ is a $k$-algebra, then the canonical map $\HP_n^k(A[T])\to\HP_n^k(A)$ is an isomorphism for all $n\in\bb Z$. Indeed, letting $P_\blob\to A$ be a simplicial resolution of $A$ by free $k$-algebras, it follows from the proof of the homotopy invariance of usual periodic cyclic homology that the periodicity map $S:\HC_{n+2}^k(P_p[T],\pid T)\to\HC_n^k(P_p[T],\pid T)$ on relative cyclic homology is zero for all $n,p\ge0$ \cite[\S4.1.12]{Loday1992}; since the simplicial complex $\CC_\blob^k(P_\blob[T],\pid T)$ computes $\HC^k_*(A[T],\pid T)$ it follows that $S:\HC_{n+2}^k(A[T],\pid T)\to\HC_n^k(A[T],\pid T)$ is also zero, as required.
\end{remark}

\section{Andr\'e--Quillen homology}\label{section_AR_for_AQ}
The remainder of the paper is devoted to extending the pro excision, Artin--Rees, and continuity results which we have already proved for Hochschild homology to Andr\'e--Quillen homology. This is technically more difficult because the restriction spectral sequence of Proposition \ref{proposition_HH_SS} must be replaced by the higher Jacobi--Zariski spectral sequence of C.~Kassel and A.~Sletsj\o e (see Proposition \ref{proposition_JZSS} and Remark \ref{remark_JZSS}), which is unfortunately more cumbersome to use.

We begin with a review of Andr\'e--Quillen homology \cite{Andre1974, Quillen1970, Ronco1993}, though we assume the reader is familiar with its basic properties. {\em All rings and simplicial rings in this section are commutative.} Let $k\to A$ be a homomorphism of rings; let $P_\bullet\to A$ be a simplicial resolution of $A$ by free (commutative!) $k$-algebras, and set \[\bb L_{A|k}:= \Omega_{P_\bullet|k}^1\otimes_{P_\bullet}A.\] Thus $\bb L_{A|k}$ is a simplicial $A$-module which is free in each degree; it is called the {\em cotangent complex} (though we always consider it simplicially) of the $k$-algebra $A$. The cotangent complex is well-defined up to homotopy, since the free simplicial resolution $P_\bullet\to A$ is unique up to homotopy.

Set $\bb L_{A|k}^i:=\bigwedge_A^i\bb L_{A|k}$ for each $i\ge 1$. The {\em Andr\'e--Quillen homology} of the $k$-algebra $A$, with coefficients in any $A$-module $M$, is defined by \[D_n^i(A|k,M):=\pi_n(\bb L_{A|k}^i\otimes_A M),\] for $n\ge 0$, $i\ge 1$. When $M=A$ the notation is simplified to \[D_n^i(A|k):=D_n^i(A|k,A)=\pi_n(\bb L_{A|k}^i).\] When $i=1$ the superscript is often omitted, writing $D_n(A|k,M)=\pi_n(\bb L_{A|k}\otimes_A M)$ and $D_n(A|k)=\pi_n(\bb L_{A|k})$ instead. If $k\to A$ is essentially of finite type and $k$ is Noetherian, then $D_n^i(A|k,M)$ is a finitely generated $A$-module for all $n,i$ and for all finitely generated $A$-modules $M$.

If $0\to M\to N\to P\to 0$ is a short exact sequence of $A$-modules, then there is a resulting long exact sequence for each $i\ge 1$: \[\cdots\To D_n^i(A|k,M)\To D_n^i(A|k,N)\To D_n^i(A|k,P)\To \cdots\]

Finally, if $k\to A\to B$ are homomorphisms of rings then the simplicial resolutions may be chosen so that there is an exact sequence of simplicial $B$-modules \[0\To\bb L_{A/k}\otimes_A B\To\bb L_{B/k}\To\bb L_{B/A}\To 0.\] This remains exact upon tensoring by any $B$-module $M$ since these simplicial $B$-modules are free in each degree; taking homotopy yields the Jacobi--Zariski long exact sequence of $B$-modules \[\cdots\To D_n(A|k,M|_A)\To D_n(B|k,M)\To D_n(B|A,M)\To\cdots\]

\begin{remark}
To avoid any ambiguity once spectral sequences appears, we remark that the notation $D_n^i(A|k,M)$ is defined in the same way if $i\le 0$ or $n<0$. However, $D_n^i(A|k,M)=0$ if $n<0$ and \[D_n^0(A|k,M)=\begin{cases}M&n=0,\\0&\mbox{else,}\end{cases}\] since $\bb L_{A|k}^0\otimes_AM\simeq M$.
\end{remark}

\begin{remark}\label{remark_AQ_to_HH}
Let $k\to A$ be a homomorphism of rings, and $M$ an $A$-module. Then there is a first quadrant spectral sequence of $A$-modules \[E^2_{pq}=D_p^q(A|k,M)\Longrightarrow H_{p+q}^k(A,M).\]
Indeed, letting $P_\blob\to A$ be a free simplicial resolution, this is the spectral sequence associated to the bisimplicial $A$-module $C_\bullet^k(P_\bullet,M)$, since $\pi_*(C_\blob^k(P_p,M))=H^k_*(P_p,M)\cong\Omega_{P_p/k}^*\otimes_{P_p}M$ for each $p\ge 0$ (the final isomorphism follows from the calculation of Hochschild homology for free commutative algebras \cite[Thm.~3.2.2]{Loday1992}). We will only use this spectral sequence in Lemma \ref{lemma_AQ_vanishing_in_excision}.
\end{remark}

\begin{remark}[Sketch of proof of Theorem \ref{theorem_pro_excision}]\label{remark_sketch_proof}
It may be instructive at this point to give a sketch of the upcoming proof of Theorem \ref{theorem_pro_excision}, namely that pro Tor-unital ideals satisfy pro excision in Andr\'e--Quillen homology under a minor assumption.

Let $k\to A\to B$ be homomorphisms of commutative rings, and $I$ a pro Tor-untial ideal of $A$ mapped isomorphically to an ideal of $B$. It is required to show that, for each $i\ge 0$, the square of pro cotangent complexes 
\[\xymatrix{
\bb L^i_{A|k}\ar[r]\ar[d]&\bb L^i_{B|k}\ar[d]\\
\{\bb L^i_{A/I^r|k}\}_r\ar[r]&\{\bb L^i_{B/I^r|k}\}_r
}\]
is homotopy cartesian. The first step is to show that $\{\bb L_{A/I^r|k}^i\}_r\simeq\{\bb L^i_{A|k}\otimes_AA/I^r\}_r$, i.e., that
\[\{D_n^i(A/I^r|k)\}_r\cong\{D_n^i(A|k,A/I^r)\}_r\tag{1}\] for all $n\ge0$ (and similarly for $B$). Once this is achieved, it follows that the homotopy fibres of the vertical arrows in the above square are $\{\bb L^i_{A|k}\otimes_AI^r\}_r$ and $\{\bb L^i_{B|k}\otimes_BI^r\}_r$; we must show these are weakly equivalent. This will follow from the vanishing result \[\{\bb L^i_{B|A}\otimes_BI^r\}\simeq 0\tag{2}\] and the Jacobi--Zariski style implication \[\{\bb L^i_{B/A}\otimes_BI^r\}\simeq 0\Longrightarrow \{\bb L^i_{A|k}\otimes_AI^r\}\simeq\{\bb L^i_{B|k}\otimes_BI^r\}_r.\tag{3}\]

Results of flavour (1) -- (3) are the Artin--Rees and continuity properties which will be established in Theorem \ref{theorem_AR_properties_in_AQ_homology} and Lemma \ref{lemma_excision_1}, after which the above programme to prove pro excision can be carried out.
\end{remark}

\subsection{Continuity properties for Andr\'e--Quillen homology}\label{subsection_Artin_Rees_for_AQ}
Our goal in this section is to prove Artin--Rees and continuity properties for Andr\'e--Quillen homology. The $i=1$ case of the next theorem was proved by Andr\'e and Quillen whenever $A$ is Noetherian \cite[Prop.~X.12]{Andre1974} \cite[Thm.~6.15]{Quillen1970}; we generalise their result to pro Tor-unital ideals, and moreover to the case $i>1$. Krishna \cite{Krishna2010} proved Corollary \ref{corollary_AR_properties_in_AQ_homology} for certain cases of finitely generated algebras over characteristic zero fields, and we were motivated by his calculations.

\begin{theorem}\label{theorem_AR_properties_in_AQ_homology}
Let $k\to A$ be a homomorphism of rings, $I$ a pro Tor-unital ideal of $A$, and $M$ an $A$-module. Then:
\begin{enumerate}
\item $\{D_n^i(A/I^r|A,M/I^rM)\}_r=0$ for all $n\ge0$, $i\ge1$.
\item The canonical map $\{D_n^i(A|k,M/I^rM)\}_r\To\{D_n^i(A/I^r|k,M/I^rM)\}_r$ is an isomorphism for all $n\ge 0$, $i\ge0$.
\end{enumerate}
\end{theorem}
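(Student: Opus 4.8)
My plan is to prove part (i) first by a direct cotangent-complex argument and then deduce part (ii) from it; note that (i) is formally the case $k=A$ of (ii) — since $\bb L_{A|A}=0$ forces $D^i_n(A|A,M/I^rM)=0$ for $i\ge1$ — but the proof of (ii) below consumes (i), so (i) must be established on its own. For (i) I would first treat $i=1$, $M=A$, showing that the pro cotangent complex $\{\bb L_{(A/I^r)|A}\}_r$ is pro-acyclic. Write $B_r:=A/I^r$ and $C_r:=B_r\dotimes_AB_r$, regarded as a $B_r$-algebra via one of the factors. Since $I$ is pro Tor-unital, Theorem \ref{theorem1a} gives $\{\Tor_n^A(B_r,B_r)\}_r=0$ for $n>0$, so the multiplication map $C_r\to B_r$ is a pro weak equivalence of pro $B_r$-algebras. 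Base change for the cotangent complex along the homotopy cocartesian square with corners $A,B_r,B_r,C_r$ yields $\bb L_{C_r|B_r}\simeq\bb L_{B_r|A}\dotimes_{B_r}C_r$; the left-hand side is pro weakly equivalent to $\bb L_{B_r|B_r}=0$ (as $\{C_r\}_r\simeq\{B_r\}_r$ over $\{B_r\}_r$), while the right-hand side is pro weakly equivalent to $\bb L_{B_r|A}\dotimes_{B_r}B_r=\bb L_{B_r|A}$ for the same reason; hence $\{D_n(A/I^r|A)\}_r=0$ for all $n$.

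To pass to arbitrary $i\ge1$ and to coefficients $M/I^rM$, I would feed the pro-vanishing of the graded pro-module $\{\pi_*\bb L_{B_r|A}\}_r$ through the K\"unneth-type spectral sequences computing $\pi_*\Lambda^i\bb L_{B_r|A}$ (in terms of symmetric, exterior and divided powers of the homotopy groups) and then $\pi_*(\Lambda^i\bb L_{B_r|A}\otimes_{B_r}M/I^rM)$: every entry on those pages is obtained by applying a polynomial functor without constant term (exterior, symmetric or divided powers, tensor products, $\Tor$) to a pro-zero system, hence is itself pro-zero, so the abutments $\{D_n^i(A/I^r|A,M/I^rM)\}_r$ vanish. (In keeping with the paper's conventions these steps would be phrased entirely via spectral sequences of pro abelian groups; equivalently, the case $i=1$, $M=A$ amounts to running Andr\'e's fundamental spectral sequence of the surjection $A\to A/I^r$ on the pro-vanishing $\{\Tor^A_*(A/I^r,A/I^r)\}_r=0$.)

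For part (ii) — the case $i=0$ being trivial, both sides being $M/I^rM$ concentrated in degree $0$ — the tool is the higher Jacobi--Zariski spectral sequence (Proposition \ref{proposition_JZSS}) applied to $k\to A\to A/I^r$. From the transitivity cofibre sequence $\bb L_{A|k}\otimes_AA/I^r\to\bb L_{(A/I^r)|k}\to\bb L_{(A/I^r)|A}$, applying $\Lambda^i$ gives a finite filtration on $\bb L^i_{(A/I^r)|k}$ with bottom graded piece $\bb L^i_{A|k}\otimes_AA/I^r$ and remaining graded pieces $\Lambda^{i-j}(\bb L_{A|k}\otimes_AA/I^r)\otimes_{A/I^r}\Lambda^j\bb L_{(A/I^r)|A}$ for $1\le j\le i$. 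After $\otimes_{A/I^r}M/I^rM$ the bottom piece becomes $\bb L^i_{A|k}\otimes_AM/I^rM$, which carries $\{D_n^i(A|k,M/I^rM)\}_r$, while each remaining piece contains the factor $\Lambda^j\bb L_{(A/I^r)|A}\otimes_{A/I^r}M/I^rM$ with $j\ge1$, which is pro-acyclic by part (i); since $\Lambda^{i-j}(\bb L_{A|k}\otimes_AA/I^r)$ is a degree-wise free complex of $A/I^r$-modules, tensoring with it preserves pro-acyclicity. Thus the positively filtered part of $\bb L^i_{(A/I^r)|k}\otimes_{A/I^r}M/I^rM$ is pro-acyclic and the canonical map $\{D_n^i(A|k,M/I^rM)\}_r\to\{D_n^i(A/I^r|k,M/I^rM)\}_r$ is a pro-isomorphism for all $n\ge0$. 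I expect the main obstacle to be the pro-homotopical bookkeeping rather than any single idea: one must verify, in the spectral-sequence language the paper adopts, that pro-acyclicity survives derived base change, the non-additive functors $\Lambda^i$, and derived tensor products, and one must assemble the rather unwieldy higher Jacobi--Zariski spectral sequence together with its coefficients — precisely the ``more cumbersome'' feature flagged in the introduction, and the reason this is harder than the restriction-spectral-sequence arguments of Section~\ref{section_HH}.
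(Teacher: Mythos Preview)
Your argument for part (ii) matches the paper's: both run the Kassel--Sletsj\o e higher Jacobi--Zariski spectral sequence for $k\to A\to A/I^r$ and collapse it using part (i).

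For part (i), however, you take a genuinely different route from the paper, and the gap you flag as ``bookkeeping'' is in fact the substantive step. Your base-change man\oe uvre correctly gives $\{\bb L_{C_r|B_r}\}_r\simeq\{\bb L_{B_r|A}\dotimes_{B_r}C_r\}_r$, and the right-hand side is indeed pro weakly equivalent to $\{\bb L_{B_r|A}\}_r$ (this is a \emph{linear} tensor product with a degree-wise free module, so a simple $E^1$ spectral sequence disposes of it). But the other half --- that $\{\bb L_{C_r|B_r}\}_r\simeq 0$ because $\{C_r\}_r\simeq\{B_r\}_r$ --- asserts that the cotangent-complex functor sends pro weak equivalences of simplicial algebras to pro weak equivalences. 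This is not formal: $\bb L$ is a non-additive functor, and there is no spectral sequence whose entries are linear in $\pi_*(C_r)$ converging to $D_*(C_r|B_r)$. Your parenthetical about ``Andr\'e's fundamental spectral sequence'' does not quite rescue this either, since that spectral sequence runs from derived symmetric powers of a shift of $\Tor$, and one must again check that derived polynomial functors annihilate pro-zero inputs. The same problem recurs in your passage from $i=1$ to $i>1$: you need that $\Lambda^i$ of a pro-acyclic, degree-wise-free simplicial module is pro-acyclic.

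What actually fills both of these gaps is a cell-attachment argument: given that finitely many transition maps kill homotopy in degrees $\le n$, one factors a long enough composite through a simplicial object which is genuinely \emph{contractible} in degrees $\le n$, and contractibility (unlike acyclicity) is preserved by any functor, additive or not. The paper does precisely this, following Andr\'e [X.12] verbatim: it uses the pro Tor-vanishing to produce, for each $r$ and $n$, an explicit simplicial free $A/I$-algebra resolution $X_\bullet$ and a subring $F_\bullet$ agreeing with it in degrees $\le n+1$, through which the transition from level $r_0$ to level $r$ factors; applying $\Omega^i_{-|A/I}\otimes M$ to this factorisation kills the transition on $D_n^i$ for all $i\ge1$ at once. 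So the paper's approach is more hands-on but handles all $i$ uniformly and supplies exactly the tool your conceptual argument is missing; conversely, your base-change formulation is cleaner to state but cannot be completed without importing the same cell-attachment machinery.
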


In particular, setting $M=A$ we immediately obtain the following corollary, which is the analogue for Andr\'e--Quillen homology of Theorem \ref{theorem_AR_for_HH}:

\begin{corollary}\label{corollary_AR_properties_in_AQ_homology}
Let $k\to A$ be a homomorphism of commutative rings, and $I$ a pro Tor-unital ideal of $A$. Then:
\begin{enumerate}
\item $\{D_n^i(A/I^r|A)\}_r=0$ for all $n\ge0$, $i\ge1$.
\item The canonical map $\{D_n^i(A|k,A/I^r)\}_r\To\{D_n^i(A/I^r|k)\}_r$ is an isomorphism for all $n\ge 0$, $i\ge 0$.
\end{enumerate}
\end{corollary}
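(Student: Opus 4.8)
This corollary is the special case $M=A$ of Theorem \ref{theorem_AR_properties_in_AQ_homology}, so the plan is to prove the latter. Write $B_r:=A/I^r$ and $M_r:=M/I^rM$, and regard all exterior powers and tensor products of cotangent complexes as derived. Two standing facts will be used throughout. First, since $A\to B_r$ is surjective we have $\Omega^1_{B_r|A}=0$, so $\bb L_{B_r|A}$ is $1$-connective and hence $\bb L^i_{B_r|A}$ is $i$-connective; moreover the homotopy of a derived exterior power $\bb L^i_{B_r|A}$ in any fixed degree is computed functorially from finitely many low-degree homotopy groups of $\bb L_{B_r|A}$ by the functors $\otimes$, $\Tor$, $\bigwedge^s$, $\Gamma^s$, and for $i\ge2$ the degree-$n$ homotopy $\pi_n(\bb L^i_{B_r|A})$ involves only $\pi_1(\bb L_{B_r|A}),\dots,\pi_{n-1}(\bb L_{B_r|A})$. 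Second, all those functors carry pro-zero pro-modules to pro-zero pro-modules, since they annihilate the zero map.

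I would prove part (i) by induction on the homological degree $n$. Fix $n\ge0$ and assume $\{D^i_{n'}(B_r|A)\}_r=0$ for all $n'<n$ and all $i\ge1$. By $i$-connectivity $D^i_n(B_r|A)=0$ for $i>n$; for $2\le i\le n$ the first standing fact expresses $D^i_n(B_r|A)$ functorially in $D_1(B_r|A),\dots,D_{n-1}(B_r|A)$, which are pro-zero by hypothesis, so $\{D^i_n(B_r|A)\}_r=0$ for $i\ge2$. The remaining case $i=1$ is where the hypothesis on $I$ enters: apply the spectral sequence of Remark \ref{remark_AQ_to_HH} to the homomorphism $A\to B_r$ with coefficients $B_r$, obtaining $E^2_{pq}=D^q_p(B_r|A)\Longrightarrow\HH^A_{p+q}(B_r)$, and pass to the limit over $r$. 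The weight-zero row of $E^2$ is concentrated at the origin; the weight-one row receives no nonzero differentials, since a differential into $E^r_{p,1}$ would emanate from a term of weight $\le0$, all of which vanish; and every differential out of $E^r_{p,1}$ lands either off the first quadrant or in a subquotient of $D^{q'}_{p'}(B_r|A)$ with $p'<p$ and $q'\ge1$, hence in a pro-zero term by the inductive hypothesis. Therefore $\{D_n(B_r|A)\}_r\cong\{E^\infty_{n,1}\}_r$ is a pro-subquotient of $\{\HH^A_{n+1}(B_r)\}_r$, which vanishes by Theorem \ref{theorem_AR_for_HH}(i) (available because $I$ is pro Tor-unital). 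This completes the induction, giving $\{D^i_n(B_r|A)\}_r=0$ for all $n\ge0$, $i\ge1$; to pass from coefficients $B_r$ to a general pro $B_\infty$-module such as $\{M_r\}_r$, run the hyper-$\Tor$ spectral sequence $\Tor^{B_r}_p(D^i_q(B_r|A),M_r)\Longrightarrow D^i_{p+q}(B_r|A,M_r)$, all of whose $E^2$-terms now pro-vanish.

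Part (ii) is then formal. For $i=0$ the map is an isomorphism in each degree for trivial reasons. For $i\ge1$, apply the higher Jacobi--Zariski spectral sequence of Kassel--Sletsj\o e (Proposition \ref{proposition_JZSS}) to $k\to A\to B_r$ with coefficients $M_r$: its $E^1$-page is assembled from the groups $D^a_\bullet(A|k,-)$ and the relative groups $D^b_\bullet(B_r|A,-)$ with $a+b=i$, and after passing to the limit over $r$ every term with $b\ge1$ vanishes by part (i). The spectral sequence therefore collapses onto its edge, identifying $\{D^i_n(A|k,M_r)\}_r$ with $\{D^i_n(B_r|k,M_r)\}_r$ via the canonical map; for $i=1$ this is just the Jacobi--Zariski long exact sequence together with the vanishing of $\{D_n(B_r|A,M_r)\}_r$. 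Specialising $M=A$ recovers the corollary.

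The crux is part (i). The delicate points there are the bookkeeping that keeps the induction on $n$ free of circularity — in particular verifying that the weight-one row of the Remark \ref{remark_AQ_to_HH} spectral sequence interacts only with strictly lower homological degree and with the trivial weight-zero row — and the usual care with pro-objects, namely passing kernels, cokernels and subquotients through $\varprojlim$ and checking that the functors computing the homotopy of a derived exterior power preserve pro-zero pro-modules. Once part (i) is established, part (ii) is routine given the higher Jacobi--Zariski spectral sequence.
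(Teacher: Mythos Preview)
Your part (ii) is the same as the paper's: Jacobi--Zariski for $i=1$, Kassel--Sletsj\o e for $i>1$, collapsing once part (i) is known. The interesting divergence is in part (i).

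The paper proves Theorem~\ref{theorem_AR_properties_in_AQ_homology}(i) by following Andr\'e's original construction \cite[Prop.~X.12]{Andre1974}: using the pro $\Tor$-vanishing $\{\Tor_n^A(A/I^r,A/I)\}_r=0$ to build, by hand, a factoring of the map of free simplicial resolutions $P_\bullet(r_0)\otimes_AA/I\to P_\bullet(r_n)\otimes_AA/I$ through a simplicial ring $F_\bullet$ agreeing in low degrees with a free $A/I$-resolution of $A/I$. Your route is different and more modular: you feed in the Hochschild vanishing of Theorem~\ref{theorem_AR_for_HH}(i) as a black box and extract the Andr\'e--Quillen vanishing via the spectral sequence of Remark~\ref{remark_AQ_to_HH}. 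This is essentially the strategy the paper uses later in the proof of Lemma~\ref{lemma_AQ_vanishing_in_excision}(ii), transported to the pro setting, and it is a perfectly good alternative.

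There is, however, a genuine gap in your handling of the $i\ge 2$ step. You assert that $\pi_n(\bb L^i_{B_r|A})$ is ``computed functorially from finitely many low-degree homotopy groups of $\bb L_{B_r|A}$ by the functors $\otimes$, $\Tor$, $\bigwedge^s$, $\Gamma^s$'', and then conclude pro-vanishing because those functors annihilate the zero map. But no such closed formula exists: $\pi_n(\bigwedge^i M)$ is not a functor of the individual $\pi_j(M)$; at best there is a filtration or spectral sequence, and even establishing that requires work. The statement ``annihilate the zero map'' applies to a single functor of a single module, not to a derived functor of a simplicial module whose transition maps are merely zero on each homotopy group separately. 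A map $M_s\to M_r$ that is zero on $\pi_1,\dots,\pi_{n-1}$ need not induce zero on $\pi_n(\bigwedge^i)$ after one step.

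The fix is exactly the content of Appendix~\ref{appendix_pro}'s companion, Proposition~\ref{proposition_homological_factoring}: compose enough transition maps, each killing one homotopy group, to factor $\bb L_{B_s|A}\to\bb L_{B_r|A}$ through a degree-wise projective $P_\bullet$ with $\pi_j(P_\bullet)=0$ for $j<n$. Then the contracting-homotopy argument in the proof of Proposition~\ref{proposition_homological_factoring_ideal} (or equivalently d\'ecalage, giving $in$-connectivity of $\bigwedge^iP_\bullet$) shows $\pi_n(\bigwedge^iP_\bullet)=0$ for $i\ge 2$, so the transition map on $D^i_n$ is zero. With this correction your induction goes through and yields a valid, genuinely different proof of part~(i) --- one that trades Andr\'e's explicit resolution-building for the Hochschild input plus the factoring lemma.
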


\begin{remark}
The previous theorem and corollary may be stated directly in terms of the pro cotangent complexes. Given a projective system $X_\bullet(1)\leftarrow X_\bullet(2)\leftarrow\cdots$ of simplicial $A$-modules, we may take the limit degree-wise to form $\{X_\bullet(r)\}_r$, which is a simplicial object in the abelian category of pro $A$-modules. By construction, its homotopy pro groups are the pro $A$-modules $\pi_n(\{X_\bullet(r)\}_r):=\{\pi_n(X_\bullet(r))\}_r$.
The notation suggests that $\{X_\bullet(r)\}_r$ lives in the abelian category $\op{Pro}(A\op-mod^{\Delta^\sub{op}})$, i.e.~pro objects in the category of simplicial $A$-modules, but we prefer to view it in $\op{Pro}(A\op-mod)^{\Delta^\sub{op}}$, i.e.~simplicial objects in the category of pro $A$-modules, via the natural functor \[\op{Pro}(A\op-mod^{\Delta^\sub{op}})\To\op{Pro}(A\op-mod)^{\Delta^\sub{op}}.\] We do this because terms like ``acyclic'' are already defined in $\op{Pro}(A\op-mod)^{\Delta^\sub{op}}$; otherwise we would have to introduce a suitable model structure on $\op{Pro}(A\op-mod^{\Delta^\sub{op}})$.

The statements of Theorem \ref{theorem_AR_properties_in_AQ_homology} are equivalent to the following:
\begin{enumerate}
\item $\{\bb L_{A/I^r|A}^i\otimes_{A/I^r}M/I^r\}_r$ is acyclic.
\item $\{\bb L_{A|k}^i\otimes_AM/I^rM\}_r\To\{\bb L_{A/I^r|k}^i\otimes_{A/I^r}M/I^rM\}_r$ is a weak equivalence.
\end{enumerate}
We will take advantage of this pro simplicial framework when we turn to excision in Section \ref{section_AQ_excision}; it is not essential but simplifies the exposition there.
\end{remark}

We now turn to the proof of Theorem \ref{theorem_AR_properties_in_AQ_homology}. We begin by treating part (i), which is based on Andr\'e's original work:

\begin{proof}[\bf Proof of part (i) of Theorem \ref{theorem_AR_properties_in_AQ_homology}]
Let $I\subseteq A$ be a pro Tor-unital ideal and fix $i\ge1$. We first claim that in order to prove \[\{D_n^i(A/I^r|A,M/I^rM)\}_r=0\] for all $A$-modules $M$ and all $n\ge0$, it is sufficient to consider the case that $M$ is an $A/I$-module. Indeed, once the vanishing claim has been proved for $A/I$-modules, it immediately follows for $A/I^r$-modules by induction on $r\ge1$ using the short exact sequence $0\to I^{r-1}M\to M\to M/I^{r-1}M\to 0$. Then, if $M$ is an arbitrary $A$-module and $r\ge1$ is given, we apply the special case to the $A/I^r$-module $M/I^rM$ to find $s\ge r$ such that the second of the following arrows, hence the composition, is zero:
\[D_n^i(A/I^s|A,M/I^sM)\To D_n^i(A/I^s|A,M/I^rM)\To D_n^i(A/I^r|A,M/I^rM).\] This shows that $\{D_n^i(A/I^r|A,M/I^rM)\}_r=0$, as desired.

Now let $M$ be an $A/I$-module; we must show that $\{D_n^i(A/I^r|A,M)\}_r=0$. Firstly, Corollary \ref{corollary_most_useful_pro_vanishing} implies that $\{\Tor_n^A(A/I^r,A/I)\}_r=0$ for all $n\ge1$. Fixing $n,r\ge1$, we may therefore find a sequence of integers $r_0\ge r_1\ge\cdots\ge r_n:=r$ such that the maps \[\Tor_p^A(A/I^{r_{p-1}},A/I)\To\Tor_p^A(A/I^{r_p},A/I)\] are zero for $p=1,\dots,n$.

For each $s\ge1$, let $P_\bullet(s)\to A/I^s$ be a functorially chosen simplicial resolution of $A/I^s$ by free $A$-algebras. Verbatim following Andr\'e \cite[Proof of Prop.~X.12]{Andre1974} (with notation $A^p=A$, $B^p=A/I^{r_p}$, $C^p=A/I$, and $W=M$), we deduce that there exist a simplicial resolution $X_\bullet\to A/I$ by free $A/I$-algebras (!), and a simplicial subring $F_\bullet\subseteq X_\bullet$, with the following properties (of which we will only need (i) and (v)):
\begin{enumerate}\itemsep-1pt
\item There exists a factoring $P_\bullet(r_0)\otimes_AA/I\to F_\bullet \to P_\bullet(r_n)\otimes_AA/I$.
\item Under the above maps, $F_p$ is a free $P_p(r_0)\otimes_AA/I$-algebra for all $p\ge0$.
\item $\pi_0(P_p(r_0)\otimes_AA/I)\to \pi_0(F_\bullet)$ is an isomorphism.
\item $\pi_p(F_\bullet)=0$ for $p=1,\dots,n$.
\item $X_p=F_p$ for $p=0,\dots,n+1$.
\end{enumerate}
Since $X_\bullet$ is a simplicial resolution of $A/I$ by free $A/I$-algebras, we have \[\pi_p(\Omega^i_{X_\bullet|A/I}\otimes_{X_\blob}M)=D_p^i(A/I|A/I,M)=0\] for all $p\ge0$, in particular for $p=n$. Since $X_p=F_p$ for $p=n-1,n,n+1$, it follows that $H_n(\Omega^i_{F_\bullet|A/I}\otimes_{F_\blob}M)=0$. According to property (i), the map \[D_n^i(A/I^{r_0}|A,M)=\pi_n(\Omega^i_{P_\bullet(r_0)|A}\otimes_{P_\blob(r_0)}M)\to \pi_n(\Omega^i_{P_\bullet(r_n)|A}\otimes_{P_\blob(r_n)}M)=D_n^i(A/I^{r_n}|A,M)\] is therefore zero.

So, given any integer $r\ge1$ we have found an integer $r_0\ge r$ for which the map $D_n^i(A/I^{r_0}|A,M)\to D_n^i(A/I^r|A,M)$ is zero, as required.
\end{proof}

It remains to prove part (ii) Theorem \ref{theorem_AR_properties_in_AQ_homology}; when $i=1$ this is easily deduced from the usual Jacobi--Zariski long exact sequence:

\begin{proof}[\bf Proof of part (ii) of Theorem \ref{theorem_AR_properties_in_AQ_homology} when $\boldsymbol{i=1}$]
Let $k\to A$ be a homomorphism of rings, $I\subseteq A$ a pro Tor-unital ideal, $M$ an $A$-module, and fix $i\ge 1$. Then the long exact Jacobi--Zariski sequence for $k\to A\to A/I^r$ is \[\cdots\To D_n(A|k,M/I^rM)\To D_n(A/I^r|k, M/I^rM)\To D_n(A/I^r|A,M/I^rM)\To\cdots\] Part (i) of Theorem \ref{theorem_AR_properties_in_AQ_homology} shows that the limit over $r$ of the right-most term is zero; so taking the limit over $r$ gives $\{D_n(A|k,M/I^rM)\}_r\isoto\{D_n(A/I^r|k, M/I^rM)\}_r$, proving (ii).
\end{proof}

To prove part (ii) of Theorem \ref{theorem_AR_properties_in_AQ_homology} when $i>1$ we will use the following ``higher Jacobi--Zariski'' spectral sequence, which is the analogue in Andr\'e--Quillen homology of Proposition \ref{proposition_HH_SS}:

\begin{proposition}[Kassel--Sletsj\o e \cite{Kassel1992}]\label{proposition_JZSS}
Let $k\to A\to B$ be homomorphisms of rings, let $M$ be a $B$-module, and fix $i\ge 1$. Then there is a natural, third octant, bounded spectral sequence of $B$-modules \[E^1_{pq}\Longrightarrow D_{p+q}^i(B|k,M)\] whose columns may be described as follows:
\begin{itemize}
\item Suppose $p<-i$ or $p>0$. Then $E^1_{pq}=0$.
\item Suppose $p=-i$; i.e., we are on the left-most column of the $E^1$-page. Then $E^1_{pq}=D_{q-i}^i(A|k,M)$.
\item Suppose $-i<p\le0$. Then the $p^\sub{th}$ column of the $E^1$-page is given by a first quadrant spectral sequence \[\cal E^2_{\al\beta}=D_\al^{-p}\big(A|k,D_\beta^{i+p}(B|A,M)\big)\Longrightarrow E_{p,\al+\beta-p}^1.\]
\end{itemize}
\end{proposition}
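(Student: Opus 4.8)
The plan is to obtain the spectral sequence by filtering the exterior power $\bb L_{B|k}^i$ by means of the fundamental exact sequence of simplicial $B$-modules $0\To\bb L_{A|k}\otimes_A B\To\bb L_{B|k}\To\bb L_{B|A}\To 0$ recalled above. Concretely, I would choose compatible free simplicial resolutions $P_\blob\to A$ over $k$ and $Q_\blob\to B$ over $P_\blob$, so that this sequence is realised degreewise by the split exact cotangent sequence of $k\to P_p\to Q_p$ base-changed to $B$; in particular it is a short exact sequence of \emph{free} $B$-modules in each simplicial degree, hence termwise split. Now the standard multilinear algebra of a split extension applies: for a split short exact sequence $0\to L'\to L\to L''\to 0$ of modules, $\bigwedge^i L$ carries a natural finite filtration whose successive quotients are $\bigwedge^d L'\otimes\bigwedge^{i-d}L''$ for $d=0,\dots,i$. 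Applying this degreewise — legitimate since every term is degreewise free over $B$ — yields a natural finite filtration of the simplicial $B$-module $\bb L_{B|k}^i$ whose $d$-th graded piece is naturally isomorphic to $\bb L_{A|k}^d\otimes_A\bb L_{B|A}^{i-d}$, and this filtration remains termwise split after $-\otimes_B M$, producing a natural filtration of $\bb L_{B|k}^i\otimes_B M$ with graded pieces $\bb L_{A|k}^d\otimes_A(\bb L_{B|A}^{i-d}\otimes_B M)$.

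The spectral sequence of this filtered complex converges to $\pi_*(\bb L_{B|k}^i\otimes_B M)=D_*^i(B|k,M)$; reindexing the filtration degree as $p=-d\in\{-i,\dots,0\}$ yields the columns of the statement, and the third-octant shape and boundedness are automatic because there are only the $i+1$ filtration steps and $\pi_n$ vanishes in negative degrees. On the column $p=-i$ the graded piece is $\bb L_{A|k}^i\otimes_A M$ (as $\bigwedge^0$ of the quotient is $B$), so $E^1_{-i,q}=\pi_{q-i}(\bb L_{A|k}^i\otimes_A M)=D_{q-i}^i(A|k,M)$ after the total-degree shift. For $-i<p\le0$, writing $N_\blob:=\bb L_{B|A}^{i+p}\otimes_B M$, the graded piece is $\bb L_{A|k}^{-p}\otimes_A N_\blob$; filtering this bicomplex by the $\bb L_{A|k}^{-p}$-degree and using that $\bb L_{A|k}^{-p}$ is degreewise free over $A$ gives a first-quadrant (hence convergent) K\"unneth-type spectral sequence
\[
\cal E^2_{\al\beta}=\pi_\al\big(\bb L_{A|k}^{-p}\otimes_A\pi_\beta(N_\blob)\big)=D_\al^{-p}\big(A|k,D_\beta^{i+p}(B|A,M)\big)\implies\pi_{\al+\beta}\big(\bb L_{A|k}^{-p}\otimes_A N_\blob\big)=E^1_{p,\al+\beta-p},
\]
which is exactly the claimed description of the $p$-th column.

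Naturality in $k\to A\to B$ and in $M$ is inherited from that of the cotangent sequence, of the Koszul filtration on exterior powers, and of the K\"unneth spectral sequence. No individual step is difficult; the point that needs care is to keep the termwise-split structure visible throughout, so that forming exterior powers and the base change $-\otimes_B M$ both commute with passage to associated graded pieces, and then to reconcile the two spectral sequences in play — the outer filtration spectral sequence and the per-column K\"unneth spectral sequences computing its $E^1$-page — with the third-octant indexing conventions of the statement. This is essentially the argument of Kassel and Sletsj\o e \cite{Kassel1992}.
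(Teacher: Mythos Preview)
The paper does not give its own proof of this proposition; it is stated with attribution to Kassel and Sletsj\o e and then used as a black box. So there is no in-paper argument to compare against, and the relevant question is whether your sketch reproduces the Kassel--Sletsj\o e construction correctly. It does: the filtration on $\bb L_{B|k}^i$ coming from the termwise-split fundamental sequence, with graded pieces $\bb L_{A|k}^d\otimes_A\bb L_{B|A}^{i-d}$, is exactly their approach, and the per-column description via a secondary spectral sequence is the standard bisimplicial computation.

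One point worth making explicit in your write-up: the graded piece $\bb L_{A|k}^{-p}\otimes_A(\bb L_{B|A}^{i+p}\otimes_BM)$ is, as it arises from the filtration, a \emph{single} simplicial $B$-module, namely the diagonal $n\mapsto(\bb L_{A|k}^{-p})_n\otimes_A(\bb L_{B|A}^{i+p}\otimes_BM)_n$. To obtain the K\"unneth spectral sequence you quote, you must recognise this diagonal as weakly equivalent (via Eilenberg--Zilber) to the totalisation of the bisimplicial module $(\al,\beta)\mapsto(\bb L_{A|k}^{-p})_\al\otimes_A(\bb L_{B|A}^{i+p}\otimes_BM)_\beta$, and only then filter by the $\al$-direction. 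Your phrase ``filtering this bicomplex'' presupposes this identification; it is routine, but since the two simplicial directions were originally coupled through the common resolution $P_\blob\subset Q_\blob$, it is worth saying that you are deliberately decoupling them at this stage.
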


\begin{remark}\label{remark_JZSS}
Suppose that $k\to A\to B$, $M$, $i$ are as in Proposition \ref{proposition_JZSS}, and assume that $A$ is filtered inductive limit of smooth, finite-type $k$-algebras. Then $\bb L_{A|k}^{-p}\simeq\Omega_{A|k}^{-p}$ for all $p$ by the ``Hochschild--Kostant--Rosenberg theorem for Andr\'e--Quillen homology'' \cite[Thm.~3.5.6]{Loday1992}, which are flat $A$-modules, so the spectral sequence simplifies to \[E_{pq}^1=\Omega_{A|k}^{-p}\otimes_AD_{p+q}^{i+p}(B|A,M)\Longrightarrow D_{p+q}^i(B|k,M).\]
It is this simplified form of Kassel--Sletsj\o e's spectral sequence which most often appears in applications, e.g., \cite{Cortinas2009}, but we require it in full generality.
\end{remark}

Using the higher Jacobi--Zariski spectral sequence we may complete the proof of Theorem \ref{theorem_AR_properties_in_AQ_homology}:

\begin{proof}[\bf Proof of part (ii) of Theorem \ref{theorem_AR_properties_in_AQ_homology} for $\pmb{i\ge1}$]
Let $k\to A$ be a homomorphism of rings, $I\subseteq A$ a pro Tor-unital ideal, $M$ an $A$-module, and fix $i\ge 1$.

For each $r\ge 1$ we apply Proposition \ref{proposition_JZSS} to the ring homomorphisms $k\to A\to A/I^r$ and the $A/I^r$-modules $M/I^rM$ to obtain natural, third octant, bounded spectral sequences \[E^1_{pq}(r)\Longrightarrow D_{p+q}^i(A/I^r|k,M/I^rM),\tag{\dag}\] with the properties described by the proposition.

We claim that $\{E_{pq}^1(r)\}_r=0$ if $p>-i$. It is sufficient to consider the situation $-i<p\le 0$, in which case there are natural, first quadrant spectral sequences for all $r\ge 1$: \[\cal E_{\al\beta}^2(r)=D_\al^{-p}\big(A|k,D_\beta^{i+p}(A/I^r|A,M/I^rM)\big)\Longrightarrow E_{p,\al+\beta-p}^1.\] Passing to the limit yields a spectral sequence of pro $A$-modules, \[\cal E_{\al\beta}^2(\infty)=\{\cal E_{\al\beta}^2(r)\}_r\Longrightarrow \{E_{p,\al+\beta-p}^1(r)\}_r,\] and so to prove our vanishing claim it is enough to show that $\cal E_{\al\beta}^2(\infty)=0$ for all $\al,\beta$. But part (i) of Theorem \ref{theorem_AR_properties_in_AQ_homology} (which we have already proved) implies that for any $r$ there exists $s\ge r$ such that the map \[\cal E_{\al\beta}^2(s)=D_\beta^{i+p}(A/I^s|A,M/I^sM)\To D_\beta^{i+p}(A/I^r|A,M/I^rM)=\cal E_{\al\beta}^2(r)\] is zero; hence $\cal E_{\al\beta}^2(\infty)=0$, completing the proof that $\{E_{pq}^1(r)\}_r=0$ for $p>-i$.

We now form the limit of the spectral sequences (\dag): \[E_{pq}^1(\infty)=\{E_{pq}^1(r)\}_r\Longrightarrow \{D_{p+q}^i(A/I^r|k,M/I^rM)\}_r\] By what we have just proved, and from Proposition \ref{proposition_JZSS}, this spectral sequence is everywhere zero on the first page except along the column $p=-i$, where it equals \[E_{-i,q}^1(\infty)=\{D_{q-i}^i(A|k,M/I^rM)\}_r.\] Therefore the edge map $\{D_{n}^i(A|k,M/I^rM)\}_r\to\{D_n^i(A/I^r|k,M/I^rM)\}_r$ is an isomorphism, as required to complete the proof of Theorem \ref{theorem_AR_properties_in_AQ_homology}.
\end{proof}

\subsection{Pro excision for Andr\'e--Quillen homology}\label{section_AQ_excision}
In this section we use the Artin--Rees properties of Theorem \ref{theorem_AR_properties_in_AQ_homology} to prove that pro Tor-unital ideals satisfy pro excision in Andr\'e--Quillen homology. Unfortunately we must impose the following additional condition on the ideal, though we suspect it is unnecessary:

\begin{definition}\label{definition_small}
We will say that a non-unital (commutative) ring $I$ is {\em small} if and only if for each $r\ge 1$ there exists $s\ge 1$ such that $I^{(r)}\supseteq I^s$, where $I^{(r)}$ is the ideal of $I$ generated by the $r^\sub{th}$-powers of elements of $I$.
\end{definition}

\begin{example}
Suppose that $I$ is an ideal of a ring $A$. Then $I$ is small if it is a finitely generated ideal of $A$, or if $A$ is a $\bb Q$-algebra (the latter case follows from combinatorial identities such as $ab=\tfrac12((a+b)^2-a^2-b^2)$; see \cite{Anderson1994}).
\end{example}

It is the proof of the follow vanishing result in which we require smallness:

\begin{lemma}\label{lemma_AQ_vanishing_in_excision}
Let $A\to B$ be a homomorphism of rings, and $I$ an ideal of $A$ mapped isomorphically to an ideal of $B$.
\begin{enumerate}
\item Then, for each $n>0$, the $A$-module $\HH_n^A(B)$ is annihilated by a power of $I$.
\item Assume further that $I$ is small; then, for each $i\ge1$, $n\ge0$, the $A$-module $D_n^i(B|A)$ is annihilated by a power of $I$.
\end{enumerate}
\end{lemma}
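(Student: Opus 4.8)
The plan is to exploit the fact that $\phi\colon A\to B$ is an isomorphism away from $V(I)$. First I would record the elementary observation that both $\ker\phi$ and $\op{coker}\phi$ are annihilated by $I$: if $a\in\ker\phi$ and $x\in I$ then $xa\in I\cap\ker\phi=0$, while if $b\in B$ and $x\in I$ then $xb$ lies in the ideal $I$ of $B$, hence in the image of $\phi$. Consequently $A_x\to B_x$ is an isomorphism for every $x\in I$. I would also observe that the hypotheses of the lemma descend to any $A$-subalgebra $B'\subseteq B$ of finite type over $A$: since $I=\phi(I)\subseteq\phi(A)\subseteq B'$, the ideal $I$ is contained in $B'$, is an ideal of $B'$, and is still mapped isomorphically from $I\subseteq A$. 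As $\HH^A_*(-)$ and $\bb L_{-|A}$ commute with the filtered colimit $B=\varinjlim_{B'}B'$, this lets one assume $B$ is of finite presentation over $A$, so that the homology modules appearing below are finitely generated over $B$ (the cotangent complex $\bb L_{B|A}$ and its exterior powers being pseudo-coherent).

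For part (i): Hochschild homology and the cotangent complex commute with localisation in the ring variable, and localisations are formally \'etale, so transitivity along $A\to A_x\to B_x$ gives $\bb L_{B|A}\otimes_B B_x\simeq\bb L_{B_x|A_x}\simeq0$, whence $\HH^A_n(B)\otimes_B B_x\cong\HH^A_n(B_x)\cong\HH^A_n(A_x)=0$ for all $n>0$ and all $x\in I$. Thus $\HH^A_n(B)$ is a finitely generated $B$-module supported on $V(IB)\subseteq\Spec B$, and it remains to promote this to annihilation by a power of $I$. I would anchor the argument on the degree-one case $\HH^A_1(B)=\Omega^1_{B|A}$, which is annihilated by $I$ because $d(xb)=x\,db$ for $x\in I$, $b\in B$ (both $xb$ and $x$ lying in $\op{im}\phi$, so $d(xb)=dx=0$ relative to $A$), and then propagate to higher degrees using the multiplicative and Koszul structure of Hochschild homology (Remark~\ref{remark_enveloping_alg}), keeping the exponents under control.

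For part (ii): taking exterior powers of $\bb L_{B|A}\otimes_B B_x\simeq0$ gives $\bb L^i_{B|A}\otimes_B B_x\simeq0$, so each $D^i_n(B|A)$ is again a finitely generated $B$-module supported on $V(IB)$, for all $i\ge1$ and $n\ge0$. The new subtlety is that the bound one naturally obtains controls $D^i_n(B|A)$ only by the ideal $I^{(r)}$ generated by the $r$-th powers of the elements of $I$, for some $r$ depending on $n$ and $i$: this is the price of the exterior and divided powers entering the computation of $\pi_*\bb L^i_{B|A}$, since the divided power $\gamma_b(m)$ of an element $m$ killed by $x$ is only killed by $x^b$ (as $\gamma_b(xm)=x^b\gamma_b(m)$). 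It is precisely the smallness hypothesis (Definition~\ref{definition_small}) --- via identities such as $ab=\tfrac12((a+b)^2-a^2-b^2)$ over $\bb Q$-algebras, or via finite generation of $I$ --- that then upgrades annihilation by $I^{(r)}$ to annihilation by an honest power $I^s$.

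The step I expect to be the main obstacle is exactly this upgrade, from the ``pointwise'' vanishing $D^i_n(B|A)_x=0$ (resp.\ $\HH^A_n(B)_x=0$) for all $x\in I$ to a single annihilating power of $I$. The two ingredients I would marshal are: (a) the reduction to $B$ of finite presentation over $A$, which supplies the finiteness without which no uniform exponent can be extracted; and (b) careful tracking of the exponent through the exterior/divided-power bookkeeping --- which for Hochschild homology terminates at a genuine power of $I$, but for the exterior powers $\bb L^i_{B|A}$ only reaches $I^{(r)}$, so that the smallness of $I$ is indispensable in part (ii) and explains the asymmetry between the two parts of the statement.
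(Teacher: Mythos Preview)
Your localisation observations are correct, but the argument for part (i) has a genuine gap at precisely the step you identify as the main obstacle. The filtered-colimit reduction does not help: if each $\HH^A_n(B')$ is killed by $I^{r(B')}$ with $r$ depending on $B'$, the colimit need not be killed by a fixed power; and if the bound were uniform in $B'$, the reduction would be unnecessary. After the reduction, ``finitely generated over $B$ and supported on $V(IB)$'' only yields $\sqrt{\op{Ann}_B M}\supseteq IB$, i.e.\ each $x\in I$ acts nilpotently --- extracting a single annihilating power $I^r$ from this still requires $IB$ to be finitely generated, which is a smallness-type hypothesis that part (i) is stated \emph{without}. Your proposed ``anchor at degree one and propagate via the multiplicative/Koszul structure'' does not close the gap either: $\HH^A_*(B)$ is not in general generated as an algebra in degree one, and Remark~\ref{remark_enveloping_alg} provides no such mechanism.

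The paper avoids all of this. For (i), call a map of simplicial $A$-modules an \emph{$I$-weak equivalence} if the kernel and cokernel on each $\pi_n$ are killed by a power of $I$; these are closed under composition and under derived tensor product with degree-wise-free simplicial $A$-modules. Your own observation that $\ker\phi$ and $\op{coker}\phi$ are killed by $I$ says exactly that $A\to P_\blob$ is an $I$-weak equivalence for any free resolution $P_\blob\to B$; iterating tensor products then shows that $A\to C^A_\blob(P_\blob)$ is one too, which is the claim --- with an exponent depending only on $n$, and no finiteness on $B$ or $I$. For (ii), the paper feeds (i) into the spectral sequence $D^q_p(B|A)\Rightarrow\HH^A_{p+q}(B)$ of Remark~\ref{remark_AQ_to_HH} to handle $D^1_n(B|A)$ by induction on $n$ (your computation that $\Omega^1_{B|A}$ is killed by $I$ is the base case), and then a separate cell-attachment lemma (Proposition~\ref{proposition_homological_factoring_ideal}) passes from $\bb L_{B|A}$ to $\bb L^i_{B|A}$ at the cost of replacing $I$ by $I^{(e)}$; smallness then finishes, much as you anticipated.
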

\begin{proof}
(i): We will say that a map of simplicial $A$-modules $M_\blob\to N_\blob$ is an {\em $I$-weak equivalence} if and only if the kernel and cokernel of $\pi_n(M_\blob)\to\pi_n(N_\blob)$ are killed by a power of $I$ for all $n\ge0$. The composition or derived tensor product of two $I$-weak equivalences is clearly again an $I$-weak equivalence.

Let $P_\blob\to B$ be a simplicial resolution of $B$ by free $A$-algebras. Since the kernel and cokernel of $A\to B$ are killed by $I$, the structure map $A\to P_\blob$ is an $I$-weak equivalence. By the comments in the previous paragraph, it inductively follows that the structure map $A\to C_\blob^A(P_\blob)$ is an $I$-weak equivalence, which is exactly the desired assertion.

(ii): We now assume further that $I$ is small, and we will prove by induction on $N\ge0$ the following: for all $i\ge1$ and $0\le n\le N$, the $A$-module $D_n^i(B|A)$ is killed by a power of $I$.

To treat the base case $N=0$ we show that $D_0^i(B|A)=\Omega_{B|A}^i$ is killed by $I$. But this is straightforward and well-known: if $b\in B$ and $a\in I$ then $a\,db=d(ab)=0$, since $ab\in I$ and hence $ab$ comes from an element of $A$.

Proceeding by induction, we may suppose that $D_n^i(B|A)$ is killed by a power of $I$ for all $i\ge 1$ and $0\le n\le N$. It therefore follows that the cokernel of the edge map $\HH_{N+2}^A(B)\to D_{N+1}^1(B|A)$ in the Andr\'e--Quillen-to-Hochschild-homology spectral sequence $E^2_{pq}=D_p^q(B|A)\Rightarrow \HH_{p+q}^A(B)$ (see Remark \ref{remark_AQ_to_HH}) is killed by a power of $I$. But part (i) implies that $\HH_{N+2}^A(B)$ is also killed by a power of $I$, whence $D_{N+1}^1(B|A)$ is killed by a power of $I$.

Therefore there exists $M>0$ such that the homotopy groups $\pi_n(\bb L_{B|A})=D_n^1(B|A)$ are killed by $I^M$, for $0\le n\le N+1$. According to Proposition \ref{proposition_homological_factoring_ideal}, it follows that $\pi_n(\bb L_{B|A}^i)=D_n^i(B|A)$ is killed by the $N+2$ power of $(I^M)^{(i)}$, which denotes the ideal generated by the $i^\sub{th}$-powers of $I^M$, for all $i\ge1$ and $0\le n\le N+1$.

The inductive step and the proof will thus be completed as soon as we show that $(I^M)^{(i)}$ contains a power of $I$. But this is a consequence of the smallness assumption, as $(I^M)^{(i)}\supseteq I^{(Mi)}$.
\end{proof}

\begin{remark}
Suppose that $A\to B$ is a homomorphism of rings, and that $I$ is a pro Tor-unital ideal of $A$ mapped isomorphically to an ideal of $B$. Applying Theorem \ref{theorem_pro_excision_for_HH_HC} with $k=A$, we easily see that $\HH_n^A(B)\cong\{\HH_n^A(B/I^r)\}$ for all $n\ge0$; this strengthens Lemma \ref{lemma_AQ_vanishing_in_excision}(i) in the pro Tor-unital case, as it implies that $\HH_n^A(B)$ embeds into $\HH_n^A(B/I^r)$ for $r\gg0$.

Continuing to assume that $I$ is pro Tor-unital, it seems likely that also $D_n^i(B|A)\cong \{D_n^i(B/I^r|A)\}_r$, even without any smallness condition. This would allow the smallness assumption to be removed from Lemma \ref{lemma_excision_1} and Theorem \ref{theorem_pro_excision}. (We should mention that, unfortunately, there do exist pro Tor-unital ideals which are not small; e.g., the quasi-regular ideal $I=\pid{X_1,X_2,\dots}$ of the infinite polynomial ring $\bb F_2[X_1,X_2,\cdots]$).
\end{remark}

\begin{lemma}\label{lemma_excision_1}
Let $k\to A\to B$ be homomorphisms of rings, and $I$ a small, pro Tor-unital ideal of $A$ mapped isomorphically to an ideal of $B$. Then the following canonical maps are isomorphisms for all $n,i\ge 0$:
\begin{enumerate}\itemsep1pt
\item $\{D_n^i(B|A)\otimes_BB/I^r\}_r\To\{D_n^i(B|A,B/I^r)\}_r$.
\item $\{D_n^i(B|A,I^r)\}_r\To\{I^rD_n^i(B|A)\}_r\stackrel{(\ast)}{=}0$\\ (vanishing $(\ast)$ not necessarily valid if $i=n=0$).
\item $\{D_n^i(A|k,I^r)\}_r\To\{D_n^i(B|k,I^r)\}_r$.
\end{enumerate}
\end{lemma}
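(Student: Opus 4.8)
The plan is to establish the three parts in turn, each feeding the next, with the Artin--Rees annihilation of Lemma~\ref{lemma_AQ_vanishing_in_excision} (where the smallness hypothesis is used) as the principal input. Throughout, recall that $I$ is also a pro Tor-unital ideal of $B$ by Theorem~\ref{theorem1a}, so Corollary~\ref{corollary_most_useful_pro_vanishing} may be invoked over $B$. For \textbf{part (i)} the case $i=0$ is trivial, $\bb L_{B|A}^0$ being the constant simplicial module $B$. For $i\ge1$, view $\bb L_{B|A}^i$ via Dold--Kan as a bounded-below complex of free $B$-modules; for any $B$-module $N$ the hyper-$\Tor$ spectral sequence $\Tor_p^B(D_q^i(B|A),N)\Longrightarrow D_{p+q}^i(B|A,N)$ is a first quadrant spectral sequence, natural in $N$, whose edge map is the canonical map $D_n^i(B|A)\otimes_B N\to D_n^i(B|A,N)$. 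Taking $N=B/I^r$ and passing to the limit over $r$ yields a spectral sequence of pro $B$-modules which collapses to the desired isomorphism as soon as $\{\Tor_p^B(D_q^i(B|A),B/I^r)\}_r=0$ for all $p>0$. But Lemma~\ref{lemma_AQ_vanishing_in_excision}(ii) shows $D_q^i(B|A)$ is annihilated by a power of $I$, hence coincides with $D_q^i(B|A)/I^rD_q^i(B|A)$ for $r\gg0$, so this vanishing is precisely Corollary~\ref{corollary_most_useful_pro_vanishing} applied over $B$.

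For \textbf{part (ii)}, apply the long exact Andr\'e--Quillen sequence attached to $0\to I^r\to B\to B/I^r\to0$ with the $B$-module $\bb L_{B|A}^i$, and pass to the limit over $r$ in the category of pro modules. Part (i) identifies $\{D_n^i(B|A,B/I^r)\}_r$ with $\{D_n^i(B|A)/I^rD_n^i(B|A)\}_r$ compatibly with the maps induced by $B\to B/I^r$, which are pro-surjective; exactness then forces the connecting maps to vanish pro-ly and shows that $\{D_n^i(B|A,I^r)\}_r$ maps isomorphically onto the pro-kernel of $D_n^i(B|A)\to\{D_n^i(B|A)/I^rD_n^i(B|A)\}_r$, namely $\{I^rD_n^i(B|A)\}_r\subseteq D_n^i(B|A)$ --- this pro-isomorphism is the canonical map of the statement. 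The vanishing $(\ast)$ is then immediate: $D_n^0(B|A)=0$ for $n>0$, while for $i\ge1$ Lemma~\ref{lemma_AQ_vanishing_in_excision}(ii) makes $\{I^rD_n^i(B|A)\}_r$ eventually zero; only the genuinely exceptional case $i=n=0$ leaves the nonzero pro-object $\{I^r\}_r$.

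For \textbf{part (iii)}, the case $i=0$ is immediate since $D_n^0(A|k,I^r)$ and $D_n^0(B|k,I^r)$ are both $I^r$ for $n=0$ and $0$ otherwise. For $i\ge1$, feed (ii) into the Kassel--Sletsj\o e higher Jacobi--Zariski spectral sequence of Proposition~\ref{proposition_JZSS} for $k\to A\to B$ with coefficient module $I^r$, taken naturally in $r$ so as to produce a spectral sequence of pro $B$-modules converging to $\{D_{p+q}^i(B|k,I^r)\}_r$. In each intermediate column $-i<p\le0$ the inner first quadrant spectral sequence has $E^2$-term $D_\al^{-p}(A|k,D_\beta^{i+p}(B|A,I^r))$; since $i+p\ge1$, part (ii) gives $\{D_\beta^{i+p}(B|A,I^r)\}_r=0$, whence, $D_\al^{-p}(A|k,-)$ being a functor, $\{D_\al^{-p}(A|k,D_\beta^{i+p}(B|A,I^r))\}_r=0$ as well, so $\{E^1_{pq}(r)\}_r=0$ for $-i<p\le0$; the columns $p<-i$ and $p>0$ vanish identically. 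The limit spectral sequence is thus concentrated in the column $p=-i$, where $E^1_{-i,q}(\infty)=\{D_{q-i}^i(A|k,I^r)\}_r$, and it collapses to the edge-map isomorphism $\{D_n^i(A|k,I^r)\}_r\isoto\{D_n^i(B|k,I^r)\}_r$.

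The subtle point is not any one calculation but \textbf{part (ii)}: at finite level there is no natural map $D_n^i(B|A,I^r)\to I^rD_n^i(B|A)$, since a cycle in $\bb L_{B|A}^i\otimes_BI^r$ need not be an $I^r$-combination of cycles, so the ``canonical map'' of the statement only exists after passing to pro-objects, and constructing it together with proving its invertibility relies essentially on part (i). Once this is in place, part (iii) is a matter of carefully propagating vanishing through the two layers of the Kassel--Sletsj\o e spectral sequence, which I expect to be the most delicate bookkeeping but otherwise routine.
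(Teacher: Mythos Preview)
Your proof is correct and follows essentially the same approach as the paper: the universal coefficient spectral sequence together with Lemma~\ref{lemma_AQ_vanishing_in_excision}(ii) and Corollary~\ref{corollary_most_useful_pro_vanishing} for part (i), the long exact coefficient sequence for $0\to I^r\to B\to B/I^r\to0$ for part (ii), and the Kassel--Sletsj\o e spectral sequence of Proposition~\ref{proposition_JZSS} for part (iii). Your remark that $I$ is pro Tor-unital over $B$ as well (via Theorem~\ref{theorem1a}) makes explicit a point the paper leaves implicit, and your closing observation about the pro-level nature of the map in (ii) is accurate but not needed for the argument.
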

\begin{proof}
The claims are all trivial when $i=0$, so we assume that $i>0$ throughout.

(i): The universal coefficient spectral sequence for the $B$-module $B/I^r$ is \[E_{pq}^2(r)=\Tor_p^B(D_q^i(B|A),B/I^r)\Longrightarrow D_{p+q}^i(B|A,B/I^r),\]  and taking the limit over $r$ yields a first quadrant spectral sequence of pro $A$-modules: \[\{\Tor_p^A(D_q^i(B|A),A/I^r)\}_r\Longrightarrow \{D_{p+q}^i(B|A,B/I^r)\}_r.\] By Lemma \ref{lemma_AQ_vanishing_in_excision}(ii), $D_q^i(B|A)$ is killed by a power of $I$, so Corollary \ref{corollary_most_useful_pro_vanishing} implies that $\{\Tor_p^A(D_q^i(B|A),A/I^r)\}_r=0$ for $p>0$; thus the spectral sequence collapses to edge map isomorphisms $\{D_n^i(B|A)\otimes_BB/I^r\}_r\isoto\{D_n^i(B|A,B/I^r)\}_r$, as desired.

(ii): The short exact sequences $0\to I^r\to B\to B/I^r\to 0$ induce a long exact sequence of pro $A$-modules: \[\cdots\To \{D_n^i(B|A,I^r)\}_r\To D_n^i(B|A)\xto{(1)} \{D_n^i(B|A,B/I^r)\}_r\To\cdots\] By part (i), the third term may be identified with $\{D_n^i(B|A)\otimes_BB/I^r\}_r$, whence arrow (1) is surjective, the sequence breaks into short exact sequences, and the canonical map $\{D_n^i(B|A,I^r)\}_r\to\{I^rD_n^i(B|A)\}_r$ is an isomorphism. The vanishing claim ($\ast$) follows from Lemma \ref{lemma_AQ_vanishing_in_excision}(ii).

(iii): The proof of (iii) is based on the higher Jacobi--Zariski spectral sequence of Proposition \ref{proposition_JZSS}. It implies that for each $r\ge 1$ there is a natural, third quadrant, bounded, spectral sequence \[E_{pq}^1(r)\Longrightarrow D_{p+q}^i(B|k,I^r)\] which vanishes outside $-i\le p\le 0$, whose $-i^\sub{th}$ column is given by $E_{-i,q}^1=D_{q-i}^i(A|k,I^r)$, and which is described in the range $-i<p\le 0$ by natural, first quadrant spectral sequences \[\cal E_{\al\beta}^2(r)=D_\al^{-p}(A|k,D_\beta^{i+p}(B|A,I^r))\Longrightarrow E_{p,\al+\beta-p}^1(r).\] According to part (ii), $\{\cal E_{\al\beta}^2(r)\}_r=0$, whence the limit of the $E$-spectral sequences collapses to edge map isomorphisms $\{D_n^i(A|k,I^r)\}_r\isoto\{D_n^i(B|k,I^r)\}_r$.
\end{proof}

We may now prove the main pro excision result for Andr\'e--Quillen homology, following the sketch given in Remark \ref{remark_sketch_proof}:

\begin{theorem}[Pro excision for AQ homology]\label{theorem_pro_excision}
Let $k\to A\to B$ be homomorphisms of rings, and $I$ a small, pro Tor-unital ideal of $A$ mapped isomorphically to an ideal of $B$. Then the following square of simplicial pro $A$-modules is homotopy cartesian
\[\xymatrix{
\bb L^i_{A|k}\ar[r]\ar[d]&\bb L^i_{B|k}\ar[d]\\
\{\bb L^i_{A/I^r|k}\}_r\ar[r]&\{\bb L^i_{B/I^r|k}\}_r
}\]
thereby resulting in a long exact, Mayer--Vietoris sequence of pro $A$-modules \[\cdots\To D_n^i(A|k)\To\{D_n^i(A/I^r|k)\}_r\oplus D_n^i(B|k)\To\{D_n^i(B/I^r|k)\}_r\To\cdots\]
\end{theorem}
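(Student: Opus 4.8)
The plan is to follow the programme sketched in Remark \ref{remark_sketch_proof}, which reduces the theorem to the continuity statements already in hand: Corollary \ref{corollary_AR_properties_in_AQ_homology}(ii), which lets us replace the bottom row of the square by derived base change, and Lemma \ref{lemma_excision_1}(iii), which supplies the comparison of the resulting homotopy fibres. All the real content is imported from these results and from Theorem \ref{theorem_AR_properties_in_AQ_homology}; what remains is bookkeeping with fibre sequences of simplicial pro $A$-modules. Note first that $I$ maps to a pro Tor-unital ideal of $B$ as well, by Theorem \ref{theorem1a}, so Corollary \ref{corollary_AR_properties_in_AQ_homology} and Lemma \ref{lemma_excision_1} apply with either $A$ or $B$ in the relevant slots.

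The first step I would carry out is the identification of the homotopy fibres of the two vertical arrows. Since $\bb L_{A|k}^i=\bigwedge_A^i\bb L_{A|k}$ is a simplicial $A$-module which is free in each degree, the functor $\bb L_{A|k}^i\otimes_A-$ is exact in each simplicial degree; applying it to $0\to I^r\to A\to A/I^r\to 0$ and passing to the limit over $r$ yields a fibre sequence of simplicial pro $A$-modules \[\{\bb L_{A|k}^i\otimes_AI^r\}_r\To\bb L_{A|k}^i\To\{\bb L_{A|k}^i\otimes_AA/I^r\}_r,\] and likewise over $B$. By Corollary \ref{corollary_AR_properties_in_AQ_homology}(ii) the canonical map $\{\bb L_{A|k}^i\otimes_AA/I^r\}_r\to\{\bb L_{A/I^r|k}^i\}_r$ is a weak equivalence, and through it the natural map $\bb L_{A|k}^i\to\{\bb L_{A/I^r|k}^i\}_r$ is identified with the right-hand map of the displayed fibre sequence (it factors through $\bb L_{A|k}^i\otimes_AA/I^r$ tautologically). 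Hence the homotopy fibre of the left vertical arrow of the square is $\{\bb L_{A|k}^i\otimes_AI^r\}_r$, and similarly the homotopy fibre of the right vertical arrow is $\{\bb L_{B|k}^i\otimes_BI^r\}_r$.

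The second step is then immediate: the square is homotopy cartesian if and only if the induced map $\{\bb L_{A|k}^i\otimes_AI^r\}_r\to\{\bb L_{B|k}^i\otimes_BI^r\}_r$ on these homotopy fibres is a weak equivalence, and on homotopy pro groups this map is exactly $\{D_n^i(A|k,I^r)\}_r\to\{D_n^i(B|k,I^r)\}_r$, which is an isomorphism for all $n,i\ge0$ by Lemma \ref{lemma_excision_1}(iii) -- this is the one place the hypotheses that $I$ be small and pro Tor-unital are really consumed. Finally, the long exact Mayer--Vietoris sequence is the long exact sequence of homotopy pro groups associated to a homotopy cartesian square of simplicial pro $A$-modules (equivalently, the statement that $\bb L_{A|k}^i\to\{\bb L_{A/I^r|k}^i\}_r\oplus\bb L_{B|k}^i\to\{\bb L_{B/I^r|k}^i\}_r$ is a fibre sequence), upon recalling the identifications $\pi_n(\bb L_{A/I^r|k}^i)=D_n^i(A/I^r|k)$ and so on.

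I do not expect any serious obstacle in this argument: the genuinely hard work was done in Theorem \ref{theorem_AR_properties_in_AQ_homology} and Lemma \ref{lemma_excision_1}. The only point that demands a little care is the homotopy-fibre identification in the second paragraph, which rests solely on the degreewise freeness of $\bb L_{-|k}^i$ (so that the tensor functors are degreewise exact and short exact sequences of simplicial pro modules are fibre sequences) together with Corollary \ref{corollary_AR_properties_in_AQ_homology}(ii).
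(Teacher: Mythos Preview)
Your proposal is correct and follows essentially the same route as the paper: replace the bottom row via Corollary~\ref{corollary_AR_properties_in_AQ_homology}(ii), identify the vertical homotopy fibres as $\{\bb L_{A|k}^i\otimes_AI^r\}_r$ and $\{\bb L_{B|k}^i\otimes_BI^r\}_r$ using degreewise freeness, and conclude via Lemma~\ref{lemma_excision_1}(iii). The paper phrases the middle step as ``the vertical arrows become surjective, so compare the kernels'' rather than ``compare homotopy fibres'', but this is the same argument.
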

\begin{proof}
According to Corollary \ref{corollary_AR_properties_in_AQ_homology}(ii), which applies to both $A$ and $B$, the homotopy of the simplicial modules in the bottom row are unchanged if we replace them by \[\{\bb L_{A|k}^i\otimes_AA/I^r\}_r\To\{\bb L_{B|k}^i\otimes_BB/I^r\}_r.\] The vertical arrows in the square now become surjective, and so to prove that the square is homotopy cartesian it is enough to show that the induced map on the vertical kernels, namely \[\{\bb L_{A|k}^i\otimes_AI^r\}_r\To\{\bb L_{B|k}^i\otimes_BI^r\}_r\]
is a weak equivalence. But this is precisely Lemma \ref{lemma_excision_1}(iii).
\end{proof}

%%%%%%%%%%%%%%%%%%%%%%%%%%%
%%%%%%%%%%%%%%%%%%%%%%%%%%%
%%%%%%%%%%%%%%%%%%%%%%%%%%%

\begin{appendix}
\section{Arguments with pro abelian groups}\label{appendix_pro}
The purpose of this appendix is to summarise the notation of pro objects, and to explain some standard arguments we use. It is likely to be unnecessary to readers already familiar with infinitesimal arguments in $K$-theory or cyclic homology.

Everything we need about categories of pro objects may be found in one of the standard references, such as the appendix to \cite{ArtinMazur1969}, or \cite{Isaksen2002}. We will often use $\op{Pro}(A\op{-}mod)$, the category of pro $A$-modules for some ring $A$, and $\op{Pro}Ab$, the category of pro abelian groups, and $\op{Pro}Rings$, the category of pro rings.

Let $\cal C$ be a category. In this paper, an object of $\op{Pro}\cal C$ is simply an inverse system $\cdots\to A_2\to A_1$ of objects and morphisms in $\cal C$, which is denoted $\{A_r\}_r$ or very occasionally $A_\infty$; some authors, including myself previously, write $\projlimf_{\!\!r}A_r$, but we now eschew this notation. Morphisms in $\op{Pro}\cal C$ are given by the rule \[\Hom_{\op{Pro}\cal C}(\{A_r\}_r,\{B_s\}_s):=\projlim_s\indlim_r\Hom_{\cal C}(A_r,B_s),\] where the right side is a genuine pro-ind limit in the category of sets, and composition is defined in the obvious way. For example, a pro object $\{A_r\}_r$ is isomorphic to zero (assuming that a zero object exists in $\cal C$) if and only if for each $r\ge 1$ there exists $s\ge r$ such that the transition map $A_s\to A_r$ is zero. A morphism is said to be {\em strict} it if arises from a compatible family of morphisms $A_r\to B_r$, for $r\ge1$.

There is a fully faithful embedding $\cal C\to\op{Pro}\cal C$. Assuming $\cal C$ has countable inverse limits, this has a right adjoint \[\op{Pro}\cal C\To \cal C,\quad \{A_r\}_r\Mapsto \projlim_r A_r,\] which is left exact but not right exact. Moreover, if $\cal C$ is an abelian category then so is $\op{Pro}\cal C$: given a inverse system of exact sequences \[\cdots\To A_{n-1}(r)\To A_n(r)\To A_{n+1}(r)\To\cdots,\] the ``limit as $r\to\infty$'', namely \[\cdots \To \{A_{n-1}(r)\}_r\To \{A_n(r)\}_r\To \{A_{n+1}(r)\}_r\To\cdots,\] is an exact sequence in $\op{Pro}\cal C$.

Pro spectral sequences play an important role in the paper, which we will discuss for concreteness only in the case of abelian groups. Suppose that \[E^1_{pq}(r)\Longrightarrow H_{p+q}(r),\] for $r\ge 1$, are spectral sequences of abelian groups, which are functorial in that we have morphisms of spectral sequences $\cdots\to E^\bullet_{pq}(2)\to E^\bullet_{pq}(1)$. To avoid convergence issues, suppose that each spectral sequence is bounded, by a bound independent of $r$; e.g., each spectral sequence might be zero outside the first quadrant. Finally, make the following assumption:
\begin{quote}
For each $p,q$ and each $r\ge 1$, there exists $s\ge r$ such that $E^1_{pq}(s)\to E^1_{pq}(r)$ is zero.
\end{quote}
Then we claim that for each $n$ and each $r\ge 1$, there exists $s\ge r$ such that $H_n(s)\to H_n(r)$ is zero. We offer two proofs of this claim; similar arguments are used in the paper:

{\em Careful proof:} For simplicity of notation, assume that each spectral sequence is zero outside the first quadrant, and let \[H_n(r)=F_nH_n(r)\supseteq\cdots\supseteq F_{-1}H_n(r)=0\] denote the resulting filtration on each $H_n(r)$. By the natural dependence of the spectral sequences on $r$, the natural maps $H_n(s)\to H_n(r)$, for $s\ge r$, respect the filtrations and thus induce homomorphisms $\op{gr}_pH_n(s)\to\op{gr}_pH_n(r)$ for $p=0,\dots,n$. But $\op{gr}_pH_n(r)$ is a subquotient of $E^1_{pq}(r)$, and the same applies to for $s$, so our standing assumption implies that we may pick $s\ge r$ such that $\op{gr}_pH_n(s)\to\op{gr}_pH_n(r)$ is zero.

So, starting with any $r\ge 1$, successively pick integers $r_n\ge \cdots\ge r_0=r$ such that  $\op{gr}_{n-i}H_n(r_i)\to\op{gr}_{n-i}H_n(r_{i-1})$ is zero for $i=1,\dots,n$. In other words, the natural map $H_n(r_i)\to H_n(r_{i-1})$ carries $F_{n-i}H_n(r_i)$ to $F_{n-i-1}H_n(r_{i-1})$. Hence the composite \[H_n(r_n)\to H_n(r_{n-1})\to\cdots \to H_n(r_1)\to H_n(r)\] carries $F_nH_n(r_n)=H_n(r_n)$ to $F_{-1}H_n(r)=0$. So setting $s=r_n$ completes the proof of the claim.

{\em Slick proof:} The naturality of the family of spectral sequences and the exactness of $\{-\}_r$ implies that we obtain a first quadrant spectral sequence of pro abelian groups \[E_{pq}^1(\infty):=\{E^1_{pq}(r)\}\Longrightarrow \{H_{p+q}(r)\}_r.\] We will often refer to this construction of this spectral sequence of pro abelian groups as ``letting $r\to\infty$''. Our assumption, rephrased into the pro language, is exactly that $E^1_{pq}(\infty)=0$ for each $p,q$.  Hence $\{H_n(r)\}_r=0$ for each $n$, which is exactly the claim.

In our calculations we will systemically make use of pro arguments of this type, usually without explicit mention; moreover, we state our results always in terms of pro objects, rather than ``for each $r\ge 1$ there exists $s\ge r$ such that etc.", whenever it does not cause confusion.

\section{A vanishing result in homological algebra}
Cell attachment arguments in homological algebra allow the following type of assertion to be established \cite[Prop.~7.3 \& Lem.~9.8]{Quillen1968}: if a map $M_\blob\to N_\blob$ of degree-wise-projective simplicial $A$-modules induces an isomorphism on homotopy in low degrees, then the same is true of $\Phi(M_\blob)\to\Phi(N_\blob)$, where $\Phi:A\op-mod\to A\op-mod$ is any functor. The aim of this appendix is to prove an analogous result in which, instead of assuming that certain groups vanish, we will assume that they are annihilated by a given ideal of $A$.

More precisely, let $A$ be commutative ring and $\Phi:A\op-mod\to A\op-mod$ a functor with the following property: there exists $e\ge1$ such that $\Phi(\mu_M(a))=\mu_{\Phi(M)}(a^e)$ for all $A$-modules $M$ and all $a\in A$, where $\mu_M(a):=$ `multiplication by $a$'$:M\to M$. Examples of such functors to have in mind are symmetric and exterior powers \[\Phi(M):=\op{Sym}_A^eM\quad\mbox{and}\quad \Phi(M):={\bigwedge}_A^eM.\] 
%This property implies that there is a canonical map $S^{-1}\Phi(M)\to \Phi(S^{-1}M)$ for any multiplicative system $S\subseteq A$, which is an isomorphism if $\Phi$ commutes with filtered colimits (though we do not need this); it then follows easily that if $M_\blob$ is a degree-wise-projective simplicial $A$-module whose homotopy groups are all supported on $V(I)$ for some ideal $I\subseteq A$, then the same is true of the homotopy groups of $\Phi(M_\blob)$.
%The aim of this appendix is to strengthen this type of vanishing result by avoiding localisations and controlling the power of $I$ which annihilates the homotopy groups.
Given an ideal $I$ of $A$, let $I^{(e)}$ denote the ideal generated by $a^e$, for $a\in I$. Then the following is the main aim of this appendix; it is needed in the proof of Lemma \ref{lemma_AQ_vanishing_in_excision}:

\begin{proposition}\label{proposition_homological_factoring_ideal}
Let $A$ be a commutative ring and $\Phi:A\op-mod\to A\op-mod$ a functor with the above property for some $e\ge1$. Let $M_\blob$ be a simplicial $A$-module which is degree-wise projective, $k\ge1$, and $I$ an ideal of $A$ such that the homotopy groups $\pi_i(M_\blob)$, for $0\le i< k$, are annihilated by $I$. Then the homotopy groups $\pi_i(\Phi(M_\blob))$, for $0\le i< k$, are annihilated by the $k^\sub{th}$-power of the ideal $I^{(e)}$.
\end{proposition}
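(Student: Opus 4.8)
The plan is to deduce the proposition from the sharper statement: \emph{for every $n\ge0$ and every degree-wise projective simplicial $A$-module $M_\blob$ such that $I$ annihilates $\pi_i(M_\blob)$ for all $i\le n$, the ideal $(I^{(e)})^{n+1}$ annihilates $\pi_n(\Phi(M_\blob))$.} Since $(I^{(e)})^k\subseteq(I^{(e)})^{n+1}$ for $n\le k-1$, taking $n=0,\dots,k-1$ recovers the proposition. The only external input is Quillen's cell-attachment technology \cite[Prop.~7.3, Lem.~9.8]{Quillen1968}, valid for the functors at hand (symmetric and exterior powers, more generally polynomial functors): $\Phi$ sends weak equivalences between degree-wise projective simplicial modules to weak equivalences -- hence descends to the homotopy category of such objects, so turns chain-homotopic maps into chain-homotopic maps -- and $\Phi$ sends an $m$-equivalence (iso on $\pi_i$ for $i<m$, epi on $\pi_m$) between such objects to an $m$-equivalence.

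First I would reduce to the truncated case: the map $M_\blob\to\tau_{\le n}M_\blob$ is built by attaching cells in degrees $\ge n+2$, hence is an $(n{+}1)$-equivalence, so $\Phi$ of it is an isomorphism on $\pi_n$; thus we may assume $\pi_i(M_\blob)=0$ for $i>n$, and then induct on the truncation level. In the base case $n=0$ the object $M_\blob$ is a free simplicial resolution of the $A/I$-module $\pi_0(M_\blob)$, so $\pi_*(\Phi(M_\blob))=L_*\Phi(\pi_0 M_\blob)$; for $a\in I$ the endomorphism ``multiplication by $a$'' of $\pi_0(M_\blob)$ vanishes, hence lifts to a self-map of the resolution that is null-homotopic, and by the Quillen input $\Phi$ of that self-map is null-homotopic -- but by the defining property of $\Phi$ this self-map is ``multiplication by $a^e$'', so $a^e$ annihilates $\pi_*(\Phi(M_\blob))$, i.e.\ $I^{(e)}$ does.

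For the inductive step, write $M'_\blob:=\tau_{\le n-1}M_\blob$, modelled so that $M_\blob\hookrightarrow M'_\blob$ is a degree-wise split inclusion of degree-wise projective objects whose cokernel $E_\blob:=M'_\blob/M_\blob$ is a degree-wise projective model of the Eilenberg--MacLane object $K(\pi_n M_\blob,\,n+1)$; thus $\pi_i(E_\blob)=0$ for $i\ne n+1$ and $\pi_{n+1}(E_\blob)=\pi_n(M_\blob)$ is annihilated by $I$. Applying $\Phi$ degree-wise gives a degree-wise split short exact sequence $0\to\Phi(M_\blob)\to\Phi(M'_\blob)\to\bar E_\blob\to0$, and its long exact homotopy sequence exhibits $\pi_n(\Phi(M_\blob))$ as an extension of a submodule of $\pi_n(\Phi(M'_\blob))$ by a quotient of $\pi_{n+1}(\bar E_\blob)$. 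The former is annihilated by $(I^{(e)})^{n}$ by the inductive hypothesis applied to $M'_\blob$. For the latter one uses the cross-effect decomposition $\Phi(M'_\blob)/\Phi(M_\blob)\cong\bigoplus_{b\ge1}T_b(M_\blob,E_\blob)$ into bifunctors of $E_\blob$-degree $b$: the summands with $b\ge2$ are too highly connected to contribute to $\pi_{n+1}$, while the $b=1$ summand is additive in $E_\blob$ and its $\pi_{n+1}$ is built from $\pi_{n+1}(E_\blob)=\pi_n(M_\blob)$, annihilated by $I$ -- and, as in the base case, the defining property of $\Phi$ upgrades this to show $\pi_{n+1}(\bar E_\blob)$ is annihilated by $I^{(e)}$. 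Multiplying, $(I^{(e)})^{n+1}$ annihilates $\pi_n(\Phi(M_\blob))$.

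\emph{Main obstacle.} The delicate point is the control of $\bar E_\blob=\Phi(M'_\blob)/\Phi(M_\blob)$, and more basically the legitimacy of the cross-effect decomposition and its connectivity estimates for a general functor $\Phi$ with only the stated multiplicativity property. This is exactly where one must revisit the cell-by-cell induction underlying \cite[Prop.~7.3, Lem.~9.8]{Quillen1968} and run it with ``annihilated by $I$'' in place of ``zero'': attaching a cell along an $I$-annihilated homotopy class alters $\pi_*(\Phi(-))$, in the relevant range of degrees, only by classes annihilated by $I^{(e)}$, the accumulation of one such factor per degree up to $n$ producing the bound $(I^{(e)})^{n+1}$. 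Making this bookkeeping precise -- and, for the fully general statement rather than just the symmetric and exterior powers needed in Lemma~\ref{lemma_AQ_vanishing_in_excision}, pinning down the minimal hypotheses on $\Phi$ (preservation of split injections and of filtered colimits, say) -- is the technical heart of the argument.
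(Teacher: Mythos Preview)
Your approach is genuinely different from the paper's, and substantially more complicated. The paper's proof avoids induction, truncation, and cross-effects entirely. Its key observation is this: for $a_0,\dots,a_{k-1}\in I$, the self-map $\mu(a_{k-1}\cdots a_0)$ of $M_\blob$ factors as the composition of $\mu(a_0),\dots,\mu(a_{k-1})$, and $\mu(a_i)$ kills $\pi_i$ by hypothesis. A separate cell-attachment lemma (Proposition~\ref{proposition_homological_factoring}, modelled on Andr\'e's argument) then shows that any such composition factors through a degree-wise-projective simplicial module $P_\blob$ with $\pi_i(P_\blob)=0$ for $0\le i<k$. Such a $P_\blob$ carries an explicit partial contracting homotopy $h_i:P_i\to P_{i+1}$ satisfying the simplicial identities; since $\Phi$ is a functor on modules it preserves these identities, so $\Phi(P_\blob)$ is again acyclic in degrees $<k$. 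Hence $\Phi(\mu(a_{k-1}\cdots a_0))=\mu(a_0^e\cdots a_{k-1}^e)$ is zero on $\pi_i(\Phi(M_\blob))$ for $i<k$. This works for \emph{any} functor $\Phi$ with the stated multiplication property --- no polynomial or cross-effect structure is required.

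Your strategy, by contrast, has two genuine gaps. First, the inductive hypothesis as you state it only controls $\pi_n(\Phi(-))$ under an $n$-level annihilation assumption, but your inductive step needs to bound $\pi_n(\Phi(M'_\blob))$ for $M'_\blob=\tau_{\le n-1}M_\blob$; the hypothesis at level $n-1$ speaks only of $\pi_{n-1}(\Phi(M'_\blob))$, so as written the induction does not close. You would need the stronger statement that if $M_\blob$ is $n$-truncated with all $\pi_i$ killed by $I$, then $(I^{(e)})^{n+1}$ kills \emph{all} of $\pi_*(\Phi(M_\blob))$ --- which, to be fair, your base-case argument actually does prove. Second, and more seriously, the control of $\bar E_\blob=\Phi(M'_\blob)/\Phi(M_\blob)$ via cross-effects that you flag as the ``main obstacle'' is not merely bookkeeping: the degree-wise splitting $M'_\blob\cong M_\blob\oplus E_\blob$ is not simplicial, so the cross-effect decomposition of $\Phi(M'_\blob)$ is only a filtration of simplicial modules, not a direct sum, and the connectivity estimates on its graded pieces require exactly the cell-attachment input that makes the paper's direct argument work. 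The paper's route sidesteps all of this.
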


\begin{proof}
Let $a_0,\dots,a_{k-1}\in I$ and put $a=a_{k-1}\cdots a_0$. Then the `multiplication by $a$' map $\mu(a):M_\blob\to M_\blob$ factors as \[M_\blob\xto{\mu(a_0)}M_\blob\xto{\mu(a_1)}\cdots \xto{\mu(a_{k-1})}M_\blob,\] and our assumptions imply that $\mu(a_i)$ induces the zero map on homotopy for $0\le i< k$. Applying Proposition \ref{proposition_homological_factoring} below (via the Dold--Kan correspondence), we deduce that $\mu(a)$ may be factored as $M_\blob\to P_\blob\to M_\blob$, where $P_\blob$ is a degree-wise-projective simplicial $A$-module satisfying $\pi_i(P_\blob)=0$ for $0\le i<k$.

Since $P_\blob$ is a degree-wise-projective simplicial $A$-module whose homotopy vanishes in degrees $<k$, it is automatically contractible in degrees $<k$. That is, there exist maps $h_i:P_i\to P_{i+1}$, for $0\le i<k$, satisfying the usual identities: $d_{i+1} h_i=\op{id}_{P_i}$ and $d_jh_i=h_{i-1}d_j:P_i\to P_i$ for $0\le j\le i<k$ (with the convention that $h_{-1}d_0:=0$ on $P_0$). Applying $\Phi$ preserves these identities, and so $\Phi(P_\blob)$ is also contractible, hence acyclic, in degrees $<k$. It follows that the map \[\mu(a^e)=\Phi(\mu(a)):\pi_i(\Phi(M_\blob))\to \pi_i(\Phi(P_\blob))\to \pi_i(\Phi(M_\blob))\] is zero for $0\le i<k$, which completes the proof.
\end{proof}

The key step in the proof of Proposition \ref{proposition_homological_factoring_ideal} is the following cell attachment result in homological algebra; our proof will very closely follow an analogue for simplicial rings \cite[Lem.~X.6]{Andre1974} which was a key tool in the proof of Theorem \ref{theorem_AR_properties_in_AQ_homology}(i). For simplicity we work in the language of complexes, even though the result was used simplicially via the Dold--Kan correspondence. Henceforth in this appendix $A$ is a ring (not necessarily commutative) and we write ``module'' for ``left module'', and ``complex'' for ``chain complex supported in degrees $\ge0$''.

\begin{proposition}\label{proposition_homological_factoring}
Let $k\ge1$ be a fixed integer, and let \[M_\bullet^{(0)}\xto{f^{(0)}}M_\bullet^{(1)}\xto{f^{(1)}}\cdots\xto{f^{(k-1)}}M_\bullet^{(k)}\] be a sequence of maps of complexes of projective $A$-modules such that each map \[H_i(f^{(i)}):H_i(M_\blob^{(i)})\to H_i(M_\blob^{(i+1)})\quad (0\le i< k)\] is zero. Then the composition $f=f^{(k-1)}\circ\cdots\circ f^{(0)}$ factors as \[M_\bullet^{(0)}\to P_\blob\to M^{(k)}_\blob,\] where $P_\blob$ is a complex of projective $A$-modules satisfying $H_i(P_\blob)=0$ for $0\le i<k$.
\end{proposition}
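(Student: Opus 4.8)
The plan is to build $P_\bullet$ iteratively, one map at a time, using the classical mapping-cylinder/cone trick for complexes of projectives, and to arrange at each stage that the homology that was killed by $H_i(f^{(i)})$ literally disappears from the intermediate object in degrees $<$ (current threshold). Concretely, I would induct on $k$: having produced from $M_\bullet^{(0)}\to\cdots\to M_\bullet^{(k-1)}$ a factorisation $M_\bullet^{(0)}\to P_\bullet'\to M_\bullet^{(k-1)}$ with $P_\bullet'$ a complex of projectives and $H_i(P_\bullet')=0$ for $0\le i<k-1$, I would then factor the composite $P_\bullet'\to M_\bullet^{(k-1)}\xrightarrow{f^{(k-1)}}M_\bullet^{(k)}$ through a new complex $P_\bullet$ of projectives with $H_i(P_\bullet)=0$ for $0\le i<k$. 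The base case $k=1$ is the single-map statement, which is itself the heart of the matter.

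So the real work is the one-step lemma: given $g\colon N_\bullet\to L_\bullet$ of complexes of projectives with $H_i(g)=0$ for $0\le i<k$ and $H_i(N_\bullet)=0$ for $0\le i<k-1$ (the inductive hypothesis supplies this extra vanishing), factor $g$ as $N_\bullet\to P_\bullet\to L_\bullet$ with $P_\bullet$ projective and $H_i(P_\bullet)=0$ for $0\le i<k$. First replace $g$ by the inclusion of $N_\bullet$ into its mapping cylinder $\mathrm{Cyl}(g)$, which is a complex of projectives (mapping cone of projectives over projectives is projective, degreewise) equipped with a deformation retraction onto $L_\bullet$; this reduces us to the case that $g$ is a degreewise-split injection with projective cokernel $C_\bullet$. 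Now one studies the long exact sequence in homology: the hypothesis $H_i(g)=0$ forces $H_i(N_\bullet)\hookrightarrow H_{i+1}(C_\bullet)$ to be injective and $H_i(C_\bullet)\twoheadrightarrow H_{i-1}(N_\bullet)$ surjective in the relevant range. The idea, following Andr\'e's \cite[Lem.~X.6]{Andre1974} cell-attachment argument cited in the statement, is to attach free cells to $N_\bullet$ to kill $H_i(N_\bullet)$ for $i<k$ — i.e.\ glue on a direct sum of copies of the elementary acyclic complexes $\cdots 0\to A\xrightarrow{\mathrm{id}}A\to 0\cdots$ placed in degrees $(i{+}1,i)$, with the top generator mapping to a cycle representing a given homology class — and to do this compatibly with a map to $C_\bullet$ (which is possible precisely because those classes die in $C_\bullet$). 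One does the attachment for $i=k-1$ first (legitimate since $H_{k-1}$ is the top homology not yet known to vanish), then $i=k-2$, and so on down; projectivity is preserved since one only adds free summands, and the resulting $P_\bullet$ sits between $N_\bullet$ and $\mathrm{Cyl}(g)\simeq L_\bullet$, with $H_i(P_\bullet)=0$ for $0\le i<k$ by construction. Undoing the mapping-cylinder replacement composes $P_\bullet\to\mathrm{Cyl}(g)$ with the retraction $\mathrm{Cyl}(g)\to L_\bullet$ to give the desired second map.

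The main obstacle I anticipate is bookkeeping the cell attachments so that they are simultaneously (a) compatible with the map to $C_\bullet$ (equivalently, with the retraction onto $L_\bullet$), (b) done in a consistent order from the top degree $k-1$ downwards so that killing $H_i$ does not reintroduce homology in degree $i+1$, and (c) carried out with projective rather than merely free cells when $A$ is noncommutative and one wants to stay inside the category of complexes of projectives — here one should simply attach copies of $0\to Q\xrightarrow{\mathrm{id}}Q\to 0$ for suitable projectives $Q$ covering the homology modules in question, which is harmless. A secondary point, already flagged in the statement's remark, is that everything must be phrased so as to transport cleanly through the Dold--Kan correspondence back to the simplicial setting in which Proposition~\ref{proposition_homological_factoring_ideal} applies it; since Dold--Kan is an equivalence of categories sending projectives to projectives and preserving homotopy, this is routine once the chain-level statement is in hand. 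I would expect the whole argument to be a faithful complex-analogue of Andr\'e's simplicial lemma, so the cleanest writeup is probably to cite \cite[Lem.~X.6]{Andre1974} for the structure and spell out only the modifications needed to pass from simplicial rings to complexes of modules.
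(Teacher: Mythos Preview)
Your approach is correct and is essentially the paper's: both induct on $k$ via a cell-attachment lemma in the style of Andr\'e. The paper peels off the \emph{first} map $f^{(0)}$, which forces it through an auxiliary lemma handling the case where the \emph{target} already has vanishing homology in a range; you peel off the \emph{last} map $f^{(k-1)}$, so in your one-step lemma the \emph{source} $N_\bullet=P'_\bullet$ already satisfies $H_i(N_\bullet)=0$ for $i<k-1$. Your direction is arguably cleaner, since only $H_{k-1}$ then needs killing---a single application of the paper's Lemma~\ref{lemma_homological_factoring_1} with $m=k-1$: attach a free module $F\twoheadrightarrow Z_{k-1}(N_\bullet)$ in degree $k$ and map $F\to L_k$ by lifting along $d$ (possible precisely because $H_{k-1}(g)=0$). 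Your mapping-cylinder detour and the ``then $i=k-2$, and so on down'' are therefore unnecessary, and the worry about ``reintroducing homology in degree $i+1$'' is misplaced: attaching in degree $k$ can only affect $H_{k-1}$ and $H_k$, and you do not care about the latter.
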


We prove Proposition \ref{proposition_homological_factoring} via several lemmas. Given a complex $M_\blob$, its truncation in degrees $\le m$ is denoted by $M_{\le m}$.
 
\begin{lemma}\label{lemma_homological_factoring_1}
Let $m\ge 0$ be a fixed integer, and let $f:M_\bullet\to N_\bullet$ be a map of complexes of $A$-modules which induces zero on the $m^\sub{th}$ homology groups, where $M_\bullet$ is a complex of projective $A$-modules. Then $f$ may be factored as \[M_\bullet\to P_\blob\to N_\blob,\] where $P_\blob$ is a complex of projective $A$-modules satisfying:
\begin{enumerate}
\item $H_m(P_\blob)=0$.
\item $M_{\le m}\to P_{\le m}$ is an isomorphism.
\end{enumerate}
\end{lemma}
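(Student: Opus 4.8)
The plan is to obtain $P_\blob$ from $M_\blob$ by attaching a single layer of free cells in degree $m+1$, realised concretely as a mapping cone. First I would pick a free (hence projective) $A$-module $F$ together with a surjection $\pi\colon F\onto H_m(M_\blob)$ onto the $m$-th homology; since $F$ is projective and the module of $m$-cycles $Z_m(M_\blob)$ surjects onto $H_m(M_\blob)$, I may lift $\pi$ to an $A$-linear map $\tilde\pi\colon F\to Z_m(M_\blob)\subseteq M_m$. Viewing $\tilde\pi$ as a chain map from the complex $F[m]$ (with $F$ in degree $m$ and zero elsewhere) into $M_\blob$, I would set $P_\blob:=\op{cone}(\tilde\pi)$; concretely $P_i=M_i$ and $d^P_i=d^M_i$ for $i\ne m+1$, while $P_{m+1}=M_{m+1}\oplus F$ with $d^P_{m+1}(x,y)=d^M x+\tilde\pi(y)$. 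Then $P_\blob$ is a complex of projective $A$-modules, and the evident inclusion $\iota\colon M_\blob\to P_\blob$ (equal to $\op{id}$ in degrees $\ne m+1$, and $x\mapsto(x,0)$ in degree $m+1$) is a chain map restricting to an isomorphism $M_{\le m}\isoto P_{\le m}$, giving property (ii). Property (i), namely $H_m(P_\blob)=0$, follows from the long exact homology sequence of the mapping cone, in which $H_m(F[m])=F\xrightarrow{\pi}H_m(M_\blob)\to H_m(P_\blob)\to H_{m-1}(F[m])=0$ is exact and $\pi$ is surjective.

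It remains to factor $f$ as $M_\blob\xrightarrow{\iota}P_\blob\to N_\blob$. Since $\iota$ is already the identity in every degree $i\ne m+1$, the only datum to supply is an $A$-linear map $\phi\colon F\to N_{m+1}$; the map $P_\blob\to N_\blob$ would then be $f_i$ in degrees $i\ne m+1$ and $(f_{m+1},\phi)\colon M_{m+1}\oplus F\to N_{m+1}$ in degree $m+1$. Unravelling the differentials of $P_\blob$ and using that $f$ is itself a chain map, the single compatibility condition that must be checked is $d^N\circ\phi=f_m\circ\tilde\pi$.

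The existence of such a $\phi$ is the only genuinely nonformal point, and it is precisely where the hypothesis $H_m(f)=0$ enters: because $\tilde\pi$ takes values in $Z_m(M_\blob)$, the composite $f_m\circ\tilde\pi$ takes values in $Z_m(N_\blob)$, and the induced map $F\to H_m(N_\blob)$ equals $H_m(f)\circ\pi=0$; hence $f_m\circ\tilde\pi$ factors through $B_m(N_\blob)=\Im\big(d^N\colon N_{m+1}\to N_m\big)$, and projectivity of $F$ produces the required lift $\phi$ along $d^N$. This completes the construction, and everything outside this last point is just the formal bookkeeping of a mapping cone. (The sign appearing in some conventions for the cone differential is harmless, being absorbed by replacing $\tilde\pi$, or $\phi$, by its negative.)
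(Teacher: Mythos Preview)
Your proof is correct and follows essentially the same cell-attachment construction as the paper: both build $P_\blob$ by adjoining a free summand $F$ to $M_{m+1}$, with the new differential $d^M+\tilde\pi$ into $M_m$, and both use the hypothesis $H_m(f)=0$ together with freeness/projectivity of $F$ to produce the lift $\phi$ (the paper's $h$) needed to extend $f$ over $P_\blob$. The only cosmetic differences are that the paper surjects $F$ directly onto the cycle module $Z_m(M_\blob)$ rather than onto $H_m(M_\blob)$, and does not package the construction in the language of mapping cones; neither difference affects the argument.
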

\begin{proof}
Set $Z_m:=\ker(M_m\xto{d} M_{m-1})$, and let $\pi:F\onto Z_m$ be a free $A$-module surjecting onto $Z_m$. Since $H_m(f)$ is zero, $f(Z_m)$ is contained in $\op{Im}(N_{m+1}\xto{d}N_m)$, and so for each generator $e$ of $F$ we may pick a lift $h(e)$ of $\pi(e)$ to $N_{m+1}$; this constructs a map $h:F\to N_{m+1}$ such that the diagram
\[\xymatrix{
M_m \ar[d]_{f_m} & F \ar[l]_d\ar[d]^h\\
N_m&N_{m+1}\ar[l]^d
}\]
commutes.

Define $P_n=M_n$ for $n\neq m+1$, and $P_{m+1}=M_{m+1}\oplus F$; this is the desired complex of projective $A$-modules, with non-obvious maps given as follows:
\[\xymatrix@C=1.5cm{
M_\bullet\ar[d]&\cdots& M_{m} \ar[l]\ar@{=}[d] & M_{m+1} \ar[l]\ar[d]^{(\op{id},0)} & M_{m+2} \ar[l]\ar@{=}[d] & \cdots\ar[l]\\
P_\bullet\ar[d]&\cdots & M_{m} \ar[l]\ar[d] & M_{m+1}\oplus F \ar[l]_{d+\pi}\ar[d]^{f_m+h} & M_{m+2} \ar[l]_{(d,0)}\ar[d] & \cdots\ar[l]\\
N_\bullet&\cdots & N_{m} \ar[l] & N_{m+1} \ar[l] & N_{m+2} \ar[l] & \cdots\ar[l]
}\]
\end{proof}

\begin{lemma}
In addition to the hypotheses of Lemma \ref{lemma_homological_factoring_1}, assume further that there is an integer $k\ge1$ such that $H_i(N_\blob)=0$ for $m<i< m+k$. Then $f$ may be factored as \[M_\bullet\to P_\blob\to N_\blob,\] where $P_\blob$ is a complex of projective $A$-modules satisfying:
\begin{enumerate}
\item $H_i(P_\blob)=0$ for $m\le i< m+k$.
\item $M_{\le m}\to P_{\le m}$ is an isomorphism.
\end{enumerate}
\end{lemma}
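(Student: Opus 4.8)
The plan is to derive the lemma from Lemma~\ref{lemma_homological_factoring_1} by an iteration that kills homology one degree at a time, climbing from degree $m$ up to degree $m+k-1$. Set $P^{(-1)}_\bullet:=M_\bullet$ and let $g^{(-1)}:=f$. I will construct, for $j=0,1,\dots,k-1$, complexes of projective $A$-modules $P^{(j)}_\bullet$ together with maps $g^{(j)}\colon P^{(j)}_\bullet\to N_\bullet$ and factorisations $M_\bullet\to P^{(0)}_\bullet\to\cdots\to P^{(j)}_\bullet\xto{g^{(j)}}N_\bullet$ of $f$, satisfying the two invariants: $H_i(P^{(j)}_\bullet)=0$ for $m\le i\le m+j$, and the composite $M_{\le m}\to P^{(j)}_{\le m}$ is an isomorphism. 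The desired complex is then $P_\bullet:=P^{(k-1)}_\bullet$.

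For the inductive step, suppose $P^{(j)}_\bullet$ and $g^{(j)}$ have been constructed for some $-1\le j\le k-2$. The induced map $H_{m+j+1}(g^{(j)})\colon H_{m+j+1}(P^{(j)}_\bullet)\to H_{m+j+1}(N_\bullet)$ vanishes: if $j\ge 0$ this is because $H_{m+j+1}(N_\bullet)=0$ by the extra hypothesis (indeed $m<m+j+1<m+k$), while if $j=-1$ it is the assumption $H_m(f)=0$. So Lemma~\ref{lemma_homological_factoring_1}, applied to $g^{(j)}$ with the integer ``$m$'' of that lemma taken to be $m+j+1$, produces a factorisation $P^{(j)}_\bullet\to P^{(j+1)}_\bullet\xto{g^{(j+1)}}N_\bullet$ with $P^{(j+1)}_\bullet$ degree-wise projective, $H_{m+j+1}(P^{(j+1)}_\bullet)=0$, and $P^{(j)}_{\le m+j+1}\isoto P^{(j+1)}_{\le m+j+1}$. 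Crucially, the complex $P^{(j+1)}_\bullet$ built in the proof of Lemma~\ref{lemma_homological_factoring_1} coincides with $P^{(j)}_\bullet$ in every degree except $m+j+2$ (where a free module is adjoined); hence $H_i(P^{(j+1)}_\bullet)=H_i(P^{(j)}_\bullet)$ for all $i\le m+j$, which together with the inductive hypothesis and the new vanishing in degree $m+j+1$ yields $H_i(P^{(j+1)}_\bullet)=0$ for $m\le i\le m+j+1$. The isomorphism $P^{(j)}_{\le m+j+1}\isoto P^{(j+1)}_{\le m+j+1}$ restricts to $P^{(j)}_{\le m}\isoto P^{(j+1)}_{\le m}$ and composes with the one already in hand, so both invariants persist and $f$ still factors through $P^{(j+1)}_\bullet$.

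After $k$ applications we arrive at $j=k-1$, and $P_\bullet:=P^{(k-1)}_\bullet$ is a degree-wise projective complex with $H_i(P_\bullet)=0$ for $m\le i<m+k$ and with $M_{\le m}\isoto P_{\le m}$, through which $f$ factors; this is exactly the assertion. (When $k=1$ the extra hypothesis is vacuous and the statement is literally Lemma~\ref{lemma_homological_factoring_1}.) There is no real obstacle here; the only point demanding attention is the bookkeeping observation that attaching a cell to annihilate $H_{m+j+1}$ alters the complex only in the single degree $m+j+2$ and therefore leaves the homology in degrees $m,\dots,m+j$ — already arranged to vanish — untouched.
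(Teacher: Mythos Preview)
Your proof is correct and follows essentially the same approach as the paper: both arguments iterate Lemma~\ref{lemma_homological_factoring_1} to kill homology one degree at a time, using the vanishing of $H_i(N_\bullet)$ for $m<i<m+k$ to ensure the relevant map on homology is zero at each step. The paper phrases the iteration as an induction on $k$ while you write out the explicit sequence $P^{(-1)}_\bullet,\dots,P^{(k-1)}_\bullet$, but the content is identical; your observation that the cell attachment alters only a single degree is exactly what the paper encodes via the truncation isomorphism $P'_{\le m+k-1}\isoto P_{\le m+k-1}$.
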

\begin{proof}
The case $k=1$ is covered by the previous lemma. By induction on $k$, we may therefore assume that $f$ may be factored as $M_\bullet\to P_\blob'\xto{g} N_\blob$, where $P_\blob'$ is a complex of projective $A$-modules satisfying:
\begin{enumerate}
\item $H_i(P'_\blob)=0$ for $m\le i<m+k-1$.
\item $M_{\le m}\to P'_{\le m}$ is an isomorphism.
\end{enumerate}
Now apply the previous lemma to the map $g:P_\blob'\to N_\blob$ and integer $m+k-1$ to obtain a factoring of $g$ as $P_\blob'\to P_\blob\to N_\blob$, where $P_\blob$ is a complex of projective $A$-modules satisfying
\begin{enumerate}
\item $H_{m+k-1}(P_\blob)=0$.
\item $P'_{\le m+k-1}\to P_{\le m+k-1}$ is an isomorphism.
\end{enumerate}
The factoring of $f$ through $P_\blob$ has all the desired properties.
\end{proof}

\begin{lemma}
Let $m\ge0$, $k\ge1$ be fixed integers, and let \[M_\bullet^{(m)}\xto{f^{(m)}}M_\bullet^{(m+1)}\xto{f^{(m+1)}}\cdots\xto{f^{(m+k-1)}}M_\bullet^{(m+k)}\] be a sequence of maps of complexes of projective $A$-modules such that each map \[H_i(f^{(i)}):H_i(M_\blob^{(i)})\to H_i(M_\blob^{(i+1)})\quad (m\le i\le m+k-1)\] is zero. Then the composition $f=f^{(m+k-1)}\circ\cdots\circ f^{(m)}$ may be factored as \[M_\bullet^{(m)}\to P_\blob\to M^{(m+k)}_\blob,\] where $P_\blob$ is a complex of projective $A$-module satisfying:
\begin{enumerate}
\item $H_i(P_\blob)=0$ for $m\le i<m+k$.
\item $M_{\le m}^{(m)}\to P_{\le m}$ is an isomorphism.
\end{enumerate}
\end{lemma}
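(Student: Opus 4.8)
The plan is to prove this by induction on the length $k$ of the chain, with Lemma~\ref{lemma_homological_factoring_1} serving as the base case and the lemma immediately preceding the present statement (the one adding the hypothesis $H_i(N_\blob)=0$ for $m<i<m+k$) powering the inductive step; no idea is needed beyond those two lemmas. Throughout, the statement is regarded as quantified over all base indices $m\ge0$, so that in the inductive step I am free to invoke it with a shifted base index.

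For $k=1$ there is a single map $f^{(m)}\colon M_\blob^{(m)}\to M_\blob^{(m+1)}$ with $H_m(f^{(m)})=0$, and applying Lemma~\ref{lemma_homological_factoring_1} at the integer $m$ produces a factorization $M_\blob^{(m)}\to P_\blob\to M_\blob^{(m+1)}$ with $P_\blob$ degree-wise projective, $H_m(P_\blob)=0$, and $M_{\le m}^{(m)}\isoto P_{\le m}$; since the range $m\le i<m+1$ consists only of $i=m$, this is precisely conditions (i) and (ii).

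For the inductive step, assume $k\ge2$ and that the result is known for all strictly smaller lengths (with arbitrary base index). First I would apply the inductive hypothesis to the truncated chain $M_\blob^{(m+1)}\xto{f^{(m+1)}}\cdots\xto{f^{(m+k-1)}}M_\blob^{(m+k)}$, which has $k-1$ maps and base index $m+1$ and whose vanishing hypotheses are inherited from ours; this gives a factorization $f^{(m+k-1)}\circ\cdots\circ f^{(m+1)}=r\circ s$ with $s\colon M_\blob^{(m+1)}\to R_\blob$, $r\colon R_\blob\to M_\blob^{(m+k)}$, $R_\blob$ degree-wise projective, and $H_i(R_\blob)=0$ for $m+1\le i<m+k$. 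Set $\phi:=s\circ f^{(m)}\colon M_\blob^{(m)}\to R_\blob$, so that $r\circ\phi$ is the full composition $f$. Then $H_m(\phi)=H_m(s)\circ H_m(f^{(m)})=0$ since $H_m(f^{(m)})=0$, and $H_i(R_\blob)=0$ for $m<i<m+k$; these are exactly the hypotheses of the preceding lemma applied to $\phi$ at the integer $m$. That lemma therefore factors $\phi$ as $M_\blob^{(m)}\xto{u}P_\blob\xto{v}R_\blob$ with $P_\blob$ degree-wise projective, $H_i(P_\blob)=0$ for $m\le i<m+k$, and $M_{\le m}^{(m)}\isoto P_{\le m}$. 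Consequently $f=r\circ\phi=(r\circ v)\circ u$ factors through $P_\blob$, and $P_\blob$ satisfies (i) and (ii) by construction, closing the induction. (Taking $m=0$ then yields Proposition~\ref{proposition_homological_factoring}.)

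The argument is essentially bookkeeping, and the only points needing a moment's care are that the truncated chain genuinely has length $k-1$ and base index $m+1$, so that the induction is well-founded, and that the interval $m+1\le i<m+k$ on which $H_i(R_\blob)$ vanishes is exactly the interval $m<i<m+k$ required by the preceding lemma. I do not anticipate any genuine obstacle: everything reduces cleanly to the two lemmas already established.
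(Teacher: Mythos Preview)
Your proof is correct and follows essentially the same approach as the paper: induction on $k$, applying the inductive hypothesis to the shortened chain $M_\blob^{(m+1)}\to\cdots\to M_\blob^{(m+k)}$ with base index $m+1$, and then invoking the preceding lemma on the composite $M_\blob^{(m)}\to M_\blob^{(m+1)}\to R_\blob$. The only difference is notation.
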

\begin{proof}
The case $k=1$ is covered by Lemma \ref{lemma_homological_factoring_1}. By induction on $k$, we may therefore suppose that the composition $f'=f^{(m+k-1)}\circ\cdots\circ f^{(m+1)}$ factors as as $M_\bullet^{(m+1)}\xto{g} P'_\blob\to M^{(m+k)}_\blob$, where $P'_\blob$ is a complex of projective $A$-module satisfying:
\begin{enumerate}
\item $H_i(P'_\blob)=0$ for $m+1\le i<m+k$.
\item $M_{\le m+1}^{(m+1)}\to P_{\le m+1}'$ is an isomorphism.
\end{enumerate}
The map $M_\blob^{(m)}\xto{f^{(m)}}M_\blob^{(m+1)}\xto{g} P_\blob'$ satisfies all the conditions of the map $f$ in the statement of the previous lemma, so we deduce that $g\circ f^{(m)}$ may be factored as $M_\blob^{(m)}\to P_\blob\to P_\blob'$, where $P_\blob$ is a complex of projective $A$-modules satisfying
\begin{enumerate}
\item $H_i(P_\blob)=0$ for $m\le i< m+k$.
\item $M_{\le m}^{(m)}\to P_{\le m}$ is an isomorphism.
\end{enumerate}
So the factoring of $f$ given by $M_\blob^{(m)}\to P_\blob\to M_\blob^{(m+k)}$ has all the desired properties.
\end{proof}

\begin{proof}[Proof of Proposition \ref{proposition_homological_factoring}]
Take $m=0$ in the previous lemma.
\end{proof}

\end{appendix}
\bibliographystyle{acm}
\bibliography{../Bibliography}

\noindent Matthew Morrow\hfill {\tt morrow@math.uni-bonn.de}\\
Mathematisches Institut\hfill \url{http://www.math.uni-bonn.de/people/morrow/}\\\
Universit\"at Bonn\\
Endenicher Allee 60\\
53115 Bonn, Germany
\end{document}